\newcommand{\mb}{\mathbb}
\newcommand{\mf}{\mathfrak}
\newcommand{\mc}{\mathcal}
\newcommand{\ov}{\overline}
\newcommand{\wt}{\widetilde}
\newcommand{\lbr}{\llbracket}
\newcommand{\rbr}{\rrbracket}
\numberwithin{equation}{section}
\title[Gauged Hamiltonian Floer homology]{Gauged Hamiltonian Floer homology I: definition of the Floer homology groups}
\author{Guangbo Xu}
\address{
Department of Mathematics \\
410N Rowland Hall\\
University of California, Irvine \\
Irvine, CA 92697 USA
}
\email{guangbox@math.uci.edu}
\date{\today}
\begin{document}

\newtheorem{thm}{Theorem}[section]
\newtheorem{lemma}[thm]{Lemma}
\newtheorem{cor}[thm]{Corollary}
\newtheorem{prop}[thm]{Proposition}

\theoremstyle{definition}
\newtheorem{defn}[thm]{Definition}
\theoremstyle{remark}
\newtheorem{rem}[thm]{Remark}
\newtheorem{hyp}[thm]{Hypothesis}

\begin{abstract}
We construct the vortex Floer homology group $VHF\left( M, \mu; H\right)$ for an aspherical Hamiltonian $G$-manifold $(M, \omega, \mu)$ and a class of $G$-invariant Hamiltonian loops $H_t$, following the proposal of \cite{Cieliebak_Gaio_Salamon_2000}. This is a substitute for the ordinary Hamiltonian Floer homology of the symplectic quotient of $M$. The equation for connecting orbits is a perturbed symplectic vortex equation on the cylinder ${\mb R} \times S^1$. We achieve the transversality of the moduli space by the classical perturbation argument instead of the virtual technique, so the homology can be defined over ${\mb Z}$.
\end{abstract}

\maketitle

\setcounter{tocdepth}{1}
\tableofcontents

\section{Introduction}

\subsection{Background}

Floer homology, introduced by Andreas Floer (see \cite{Floer_intersection}, \cite{Floer_CMP}), has been a great triumph of Gromov's $J$-holomorphic curve technique (\cite{Gromov_1985}) in many areas of mathematics. Hamiltonian Floer homology gives new invariants of symplectic manifolds and has been the most important approach towards the solution to the celebrated Arnold conjecture; the Lagrangian Floer homology and more general the Fukaya category become fundamental objects in Kontsevich's homological mirror symmetry conjecture \cite{HMS}; several Floer-type homology theories, including the instanton Floer homology (\cite{Floer_instanton}, \cite{Donaldson_YM}), Heegaard-Floer theory (\cite{OZ}), Seiberg-Witten Floer homology (\cite{KM_book}), ECH theory (\cite{Hutchings_Sullivan}, \cite{Hutchings_Taubes}), have become tools of understanding lower dimensional topology. Many variants of Floer homology, such as symplectic homology (\cite{FH_94}, \cite{CFH_95}, \cite{CFHW_96}) and symplectic field theory (\cite{SFT}), lead to many interesting theories of open symplectic manifolds.

In this paper we consider a new Floer homology theory and construct the corresponding Floer homology group. The construction was proposed in \cite{Cieliebak_Gaio_Salamon_2000}. Before we introduce this new theory, we give a brief review on general Morse homology theory and Hamiltonian Floer theory.

All types of Floer theories are certain infinite dimensional Morse theories, whose constructions essentially apply Witten's point of view (\cite{Witten_morse}). Basically, if $f: X\to {\mb R}$ is certain smooth functional on manifold $X$ (which could be infinite dimensional), then with an appropriate choice of metric on $X$, we can study the equation of negative gradient flow of $f$, of the form
\begin{align}\label{equation11}
x'(t) + \nabla f(x(t)) = 0,\ t\in (-\infty, +\infty).
\end{align}
If a solution $x(t)$ has finite ``energy'', then $x(t)$ converges to a critical point of $f$ as $t \to \pm\infty$. Assuming that all critical points of $f$ is nondegenerate, then we can define a Morse-type index $\lambda_f: {\rm Crit} f \to {\mb Z}$. Then for a given pair of critical points $a_-, a_+\in {\rm Crit} f$, the {\it moduli space} of solutions to the negative gradient flow equation which are asymptotic to $a_\pm$ as $t\to \pm \infty$, denoted by ${\mc M}(a_-, a_+)$, has dimension equal to $\lambda_f(a_-) - \lambda_f (a_+)$, if $f$ and the metric are perturbed generically. If $\lambda_f(a_-) - \lambda_f (a_+)= 1$, because of the translation invariance of (\ref{equation11}), we expect to have only finitely many geometrically different solutions connecting $a_-$ and $a_+$. In the ``orientable'' cases we can also associate a sign to each such solutions.

On the other hand, we define a chain complex over ${\mb Z}_2$ (and over ${\mb Z}$ in the oriented case), generated by critical points of $f$ and graded by the index $\lambda_f$; the boundary operator $\partial$ is defined by the (signed) counting of geometrically different trajectories of solutions to (\ref{equation11}) connecting two critical points with adjacent indices. We expect a nontrivial fact that $\partial \circ \partial = 0$. So a homology group is derived.

Last but not least, a nontrivial amount of effort is devoted to proving the independence of the homology groups on various choices of metrics, almost complex structures, etc. Therefore the homology is some invariant of the underlying geometric background, which implies certain interesting results. For example, in Hamiltonian Floer theory, the Floer homology group is isomorphic to the singular homology of the symplectic manifold, which implies the Arnold conjecture.

\subsection{Hamiltonian Floer homology and the transversality issue}

In Hamiltonian Floer theory, we have a compact symplectic manifold $(X, \omega)$ and a time-dependent Hamiltonian $H \in C^\infty(S^1 \times X)$, usually denoted as a family $H_t: X \to {\mb R}$ parametrized by $t \in S^1$. We can define an action functional ${\mc A}_H$ on a covering space $\wt{LX}$ of the contractible loop space of $X$. The space $\wt{LX}$ consists of pairs $(x, w)$ where $x: S^1 \to X$ is a contractible loop and $w: {\mb D} \to X$ with $w|_{\partial {\mb D}} = x$; the action functional is defined as
\begin{align}\label{equation12}
{\mc A}_H (x, w) = -\int_{\mb D} w^* \omega - \int_{S^1} H_t(x(t)) dt.
\end{align}
The Hamiltonian Floer homology is formally the Morse homology of $\left( \wt{LX}, {\mc A}_H \right)$. 

The critical points are pairs $(x, w)$ where $x: S^1 \to X$ satisfying $x'(t) = X_{H_t}(x(t))$, where $X_{H_t}$ is the Hamiltonian vector field associated to $H_t$; these loops are 1-periodic orbits of the Hamiltonian isotopy generated by $(H_t)$. Choose a smooth $S^1$-family of $\omega$-compatible almost complex structures $(J_t)$ on $X$, which induces an $L^2$-metric on the loop space of $X$. We can write (\ref{equation11}) formally as the {\it Floer equation} for maps $u$ from the infinite cylinder $\Theta = {\mb R}\times S^1$ to $X$:
\begin{align*}
{\partial u \over \partial s} + J_t \left( {\partial u \over \partial t} - X_{H_t}(u) \right) = 0.
\end{align*}
Here $(s, t)$ is the standard coordinates on $\Theta$. This is a perturbed Cauchy-Riemann equation, so Gromov's theory of pseudoholomorphic curves can be applied.

A nontrivial issue is how to make the moduli spaces transverse. The effort of dealing with the transversality problem has long history. In both Donaldson's theory and Gromov-Witten-Floer theory for semi-positive manifolds, one has a large space of auxiliary data (metrics or almost complex structures). So a generic choice of the auxiliary data can make the moduli space transverse. However, for $J$-holomorphic curves, certain obstructions cannot be overcome in this way, where there are multiple covers of holomorphic spheres with negative Chern numbers. Therefore such method only works for semi-positive symplectic manifolds where there are no such holomorphic spheres. In Floer theory, the transversality argument was carried out by Floer-Hofer-Salamon \cite{Floer_Hofer_Salamon}, which was applied to define Floer homology groups for monotone (\cite{Floer_CMP}) and semi-positive (\cite{Hofer_Salamon}, \cite{Ono_1995}) manifolds. 

On the other hand, for general symplectic manifolds, people developed the so-called ``virtual technique'' to overcome the obstruction. It leads to the definition of the Floer homology and proof of Arnold conjecture for general symplectic manifolds, by Fukaya-Ono (\cite{Fukaya_Ono}) and Liu-Tian (\cite{Liu_Tian_Floer}). The virtual technique, despite of its power, is much more sophisticated than the perturbation argument. There have been plenty of discussions on clarifications of its details. Moreover, since the virtual technique requires to do ``multi-valued'' perturbations, the resulting theories are only defined over rational coefficients rather than integers.

\subsection{Hamiltonian Floer theory in gauged $\sigma$-model}

The Floer theory considered in this paper plays a role as a substitute of the ordinary Floer theory, while the virtual technique can be bypassed in many interesting cases. It was proposed in \cite{Cieliebak_Gaio_Salamon_2000}, motivating from Dostoglou-Salamon's study of Atiyah-Floer conjecture (see \cite{Dostoglou_Salamon}). The main analytical object is the symplectic vortex equation, which was also independently studied initially in \cite{Cieliebak_Gaio_Salamon_2000} and by Ignasi Mundet in  \cite{Mundet_thesis} \cite{Mundet_2003}.

The role of the symplectic vortex equation in gauged Floer theory considered in this paper is the same as the role of $J$-holomorphic curve equation in ordinary Floer theory. In our case, consider the following action functional on a covering space of the space of contractible loops in $M \times {\mf g}$. Let $H: M \times S^1 \to {\mb R}$ be an $S^1$-family of $G$-invariant Hamiltonians; for any contractible loop $\wt{x}:=(x, f): S^1 \to M \times {\mf g}$ with a homotopy class of extensions of $x: S^1 \to M$, represented by $w: {\mb D}\to M$, the action functional (given first in \cite{Cieliebak_Gaio_Salamon_2000}) is 
\begin{align}\label{equation13}
\wt{\mc A}_H(x, f, w):= - \int_{\mb D} w^* \omega + \int_{S^1}\left( \mu(x(t)) \cdot f(t) - H_t(x(t)) \right) dt.
\end{align}
The critical loops of $\wt{\mc A}_H$ corresponds to 1-periodic orbits of the induced Hamiltonian $(\ov{H}_t)$ on the symplectic quotient $\ov{M}:= \mu^{-1}(0)/ G$. The equation of negative gradient flows of $\wt{\mc A}_H$, is just the symplectic vortex equation on the trivial bundle $G \times \Theta$, with the standard area form $ds \wedge dt$, and the connection $A$ is in temporal gauge (i.e., $A$ has no $ds$ component). More precisely, if we choose an $S^1$-family of $G$-invariant, $\omega$-compatible almost complex structures $J_t$, then (\ref{equation11}) is written as a system of $(u, \Psi) : \Theta \to M \times {\mf g}$:
\begin{align}\label{equation14}
\left\{ \begin{array}{ccc}
\displaystyle {\partial u \over \partial s} + J_t \left( {\partial u \over \partial t} + X_{\Psi}(u) - Y_{H_t} \right) & = & 0;\\[0.4cm]
\displaystyle {\partial \Psi \over \partial s} + \mu(u) & = & 0.
\end{array}\right.
\end{align}
Solutions with finite energy are asymptotic to loops in ${\rm Crit} \wt{\mc A}_H$. Then the moduli space of such trajectories, especially those zero-dimensional ones, gives the definition of the boundary operator in the Floer chain complex, and hence the Floer homology group. We call these homology theory the {\bf vortex Floer homology}. 

Here we state the main theorem of this paper, which looks very much parallel to the corresponding statements in ordinary Hamiltonian Floer theory.
\begin{thm}
To a pair $(J, H)$ where $H = (H_t) \in C^\infty_c(S^1 \times M)^G$ is an admissible Hamiltonian (see Definition \ref{defn65}), and $J = (J_t)$ is a generic ``admissible'' almost complex structure $J = (J_t)$ (see Definition \ref{defn62}), we can associate the vortex Floer chain complex 
\begin{align*}
\left( VCF_* \left( M, \mu; J, H; \Lambda_{\mb Z} \right), \delta_J \right).
\end{align*}
This is a chain complex of $\Lambda_{\mb Z}$-modules (where $\Lambda_{\mb Z}$ is defined in Subsection \ref{subsection24}), generated by certain equivalence classes of critical points of the action functional $\wt{\mc A}_H$ given by (\ref{equation13}). The chain complex is ${\mb Z}$-graded, and the grading is given by a Conley-Zehnder type index, defined in Subsection \ref{subsection43}. The homology of this chain complex is denoted by
\begin{align*}
VHF_* \left( M, \mu; J, H; \Lambda_{\mb Z} \right).
\end{align*}

Moreover, if $(J', H')$ is another pair for which we can define the vortex Floer chain complex $\left( VCF_*(M, \mu; J', H'; \Lambda_{\mb Z}), \delta_{J'} \right)$, then there exists a chain homotopy equivalence (the continuation map)
\begin{align*}
\mf{cont}: VCF_*\left( M, \mu; J, H; \Lambda_{\mb Z}\right) \to VCF_* \left( M, \mu; J', H'; \Lambda_{\mb Z} \right).
\end{align*}
The construction of the continuation map results in a unique isomorphism 
\begin{align*}
\mf{cont}: VHF_* \left( M, \mu; J, H; \Lambda_{\mb Z} \right) \to VHF_* \left( M, \mu; J', H'; \Lambda_{\mb Z} \right).
\end{align*}
So there exists a ${\mb Z}$-graded $\Lambda_{\mb Z}$-module $VHF_*\left(M, \mu; \Lambda_{\mb Z} \right)$ which is canonically isomorphic to $VHF_* \left( M, \mu; J, H; \Lambda_{\mb Z} \right)$ for all $(J, H)$ for which we can define the vortex Floer chain complex. This module is called the vortex Floer homology of $(M, \mu)$.
\end{thm}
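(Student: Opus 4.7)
The plan is to follow the standard Floer-theoretic blueprint, but with the symplectic vortex equation \eqref{equation14} playing the role of the Floer equation, taking full advantage of the asphericality hypothesis and the $G$-equivariance of the setup.

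First I would set up the generators. Critical points of $\wt{\mc A}_H$ are loops $(x,f)\colon S^1 \to M \times \mf g$ whose projection to the quotient $\ov M$ is a $1$-periodic orbit of the induced Hamiltonian $\ov H_t$. Gauge transformations in the based loop group of $G$ act on the critical set, and for admissible $H$ there are only finitely many equivalence classes in each homotopy class, each nondegenerate after perturbing $H$ in its admissible neighborhood. The covering $\wt{L}(M \times \mf g)$ is chosen so that $\wt{\mc A}_H$ becomes single-valued, and $\Lambda_{\mb Z}$ is the associated Novikov-type ring used to keep track of the covering group. The $\mb Z$-grading is defined via the Conley--Zehnder type index from Subsection~\ref{subsection43}, applied to a symplectic trivialization of $x^*T\ov M$ induced by the bounding disk~$w$.

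The analytic heart is the study of the moduli space $\mc M((x_-, f_-, w_-), (x_+, f_+, w_+))$ of finite-energy solutions to \eqref{equation14} with prescribed asymptotics. I would proceed in the following order: (i) an energy identity, expressing the action difference as a sum of $|\partial_s u|^2 + |\partial_s \Psi + \mu(u)|^2$ plus topological terms; (ii) asymptotic convergence to critical loops modulo gauge, using an exponential decay estimate near nondegenerate orbits; (iii) a compactness theorem modulo broken trajectories, where asphericality of $(M,\omega)$ rules out $J$-holomorphic sphere bubbles, and a careful rescaling analysis must rule out vortex bubbles by a mean curvature / monotonicity argument tailored to the equivariant setting; (iv) a Fredholm package for the linearized operator, producing a perturbed Cauchy--Riemann operator coupled with the linearization of $\partial_s \Psi + \mu(u)$, with index equal to the difference of Conley--Zehnder indices. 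The \textbf{main obstacle} is precisely the compactness/bubbling step: unlike the ungauged case, one must simultaneously control energy concentration at finite points (spheres, disks, vortices in $\mb C$) and at infinity along $\mb R \times S^1$. The strategy is to combine the standard Gromov trick with the asymptotic analysis of finite-energy affine vortices in the symplectic quotient, which is where the aspherical hypothesis on $M$ does its essential work.

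Transversality would be achieved via the classical method: a Sard--Smale argument applied to the $S^1$-family $J_t$ of $G$-invariant admissible almost complex structures. Because no sphere bubbles appear and all relevant trajectories are somewhere injective transversally to the $G$-orbits, a generic $J$ suffices and no multivalued perturbation is needed; hence the theory works over $\mb Z$. I would then define $\delta_J$ by a signed count of index-$1$ trajectories using a coherent orientation of the determinant bundle of the linearized operator, and prove $\delta_J \circ \delta_J = 0$ by compactifying the index-$2$ components and showing that their boundary consists precisely of once-broken pairs.

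For invariance, given $(J,H)$ and $(J',H')$ I would pick a smooth homotopy $(J^s, H^s)$ interpolating between them, run the same compactness/Fredholm/transversality package for the $s$-dependent vortex equation, and define $\mf{cont}$ by counting index-$0$ homotopy solutions. The square $\mf{cont}' \circ \mf{cont} \simeq \mathrm{id}$ would follow from the usual gluing/concatenation argument applied to a two-parameter family of homotopies, producing a chain homotopy. Canonicity of the resulting isomorphism on homology, and hence of the abstract module $VHF_*(M,\mu;\Lambda_{\mb Z})$, follows from the standard naturality argument applied to a triangle of homotopies among three data sets.
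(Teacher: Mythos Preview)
Your outline follows the same architecture as the paper, but two points deserve correction.

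First, your compactness discussion is overcomplicated in the wrong direction. There are no ``vortex bubbles'' or affine vortices to rule out here: when energy concentrates at a point of $\Theta$, rescaling sends the curvature term to zero and the limit is an ordinary $J$-holomorphic sphere in $M$, which is excluded by asphericality. Affine vortices over $\mb C$ arise only in the adiabatic limit $\lambda\to\infty$ discussed in the introduction, not in the compactification of ${\mc M}(\lbr{\mf x}_-\rbr,\lbr{\mf x}_+\rbr)$. Once sphere bubbling is excluded, the only degeneration is breaking, exactly as in finite-dimensional Morse theory; see Section~\ref{section5}.

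Second, and more seriously, you assert that ``all relevant trajectories are somewhere injective transversally to the $G$-orbits'' as if this were automatic. It is not, and this is precisely where the paper does real work. A priori a nontrivial connecting orbit could lie entirely inside a single $G_{\mb C}$-orbit, in which case perturbing $J_t$ within the class of $G$-invariant structures gives no control. The paper handles this by imposing the admissibility condition on $H$ (Definition~\ref{defn65}): every nonzero eigenvector of the Hessian ${\mc H}_{\wt x}$ must have nontrivial projection onto $\ov{TM}$. Combined with the asymptotic eigenvector description of $(\partial_s u,\partial_s\Psi)$ (Theorem~\ref{thm610}), this forces the existence of $G$-regular points (Definition~\ref{defn611}), after which the Floer--Hofer--Salamon argument goes through. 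The existence of admissible lifts of a given nondegenerate $\ov H$ is itself nontrivial and is supplied by the explicit construction in Lemma~\ref{lemma66}. Without this step your transversality claim has a genuine gap.
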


\subsection{Lagrange multipliers}\label{subsection14}

The action functional (\ref{equation13}) seems to be quite complicated, not to mention its gradient flow equation (\ref{equation14}). However, the action functional (\ref{equation13}) is just a Lagrange multiplier of the action functional (\ref{equation12}). Indeed there is a much simpler situation in the case of the Morse theory of a finite-dimensional Lagrange multiplier, which is worth mentioning in this introduction as a model. 

Suppose $X$ is a Riemannian manifold and $\mu: X \to {\mb R}$ is a smooth function, with $0$ a regular value. Then consider a function $f: X \to {\mb R}$ whose restriction to $\ov{X} = \mu^{-1}(0)$ is Morse. Then critical points of $f|_{\ov{X}}$ are the same as critical points of the Lagrange multiplier $F: X \times {\mb R} \to {\mb R}$ defined by $F(x, \eta) = f(x) + \eta \mu(x)$, and the Morse index as a critical point of $f|_{\ov{X}}$ is one less than the index as a critical point of $F$. Then instead of considering the Morse-Smale-Witten complex of $f|_{\ov{X}}$, we can consider that of $F$. In generic situation, these two chain complexes have the same homology, and a concrete correspondence can be constructed through the ``adiabatic limit'' (for details, see \cite{Lagrange_multiplier}).

The vortex Floer homology studied in this paper is an infinite-dimensional and equivariant generalization of this Lagrange multiplier technique. Therefore, the vortex Floer homology is expected to coincide with the ordinary Hamiltonian Floer homology of the symplectic quotient.

\subsection{Advantage in achieving transversality}

It seems that considering the complicated equation (\ref{equation14}) only gives what we have already known about the Hamiltonian Floer theory of the symplectic quotient. But the trade-off is that the most crucial and sophisticated step--transversality of the moduli space--can be achieved more easily. This advantage comes from the fact that in many cases, $M$ has simpler topology than $\ov{M}$. For example, toric manifolds are symplectic quotients of the Euclidean space, where the latter allows no nonconstant holomorphic sphere. So the issue caused by spheres with negative Chern numbers is ruled out. This phenomenon allows us to achieve transversality of the moduli space by using the traditional ``concrete perturbation'' to the equation, similar to the transversality in Hamiltonian Floer theory for semi-positive manifolds. Moreover, when using virtual technique, the Floer homology group of the symplectic quotient can only be defined over ${\mb Q}$ but here it can be defined over ${\mb Z}$ or ${\mb Z}_2$.

\subsection{Ring structure and computation of the Floer homology}

The construction of the quantum multiplication on $VHF_*(M, \mu)$ and the computation of the group are two indispensable parts of the whole theory. We postpone the details for later work but would like to give a brief description here.

The quantum ring structure on $VHF_*(M, \mu)$ is constructed on the chain level, by counting isolated solutions to the symplectic vortex equation on a pair-of-pants. The pair-of-pants is conformal to the 3-punctured Riemann sphere. We choose an area form on $\Sigma$ of cylindrical type near $z_\sigma$ ($\sigma = 0, 1, \infty$), and we perturb the symplectic vortex equation by three admissible Hamiltonians $\left( H^\sigma_t \right)$. All necessary analytical ingredients in defining the chain level multiplication has been provided in the current paper, so there is no essential difficulties in the construction.

On the other hand, it has been conjectured in \cite{Cieliebak_Gaio_Salamon_2000} that the vortex Floer homology group should be isomorphic to the singular homology of the symplectic quotient $\ov{M}$ over integers. This implies the homological Arnold conjecture for $\ov{M}$ over integers, which means a refinement of the Arnold conjecture proved by \cite{Fukaya_Ono}, \cite{Liu_Tian_Floer} over rationals. Naturally one thinks of imitating similar methods used for ordinary Hamiltonian Floer. However, the computation is not so straightforward, espeically if we want to establish the isomorphism over integers.

The first possible approach is to use a time-independent function as the Hamiltonian, and try to prove that when the function is very small in $C^2$-norm, there is no ``quantum contribution'' when defining the boundary operator in the Floer chain complex; this was also Floer's original argument. However, there are certain obstructions to achieve transversality: the Hamiltonian and almost complex structure we used in the current paper have special properties which we don't expect to see for time-independent ones (see the appendix, especially Definition \ref{defn62} and Definition \ref{defn65}); on the other hand, though $M$ is assumed to have no nonconstant holomorphic spheres, solutions to (\ref{equation14}) for time-independent $(J, H)$ can still form multiple covers of negative Chern numbers. These difficulties almost invalidate this approach, if one want to avoid using virtual technique.

The second possible approach is the Piunikhin-Salamon-Schwarz (PSS) construction (\cite{PSS}). Part of the following ideas emerge during discussions with Weiwei Wu. In this approach one would like to define some moduli space (of ``spiked disks'') interpolating between the vortex Floer chain complex and some Morse chain complex whose homology is isomorphic to the singular homology of $\ov{M}$. Since the vortex equation is not conformally invariant, one has to choose the correct metric on the (punctured) disk. Basically, consider the following equation on $\Theta$.
\begin{align}\label{equation15}
\left\{ \begin{array}{ccc} \displaystyle {\partial u \over \partial s} + X_\Phi + J_t \left( {\partial u \over \partial t} + X_\Psi - \rho(s) Y_{H_t}(u) \right) & = & 0,\\[0.3cm]
            \displaystyle {\partial \Psi \over \partial s} - {\partial \Phi \over \partial t} + [\Phi, \Psi] + \mu(u) & = & 0.
\end{array} \right.
\end{align}
Here $\rho$ is a cut-off function supported either near $-\infty$ or $+\infty$. Suppose it is the latter case. Then near $-\infty$ one can prove that a finite energy solution $(u, \Phi, \Psi)$ is asymptotic to a $G$-orbit $\ov{x} \subset \mu^{-1}(0)$. On the other hand, we choose a Morse-Smale pair $(f, g)$ on the symplectic quotient $\ov{M}$. Therefore a spiked disk in this case is a pair $(\ov{x}, [u, \Phi, \Psi])$, where $\ov{x}: (-\infty, 0] \to \ov{M}$ is a negative gradient flow line of $f$ starting from a critical point, and $(u, \Phi, \Psi)$ is a solution to (\ref{equation15}) such that $u$ is asymptotic to $\ov{x}(0)$.

The counting of isolated spiked disks should define a chain map, but the chain complex associated with $(f, g)$ is not its Morse-Smale-Witten complex, but a ``pearl complex'' similar to that of Biran-Cornea \cite{Biran_Cornea}\cite{Biran_Cornea_09}. The pearls, instead of holomorphic disks in Biran-Cornea's case, are solutions on $\Theta$ to the following equation.
\begin{align}\label{equation16}
\left\{ \begin{array}{ccc} \displaystyle {\partial u \over \partial s} + X_\Phi + J_t \left( {\partial u \over \partial t} + X_\Psi  \right) & = & 0,\\[0.3cm]
            \displaystyle {\partial \Psi \over \partial s} - {\partial \Phi \over \partial t} + [\Phi, \Psi] + \mu(u) & = & 0.
\end{array} \right.
\end{align}
Each solution to (\ref{equation16}) represents a class in $H_2^G(M)$. (One intends to use $t$-independent almost complex structures but then will have trouble with multiple covers.)

The last step is to prove that the pearl complex is chain homotopy equivalent to the Morse-Smale-Witten complex of $\ov{M}$. To do this we deform $J_t$ to a $t$-independent almost complex structure $J_0$ and try to use a cobordism argument. However, the existence of multiple covers for solutions to (\ref{equation16}) for $J_0$ instead of $J_t$ will cause nontrivial obstructions. But this is a special (and possibly the simplest) case of the situation considered in \cite{Fukaya_Ono_integer}, where by analyzing the contribution of multiple covers one can prove that the only essential contribution to the boundary operator of the pearl complex are those Morse flow lines.

Another possible way of proving the isomorphism between the vortex Floer homology and the singular homology of $\ov{M}$ is to use the adiabatic limit technique. This ideas was used by Gaio-Salamon \cite{Gaio_Salamon_2005}, which shows that in certain cases, the Hamiltonian-Gromov-Witten invariants with low degree insertions coincide with the Gromov-Witten invariants of the symplectic quotient. More generally, adiabatic limit leads to a quantum deformation of the Kirwan map (see \cite{quantumkirwan}, \cite{Woodward_Kirwan}).

In our case, for any $\lambda>0$, we could consider a variation of (\ref{equation14})
\begin{align}\label{equation17}
\left\{ \begin{array}{ccc}
\displaystyle {\partial u \over \partial s} + J_t \left( {\partial u \over \partial t} + X_{\Psi}(u) - Y_{H_t} \right) & = & 0;\\[0.3cm]
\displaystyle {\partial \Psi \over \partial s} + \lambda^2 \mu(u) & = & 0.
\end{array}\right.
\end{align}
Then we would like to let $\lambda$ approach to $\infty$. By a simple energy estimate, solutions of (\ref{equation17}) will ``sink'' into the symplectic quotient $\ov{M}$ and become Floer trajectors of the induced pair $(\ov{H}, \ov{J})$; at isolated points there will be energy blow up, and certain ``affine vortices'' will appear, which are finite energy solutions to the symplectic vortex equation over the complex plane ${\mb C}$. If we can analyze the contribution of affine vortices (maybe with similar restriction on $M$ as in \cite{Gaio_Salamon_2005}), we could prove that $VHF(M, \mu; H)$ is isomorphic to $HF(\ov{M}; \ov{H})$, with appropriate changes of coefficients. However, since generally $HF(\ov{M}; \ov{H})$ can be defined only over ${\mb Q}$, this method has its limitations.

As a remark, it is interesting to consider the reversed limit $\lambda \to 0$, and it actually motivated the work of the author with S. Schecter \cite{Lagrange_multiplier}, where they considered the nonequivariant, finite dimensional Morse homology for Lagrange multiplier type functions on $M \times {\mb R}$ (see Subsection \ref{subsection14}). In \cite{Lagrange_multiplier} it was shown that, the Morse-Smale trajectories, as $\lambda \to 0$, will converge to certain ``fast-slow'' trajectories, and the counting of such trajectories defines a new chain complex, which also computes the same homology.

\subsection{Compare with gauged Lagrangian Floer theroy}

In Frauenfelder's thesis \cite{Frauenfelder_thesis} (and in a slightly different published version \cite{Frauenfelder_2004}), he used the symplectic vortex equation on the strip ${\mb R}\times [0, 1]$ to define Lagrangian Floer homology for certain types of pairs of Lagrangians $(L_0, L_1)$ in $M$. The Lagrangians are not $G$-invariant in general, but their intersections with $\mu^{-1}(0)$ reduce to a pair of Lagrangians $(\ov{L}_0, \ov{L}_1)$ in the symplectic quotient $\ov{M}$. Then by the calculation in the Morse-Bott case, he managed to prove the Arnold-Givental conjecture (in monotone case in \cite{Frauenfelder_thesis} and general case in \cite{Frauenfelder_2004}). 

Woodward also defined a version of Lagrangian Floer theory in \cite{Woodward_toric}, where he considered $G$-invariant Lagrangians $L \subset \mu^{-1}(0)$. They project down to Lagrangians downstairs (i.e. in $\ov{M}$). His equation is the naive limit of the symplectic vortex equation with Lagrangian boundary condition by setting the area form to be zero. He applied this Floer theory to study displacibility of toric fibres in toric orbifolds, which reproduces and extends some of the results of Fukaya et. al. \cite{FOOO_toric_I} \cite{FOOO_toric_II}.

The problem considered in this paper shares some similarities with the above two Lagrangian Floer theories. First, they both take the advantage that no nonconstant holomorphic sphere exist upstairs (i.e., in the Hamiltonian $G$-manifold). Second, the current paper and \cite{Frauenfelder_thesis}, \cite{Frauenfelder_2004} both use vortex equation, therefore some analysis are similar. We will give more detailed comments in the remaining sections.

\subsection{Organization and conventions of this paper}

In Section \ref{section2} we give the basic setup, including the action functional, the definition of the Floer chain group and the equation of connecting orbits. In Section \ref{section3} we proved that each finite energy solution is asymptotic to critical loops of the action functional. In Section \ref{section4} we study the Fredholm theory of the equation of connecting orbits; we show that the linearized operator is a Fredholm operator whose index is equal to the difference of Conley-Zehnder indices of the two ends of the connecting orbit. In Section \ref{section5} we prove that our moduli space is compact up to breaking, if assuming the nonexistence of nontrivial holomorphic spheres. In Section \ref{section6} we prove the transversality of the moduli spaces by choosing certain type of Hamiltonian and choosing a generic $t$-dependent almost complex structure. In Section \ref{section7} we give the definition of the vortex Floer homology and prove the invariance of the homology group by using continuation method. 

\subsubsection*{Notations and conventions}

We use $\Theta$ to denote the infinite cylinder ${\mb R}\times S^1$, with the axial coordinate $s$ and angular coordinate $t$. We denote $\Theta_+ = [0, +\infty) \times S^1$ and $\Theta_- = (-\infty, 0]\times S^1$.

$G$ is a connected compact Lie group, with Lie algebra ${\mf g}$. Any $G$-bundle over $\Theta$ is trivial, and we just consider {\it the} trivial bundle $P = G \times \Theta$. Any connection $A$ can be written as a ${\mf g}$-valued 1-form on $\Theta$. We always use $\Phi$ to denote its $ds$ component and $\Psi$ to denote its $dt$ component. 

There is a small $\epsilon>0$ such that for the $\epsilon$-ball ${\mf g}_\epsilon^* \subset {\mf g}^*$ centered at the origin of ${\mf g}^*$, $U_\epsilon := \mu^{-1}({\mf g}_\epsilon^*)$ can be identified with $\mu^{-1}(0) \times {\mf g}_\epsilon^*$. We denote by $\pi_\mu: U_\epsilon \to \mu^{-1}(0)$ the projection on the the first component, and by $\ov\pi_\mu: U_\epsilon \to \ov{M}$ the composition with the projection $\mu^{-1}(0) \to \ov{M}$. 

A solution to the gradient flow equation will be called a flow line; once we know the asymptotics of a flow line, we call it a connecting orbit. A trajectory is an orbit of the natural ${\mb R}$-action on the space of connecting orbits.

\subsection{Acknowledgments} The author would like to thank his PhD advisor Gang Tian for introducing him to this field and for his support and encouragement. He
would like to thank Urs Frauenfelder, Kenji Fukaya, Yong-Geun Oh, Chris Woodward, and Weiwei Wu for many helpful discussions and encouragement.

The author want to thank Chris Woodward for pointing out a mistake in the appendix of the first arXiv version of this paper.

During the preparation of the current version, the author is visiting Institute for Advanced Study. He would like to thank Professor Helmut Hofer for hospitality.

\section{Basic setup and outline of the construction}\label{section2}

Let $(M, \omega)$ be a connected symplectic manifold. We assume that it is aspherical, i.e., for any smooth map $f: S^2 \to M$, $\displaystyle \int_{S^2} f^*\omega = 0$. This implies that for any $\omega$-compatible almost complex structure $J$ on $M$, there is no nonconstant $J$-holomorphic spheres.

Let $G$ be a {\it connected} compact Lie group which acts on $M$ smoothly. The infinitesimal action ${\mf g} \ni \xi \mapsto X_\xi \in \Gamma(TM)$ is an anti-homomorphism of Lie algebras. We assume the action is Hamiltonian, which means that there exists a smooth function $\mu: M\to {\mf g}^*$ satisfying
\begin{align*}
\mu(gx) = \mu(x) \circ {\rm Ad}_g^{-1} ,\ \forall \xi\in {\mf g},\ d \left( \mu \cdot \xi \right) = \iota_{X_\xi} \omega.
\end{align*}

Suppose we have a $G$-invariant, time-dependent Hamiltonian
\begin{align*}
H  = (H_t) \in C^\infty_c \left( S^1 \times M \right)
\end{align*}
with compact support. For each $t \in S^1$, the associated Hamiltonian vector field $Y_{H_t} \in \Gamma(TM)$ is determined by
\begin{align*}
\omega( Y_{H_t}, \cdot ) = d H_t\in \Omega^1(M).
\end{align*}
The flow of $Y_{H_t}$ is a one-parameter family of diffeomorphisms
\begin{align*}
\phi^H_t: M \to M,\ {d \phi^H_t(x)\over dt} = Y_{H_t} \left( \phi^H_t(x) \right)
\end{align*}
which we call a Hamiltonian path. 

We need to put several assumptions to the given structures, which are still general enough to include the most important cases (e.g., toric manifolds as symplectic quotients of Euclidean spaces). 
\begin{hyp}\label{hyp21}
We assume that $\mu: M \to {\mf g}^*$ is proper, $0\in {\mf g}^*$ is a regular value of $\mu$ and the $G$-action restricted to $\mu^{-1}(0)$ is free. Moreover, the symplectic quotient $\ov{M}:= \mu^{-1}(0)/G$ has positive dimension.
\end{hyp}
With this hypothesis, $\mu^{-1}(0)$ is a smooth submanifold of $M$ and the symplectic quotient $\ov{M}$ is a symplectic manifold, which has a canonically induced symplectic form $\ov{\omega}$. Also, the Hamiltonian $(H_t)$ descends to a time-dependent Hamiltonian
\begin{align*}
\ov{H} = (\ov{H}_t) \in C^\infty(S^1 \times \ov{M})
\end{align*}
by the $G$-invariance of $(H_t)$. It is easy to check that $Y_{H_t}$ is tangent to $\mu^{-1}(0)$ and the projection $\mu^{-1}(0) \to \ov{M}$ pushes $Y_{H_t}$ forward to $Y_{\ov{H}_t}$. Then we assume
\begin{hyp}\label{hyp22}
The induced Hamiltonian $\ov{H}_t: \ov{M} \to {\mb R}$ is nondegenerate in the usual sense.
\end{hyp}

Finally, in the case when $M$ is noncompact, we need the following convexity condition (see \cite[Definition 2.6]{Cieliebak_Gaio_Mundet_Salamon_2002}).
\begin{hyp}\label{hyp23} There exists a pair $({\mf f}, {\mf J})$, where ${\mf f}: M \to [0, +\infty)$ is a $G$-invariant and proper function, and ${\mf J}$ is a $G$-invariant, $\omega$-compatible almost complex structure on $M$, such that there exists a constant ${\mf c}_0 >0$ with
\begin{align*}
{\mf f}(x) \geq {\mf c}_0 \Longrightarrow \langle \nabla_\xi \nabla {\mf f}(x), \xi \rangle + \langle \nabla_{{\mf J}\xi} \nabla {\mf f}(x), {\mf J} \xi \rangle \geq 0,\ d{\mf f}(x) \cdot {\mf J} X_{\mu(x)} \geq 0,\ \forall \xi \in T_x M.	
\end{align*}
\end{hyp}
In this paper, to achieve transversality, we need to perturb ${\mf J}$ near $\mu^{-1}(0)$ (see the appendix). The above condition is only about the behavior ``near infinity'', so such perturbations don't break the hypothesis.

\subsection{Equivariant topology}

\subsubsection{Equivariant spherical classes}

Recall that the Borel construction for the action of $G$ on $M$ is $M_G:= EG\times_G M$, where $EG \to BG$ is a universal $G$-bundle over the classifying space $BG$. The equivariant (co)homology of $M$ is the ordinary (co)homology of $M_G$, denoted by $H_*^G(M)$ for homology and $H^*_G(M)$ for cohomology. 

On the other hand, for any smooth manifold $N$, we denote by $S_2(N)$ to be the image of the Hurwitz map $\pi_2(N) \to H_2(N; {\mb Z})$, and classes in $S_2(N)$ are called spherical classes. We define the equivariant spherical homology of $M$ to be $S_2^G(M):= S_2(M_G)$. Geometrically, a generator of $S_2^G(M)$ can be represented by a smooth principal $G$-bundle $P \to S^2$ and a smooth section $\phi: S^2\to P\times_G M$. So there is a natural map $S_2(M) \to S_2^G(M)$. We denote the class of the pair $(P, \phi)$ to be $[P, \phi]\in S_2^G(M)$. 

\subsubsection{Equivariant symplectic form and equivariant Chern numbers}

The equivariant cohomology of $M$ can be computed using the equivariant de Rham complex $\left( \Omega^*(M)^G, d^G \right)$. In $\Omega^2(M)^G$, there is a distinguished closed form $\omega^G = \omega - \mu$, which represents an equivariant cohomology class $\left[ \omega^G\right] \in H^2_G(M; {\mb R})$. We are interested in the pairing $\left\langle \left[ \omega^G \right], \left[P, u \right] \right\rangle \in {\mb R}$. It can be computed in the following way. Choose any smooth connection $A$ on $P$. Then there exists an associated closed 2-form $\omega_A $ on $P\times_G M$, called the {\bf minimal coupling form}. If we trivialize $P$ locally over a subset $U\subset S^2$, such that $A= d + \alpha$, $\alpha \in \Omega^1(U, {\mf g})$ with respect to this trivialization, then $\omega_A$ can be written as
\begin{align*}
\omega_A= \pi^* \omega - d( \mu\cdot \alpha)\in \Omega^2( U \times M).
\end{align*}
Then we have
\begin{align*}
\left\langle [\omega^G], [P, u] \right\rangle = \int_{S^2} u^* \omega_A
\end{align*}
which is independent of the choice of $A$. On the other hand, any $G$-invariant, $\omega$-compatible almost complex structure $J$ on $M$ makes $TM$ an equivariant complex vector bundle. So we have the equivariant first Chern class
\begin{align*}
c_1^G:= c_1^G(TM) \in H^2_G(M; {\mb Z}).
\end{align*}
This is independent of the choice of $J$. 

\subsubsection{Kirwan maps}

The cohomological Kirwan map is a map
\begin{align*}
\kappa: H^*_G ( M; {\mb R}) \to H^*( \ov{M}; {\mb R}).
\end{align*}
Here we take ${\mb R}$-coefficients for simplicity. It is easy to check that 
\begin{align*}
\kappa( [\omega^G]) = [\ov{\omega}] \in H_2(\ov{M}; {\mb R}),\ \kappa(c_1^G) = c_1(T\ov{M}) \in H_2( \ov{M}; {\mb R}).
\end{align*}
We define
\begin{align*}
\begin{array}{c}
N_2^G(M) = {\rm ker} [\omega^G] \cap {\rm ker} c_1^G \subset S_2^G(M),\\[0.2cm]
N_2(\ov{M}) = {\rm ker} [\ov\omega] \cap {\rm ker} c_1(T\ov{M}) \subset S_2(\ov{M}),\\[0.2cm]
\Gamma:= S_2^G(M)/ N_2^G(M).
\end{array}
\end{align*}

\subsection{The spaces of loops and equivalence classes}

Let $\wt{\mc L}$ be the space of smooth contractible loops in $M \times {\mf g}$. A general element of $\wt{\mc L}$ is denoted by 
\begin{align*}
\wt{x}:= (x, f): S^1 \to M \times {\mf g}.
\end{align*}

Let $\wt{\mf L}$ be a covering space of $\wt{\mc L}$, consisting of triples ${\mf x}:= \left( x, f, [w]\right)$ where $\wt{x}= (x, f) \in \wt{\mc L}$ and $[w]$ is an equivalence class of smooth extensions of $x$ to the disk ${\mb D}$. The equivalence relation is described as follows. For each pair $w_1, w_2: {\mb D}\to M$ both bounding $x:S^1 \to M$, we have the continuous map
\begin{align*}
w_{12}:= w_1 \# (-w_2): S^2 \to M
\end{align*}
by gluing them along the boundary $x$. It defines a class $[w_{12}]\in S_2^G(M)$, via the natural map $S_2(M) \to S_2^G(M)$. We define
\begin{align*}
w_1 \sim w_2 \Longleftrightarrow [w_{12}]=0 \in S_2^G(M).
\end{align*}

Denote by $LG:= C^\infty( S^1, G)$ the smooth free loop group of $G$. For any point $x_0 \in M$, there is a homomorphism
\begin{align*}
l(x_0): \pi_1(G) \to \pi_1(M, x_0)
\end{align*}
which is induced by mapping a loop $t\mapsto \gamma(t) \in G$ to a loop $t \mapsto \gamma(t) x_0 \in M$. For different $x_1 \in M$ and a homotopy class of paths connecting $x_0$ and $x_1$, we have an isomorphism $\pi_1(M, x_0) \simeq \pi_1(M, x_1)$ which intertwines $l(x_0)$ and $l(x_1)$. So ${\rm ker} l(x_0) \subset \pi_1(G)$ is independent of $x_0$. We define
\begin{align*}
L_M G:= \left\{ \gamma\in C^\infty (S^1, G) \ |\ [\gamma] \in {\rm ker} l(x_0) \subset \pi_1(G) \right\}.
\end{align*}  
Let $L_0 G \subset L_M G$ be the subgroup of contractible loops in $G$.

It is easy to see that $L_M G$ acts on $\wt{\mc L}$ (on the right) by
\begin{align*}
\begin{array}{ccc}
 \wt{\mc L} \times L_M G & \to & \wt{\mc L}\\
 \displaystyle \left((x, f), h\right) & \mapsto & h^*(x, f)(t) =  \left( h(t)^{-1} x(t), {\rm Ad}_{h(t)}^{-1} ( f(t)) + h(t)^{-1} \partial_t h(t) \right).
\end{array}
\end{align*}
Here the action on the second component can be viewed as the gauge transformation on the space of $G$-connections on the trivial bundle $S^1 \times G$. (For short, we denote by $d \log h$ the ${\mf g}$-valued 1-form $h^{-1} d h$, which is the pull-back by $h$ of the left-invariant Maurer-Cartan form on $G$.)

However, $L_M G$ doesn't act on $\wt{\mf L}$ naturally; only the subgroup $L_0 G$ does: for a contractible loop $h: S^1 \to G$, extend $h$ arbitrarily to $h: {\mb D} \to G$. The homotopy class of extensions is unique because $\pi_2(G)= 0$ for any connected compact Lie group (see \cite{Cartan_1936}). Then the class of $( h^{-1} x, h^* f, [ h^{-1} w] )$ in $\wt{\mf L}$ is independent of the extension. It is easy to see that the covering map $\wt{\mf L}\to \wt{\mc L}$ is equivariant with respect to the inclusion $L_0 G \to L_M G$. Hence it induces a covering
\begin{align*}
\wt{\mf L} / L_0 G \to \wt{\mc L} / L_M G.
\end{align*}

\subsection{The action functional}

\subsubsection{The deck transformations}

We now define an action of $S_2^G(M)$ on $\wt{\mf L} / L_0 G$. Take a class $A \in S_2^G(M)$ represented by a pair $(P, u)$, where $P \to S^2$ is a principal $G$-bundle and $u: S^2 \to P\times_G M$ is a section of the associated bundle. 

Consider $U \simeq {\mb C}^* \cup \{\infty\}  \subset S^2$ as the complement of the south pole $0 \in S^2$. Take an arbitrary trivialization $\phi: P|_{U}\to U\times G$, which induces a trivialization
\begin{align*}
\phi: P\times_G M|_U \to U \times M.
\end{align*}
Then $\phi \circ u : U \to M$ and there exist a loop $h: S^1 \to G$ and $x \in M$ such that
\begin{align}\label{equation21}
\lim_{r\to 0}  \phi \circ u (re^{2\pi {\bm i} t})  = h(t) x.
\end{align}
Note that the homotopy class of $h$ is independent of the choices of $\phi$ and $x$. Then for any element $(x, f, [w])\in \wt{\mf L}$, find a smooth path $\gamma: [0,1] \to M$ such that $\gamma(0) = w(0)$ and $\gamma(1) = x$. Then define $\gamma_h: S^1 \times [0,1] \to M$ by $\gamma(t, \nu) = h(t) \gamma(\nu)$. On the other hand, view ${\mb D}\setminus \{0\} \simeq (-\infty, 0] \times S^1$. Consider the map
\begin{align*}
w_h (r, t) = h(t) w(r, t)
\end{align*}
and the ``connected sum'':
\begin{align*}
u\wt{\#} w:= \left( \phi\circ u \right) \# \gamma_h \# w_h: {\mb D} \to M.
\end{align*}
It extends the loop $x_h(t) = h(t) x(t)$. Denote $f_h:= {\rm Ad}_h f - \partial_t h \cdot h^{-1}$. We define
\begin{align}\label{equation22}
A \# [x, f, [w ]] = \left[  x_h,   f_h, [ u\wt{\#} w], \right] \in \wt{\mf L}/L_0 G.
\end{align}

On the other hand, there exists a morphism
\begin{align*}
S_2^G(M)  \to {\rm ker} l(x_0) \subset \pi_1(G)
\end{align*}
which sends the homotopy class of $[P, u]$ to the homotopy class of $h: S^1 \to G$ where $h$ is the one in (\ref{equation21}). Then it is easy to see the following.
\begin{lemma}
The action (\ref{equation22}) is well-defined (i.e., independent of the representatives and choices) and every deck transformation of the covering $\wt{\mf L}/ L_0 G  \to \wt{\mc L}/ L_M G$ is given by such an action.
\end{lemma}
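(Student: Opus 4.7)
The plan is to separate the two claims of the lemma and treat them in turn: first verify that the recipe (\ref{equation22}) is independent of every choice made in the construction, and then identify the deck group of the covering $\wt{\mf L}/L_0 G \to \wt{\mc L}/L_M G$ explicitly and match it with the image of the $S_2^G(M)$-action.

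For well-definedness, I would audit the five choices involved: (a) the trivialization $\phi$ of $P|_U$, (b) the limit point $x \in M$ and loop $h \in LG$ extracted from (\ref{equation21}), (c) the path $\gamma$ from $w(0)$ to $x$, (d) the pair $(P,u)$ representing $A$, and (e) the representative $(x,f,[w])$ inside its $L_0 G$-orbit. Each of these is a short check. Since $U = S^2 \setminus\{0\}$ is contractible, two trivializations of $P|_U$ differ by a gauge transformation $g: U \to G$ that is null-homotopic, so the boundary loop $h$ is only modified by an element of $L_0 G$, and $u\wt\# w$ is changed by the same $L_0 G$-element; hence the class in $\wt{\mf L}/L_0 G$ is unchanged. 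Replacing $x$ by $gx$ absorbs a constant loop into $\gamma$, while two different choices of $\gamma$ differ by a loop at $w(0)$; capping this loop produces a class in $S_2(M)$ whose image in $S_2^G(M)$ is exactly the ambiguity in $[u\wt\# w]$, so the class in $\wt{\mf L}/L_0 G$ is still well-defined. For step (d), I would connect two representatives $(P_0,u_0), (P_1,u_1)$ of $A$ by a bundle $\wt P$ with section $\wt u$ over $S^2 \times [0,1]$, produce a continuous family $h_\tau, u_\tau \wt\# w$, and argue that the resulting endpoints represent equivalent elements of $\wt{\mf L}$ because their difference class lies in the trivial subgroup of $S_2^G(M)$ by construction. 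Equivariance under $L_0 G$ is manifest: any $h_0 \in L_0 G$ admits a contractible extension to $\mb D$, and left multiplication by $h_0$ commutes with the connected-sum construction.

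For the surjectivity onto deck transformations, I would first describe the deck group as an extension: its ``vertical'' part is the image of $S_2(M) \to S_2^G(M)$ (coming from $\wt{\mf L}\to \wt{\mc L}$), and its ``horizontal'' part is $L_M G / L_0 G = \pi_0(L_M G) = {\rm ker}\, l(x_0) \subset \pi_1(G)$ (coming from the $L_M G$-quotient). A deck transformation is specified by a class $[h]\in {\rm ker}\, l(x_0)$ shifting $(x,f)$ by gauge, together with a sphere class adjusting $[w]$. Given such data I would realize it by a pair $(P,u)$: clutch two trivial $G$-bundles along $h$ to produce $P \to S^2$ inducing the prescribed element of ${\rm ker}\, l(x_0)$ under the connecting homomorphism $\pi_2(BG) \to \pi_1(G)$, and extend $u$ by choosing a section of $P\times_G M$ that realizes the prescribed sphere class at the fiber level. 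Then a direct inspection of (\ref{equation22}) shows this $A=[P,u]$ acts by exactly the prescribed deck transformation, giving surjectivity.

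The main obstacle I anticipate is bookkeeping in step (d) of well-definedness and in the clutching construction for surjectivity, namely keeping track of how the connected-sum operation $u\wt\# w$ interacts with the equivalence relation on extensions, which is modulo $S_2^G(M)$ rather than $S_2(M)$. Getting the homotopy lifting right (so that the family of sums $u_\tau\wt\# w$ assembles to a single equivariant null-homologous class) and matching the vertical/horizontal decomposition of the deck group with the natural map $S_2^G(M) \to {\rm ker}\, l(x_0)$ from the fibration $M \to M_G \to BG$ is the only nontrivial piece; the rest is routine verification.
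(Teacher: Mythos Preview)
The paper offers no proof of this lemma beyond the phrase ``Then it is easy to see the following.'' Your outline is therefore considerably more detailed than the paper's own treatment, and the overall framework---auditing the five choices (a)--(e) for well-definedness, then realizing the deck group via the morphism $S_2^G(M)\to\ker l(x_0)\subset\pi_1(G)$ together with the residual sphere classes---is the right scaffolding and matches what a complete proof would require.

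There is, however, one genuine gap in your step (c). You write that two paths $\gamma,\gamma'$ differ by a loop at $w(0)$, and that ``capping this loop produces a class in $S_2(M)$ whose image in $S_2^G(M)$ is exactly the ambiguity in $[u\wt\# w]$, so the class in $\wt{\mf L}/L_0 G$ is still well-defined.'' Two problems. First, the loop $\gamma\#\bar\gamma'$ need not be contractible in $M$, so ``capping'' it is not available in general. Second---and this is the real issue---even granting that the two caps $u\wt\#_\gamma w$ and $u\wt\#_{\gamma'}w$ differ by a sphere in $M$ (they do, since both cap $x_h$), knowing that this difference lies in the image of $S_2(M)\to S_2^G(M)$ is a tautology, not a vanishing statement. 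The equivalence relation on caps is precisely ``difference vanishes in $S_2^G(M)$,'' so your conclusion is a non sequitur: you must actually show that the difference sphere
\[
S \;=\; (u\wt\#_\gamma w)\#\bigl(-(u\wt\#_{\gamma'}w)\bigr)
\]
is zero in $S_2^G(M)$. After cancelling $w_h\#(-w_h)$ this sphere has the form $D\cup C\cup(-D)$ with $D=\phi\circ u$ and middle cylinder $C(t,\nu)=h(t)\delta(\nu)$ for $\delta=\gamma\#\bar\gamma'$, and one has to produce an explicit bounding $3$-chain (using that each loop $t\mapsto h(t)p$ is contractible because $[h]\in\ker l(x_0)$) or otherwise argue that $[S]=0$. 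You correctly flag this circle of ideas as the main obstacle in your last paragraph, but the sketch of (c) as written does not resolve it.
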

Now by this lemma, we denote ${\mf L}:= \left( \wt{\mf L}/ L_0 G \right)/ N_2^G$, which is again a covering of ${\mc L}:= \wt{\mc L}/ L_M G$, with the group of deck transformations isomorphic to $\Gamma$. We will use ${\mf x}$ to denote an element in $\wt{\mf L}$, $[{\mf x}]$ for $\wt{\mf L}/L_0 G$ and $\llbracket{\mf x}\rrbracket$ for ${\mf L}$.

\subsubsection{The action functional}

We define a 1-form $\wt{\mc B}_H$ on $\wt{\mc L}$ by
\begin{align*}
 T_{(x, f)} \wt{\mc L}\ni (\xi, h)\mapsto       \int_{S^1}\left( \omega\left( \dot{x}(t)+ X_f-  Y_{H_t},\xi(t)\right) + \langle \mu(x(t)), h(t)\rangle \right) dt \in {\mb R}.
\end{align*}
Its pull-back to $\wt{\mf L}$ is exact and is the differential of the following action functional:
\begin{align*}
\wt{\mc A}_H({\mf x}) = \wt{\mc A}_H(x, f, [w]):= -\int_B w^* \omega+ \int_{S^1}  \left( \mu(x(t))\cdot f(t) - H_t(x(t))\right) dt.
\end{align*}

The zero set of the one-form $\wt{\mc B}_H$ consists of pairs $(x,f)$ such that 
\begin{align*}
\mu(x(t))\equiv 0, \ \dot{x}(t) + X_{f(t)} (x(t))- Y_{H_t}(x(t))=0.
\end{align*} 
The critical point set of $\wt{\mc A}_H$ is the preimage of ${\rm Zero}\wt{\mc B}_H$ under the covering $\wt{\mf L} \to \wt{\mc L}$.

\begin{lemma}
$\wt{\mc A}_H$ is $L_0G$-invariant and $\wt{\mc B}_H$ is $L_M G$-invariant.
\end{lemma}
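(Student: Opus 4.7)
My plan is to verify both invariances by direct pullback computation, using three structural facts: the $G$-invariance of $\omega$ and $H_t$; the equivariance $\mu(gx) = \mu(x)\circ \mathrm{Ad}_g^{-1}$ of the moment map; and the transformation rule $X_\xi(gy) = g_*X_{\mathrm{Ad}_g^{-1}\xi}(y)$ of the infinitesimal action (an immediate consequence of the sign convention fixed in the paper).

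I would start with the $L_M G$-invariance of $\wt{\mc B}_H$, which is essentially algebraic. The differential of the right action at $(x,f)$ sends $(\xi,\zeta)\in T_{(x,f)}\wt{\mc L}$ to $(h^{-1}_*\xi,\ \mathrm{Ad}_h^{-1}\zeta)$. For the moment-map piece, equivariance of $\mu$ immediately gives $\mu(h^{-1}x)\cdot \mathrm{Ad}_h^{-1}\zeta = \mu(x)\cdot\zeta$. For the $\omega$-piece I would establish the pointwise identity
\begin{align*}
\tfrac{d}{dt}(h^{-1}x) + X_{\mathrm{Ad}_h^{-1}f + h^{-1}\partial_t h}(h^{-1}x) - Y_{H_t}(h^{-1}x) = h^{-1}_*\bigl(\dot x + X_f - Y_{H_t}\bigr),
\end{align*}
which boils down to the two elementary calculations $\tfrac{d}{dt}(h^{-1}x) = -X_{h^{-1}\partial_t h}(h^{-1}x) + h^{-1}_*\dot x$ (product rule for the $G$-action) and $X_{\mathrm{Ad}_h^{-1}f}(h^{-1}x) = h^{-1}_*X_f(x)$ (transformation rule). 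Once the identity holds, $G$-invariance of $\omega$ finishes the $\omega$-piece and yields $R_h^*\wt{\mc B}_H = \wt{\mc B}_H$.

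For the $L_0 G$-invariance of $\wt{\mc A}_H$, the Hamiltonian term is invariant by $G$-invariance of $H_t$. The same $\mathrm{Ad}$-bookkeeping applied to the moment-map term gives
\begin{align*}
\int_{S^1}\mu(h^{-1}x)\cdot\bigl(\mathrm{Ad}_h^{-1}f + h^{-1}\partial_t h\bigr)\,dt = \int_{S^1}\mu(x)\cdot f\,dt + \int_{S^1}\mu(x)\cdot(\partial_t h\cdot h^{-1})\,dt,
\end{align*}
so invariance reduces to the disk identity
\begin{align*}
\int_{\mb D}(\wt h^{-1}w)^*\omega - \int_{\mb D} w^*\omega = \int_{S^1}\mu(x)\cdot(\partial_t h\cdot h^{-1})\,dt,
\end{align*}
where $\wt h\colon {\mb D}\to G$ is any extension of $h$ (the homotopy class of which is unique by $\pi_2(G)=0$). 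To prove this I would exploit that $\wt h$ is null-homotopic (${\mb D}$ is contractible), pick $h_s\colon {\mb D}\to G$ with $h_0\equiv e$ and $h_1=\wt h$, and apply Stokes to the closed form $W^*\omega$ on ${\mb D}\times[0,1]$, where $W(z,s) = h_s(z)^{-1}w(z)$. Since $d\omega=0$ the bulk integral vanishes, so the difference of the two caps equals $-\int_{S^1\times[0,1]} W^*\omega$; expanding $W^*\omega(\partial_s,\partial_t)$ via $\iota_{X_\xi}\omega = d(\mu\cdot\xi)$ and integrating by parts in $s$ should produce exactly the desired boundary term at $s=1$.

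The main obstacle is this final Stokes computation: carefully handling the ``curvature-type'' contributions $\omega(X_{-h_s^{-1}\partial_s h_s}, X_{-h_s^{-1}\partial_t h_s})$ evaluated at $h_s^{-1}x$, and checking via equivariance of $\mu$ that they assemble into a total $s$-derivative whose value at $s=1$ is exactly $\mu(x)\cdot(\partial_t h\cdot h^{-1})$. Once the disk identity is in place, it cancels the extra $\mu\cdot(\partial_t h\cdot h^{-1})$ term in the moment-map transformation, completing the proof that $\wt{\mc A}_H(h\cdot{\mf x}) = \wt{\mc A}_H({\mf x})$ for every $h\in L_0 G$.
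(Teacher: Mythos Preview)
Your argument for $\wt{\mc B}_H$ is correct and is exactly the direct verification the paper has in mind. Your reduction of the $\wt{\mc A}_H$-invariance to the ``disk identity''
\[
\int_{\mb D}(\wt h^{-1}w)^*\omega - \int_{\mb D} w^*\omega \;=\; \int_{S^1}\mu(x)\cdot(\partial_t h\cdot h^{-1})\,dt
\]
is also the same as the paper's. The only substantive difference is in how you propose to prove this identity. You introduce a homotopy $h_s$ and apply Stokes on ${\mb D}\times[0,1]$; the paper instead works directly on ${\mb D}$, observing the pointwise identity
\[
(h^{-1}w)^*\omega \;=\; w^*\omega \;+\; d\bigl(\mu(h^{-1}w)\cdot d\log h\bigr),
\]
so that a single application of Stokes on ${\mb D}$ yields the boundary term $\int_{S^1}\mu(h^{-1}x)\cdot h^{-1}\partial_t h\,dt$, which by equivariance of $\mu$ is exactly your right-hand side. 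This exactness is just the computation $\partial_x(h^{-1}w)=h^{-1}_*\partial_x w - X_{h^{-1}\partial_x h}(h^{-1}w)$ plugged into $\omega$, with $\iota_{X_\xi}\omega=d(\mu\cdot\xi)$; the Maurer--Cartan relation $d(h^{-1}dh)=-\tfrac12[h^{-1}dh,h^{-1}dh]$ absorbs the ``curvature-type'' cross term you were worried about. Your homotopy route would ultimately reproduce this same cancellation, but the paper's approach avoids the extra parameter entirely and makes the role of $\pi_2(G)=0$ clearer (it is only needed so that the class $[h^{-1}w]$ is well-defined, not for the identity itself).
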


\begin{proof}
For any $h: S^1 \to G$, extend it to a map $h: {\mb D}\to G$. Then we have
\begin{align*}
(h^{-1} w)^* \omega = \omega \left( \partial_x (h^{-1} w), \partial_y (h^{-1}w) \right) dx dy = w^* \omega + d \left( \mu(h^{-1} w) \cdot d \log h \right).
\end{align*}
Also, we see that
\begin{align*}
\mu(h^{-1}(t) x(t)) \cdot \left({\rm Ad}_{h(t)}^{-1} f(t) + h(t)^{-1} h'(t) \right) = \mu(x(t)) \cdot f(t) + \left. \left( \mu(h^{-1}w) \cdot d\log h \right)\right|_{S^1}.
\end{align*}
The invariance of $\wt{\mc A}_H$ follows from Stokes' theorem and the $G$-invariance of $(H_t)$ and the invariance of $\wt{\mc B}_H$ follows in a similar way.
\end{proof}

Therefore $\wt{\mc A}_H$ descends to $\wt{\mf L}/ L_0 G$ and it satisfies the following.
\begin{lemma}
For any $[{\mf x}] = [ x, f, [w ]] \in \wt{\mf L} / L_0 G$ and any $A \in S_2^G(M)$, we have
\begin{align*}
\wt{\mc A}_H \left( A\# [\mf x] \right) = \wt{\mc A}_H \left( [\mf x] \right) -  \left\langle \left[ \omega^G \right], A \right\rangle.
\end{align*}
\end{lemma}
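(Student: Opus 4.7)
The plan is to expand $\wt{\mc A}_H(A\#[{\mf x}])-\wt{\mc A}_H([{\mf x}])$ into its three natural constituents -- the $H_t$ term, the $\mu\cdot f$ term, and the symplectic area term -- and show that the first vanishes by invariance, the second cancels against boundary corrections arising from the third, and what survives is precisely $-\langle[\omega^G],A\rangle$ as expressed by the minimal coupling formula. Since $H_t$ is $G$-invariant and $x_h(t)=h(t)x(t)$, the identity $H_t(x_h(t))=H_t(x(t))$ kills the Hamiltonian contribution. Using coadjoint equivariance $\mu(hx)=\mu(x)\circ\mathrm{Ad}_h^{-1}$ and $\mathrm{Ad}_{h^{-1}}(h'h^{-1})=h^{-1}h'$, a short algebraic calculation gives
\begin{align*}
\int_{S^1}\bigl(\mu(x_h)\cdot f_h - \mu(x)\cdot f\bigr)\,dt = -\int_{S^1}\mu(x(t))\cdot h(t)^{-1}h'(t)\,dt.
\end{align*}

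For the symplectic area term I would use the decomposition $u\wt\#w=(\phi\circ u)\#\gamma_h\#w_h$. Applying $G$-invariance of $\omega$ to the chain-rule identity $\partial_t(h\cdot v)=h\cdot(\partial_t v+X_{h^{-1}h'}(v))$ and then the moment map equation $\omega(\cdot,X_\xi)=-d(\mu\cdot\xi)$ yields
\begin{align*}
w_h^*\omega = w^*\omega - d\bigl(\mu(w)\cdot h^{-1}h'\,dt\bigr), \qquad \gamma_h^*\omega = -d\bigl(\mu(\gamma)\cdot h^{-1}h'\,dt\bigr).
\end{align*}
Stokes on these two pieces produces boundary integrals at the three gluing loops: the outer loop $x_h$, the junction loop $h(t)w(0)$ shared by $w_h$ and $\gamma_h$, and the junction loop $h(t)x$ shared by $\gamma_h$ and $\phi\circ u$. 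The matched contributions at $h(t)w(0)$ from $w_h$ and $\gamma_h$ telescope and cancel, the outer $x_h$ contribution exactly absorbs the $\mu\cdot f$ correction from the previous paragraph, and there remain only $-\int_U(\phi\circ u)^*\omega$ together with a single residual boundary integral over the loop $h(t)x$.

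To identify this residue with $-\langle[\omega^G],A\rangle$ I would choose a smooth connection on $P\to S^2$ whose representative $\alpha$ in the $\phi$-trivialization agrees with $d\log h$ on a collar of the boundary circle $\partial U$; such a connection exists precisely because the transition of $\phi$ across $0\in S^2$ is $h$. Then $\omega_A=\pi^*\omega-d(\mu\cdot\alpha)$ together with Stokes gives
\begin{align*}
\langle[\omega^G],A\rangle = \int_{S^2}u^*\omega_A = \int_U(\phi\circ u)^*\omega - \int_{\partial U}\mu(\phi\circ u)\cdot\alpha,
\end{align*}
and the boundary term here matches exactly the residual integral from the previous paragraph, producing the claimed equality. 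The main technical obstacle will be the careful bookkeeping of orientations in the three-piece decomposition of $u\wt\#w$ -- determining the sign with which each component enters the composite integral and the induced orientation of $\partial U$ -- but once those are pinned down the cancellations are forced by the moment map equation and the minimal coupling identity.
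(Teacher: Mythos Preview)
Your proposal is correct and follows essentially the same route as the paper's own proof: both decompose $\int_{\mb D}(u\wt\#w)^*\omega$ into the three pieces $(\phi\circ u)$, $\gamma_h$, $w_h$, use the identity $w_h^*\omega = w^*\omega - d(\mu(w)\cdot h^{-1}h'\,dt)$ coming from $G$-invariance of $\omega$ and the moment map equation, apply Stokes, and telescope the boundary terms against the $\mu\cdot f$ correction. The only cosmetic difference is that the paper computes $\int_{S^2\setminus\{0\}}(\phi\circ u)^*\omega$ directly ``in the same way'' to obtain $\langle[\omega^G],A\rangle - \int_{S^1}\mu(x)\,d\log h$, whereas you phrase the same computation via an explicit choice of connection and the minimal coupling form; these are the same calculation viewed from two sides.
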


\begin{proof}
Use the same notation as we define the action $A\# [{\mf x}]$, we see that
\begin{multline*}
\int_{{\mb D}\setminus \{0\}} w_h^* \omega = \int_{\mb D} w^* \omega + \int_{-\infty}^0 ds \int_{S^1} dt \omega \left( h_* \partial_s w, h_* X_{\partial_t \log h} (w) \right)\\
= \int_{\mb D} w^* \omega - \int_{-\infty}^0 \int_0^1  d\left( \mu(w) \cdot d \log h \right) = \int_{\mb D} w^* \omega - \int_{S^1} ( \mu(x(t)) - \mu(w(0)) ) d \log h.
\end{multline*}
Also
\begin{align*}
\int_{S^1 \times [0,1]} \gamma_h^* \omega = - \int_{S^1} \left( \mu( w(0)) - \mu(x) \right) d\log h.
\end{align*}
In the same way we can calculate
\begin{align*}
\int_{S^2\setminus \{0\}} \left( \phi\circ u \right)^* \omega = \left\langle \left[ \omega^G \right], \left[ P, u \right] \right\rangle - \int_{S^1} \mu(x) d\log h.
\end{align*}
So we have
\begin{multline*}
\wt{\mc A}_H \left( A\# [\mf x] \right) = - \int_{{\mb D}} \left( u \wt{\#} w \right)^* \omega + \int_{S^1} \left\{ \left\langle \mu( h(t) x(t)), {\rm Ad}_h f - h' h^{-1} \right\rangle - H_t( h(t) x(t))  \right\} dt \\
= - \left\langle \left[ \omega^G \right], A \right\rangle   - \int_{\mb D} w^* \omega + \int_{S^1} \langle \mu(x(t)), f(t) \rangle - H_t(x(t)) dt =- \left\langle\left[ \omega^G \right], A \right\rangle  + \wt{\mc A}_H \left( [\mf x]\right).
\end{multline*}
\end{proof}
This lemma implies that $\wt{\mc A}_H$ descends to a well-defined function ${\mc A}_H: {\mf L} \to {\mb R}$. Our Floer theory will be formally a Morse theory of the pair $\left( {\mf L}, {\mc A}_H \right)$.

\subsubsection{Lagrange multiplier}

Before we move on to the chain complex, we see that ${\mc A}_H$ is a Lagrange multiplier function associated to the action functional ${\mc A}_{\ov{H}}$ of the induced Hamiltonian $\ov{H}$ on the symplectic quotient $\ov{M}$. Let $\wt{\mf L}_{\ov{M}}$ be the space of contractible loops in $\ov{M}$ and let ${\mf L}_{\ov{M}}$ be pairs $(\ov{x}, [\ov{w}])$ where $\ov{x} \in \wt{\mf L}_{\ov{M}}$ and $\ov{w}: {\mb D } \to \ov{M}$ extends $\ov{x}$; $[\ov{w}] = [\ov{w}']$ if $(-\ov{w}) \# \ov{w}'$ is annihilated by both $\ov{\omega}$ and $c_1(T\ov{M})$. Then for any $(\ov{x}, [\ov{w}]) \in {\mf L}_{\ov{M}}$, we can pull-back the principal $G$-bundle $\mu^{-1}(0)\to \ov{M}$ to ${\mb D}$. Any trivialization (or equivalently a section $s$) of this bundle over ${\mb D}$ induces a map $w_s: {\mb D} \to \mu^{-1}(0)$ whose boundary restriction, denoted by $x: S^1\to \mu^{-1}(0)$, lifts $\ov{x}$. Now, if $(\ov{x}, [\ov{w}]) \in {\rm Crit} {\mc A}_{\ov{H}}$, i.e.
\begin{align*}
0= \ov{x}'(t) - Y_{\ov{H}_t}(\ov{x}(t))  = \left(\ov\pi_\mu\right)_* \left( x'(t) - Y_{H_t}(x(t))\right)
\end{align*}
there exists a smooth function, $f_s: S^1 \to {\mf g}$ such that
\begin{align*}
x'(t) + X_{f_s(t)}(x(t)) -  Y_{H_t}(x(t)) = 0.
\end{align*}
Then this gives a map
\begin{align}\label{equation23}
\begin{array}{cccc}
\iota: & \wt{\mf L}_{\ov{M}} & \to & \wt{\mf L}/ L_0 G \\
       &  (\ov{x}, [\ov{w}] ) & \mapsto & \left[ x_s, f_s, [w_s] \right]
\end{array}
\end{align}
By the correspondence of the symplectic forms and Chern classes between upstairs and downstairs, we have 
\begin{prop}\label{proposition27}
The class $\left[ x_s, f_s, [w_s]\right]$ is independent of the choice of the section $s$ and only depends on the homotopy class of $\ov{w}$. Moreover, it induces a map
\begin{align*}
\iota: \left( {\mf L}_{\ov{M}}, {\rm Crit} {\mc A}_{\ov{H}}\right)  \to \left( {\mf L}, {\rm Crit} {\mc A}_H \right)
\end{align*}
\end{prop}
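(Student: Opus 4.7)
The plan is to verify four linked points: existence of the multiplier $f_s$; independence of $[x_s, f_s, [w_s]] \in \wt{\mf L}/L_0 G$ from the section $s$; invariance under the $N_2(\ov{M})$-equivalence on $\ov{w}$, after descending to $\mf{L} = (\wt{\mf L}/L_0 G)/N_2^G$; and preservation of the critical locus.

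For existence, the section $s$ gives a lift $x_s: S^1 \to \mu^{-1}(0)$ of $\ov{x}$, and the Hamilton equation $\ov{x}' = Y_{\ov{H}_t}(\ov{x})$ implies that $x_s'(t) - Y_{H_t}(x_s(t))$ is $\ov\pi_\mu$-vertical in $T_{x_s(t)}\mu^{-1}(0)$. Since $G$ acts freely on $\mu^{-1}(0)$, there is a unique smooth $f_s: S^1 \to \mf{g}$ with $X_{f_s(t)}(x_s(t)) = Y_{H_t}(x_s(t)) - x_s'(t)$. For independence of $s$, two sections of the pullback bundle over $\mb{D}$ differ by a gauge transformation $g: \mb{D}\to G$; the boundary loop $h := g|_{S^1}$ is nullhomotopic in $G$ (since $g$ extends to $\mb{D}$), so $h \in L_0 G$. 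Using the $G$-equivariance of the data, a direct check gives $(x_{s'}, f_{s'}) = (h^{-1} x_s,\, {\rm Ad}_{h}^{-1} f_s + h^{-1} h')$ and $w_{s'} = g^{-1} w_s$, which is precisely the $L_0 G$-action of Subsection 2.2. Hence $[x_s, f_s, [w_s]]$ is well-defined in $\wt{\mf L}/L_0 G$.

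For invariance under the $N_2$-equivalence, take two representatives $\ov{w}_1, \ov{w}_2$ with $\ov{B} := (-\ov{w}_1)\#\ov{w}_2$ representing a class in $N_2(\ov{M})$. Using the previous step, one may choose sections $s_1, s_2$ whose boundary data coincide, so the connected sum $B := w_{s_2}\#(-w_{s_1})$ is a sphere contained in $\mu^{-1}(0)\subset M$. Sending $B$ to $S_2^G(M)$ via the natural map (trivial $G$-bundle with section $B$), and using $\omega|_{\mu^{-1}(0)} = \ov\pi_\mu^* \ov\omega$ together with the Kirwan-map identities $\kappa([\omega^G]) = [\ov\omega]$ and $\kappa(c_1^G) = c_1(T\ov{M})$, the pairings of $B$ with $[\omega^G]$ and $c_1^G$ agree with those of $\ov{B}$ with $[\ov\omega]$ and $c_1(T\ov{M})$. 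By assumption these vanish, so $B \in N_2^G(M)$, and the two lifts agree in $\mf{L}$.

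The main obstacle is this third step: carefully matching the two boundary lifts before forming the connected sum, and identifying how a sphere sitting inside $\mu^{-1}(0)$ represents an equivariant spherical class whose pairings descend through the Kirwan map. Once this is settled, the critical-point claim is immediate: $\mu(x_s) \equiv 0$ on $\mu^{-1}(0)$ together with the defining equation for $f_s$ say exactly that $(x_s, f_s)$ is a zero of $\wt{\mc B}_H$, so $\iota$ sends ${\rm Crit}\, {\mc A}_{\ov{H}}$ into ${\rm Crit}\, {\mc A}_H$ as required.
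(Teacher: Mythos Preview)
The paper does not give a detailed proof of this proposition; it is stated as a consequence of ``the correspondence of the symplectic forms and Chern classes between upstairs and downstairs,'' i.e.\ the Kirwan-map identities $\kappa([\omega^G]) = [\ov\omega]$ and $\kappa(c_1^G) = c_1(T\ov M)$. Your write-up supplies the missing details, and steps 1, 2, and 4 are correct and cleanly argued.

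There is, however, a gap in step 3. You claim that ``one may choose sections $s_1, s_2$ whose boundary data coincide'' and then glue $w_{s_1}, w_{s_2}$ to a sphere $B \subset \mu^{-1}(0) \subset M$. The two boundary lifts $x_{s_1}, x_{s_2}$ of $\ov x$ differ by a loop $h: S^1 \to G$, and matching them (after adjusting sections) is possible precisely when $h \in L_0 G$. But $[h] \in \pi_1(G)$ is the clutching invariant of the $G$-bundle $\ov B^{\,*}\mu^{-1}(0) \to S^2$, which need not vanish for general $\ov B \in N_2(\ov M)$: for $G = U(1)$, say, triviality would force $\langle c_1(\mu^{-1}(0) \to \ov M), \ov B\rangle = 0$, a condition unrelated to the defining conditions of $N_2(\ov M)$. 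So in general you cannot form a sphere in $M$, and the ``natural map $S_2(M) \to S_2^G(M)$'' is not the right vehicle.

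The fix is to work directly with equivariant spherical classes. The pair $A = \big(\ov B^{\,*}\mu^{-1}(0),\ \text{tautological section into } \mu^{-1}(0)\big)$ represents a class in $S_2^G(M)$ whose associated loop, in the sense of the paper's deck-transformation construction (\ref{equation22}), is precisely $h$; hence $A \# [x_{s_1}, f_{s_1}, [w_{s_1}]] = [x_{s_2}, f_{s_2}, [w_{s_2}]]$ in $\wt{\mf L}/L_0 G$. Since the tautological section lands in $\mu^{-1}(0)$, the Kirwan identities you already cite give $\langle [\omega^G], A\rangle = \langle [\ov\omega], \ov B\rangle = 0$ and $\langle c_1^G, A\rangle = \langle c_1(T\ov M), \ov B\rangle = 0$, so $A \in N_2^G(M)$ and the two lifts agree in $\mf L$. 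Your argument as written is correct in the special case $[h] = 0$, which includes $G$ simply connected and $\ov B$ nullhomotopic; the latter already covers the ``depends only on the homotopy class of $\ov w$'' portion of the statement.
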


\subsection{The Floer chain complex}\label{subsection24}

For $R= {\mb Z}_2$, ${\mb Z}$ or ${\mb Q}$, we consider the downward Novikov ring over the base ring $R$:
\begin{align*}
\Lambda_R:= \Lambda_R^\downarrow:= \left\{ \left. \sum_{B \in \Gamma} \lambda_B q^B\ \right| \ \forall K>0,  \# \left\{ B\in \Gamma \ |\ \left\langle [\omega^G], B \right\rangle > K,\ \lambda_B \neq 0 \right\} < \infty\right\}.
\end{align*}

We denote the free $\Lambda_R$-module generated by ${\rm Crit}{\mc A}_H\subset {\mf L}$ by $\widehat{VCF}(M, \mu; H; \Lambda_R)$. We define an equivalence relation on $\widehat{VCF}(M, \mu; H; \Lambda_R)$ by
\begin{align*}
\llbracket {\mf x} \rrbracket \sim q^{-B} \llbracket {\mf x}' \rrbracket \Longleftrightarrow B\# \llbracket {\mf x}'\rrbracket = \llbracket  {\mf x} \rrbracket \in {\mf L}.
\end{align*}
Denote by $VCF(M, \mu; H; \Lambda_R)$ the quotient $\Lambda_R$-module by the above equivalence relation, which is will be graded by a Conley-Zehnder type index which will be defined later in Section \ref{section4}.

In the remaining of this paper, we will restrict to the case that $R= {\mb Z}$.

\subsection{Gradient flow and symplectic vortex equation}

Now we choose an $S^1$-family of $G$-invariant, $\omega$-compatible almost complex structures $J = (J_t)$ on $M$, we assume that 
\begin{align*}
{\mf f}(x) \geq {\mf c}_0 \Longrightarrow J_t(x) = {\mf J}(x)
\end{align*}
where $({\mf f}, {\mf J})$ is the convex structure which we assume to exist in Hypothesis \ref{hyp23}. Then for each $t\in S^1$, $\omega$ and $J_t$ defines a Riemannian metric on $M$. They induce an $L^2$-metric on the loop space $LM$. On the other hand, we fix a biinvariant metric on the Lie algebra ${\mf g}$ which induces a metric on $L{\mf g}$; it also identifies ${\mf g}$ with ${\mf g}^*$ and we use this identification everywhere in this paper without mentioning it. These choices induce a metric on ${\mf L}$. 

Then, it is easy to see that formally, the equation for the negative gradient flow line of ${\mc A}_H$ is the following equation for a pair $\wt{u} = (u, \Psi): \Theta \to M \times {\mf g}$
\begin{align}\label{equation24}
\left\{ \begin{array}{ccc} 
\displaystyle {\partial u \over\partial s}  + J_t \left( {\partial u \over \partial t} + X_\Psi(u)- Y_{H_t}(u)  \right) &= &0, \\[0.3cm]
\displaystyle {\partial \Psi \over \partial s} + \mu(u) &= &0. \end{array}\right.
\end{align}
This equation is invariant under the action of $LG$ on $(u, \Psi)$, which is defined by
\begin{align}\label{equation25}
      g^* (u, \Psi)(s, t) = \left( g(t)^{-1} u(s, t), {\rm Ad}_{g(t)}^{-1} \Psi(s, t) + g(t)^{-1} \partial_t g(t) \right).
\end{align}

\begin{defn}\label{defn28} The energy for a flow line $\wt{u} = (u, \Psi)$ is defined to be
\begin{align*}
E\left( \wt{u}\right) := E\left( u, \Psi \right) = \Big\| {\partial u\over \partial s} \Big\|_{L^2(\Theta)}^2 + \Big\| \mu(u) \Big\|_{L^2(\Theta)}^2.
\end{align*}
Here the $L^2$-norm is taken with respect to the standard metric on $\Theta$ and the $t$-dependent metric on $M$ determined by $\omega$ and $J_t$.
\end{defn}

The connection form $d+ \Psi dt$ has already been put in the {\bf temporal gauge}, i.e., it has no $ds$ component. A more general equation on pairs $(u, \alpha)$, with $\alpha = \Phi ds + \Psi dt \in \Omega^1(\Theta) \otimes {\mf g}$, thought of a connection form on the trivial $G$-bundle over $\Sigma$, reads
\begin{align}\label{equation26}
\left\{ \begin{array}{ccc}  
\displaystyle {\partial u \over \partial s} +   X_\Phi(u) + J_t \left( {\partial u \over \partial t}  +  X_\Psi(u) - Y_{H_t}(u) \right)&= &0,\\[0.3cm]
\displaystyle {\partial \Psi \over \partial s} - {\partial \Phi \over \partial t} + [\Phi, \Psi] + \mu(u)  & = & 0.\end{array} \right.
\end{align}
Throughout this paper, the variable of (\ref{equation26}) is denoted by a triple $\wt{u} = (u, \Phi, \Psi) \in C^\infty \left( \Sigma, M \times {\mf g} \times {\mf g} \right)$. (\ref{equation26}) is invariant under the action by ${\mc G}_\Theta : = C^\infty\left( \Theta, G\right)$, which is defined by
\begin{align*}
\begin{array}{ccc} 
{\mc G}_\Theta \times C^\infty \left( \Theta, M \times {\mf g} \times {\mf g} \right) & \to & C^\infty \left( \Theta, M \times {\mf g} \times {\mf g } \right)\\
g^* \left( \begin{array}{c} u\\ \Phi \\ \Psi  \end{array} \right)(s, t) & = & \left( \begin{array}{c} g(s, t)^{-1} u(s, t) \\ {\rm Ad}_{g(s, t)}^{-1} \Phi(s, t) + g(s, t)^{-1} \partial_s g(s, t)\\
 {\rm Ad}_{g(s, t)}^{-1} \Psi(s, t) + g(s, t)^{-1} \partial_t g(s, t) \end{array}\right)
\end{array}
\end{align*} 
Solutions to (\ref{equation26}) are called generalized flow lines. The energy is defined by 
\begin{align*}
E(u, \Phi, \Psi) = \Big\| {\partial u \over \partial s} + X_{\Phi} \Big\|_{L^2(\Theta)}^2 + \Big\| \mu(u) \Big\|_{L^2(\Theta)}^2 .
\end{align*}
Every smooth generalized flow line is gauge equivalent via a gauge transformation in ${\mc G}_\Theta$ to a smooth flow line in temporal gauge; and the energy is gauge invariant.

\subsection{Moduli space and the naive idea of defining Floer homology}

Under the current setting, the definition of our vortex Floer homology group is very similar to that of Morse homology and ordinary Hamiltonian Floer homology. We briefly describe the construction and the details are provided in due course.

In Section \ref{section3} we will show that, any finite energy solution to (\ref{equation26}) is gauge equivalent to a solution $\wt{u}= (u, \Phi, \Psi)$ such that for some pair $\wt{x}_\pm = (x_\pm, f_\pm) \in {\rm Zero}\wt{\mc B}_H$,
\begin{align*}
\lim_{s \to \pm\infty} \Phi(s, t) = 0,\ \lim_{s\to \pm\infty} (u(s, \cdot), \Psi(s, \cdot)) = \wt{x}_\pm.
\end{align*}
Hence for any pair $\llbracket{\mf x}_\pm\rrbracket \in {\rm Crit} {\mc A}_H$, we can consider solutions which ``connect'' them. Those solution are called connecting orbits between $\llbracket{\mf x}_-\rrbracket$ and $\llbracket{\mf x}_+\rrbracket$. We denote by
\begin{align*}
{\mc M}\left( \llbracket{\mf x}_- \rrbracket, \llbracket{\mf x}_+ \rrbracket; J, H \right)
 \end{align*}
the moduli space of all such solutions, modulo gauge transformation.

In Section \ref{section6} we will show that, for certain type of Hamiltonians $H = (H_t)$ (which we call admissible ones, see Definition \ref{defn65}), and a generic choice of $S^1$-family of almost complex structures $J = (J_t)$ (which are admissible with respect to $H$, see Definition \ref{defn62}), the space ${\mc M}\left( \llbracket{\mf x}_-\rrbracket, \llbracket{\mf x}_+\rrbracket; J, H \right)$ is a smooth manifold, whose dimension is equal to the difference of the Conley-Zehnder indices (see Subsection \ref{subsection43}) of $\llbracket{\mf x}_\pm \rrbracket$. Moreover, since $M$ cannot have any nonconstant pseudoholomorphic spheres, ${\mc M}\left( \llbracket{\mf x}_-\rrbracket, \llbracket{\mf x}_+ \rrbracket; J, H \right)$ is compact modulo breaking. Finally, there exist coherent orientations on different moduli spaces, which is similar to the case of ordinary Hamiltonian Floer theory (see \cite{Floer_Hofer_Orientation}). So in our case, the signed counting of isolated gauged equivalence classes of trajectories has exactly the same nature as counting trajectories in the finite-dimensional Morse-Smale-Witten theory, which defines a boundary operator
\begin{align*}
\delta_J: VCF_*(M, \mu; H; \Lambda_{\mb Z}) \to VCF_{*-1}(M, \mu; H; \Lambda_{\mb Z}).
\end{align*}
The vortex Floer homology is then defined as the homology
\begin{align*}
VHF_*\left( M, \mu; J, H; \Lambda_{\mb Z} \right) = H \left( VCF_*(M, \mu; H; \Lambda_{\mb Z}), \delta_J \right).
\end{align*}
Moreover, for a different choice of the pair $(J', H')$, we can use continuation principle to prove that the chain complex $\left( VCF_*(M, \mu; H'; \Lambda_{\mb Z}), \delta_{J'}\right)$ is chain homotopic to $\left( VCF_*(M, \mu; H; \Lambda_{\mb Z}); \delta_J \right)$. There is a canonical isomorphism between the homologies, and we denote the common homology group by $VHF_*(M, \mu; \Lambda_{\mb Z})$. The details are given in Section \ref{section7} and the appendix.

\section{Asymptotic behavior}\label{section3}

In this section we analyze the asymptotic behavior of solutions $\wt{u} = (u, \Phi, \Psi)$ to (\ref{equation26}) which has finite energy and for which $u(\Theta)$ has compact closure in $M$. We call such a solution a {\bf bounded solution}. We denote the space of bounded solutions by $\wt{\mc M}_\Theta^b$. We can also consider the equation on the half cylinder $\Theta_+$ or $\Theta_-$ and denote the spaces of bounded solutions over $\Theta_\pm$ by $\wt{\mc M}_{\Theta_\pm}^b$. 

The main theorem of this section is
\begin{thm}
\begin{enumerate}
\item Any $(u, \Phi, \Psi )\in \wt{\mc M}_{\Theta_\pm} ^b$ is gauge equivalent (via a smooth gauge transformation $g: \Theta_\pm \to G$) to a solution $(u', \Phi', \Psi')\in \wt{\mc M}_{\Theta_\pm}^b$ such that there exist $\wt{x}_\pm = (x_\pm, f_\pm) \in {\rm Zero} \wt{\mc B}_H$
and
\begin{align*}
\lim_{s\to \pm \infty} \left( u'(s, \cdot), \Psi'(s, \cdot) \right) = \wt{x}_\pm,\ \lim_{s\to \pm\infty} \Phi(s, \cdot) = 0
\end{align*}
uniformly for $t \in S^1$.

\item There exists a compact subset $K_H\subset M$ such that for any $(u, \Phi, \Psi) \in \wt{\mc M}^b_\Theta$, we have $u(\Theta) \subset K_H$.
\end{enumerate}
\end{thm}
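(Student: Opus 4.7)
My plan is to treat part (2) first by a maximum principle, then use it to confine the analysis in part (1) to a compact region of $M$, where the standard Floer-type asymptotic analysis can be carried out after temporal gauge fixing.

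For part (2), I would invoke Hypothesis \ref{hyp23} and the compact support of $H$. Enlarging $\mathfrak{c}_0$ if necessary, I can arrange that $\mathrm{supp}(H) \subset \{\mathfrak{f} < \mathfrak{c}_0\}$ and that $J_t = \mathfrak{J}$ wherever $\mathfrak{f} \geq \mathfrak{c}_0$. On the open set $\Omega \subset \Theta$ where $\mathfrak{f}(u) \geq \mathfrak{c}_0$, the Hamiltonian perturbation vanishes, and the equation (\ref{equation26}) becomes the unperturbed symplectic vortex equation with $\mathfrak{J}$. A direct computation (as in \cite{Cieliebak_Gaio_Mundet_Salamon_2002}) then shows that the convexity inequality in Hypothesis \ref{hyp23} makes $\mathfrak{f}(u)$ subharmonic (with respect to the Laplacian on $\Theta$) on $\Omega$. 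Since $\mathfrak{f}(u)$ is bounded (a priori by the assumption that $u(\Theta)$ is precompact, though one wants a uniform bound), the strong maximum principle on the half-cylinder combined with the finite-energy condition forces $\mathfrak{f}(u) \leq \mathfrak{c}_0 + C$ for a constant $C$ depending only on $H$; properness of $\mathfrak{f}$ then gives the compact set $K_H$. This part should be fairly mechanical and follows the CGMS template.

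For part (1), I would proceed in four steps. \emph{Step A (Temporal gauge):} Define $g: \Theta_\pm \to G$ by solving the ODE $\partial_s g = -g \Phi$ along each $s$-line with $g(0,t) = e$. Then $g^*(u, \Phi, \Psi)$ has zero $ds$-component, so after replacing $(u, \Phi, \Psi)$ by its gauge transform I may assume $\Phi \equiv 0$ and the equation reduces to (\ref{equation24}). Note the image $u(\Theta_\pm)$ still lies in the compact $K_H$ from part (2). \emph{Step B (A priori bounds):} Using the energy bound, the compactness of the image, and bootstrap via elliptic regularity for the first-order system (\ref{equation24}), obtain uniform $C^k$ bounds on $(\partial_s u, \partial_t u + X_\Psi - Y_{H_t}, \mu(u), \Psi)$ on every strip $[s, s+1] \times S^1$, independent of $s$. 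Here I use that $\Psi$ is now controlled by $\partial_s \Psi = -\mu(u)$ together with a choice of gauge at $t=0$. \emph{Step C (Exponential decay):} This is the main analytic step. Define the energy density $e(s) = \int_{\{s\} \times S^1} (|\partial_s u|^2 + |\mu(u)|^2) dt$ and prove a differential inequality $e''(s) \geq \kappa^2 e(s)$ on the end, valid once $e(s)$ is small enough. The inequality comes from linearizing (\ref{equation24}) near the locus $\{\mu = 0\} \cap \{\text{$1$-periodic $\bar{H}$-orbit}\}$ and using the nondegeneracy in Hypothesis \ref{hyp22} to get a positive lower bound on the Hessian of $\widetilde{\mathcal{A}}_H$ transverse to the gauge orbits. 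Together with $\int e(s) ds < \infty$, this yields $e(s) \leq C e^{-\kappa |s|}$. \emph{Step D (Identification of the limit):} Exponential decay of $e$ combined with the uniform bounds in Step B gives, via Arzel\`a--Ascoli, uniform convergence of $(u(s,\cdot), \Psi(s,\cdot))$ as $s \to \pm\infty$ to a smooth loop $\widetilde{x}_\pm = (x_\pm, f_\pm)$, and of $\Phi(s,\cdot)$ to $0$. Passing to the limit in the equation shows $\mu(x_\pm) \equiv 0$ and $\dot{x}_\pm + X_{f_\pm}(x_\pm) - Y_{H_t}(x_\pm) = 0$, i.e. $\widetilde{x}_\pm \in \mathrm{Zero}\,\widetilde{\mathcal{B}}_H$.

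The genuine obstacle will be Step C, the exponential decay estimate. The complication over the classical Floer case is that one must work modulo the loop-group action and show that, after choosing an appropriate slice, the second variation of $\widetilde{\mathcal{A}}_H$ at a zero of $\widetilde{\mathcal{B}}_H$ is coercive on the slice. This requires identifying the infinitesimal gauge directions with the kernel of $d\mu$ combined with the constraint from Hypothesis \ref{hyp21} (free action on $\mu^{-1}(0)$), and then establishing a Poincar\'e-type inequality on $S^1$ with a uniform constant. The remaining parts, namely gauge fixing, the maximum principle, and the passage from energy decay to loop convergence, are adaptations of well-established techniques and should not present serious new difficulties.
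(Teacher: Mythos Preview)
Your plan has a genuine gap and also diverges from the paper's route in a way worth flagging.

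\textbf{The gap.} In Step~C you want $e''(s)\geq \kappa^2 e(s)$ ``once $e(s)$ is small enough,'' but you never establish that $e(s)$ becomes small. Finite energy only gives $\int e(s)\,ds<\infty$, which does \emph{not} force $e(s)\to 0$ (think of bumps of fixed height and shrinking width). The linearization argument you describe is only valid once the loop $(u(s,\cdot),\Psi(s,\cdot))$ is already $C^0$-close to some $\wt{x}\in\mathrm{Zero}\,\wt{\mc B}_H$; getting there is precisely the missing step. The paper fills this by a Heinz-type mean-value argument (Lemma~\ref{lemma34} and Lemma~\ref{lemma35}): one shows $\Delta(|v_s|^2+|\mu(u)|^2)\geq -c_1|v_s|^4-c_2$ and deduces \emph{pointwise} uniform decay of $|\partial_s u|^2+|\mu(u)|^2$ as $|s|\to\infty$ (Proposition~\ref{prop32}). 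Only after that does any asymptotic identification become possible. The same gap undermines your Step~B claim that $\Psi$ is controlled: in temporal gauge $\partial_s\Psi=-\mu(u)$, so $\Psi$ converges only if $\mu(u)$ is integrable in $s$, which you have not yet shown.

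\textbf{The ordering of (1) and (2).} Your maximum-principle argument for (2) needs ${\mf f}(u)$ to be controlled at $s=\pm\infty$ so that a supremum above ${\mf c}_0$ is actually \emph{attained} on $\Theta$; ``finite energy'' alone does not supply this. The paper proves (2) \emph{after} the pointwise decay from Proposition~\ref{prop32}: since $\mu(u)\to 0$ uniformly, $\limsup_{|s|\to\infty}{\mf f}(u)\leq {\mf c}_1:=\max\{{\mf c}_0,\sup_{|\mu|\leq 1}{\mf f}\}$, so a putative supremum exceeding ${\mf c}_1$ is attained, and then subharmonicity of ${\mf f}(u)$ on $u^{-1}(M\setminus K_H)$ forces constancy, a contradiction. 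Note also that (2) is not needed as input to (1): solutions in $\wt{\mc M}^b_{\Theta_\pm}$ already have precompact image by definition.

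\textbf{Different route for the limit.} Even after the pointwise decay, the paper does \emph{not} prove exponential decay to identify the limit. Instead (Proposition~\ref{prop36}) it projects $u$ to $\ov{M}$ to see that $\ov{u}(s,\cdot)$ is $C^0$-close to a single $1$-periodic orbit $\ov\gamma$ of $\ov{H}$, lifts $\ov\gamma$ to $(x,f)\in\mathrm{Zero}\,\wt{\mc B}_H$, and then writes down an explicit gauge transformation $\wt g(s,t)=h_s(t)^{-1}g(s)^{-1}\exp(t\xi(s))$ (built from the holonomy $h_s$ of $\Psi(s,\cdot)$ and a slice choice $g(s)$) under which $(u,\Psi)$ converges to $(x,f)$ and $\Phi'\to 0$. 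Exponential decay is obtained only later, as a corollary of the Fredholm analysis in Section~\ref{section4}. Your Hessian-coercivity/exponential-decay scheme is a legitimate alternative (closer to Frauenfelder's treatment), but it is strictly more work here and, crucially, still requires the pointwise-decay step you omitted.
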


We will prove (1) for $\wt{u}\in \wt{\mc M}_{\Theta_+}^b$ in temporal gauge, i.e., $\Phi \equiv 0$ and the case for $\Theta_-$ is the same. Then (2) follows from a maximum principle argument, given at the end of this section. The proof is based on estimates on the energy density, which has been given by others in different settings (see \cite{Cieliebak_Gaio_Mundet_Salamon_2002}, \cite{Gaio_Salamon_2005}).

\subsection{Estimate on the energy density}\label{subsection31}

In this subsection we prove the following result. 
\begin{prop}\label{prop32}
If $(u, \Psi)$ is a bounded solution to (\ref{equation24}) on $\Theta_+$, then the energy density function $\left| \partial_s u \right|^2 +\left| \partial_s \Psi \right|^2$ converges to 0 as $s\to \pm\infty$, uniformly in $t$.
\end{prop}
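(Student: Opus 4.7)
The strategy is the familiar one for pseudoholomorphic curves adapted to the symplectic vortex equation: establish a Bochner-type differential inequality for the energy density, convert it into an $\varepsilon$-regularity (mean-value) estimate, and then combine that estimate with finiteness of the total energy to get uniform pointwise decay at infinity. The global energy bound already tells us the integral of the density over half-infinite strips tends to zero, so the only real work is to pass from integral smallness to pointwise smallness.

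Set $e(s,t) = |\partial_s u|^2 + |\partial_s \Psi|^2$. The plan is first to show that there exist constants $C_1, C_2 > 0$, depending only on $H$, $\mu$, $J$, and the compact set $\overline{u(\Theta_+)} \subset M$, such that $e$ satisfies a differential inequality of the form
\begin{align*}
-\Delta e \,\leq\, C_1 e + C_2 e^2
\end{align*}
on $\Theta_+$. To derive this, I would differentiate the equations in (\ref{equation24}) in the $s$-variable to obtain a linear system satisfied by $v:= \partial_s u$ and $\eta:= \partial_s \Psi$; thus
\begin{align*}
\nabla_s v + J_t \nabla_t v + (\nabla_v J_t)(\partial_t u + X_\Psi - Y_{H_t}) + J_t X_\eta + J_t \nabla_v(X_\Psi - Y_{H_t}) &= 0,\\
\partial_s \eta + d\mu(u) v &= 0.
\end{align*}
Applying $\partial_s$ once more and commuting covariant derivatives on $M$ produces curvature-of-$J_t$ and Hessian-of-$H$ terms; these are the source of the linear-in-$e$ contribution, while the commutator of covariant derivatives on $(M,J_t)$ with $v$ produces the quadratic $e^2$ contribution. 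Using the moment map equation $\partial_s \Psi = -\mu(u)$ to replace $\eta$ where necessary, and the boundedness of $u(\Theta_+)$ (so that $\mu$, $d\mu$, $H$, $Y_H$, $J$, $\nabla J$ are uniformly bounded along $u$), one collects terms to obtain the inequality above.

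Once the differential inequality is in hand, a standard Heinz-type mean value inequality (applied on unit balls $B_1(s_0,t_0) \subset \Theta_+$ with $s_0 \geq 1$) produces a threshold $\hbar > 0$ and a constant $C_3 > 0$ such that
\begin{align*}
\int_{B_1(s_0, t_0)} e \, dsdt \,<\, \hbar \quad \Longrightarrow \quad e(s_0, t_0) \,\leq\, C_3 \int_{B_1(s_0,t_0)} e \, dsdt.
\end{align*}
The hypothesis $E(u,\Psi) < \infty$ means the tail integral $\int_{[s-1,s+1]\times S^1} e\,dsdt$ tends to zero as $s \to \infty$, uniformly in $t \in S^1$ since $S^1$ is compact. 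For $s$ large enough this tail is below $\hbar$, so the mean-value estimate applies and gives $e(s,t) \to 0$ uniformly in $t$, as claimed.

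The main obstacle is the derivation of the Bochner inequality: one has to keep track of how $\nabla J$, the Hessian of $H$, the infinitesimal action, and the moment map interact with the covariant derivatives on $M$, and absorb the various cross terms into $C_1 e + C_2 e^2$. The boundedness of $u(\Theta_+)$ in $M$ is crucial here so that all these background geometric quantities are uniformly controlled; without this assumption the constants $C_1, C_2$ could blow up along the solution. Note also that the aspherical hypothesis on $M$ is not invoked at this stage: the mean value inequality is a local PDE fact and works regardless of bubbling, since it only needs smallness of energy on a ball of fixed radius, not conformal rescaling.
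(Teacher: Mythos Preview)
Your proposal is correct and follows essentially the same strategy as the paper: establish a Bochner-type differential inequality for the energy density (the paper's Lemma \ref{lemma34}), feed it into a Heinz mean-value estimate (the paper's Lemma \ref{lemma35}), and conclude from finiteness of the total energy. The only cosmetic difference is that the paper first applies the Hamiltonian diffeomorphism $(\phi_H^t)^{-1}$ to reduce to the case $H\equiv 0$ (at the cost of a $t$-dependent, non-periodic $\wt{J}_t$) before carrying out the Bochner computation, whereas you keep the Hamiltonian term throughout; both routes yield an inequality of the required form and neither buys anything the other does not.
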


\begin{rem}
The basic idea of proving Proposition \ref{prop32} is standard, i.e., to estimate the Laplacian of the energy density function and to use a mean value estimate. In the context of vortex equation, the calculation and estimates can be found in \cite[Section 9]{Gaio_Salamon_2005}, in the case that $J$ is domain-independent and $H \equiv 0$. We carry out the estimates in detail in our situationin the case that $J$ depends on the paramter $t$ and $H \neq 0$ (which can be transformed into the case that $H \equiv 0$, as we will see), though there is no essential difficulty for this extension. 
\end{rem}

\begin{proof}[Proof of Proposition \ref{prop32}, Part I] We first transform the problem to the case where we can assume that $H \equiv 0$ as in \cite[Remark 3.2, 3.4]{Frauenfelder_thesis}. Let $\phi_H^t: M \to M$, $t \in {\mb R}$ be the family of Hamiltonian diffeomorphisms generated by $H_t$, i.e., the solution to 
\begin{align*}
\phi_H^0 ={\rm Id}_M,\ {d\over dt} \phi_H^t(x) = Y_{H_t}( \phi_H^t(x)).
\end{align*}
Suppose $(u, \Psi)$ is a solution to (\ref{equation24}) on $\Theta_+$. Regard $u$ as a map from $[0, +\infty) \times {\mb R}$ which is periodic in the second variable $t$ with period 1. Then define $v: [0, +\infty) \times {\mb R} \to M$ by
\begin{align*}
v(s, t) = \left( \phi_H^t\right)^{-1} u(s, t).
\end{align*}
Then by the $G$-invariance of $H_t$, we have
\begin{align*}
{\partial v \over \partial s} + \wt{J}_t \left( {\partial v \over \partial t} + X_\Psi(v) \right) = 0,\ {\partial \Psi \over \partial t} + \mu(v) = 0.
\end{align*}
Here $\wt{J}_t$ is the family of almost complex structures on $M$ defined by
\begin{align*}
\wt{J}_t =\left( \left( \phi_H^t \right)_*\right)^{-1} \circ J_t \circ \left( \phi_H^t \right)_*,\ t \in {\mb R}.
\end{align*}
Note that for each $t$, $\wt{J}_t$ is still $G$-invariant, $\omega$-compatible, and for $t \in [0,1]$, the family of metrics $\omega \left( \cdot, \wt{J}_t \cdot \right)$ is uniformly comparable with the original family of metrics determined by $\omega$ and $J_t$. Therefore, in order to prove Proposition \ref{prop32}, it suffices to prove that $|\partial_s|^2 + |\partial_s \Psi|^2$ converges to zero as $ s\to +\infty$ uniformly for $t \in [0,1]$, for a bounded solution $(u, \Psi)$ on $[0, +\infty) \times {\mb R}$ to (\ref{equation24}) in the case where $H\equiv 0$ and $J_t$ is parametrized by $t \in {\mb R}$. ({\it Proof to be continued.})
\end{proof}

We need to introduce some notations. The family of metrics $g_t:= \omega( \cdot, J_t \cdot)$ induces a metric connection $\nabla$ on the bundle $u^* TM$. We define
\begin{align*}
\nabla_{A, s} \xi = \nabla_s \xi,\ \nabla_{A, t} \xi = \nabla_t \xi + \nabla_\xi X_\Psi.
\end{align*}

On the trivial bundle $\Theta \times {\mf g}$, define the covariant derivative
\begin{align*}
\nabla_{A, s} \theta = \nabla_s \theta, \ \nabla_{A, t} \theta = \nabla_t \theta + [\Psi, \theta].
\end{align*}
We denote by $\nabla_A$ the direct sum connection on $u^* TM \times {\mf g}$. Note that it is compatible with respect to the metric on this bundle induced by the family $g_t$. 

Define the ${\mf g}$-valued 2-form $\rho_t$ on $M$ by
\begin{align}\label{equation31}
\langle \rho_t ( \xi_1, \xi_2), \eta \rangle_t = \langle \nabla_{\xi_1} X_\eta, \xi_2 \rangle_t = - \langle \nabla_{\xi_2} X_\eta, \xi_1   \rangle_t,\  \xi_i \in TM,\  \eta \in {\mf g}.
\end{align}
Here $\langle \cdot, \cdot \rangle_t$ is the inner product of the metric $g_t$. 

%We list several useful identities of this covariant derivative. The reader may refer to \cite{Gaio_Salamon_2005} for details.
%\begin{lemma}
%Suppose $(u, \Psi)$ is a solution to (\ref{equation24}). Denote $v_s = \partial_s u$, $v_t = \partial_t u + X_\Psi(u)$. Then we have
%\begin{align*}
%\nabla_{A, s} X_\eta - X_{\nabla_{A, s} \eta} = \nabla_{v_s} X_\eta,\ \nabla_{A, t} X_\eta - X_{\nabla_{A, t} \eta} = \nabla_{v_t} X_\eta.
%\end{align*}
%\begin{align*}
%\nabla_{A, s} v_t - \nabla_{A, t} v_s = X_{\partial_t \Psi}.
%\end{align*}
%\begin{align*}
%\begin{split}
%\nabla_{A, s} \left( d\mu \cdot J_t \xi \right) - d\mu \cdot J_t \left( \nabla_{A, s} \xi \right) = &\ \rho( v_s, \xi),\\
% \nabla_{A, t} \left( d\mu \cdot J_t \xi \right) - d\mu\cdot J_t \left( \nabla_{A, t}  \xi \right) =&\  \rho( v_t, \xi ) + d \mu \left( \dot{J}_t \xi \right).
%\end{split}
%\end{align*}
%\end{lemma}

\begin{lemma}\label{lemma34} For any compact subset $K \subset M$, there exist positive constants $c_1$ and $c_2$ depending only on $(X, \omega, J, \mu, H)$ and $K$, such that for any solution $(u, \Psi)$ to (\ref{equation24}) on an open subset $\Sigma \subset [0, +\infty) \times [-1, 2]$ with $u(\Sigma)\subset K$, we have
\begin{align*}
\Delta \left( | v_s |^2 + |\mu(u)|^2  \right) \geq -c_1 | v_s|^4- c_2.
\end{align*}
\end{lemma}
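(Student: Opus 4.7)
The plan is to establish the differential inequality by a direct Bochner/Weitzenb\"ock style computation, following the pattern of \cite[Section 9]{Gaio_Salamon_2005} but carrying the extra terms forced on us by the $t$-dependence of $J_t$. After the reduction performed in the first half of the proof of Proposition \ref{prop32}, it suffices to work with a bounded solution $(u,\Psi)$ to (\ref{equation24}) with $H\equiv 0$, for a $t$-dependent family $J_t$ still satisfying the $G$-invariance and $\omega$-compatibility. Abbreviate $v_s = \partial_s u$ and work with the covariant derivatives $\nabla_{A,s},\nabla_{A,t}$ introduced before the lemma, so that the equations become $v_s+J_t\nabla_{A,t}u=0$ and $\partial_s\Psi+\mu(u)=0$.

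First, I would differentiate the two equations in (\ref{equation24}) with respect to $s$ and $t$. Applying $\nabla_{A,s}$ to the first equation and $\nabla_{A,t}$ to it, together with the identity $[\nabla_{A,s},\nabla_{A,t}]\xi = R(v_s,\nabla_{A,t}u)\xi + X_{\partial_s\Psi}\cdot\xi$-type contribution (more precisely, expressed via $\rho_t$ from (\ref{equation31})), produces a linearized vortex system for $v_s$ of the schematic form
\begin{align*}
\nabla_{A,s}v_s + J_t \nabla_{A,t}v_s + (\nabla_{v_s}J_t)(\nabla_{A,t}u) + (\partial_t J_t)v_s - J_t X_{\mu(u)}(u) &= 0,\\
\nabla_{A,s}\mu(u) + d\mu(u)\cdot v_s &= 0.
\end{align*}
Here the $(\partial_t J_t)$ contribution is the new term absent from the autonomous treatment in \cite{Gaio_Salamon_2005}; on the compact set $K$ it is uniformly bounded. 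An analogous $\nabla_{A,t}$-differentiation, combined with the second equation $\partial_s\Psi+\mu(u)=0$, yields an expression for $\nabla_{A,t}\nabla_{A,s}\mu(u)$ in terms of $d\mu(u)$ applied to combinations of $v_s$ and $X_\Psi$, again bounded on $K$.

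Second, I would expand $\Delta|v_s|^2 = 2|\nabla_{A,s}v_s|^2 + 2|\nabla_{A,t}v_s|^2 + 2\langle v_s,(\nabla_{A,s}^2+\nabla_{A,t}^2)v_s\rangle$ (with metric $g_t$, whose time-derivative contributes a harmless extra $\langle(\partial_t g_t)(v_s),v_s\rangle$ term) and similarly $\Delta|\mu(u)|^2 = 2|\nabla_{A,s}\mu(u)|^2 + 2|\nabla_{A,t}\mu(u)|^2 + 2\langle\mu(u),\Delta\mu(u)\rangle$. Substitute the linearized system to rewrite $\nabla_{A,t}^2 v_s$ via $\nabla_{A,s}v_s$ plus curvature and $\rho_t$ terms; crucially use the symplectic identity $J_t X_{\mu(u)} = \operatorname{grad}_t(\tfrac12|\mu|^2)$ to turn the coupling term into a squared norm $|d\mu(u)\cdot v_s|^2$ that can be dropped as a nonnegative quantity. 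All remaining terms are then of three types: nonnegative squares (which I discard); cubic, cubic-times-bounded, or quartic-in-$v_s$ combinations (absorbed into $-c_1|v_s|^4$ by Cauchy--Schwarz $|a|^3 \leq 1 + |a|^4$); and bounded-in-$K$ quantities (absorbed into $-c_2$, using that on the compact $K \subset M$ the quantities $J_t$, $\partial_t J_t$, $\mu$, $\nabla\mu$, $\rho_t$, the Riemann tensor of $g_t$, and the ambient symplectic data are all $C^1$-bounded uniformly in $t\in[-1,2]$).

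The main obstacle is organizational rather than conceptual: one must systematically track the corrections coming from (i) the non-vanishing $\partial_t J_t$ and $\partial_t g_t$, (ii) the $\Psi$-dependence inside $\nabla_{A,t}$, which replaces bare commutators with the gauged curvature and produces $\rho_t$-terms involving $\mu(u)$ itself (yielding cross terms $|v_s|\cdot|\mu(u)|$ absorbable into $|v_s|^4$ plus constants), and (iii) the coupling between the $|v_s|^2$ and $|\mu(u)|^2$ halves of the energy density via the linearized equations. Once every term is assigned to one of the three classes above, the stated inequality $\Delta(|v_s|^2+|\mu(u)|^2)\geq -c_1|v_s|^4 - c_2$ follows, with $c_1,c_2$ explicitly depending only on $(M,\omega,J,\mu,H)$ and $K$ as asserted.
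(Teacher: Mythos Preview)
Your proposal is correct and follows essentially the same route as the paper: a Bochner--Weitzenb\"ock computation in the style of \cite[Section~9]{Gaio_Salamon_2005}, carried out after the reduction to $H\equiv 0$, with the extra $\dot{J}_t$ terms tracked and all remaining pieces absorbed via Young's inequality into $-c_1|v_s|^4-c_2$. The paper organizes the same calculation via the decomposition $D_s^2 v_s + D_t^2 v_s = Q_1+Q_2+Q_3$ with $Q_1=D_s(D_sv_s+D_tv_t)$, $Q_2=-D_t(D_sv_t-D_tv_s)$, $Q_3=[D_t,D_s]v_t$, and treats $\Delta|\mu(u)|^2$ separately (the positive term it drops is $\langle d\mu\cdot J_tX_\mu,\mu\rangle=|X_\mu|^2$ rather than $|d\mu\cdot v_s|^2$), but this is only a bookkeeping difference.
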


\begin{proof} Abbreviate $D_s = \nabla_{A, s}$, $D_t = \nabla_{A, t}$. We have	
\begin{align*}
\begin{split}
{1\over 2} \Delta \left| v_s \right|^2  = &\ \left( \partial_s^2+ \partial_t^2\right) | v_s |^2
= \partial_s   \left\langle D_s v_s, v_s \right\rangle + \partial_t \langle D_t v_s, v_s \rangle \\
    = &\ \left| D_s v_s \right|^2 +  \left| D_t v_s \right|^2 +  \left\langle \left( D_s^2 +  D_t^2 \right) v_s, v_s \right\rangle.
	\end{split}
	\end{align*}
Then we denote
\begin{align*}
\begin{split}
D_s^2 v_s+ D_t^2 v_s & = D_s \left( D_s v_s + D_t v_t\right)  - D_t \left( D_s v_t - D_t v_s\right) + \left( D_t D_s- D_s D_t \right) v_t\\
 & = : Q_1 + Q_2 + Q_3.
\end{split}
\end{align*}

We compute $Q_i$, $i=1, 2, 3$ as follows.

\begin{align}\label{equation32}
\begin{split}
Q_1 = &\ D_s \left( D_s ( -J_t v_t) +  D_t (J_t v_s)\right)\\
 = &\ D_s \left( -\nabla_s (J_t v_t) +\nabla_t (J_t  v_s )+ \nabla_{J_t v_s} X_\Psi \right)\\	
 = &\ D_s \left( - (\nabla_s J_t) v_t - J_t \nabla_s v_t + (\nabla_t J_t) v_s + J_t \nabla_t v_s + \nabla_{J_t v_s} X_\Psi \right)\\
 = &\ D_s \left( - (\nabla_{v_s} J_t) v_t - J_t \nabla_s X_\Psi + (\nabla_t J_t) v_s + [J_t v_s, X_\Psi] + \nabla_{X_\Psi} (Jv_s) \right)\\
 = &\ D_s \big( - (\nabla_{v_s} J_t) v_t - J_t X_{\partial_s \Psi} - J_t \nabla_{v_s} X_\Psi + \dot{J}_t v_s + (\nabla_{v_t - X_\Psi} J_t)v_s \big) \\
  &\  + D_s \left(  J_t [v_s, X_\Psi] + (\nabla_{X_\Psi} J_t) v_s + J_t \nabla_{X_\Psi} v_s \right)\\
= &\ D_s \big( - (\nabla_{v_s} J_t) v_t + J_t X_\mu + \dot{J}_t v_s + (\nabla_{v_t} J_t) v_s \big).
\end{split}\\
\begin{split}\label{equation33}
Q_2 = &\ - D_t \left( D_s v_t - D_t v_s  \right) \\
 = &\ - D_t \left(  \nabla_s \left( \partial_t u+ X_\Psi \right)- \nabla_t v_s - \nabla_{v_s}X_\Psi \right) \\
 = &\ - D_t  X_{\partial_s \Psi} = D_t  X_{\mu(u)}\\
 = &\ \nabla_t X_\mu + \nabla_{X_\mu} X_\Psi \\
= &\ X_{d\mu \cdot \partial_t u} + \nabla_{v_t} X_\mu + [X_\mu , X_\Psi]\\
 = &\ X_{d\mu \cdot v_t} - X_{d\mu \cdot X_\Psi}  + \nabla_{v_t} X_\mu - X_{[\mu, \Psi]}\\
= &\ X_{d\mu \cdot v_t} + \nabla_{v_t} X_\mu
\end{split}
\end{align}

On the other hand, for any $(s, t)\in \Sigma$, any $\xi \in T_{u(s, t)} M$, we extend $\xi$ and $v_s(s, t)$ to $G$-invariant vector fields locally. Then for the curvature $R_t$ of $\omega( \cdot, J_t\cdot)$, we have
\begin{align}\label{equation34} 
R_t ( v_s, X_\Psi ) \xi = \nabla_{v_s} \nabla_{ X_\Psi} \xi - \nabla_{X_\Psi} \nabla_{v_s} \xi -\nabla_{[v_s, X_\Psi ]} \xi = \nabla_{v_s} \nabla_\xi X_\Psi - \nabla_{{\nabla_{v_s} \xi}} X_\Psi.
\end{align}
Hence
\begin{align}\label{equation35}
\begin{split}
Q_3 = &\ \left( D_t D_s- D_s D_t \right) v_t \\[0.1cm]
 = &\ \nabla_t (\nabla_s v_t) + \nabla_{\nabla_s v_t} X_\Psi - \nabla_s \Big( \nabla_t v_t + \nabla_{v_t} X_\Psi \Big) \\
 = &\ - R_t (v_s, \partial_t u) v_t - \nabla_s \nabla_{v_t} X_\Psi + \nabla_{\nabla_s v_t} X_\Psi + \Big(  {d\over dt} \nabla_s  \Big) v_t \\
 = &\ - R_t (v_s , \partial_t u + X_\Psi ) v_t - \nabla_{v_t} X_{\partial_s \Psi} +  \Big( {d\over dt} \nabla_s \Big) v_t \\
 = &\ -R_t (v_s, v_t) v_t + \nabla_{v_t} X_\mu + \Big( {d\over dt} \nabla_s \Big) v_t.
\end{split}
\end{align}
Here the fourth equality uses (\ref{equation34}).

By (\ref{equation32}), for some $C_1>0$ depending on $K$, we have
\begin{align*}
\begin{split}
\langle Q_1, v_s\rangle = &\ \left\langle \nabla_s \left( -\left( \nabla_{v_s} J_t\right) v_t +\left( \nabla_{v_t} J_t \right) v_s + J_t X_\mu + \dot{J}_t v_s \right), v_s \right\rangle\\
                       \geq &\ - C_1 \left( |v_s|^3 + |v_s|^2 + |v_s|^4 + |v_s|^2 \left| D_s v_s \right|  + |v_s| \left| D_s v_s \right| \right).
											\end{split}
\end{align*}
By (\ref{equation33}), for some $C_2>0$, we have
\begin{align*}
\langle Q_2, v_s\rangle =  \langle X_{d\mu \cdot v_t} + \nabla_{v_t} X_\mu, v_s \rangle \geq -C_2 |v_s|^2.
\end{align*} 
By (\ref{equation35}), for some $C_3>0$ depending on the solution, we have
\begin{align*}
\begin{split}
 \langle Q_3, v_s\rangle  = &\  \Big\langle - R_t (v_s, v_t) v_t + \nabla_{v_t} X_\mu  + \left( {d\over dt} \nabla_s \right) v_t , v_s \Big\rangle\\
                        \geq &\  -C_3 \left(  |v_s|^4 + |v_s|^3 + |v_s|^2  \right).
												\end{split}
\end{align*}

Hence for some $C_4>0$ and $c_1, c_2>0$, we have
\begin{align*}
\begin{split}
{1\over 2} \Delta |v_s|^2 = &\ |D_s v_s|^2 + |D_t v_t|^2+ \langle Q_1 + Q_2 + Q_3,  v_s\rangle \\[0.2cm]
\geq &\ | D_s v_s|^2 +\langle Q_1 + Q_2 + Q_3, v_s\rangle \\
    \geq &\ |D_s v_s|^2- C_4 \left( \sum_{i=2}^4 |v_s|^i + \sum_{i=1}^2 |v_s|^i \left| D_s v_s \right| \right)\\
		\geq &\ - c_1 |v_s|^4 - c_2.
            \end{split}
							\end{align*}

For the other part of the energy density, we have
\begin{align*}
{1\over 2} \Delta \left| \mu(u) \right|^2 = \left| \nabla_{A, s} \mu \right|^2 + \left| \nabla_{A, t} \mu \right|^2 + \langle \nabla_{A, s} \nabla_{A, s} \mu + \nabla_{A, t} \nabla_{A, t} \mu, \mu \rangle.
\end{align*}
Moreover, (see \cite[Lemma C.2]{Gaio_Salamon_2005})
\begin{align*}
\begin{split}
&\ \nabla_{A, s} \nabla_{A, s} \mu(u) + \nabla_{A, t} \nabla_{A, t} \mu(u) \\[0.1cm]
= &\ \nabla_{A, s} d\mu \cdot v_s + \nabla_{A, t} d\mu \cdot v_t  = \nabla_{A, t} \left( d\mu \cdot J_t v_s \right) - \nabla_{A, s} \left( d\mu \cdot J_t v_t \right)\\
= &\ -2 \rho_t ( v_s, v_t) + d\mu \cdot J_t \left( \nabla_{A, t} v_s - \nabla_{A, s} v_t \right)  + d\mu \Big(\dot{J}_t v_s \Big)\\
= &\ d\mu \cdot \Big( J_t X_\mu + \dot{J}_t v_s \Big) - 2\rho_t (v_s, v_t) .
\end{split}
\end{align*}
Here $\rho_t$ is a ${\mf g}$-valued 2-form on $M$ defined by
\begin{align*}
\langle \rho_t ( \xi_1, \xi_2), \eta \rangle = \langle \nabla_{\xi_1} X_\eta, \xi_2 \rangle = - \langle \nabla_{\xi_2} X_\eta, \xi_1   \rangle,\  \xi_i \in TM,\  \eta \in {\mf g}.
\end{align*}
Since $\displaystyle \sup_{u(\Sigma),\ t\in [-1, 2]} \left| \rho_t \right| <\infty$, there exist $c_3, c_4>0$ such that
\begin{align*}
\Delta \left| \mu(u) \right|^2 \geq - c_3 - c_4  \left| v_s \right|^4.
\end{align*}

\end{proof}

\begin{proof}[Proof of Proposition \ref{prop32}, Part II] We quote the following lemma.
\begin{lemma} (cf. \cite[Page 12]{Salamon_lecture})\label{lemma35}
Let $r>0$ and Let $\Omega\subset {\mb R}^2$ be an open subset containing the origin and $e: \Omega \to [0, +\infty)$ satisfying 
\begin{align}\label{equation36}
\Delta e\geq -A- Be^2,
\end{align}
then for any disk $B_r(0)\subset \Omega$ centered at the origin, we have
\begin{align*}
\int_{B_r(0)} e\leq {\pi \over 16 B}\ \Longrightarrow \ e(0)\leq {8 \over \pi r^2} \int_{B_r(0)} e + {A r^2\over 4}.
\end{align*}
\end{lemma}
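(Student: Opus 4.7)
The plan is to follow the classical Heinz-type rescaling argument, combining a linear mean value inequality with a contradiction step to absorb the nonlinear term $-Be^2$.

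\textbf{Step 1 (Linear mean value inequality).} I would first establish the linear version: if a nonnegative function $u$ on $B_r(0)\subset {\mb R}^2$ satisfies $\Delta u\geq -A$, then
\begin{align*}
u(0)\leq \frac{1}{\pi r^2}\int_{B_r(0)} u + \frac{A r^2}{c}
\end{align*}
for an explicit universal constant $c$. This follows by observing that $u+A|x|^2/4$ is subharmonic (since $\Delta |x|^2=4$ in dimension two) and applying the classical mean value inequality for subharmonic functions, combined with $\int_{B_r(0)}|x|^2\,dx=\pi r^4/2$.

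\textbf{Step 2 (Heinz trick).} For the full nonlinear statement I argue by contradiction: assume $\int_{B_r(0)} e\leq \pi/(16B)$ but the desired pointwise bound on $e(0)$ fails. Introduce the auxiliary function $\phi(\rho):=(r-\rho)^2\sup_{\overline{B_\rho(0)}}e$ on $[0,r]$. Since $\phi$ is continuous with $\phi(r)=0$, it attains its maximum at some $\rho^*\in[0,r)$. Pick $x^*\in\overline{B_{\rho^*}(0)}$ realizing $e^*:=e(x^*)=\sup_{\overline{B_{\rho^*}(0)}}e$, and set $\delta:=(r-\rho^*)/2$. The maximality inequality $\phi(\rho^*+\delta)\leq \phi(\rho^*)$ immediately gives $\sup_{B_{\rho^*+\delta}(0)}e\leq 4e^*$, and in particular $e\leq 4e^*$ on the smaller ball $B_\delta(x^*)\subset B_{\rho^*+\delta}(0)$.

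\textbf{Step 3 (Absorption and contradiction).} On $B_\delta(x^*)$, the differential inequality (\ref{equation36}) becomes $\Delta e\geq -A - 16 B(e^*)^2$. Applying Step 1 to $e$ on this smaller ball yields
\begin{align*}
e^* \leq \frac{1}{\pi\delta^2}\int_{B_\delta(x^*)} e + C_0\bigl(A+16B(e^*)^2\bigr)\delta^2
\end{align*}
for a universal constant $C_0$. Using the hypothesis $\int_{B_\delta(x^*)} e\leq \int_{B_r(0)}e\leq \pi/(16B)$, multiplying through by $4\delta^2$, and rewriting in terms of $\phi(\rho^*)=4\delta^2 e^*$, this becomes a quadratic inequality in $\phi(\rho^*)$. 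The threshold $\pi/(16B)$ is tuned precisely so that the coefficient of the $\phi(\rho^*)^2$ term is strictly less than one, allowing the nonlinear contribution to be absorbed into the left-hand side. The resulting linear upper bound on $\phi(\rho^*)$, combined with $\phi(\rho^*)\geq \phi(0)=r^2 e(0)$, produces an upper bound on $e(0)$ that contradicts the supposed failure of the claimed inequality.

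The main obstacle I anticipate lies in Step 3: bookkeeping the universal constants so that the final bound produces exactly the stated $8/(\pi r^2)$ and $1/4$ rather than weaker versions. One must simultaneously handle the factor $4$ coming from the ratio $(r-\rho^*)^2/\delta^2$, the constant in the linear MVI of Step 1, and the inequality $\delta\leq r/2$; in particular, one may need to apply the linear MVI in its circular-average form on $\partial B_\delta(x^*)$ (which naturally produces the constant $1/4$) rather than the ball-average form (which would give $1/8$) in order to recover the stated coefficient on $A r^2$.
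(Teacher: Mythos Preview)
The paper does not actually prove this lemma; it is quoted verbatim as a known result with the citation ``cf.\ \cite[Page 12]{Salamon_lecture}'' and then applied. Your proposal correctly reproduces the standard Heinz rescaling argument that appears in that reference, so the approach is the right one and matches what the cited source does.
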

On the other hand, Lemma \ref{lemma34} implies that for $e = |v_s|^2 + |\mu(u)|^2$, there exist constants $A, B$ satisfying (\ref{equation36}) for $\Omega = (0, +\infty) \times (-1, 2)$. Therefore Proposition \ref{prop32} is proved by applying the above lemma, as in the case of ordinary Hamiltonian Floer theory.
\end{proof}

\subsection{Approaching to periodic orbits}

\begin{prop}\label{prop36}
Any bounded solution $(u, \Psi)$ to (\ref{equation24}) on $\Theta_+$ is gauge equivalent to a solution $(u', \Phi', \Psi')$ to (\ref{equation26})  on $\Theta_+$ with the following properties
\begin{enumerate}
\item $(u', \Psi')|_{\{s\}\times S^1}$ converges in $C^0$ to an element of ${\rm Zero} \wt{\mc B}_H$ as $s\to +\infty$;
\vspace{0.2cm}
\item $\displaystyle \lim_{s\to +\infty} \Phi'(s, \cdot) =0$ uniformly in $t$.
\end{enumerate}
\end{prop}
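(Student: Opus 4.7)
The plan is to combine the energy density decay from Proposition \ref{prop32} with a boundedness argument for $\Psi$ and a uniqueness-of-limit argument using nondegeneracy of the critical loops on $\ov M$.

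First, I would reduce to the case $H \equiv 0$ by conjugating with the Hamiltonian flow $\phi_H^t$, exactly as in Part I of the proof of Proposition \ref{prop32}. Proposition \ref{prop32} then yields $|\partial_s u(s,t)|^2 + |\mu(u(s,t))|^2 \to 0$ uniformly in $t \in S^1$ as $s \to +\infty$, and the second equation of (\ref{equation24}) immediately gives $\partial_s \Psi \to 0$ as well. A standard elliptic bootstrap for the gauged Cauchy--Riemann system then upgrades this to uniform control on all higher covariant derivatives of $(u, \Psi)$ on $[1, +\infty) \times S^1$.

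Next, to make $\Psi$ itself uniformly bounded (which is needed for loop-level compactness), observe that $\mu(u(s, t)) \to 0$ forces $u(s, t) \in U_\epsilon$ for $s$ large, where $G$ acts freely by Hypothesis \ref{hyp21}. On $U_\epsilon$ the infinitesimal action $\xi \mapsto X_\xi(x)$ admits a uniformly bounded left inverse, and the first equation of (\ref{equation24}) rewrites as $X_\Psi(u) = Y_{H_t}(u) - \partial_t u - J_t \partial_s u$, whose right-hand side is uniformly bounded by the previous step. Hence $|\Psi|$ is uniformly bounded on $\{s \ge s_0\} \times S^1$; combined with the analogous bound on $\partial_t \Psi$ obtained by differentiating that identity in $t$, Arzel\`a--Ascoli yields, along any sequence $s_n \to +\infty$, a subsequence for which $(u(s_n, \cdot), \Psi(s_n, \cdot)) \to (x_\infty, f_\infty)$ in $C^0(S^1, M \times {\mf g})$. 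Passing to the $s \to \infty$ limit in (\ref{equation24}) gives $\mu(x_\infty) \equiv 0$ and $x_\infty'(t) + X_{f_\infty(t)}(x_\infty) = Y_{H_t}(x_\infty)$, so $\wt x_\infty := (x_\infty, f_\infty) \in {\rm Zero}\,\wt{\mc B}_H$.

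The main obstacle is ruling out oscillation between distinct subsequential limits. By Proposition \ref{proposition27} together with Hypothesis \ref{hyp22}, the quotient ${\rm Zero}\,\wt{\mc B}_H / L_M G$ corresponds to the nondegenerate $1$-periodic orbits of $\ov H_t$ on $\ov M$ and is therefore discrete. If two distinct $L_M G$-orbits $[\wt x_\infty]$ and $[\wt x_\infty']$ arose as subsequential limits, the projected trajectory $[(u(s, \cdot), \Psi(s, \cdot))]$ in $\wt{\mc L}/L_M G$ would cross a ``gap'' between disjoint neighborhoods of these two orbits infinitely often. On the gap, working inside the compact set of $C^k$-bounded loops in $U_\epsilon$ provided by the bootstrap, a compactness argument forces the $L^2$-norm of the $\wt{\mc B}_H$-gradient to be bounded below by some $c > 0$. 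Each transit then costs a definite amount of squared $L^2$-gradient, contradicting the finite-energy bound $\int_0^\infty \|\nabla \wt{\mc A}_H(u(s,\cdot),\Psi(s,\cdot))\|_{L^2}^2 \,ds = E < \infty$. Hence $[(u(s, \cdot), \Psi(s, \cdot))]$ converges in $\wt{\mc L}/L_M G$ to a single class $[\wt x_\infty]$.

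Finally, to obtain the stated gauge representative, for $s$ large I choose a smoothly varying $h_s \in L_M G$ via a local slice for the $L_M G$-action near $\wt x_\infty$, so that $h_s^*(u(s, \cdot), \Psi(s, \cdot)) \to \wt x_\infty$ in $C^0$; the slice construction can be arranged so that $s \mapsto h_s$ is slowly varying, with $\partial_s h_s \to 0$ uniformly in $t$. Extend $s \mapsto h_s$ smoothly to all of $[0, +\infty)$ and set $g(s, t) := h_s(t)$. Under this gauge transformation the triple $(u, 0, \Psi)$ becomes a solution $(u', \Phi', \Psi')$ of (\ref{equation26}) with $\Phi'(s, t) = h_s(t)^{-1} \partial_s h_s(t) \to 0$ and $(u', \Psi')(s, \cdot) \to \wt x_\infty$ in $C^0$, as required.
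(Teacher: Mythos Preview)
Your strategy has a genuine gap at the step where you bound $\Psi$. You write $X_\Psi(u) = Y_{H_t}(u) - \partial_t u + J_t\,\partial_s u$ and claim the right-hand side is uniformly bounded ``by the previous step.'' But the elliptic bootstrap for the gauged Cauchy--Riemann system only controls \emph{gauge-invariant} quantities: $\partial_s u$ and the covariant $t$-derivative $v_t = \partial_t u + X_\Psi(u) - Y_{H_t}(u)$, together with their higher covariant derivatives. It does not bound the bare derivative $\partial_t u$. In fact the decomposition $\partial_t u = v_t + Y_{H_t}(u) - X_\Psi(u)$ shows that a bound on $\partial_t u$ is \emph{equivalent} to a bound on $\Psi$ (since $\xi\mapsto X_\xi$ has a bounded left inverse on $U_\epsilon$), so the argument is circular. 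Equivalently, in temporal gauge $\Psi(s,t) = \Psi(0,t) - \int_0^s \mu(u(\sigma,t))\,d\sigma$, and nothing established so far shows $\mu(u)$ lies in $L^1$ in $s$; Proposition~\ref{prop32} gives only uniform decay, not a rate. Without a bound on $\Psi$, your Arzel\`a--Ascoli step and the subsequent ``gap-crossing'' energy argument both break down, because the loops $(u(s,\cdot),\Psi(s,\cdot))$ need not lie in a compact subset of $C^0(S^1,M\times{\mf g})$.

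The paper's proof sidesteps this entirely: it never attempts to bound $\Psi$ in temporal gauge. Instead it projects $u$ to $\ov M$ (which kills the gauge direction), observes that $\ov u := \ov\pi_\mu\circ u$ is approximately a Floer trajectory for $\ov H$, and uses discreteness of the $1$-periodic orbits of $(\ov H_t)$ to identify a single limiting orbit $\ov\gamma$. It then constructs the gauge transformation explicitly: parallel-transport along $t$ using $\Psi$ to define $h_s(t)$ with $h_s^{-1}\partial_t h_s = \Psi$, compose with an $s$-dependent constant $g(s)\in G$ chosen via a finite-dimensional local slice in $\mu^{-1}(0)$ near a lift $p$ of $\ov\gamma(0)$, and twist by $\exp(t\xi(s))$. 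A computation shows $g(s)h_s(1)g(s)^{-1}\to g_p$, which forces the gauge-transformed connection to converge to the constant $\xi_p\,dt$ and the $\Phi'$-component to tend to $0$. This construction is indifferent to whether $\Psi$ is bounded in the original temporal gauge.
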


\begin{proof} 
By Proposition \ref{prop61}, there exists a neighborhood $U$ of $\mu^{-1}(0)$ and a diffeomorphism
\begin{align}\label{equation37}
i_U: U \to \mu^{-1}(0) \times {\mf g}_\epsilon^*
\end{align}
where ${\mf g}^*$ is the $\epsilon$-open ball of ${\mf g}^*$ centered at the origin, such that $\mu\circ i_U^{-1}$ is equal to the projection $\mu^{-1}(0) \times {\mf g}^* \to {\mf g}_\epsilon^*$. Let $\pi_\mu$ be the projection on to the first factor, and define the almost complex structure
\begin{align*}
J_{0, t}: =  \pi_\mu^* \left(  J_t \left|_{\mu^{-1}(0)} \right. \right)
\end{align*}
and the vector field
\begin{align*}
Y_{0, H_t}:= \pi_\mu^*\left(  Y_{H_t} \left|_{\mu^{-1}(0)} \right. \right).
\end{align*}
Then there exists $K_1 >0$ depending on $(M, \omega, J, \mu, H_t)$ such that
\begin{align*}
\left| J_{0, t}(x)- J_t (x)\right|\leq K_1 |\mu(x)|,\ \left| Y_{0, H_t}(x)- Y_{H_t}(x)\right| \leq K_1 | \mu(x)|,\ \forall x\in U.
\end{align*}
We denote $\ov\pi: U \to \ov{M}$ the composition of $\pi_\mu$ with the projection $\mu^{-1}(0) \to \ov{M}$. 

If $(u, 0, \Psi)\in \wt{\mc M}_{\Theta_+}^b$, then by Proposition \ref{prop32}, $\mu(u)$ converges to $0$ uniformly as $s\to + \infty$. So for $N$ sufficiently large, $u(s, \cdot)$ maps $\Theta_+^N:= [N, +\infty )\times S^1$ into $U$. So on $\Theta_+^N$, we have
\begin{multline*}
\partial_s u+ J_{0, t} \left( \partial_t u + X_\Psi (u)-  Y_{0, H_t}(u)\right) \\
= (J_{0, t}- J_t ) \left( \partial_t u + X_\Psi(u)- Y_{0, H_t}\right) + J_t \left( Y_{H_t}- Y_{0, H_t}\right) .
\end{multline*}
Denoting $\ov{u}:= \ov\pi_\mu \circ u: \Theta_+^N \to \ov{M}$ and applying $\left( \ov\pi_\mu \right)_*$ to the above equality, we see
\begin{align*}
\left| \partial_s \ov{u} + \ov{J}_t \left( \partial_t \ov{u} - Y_{\ov{H}_t} (\ov{u}) \right) \right| \leq K_2 \epsilon
\end{align*}
for some constant $K_2$. Here $\ov{J} = (\ov{J}_t)$ is the induced family of almost complex structures on $\ov{M}$. Hence for $s \geq N$, the family of loops $\ov{u}(s, \cdot)$ in the quotient will be close (in $C^0$) to some 1-periodic orbits $\gamma: S^1 \to \ov{M}$ of $Y_{\ov{H}_t}$. 

Take a lift $p\in \mu^{-1}(0)$ with $\ov\pi(p)= \gamma(0)$. Then there exists a unique $g_p \in G$ such that
\begin{align}\label{equation38}
\phi_H^1 (p)= g_p p.
\end{align}
Suppose $g_p= \exp \xi_p$, $\xi_p\in {\mf g}$. It is easy to see that the loop
\begin{align*}
(x(t), f(t)) :=  \left( \exp(-t \xi_p) \phi_H^t (p), \xi_p \right)
\end{align*}
is an element of ${\rm Zero} \wt{\mc B}_H$. We will construct a gauge transformation $\wt{g}$ on $\Theta_+$ and show that $\wt{g}^* (u,0, \Psi)$ satisfies the condition stated in this proposition.

Take a local slice of the $G$-action near $p$. That is, an embedding $i: B^{2n-2k}_\delta \to \mu^{-1}(0)$ where $B^{2n-2k}_\delta$ is the $\delta$-ball in ${\mb R}^{2n-2k}$ such that $i(0)=p$ and $(y, g)\mapsto g(i(y))$ is a diffeomorphism from $B^{2n-2k}_\delta \times G$ onto its image. 

Denote $u(s, t)= \left( v(s, t), \xi(s, t)\right) \in \mu^{-1}(0)\times {\mf g}^*$ with respect to (\ref{equation37}). Then for $s$ large enough, there exists a unique $g(s)\in G$ such that 
 \begin{align}\label{equation39}
g(s) v(s, 0)\in i \left( B_\delta^{2n-2k} \right),\ g(s) v(s, 0)\to p.
\end{align}
Moreover, by the fact that $|\partial_s u|$ converges to zero, we see 
\begin{align}\label{equation310}
\lim_{s\to +\infty} \left| g(s)^{-1} \dot{g}(s)\right| =0. 
\end{align} 

Define $h_s(t) \in G$ by
\begin{align*}
h_s(0) =1,\  h_s(t)^{-1} {\partial h_s(t) \over \partial t} = \Psi (s, t).
\end{align*}
Then by the fact that $\lim_{s\to +\infty} \left| \partial_s \Psi \right| = 0$ we see that
\begin{align}\label{equation311}
\lim_{s\to \infty} \left| \partial_s \log h_s(t) \right| = 0.
\end{align}
 
Thus we have
\begin{align*}
\begin{split}
& \ \ \ \ d\left( g_p p, g(s) h_s(1) g(s)^{-1} p\right)\\
 & \leq   d\left( g_p p, \phi_H^1 g(s) v(s, 0) \right) + d\left( \phi_H^1 g(s) v(s, 0), g(s) h_s(1) v(s, 0)  \right) \\
  &\ \ \ \ + d \left( g(s) h_s(1) v(s, 0), g(s) h_s(1) g(s)^{-1} p\right)  \\
 & =   d\left( g_p p, \phi_H^1 g(s) v(s, 0) \right) + d\left( \phi_H^1 v(s, 0), h_s(1) v(s, 0)  \right)+  d \left(  g(s) v(s, 0),  p\right) \\
  & =:  d_1(s) + d_2(s) + d_3(s).
	\end{split}
	\end{align*}
Here $d$ is the $G$-invariant distance function induced by an invariant Riemannian metric. By (\ref{equation38}) and (\ref{equation39}), we have $d_1(s) + d_3(s) \to 0$. By the decay of energy density, i.e., 
\begin{align*}
 \lim_{s\to +\infty} \sup_t \left| \partial_t u + X_\Psi - Y_{H_t}\right| = 0,
\end{align*}
we have $d_2(s) \to 0$. Hence we have
\begin{align*}
\lim_{s\to +\infty} d\left( g_p p, g(s) h_s(1) g(s)^{-1} p\right) = 0.
\end{align*}
Since the $G$-action on $\mu^{-1}(0)$ is free, we have
\begin{align}\label{equation312}
\lim_{s\to +\infty} g(s) h_s(1) g(s)^{-1} = g_p
\end{align}

Then by (\ref{equation312}), there exists a continuous curve $\xi(s) \in {\mf g}$ defined for large $s$, such that 
\begin{align*}
g(s) h_s(1) g(s)^{-1}=\exp \xi(s),\  \lim_{s\to +\infty} \xi(s)=  \xi_p.
\end{align*}	

Then apply the gauge transformation
\begin{align}\label{equation313}
\widetilde{g}(s, t)= h_s(t)^{-1} g(s)^{-1} \exp(t \xi(s))
\end{align}
to the pair $(u, 0, \Psi)$, we see that
\begin{align*}
\left( \widetilde{g}^* u \right) (s, 0)= \widetilde{g}(s, 0)^{-1}(u(s, 0))= g(s) u(s, 0) \to p,\ \widetilde{g}^* \left( \Psi dt \right) = \xi dt + \wt{g}^{-1} \partial_s \wt{g} ds.
\end{align*}
By (\ref{equation310}) and (\ref{equation311}), we have
\begin{align*}
\lim_{s \to +\infty} \left\| \wt{g}^{-1}(s, \cdot) \partial_s \wt{g}(s, \cdot) \right\| = 0.
\end{align*}
Therefore
\begin{align*}\lim_{s\to +\infty} \left.  \left( \widetilde{g}^* u \right) \right|_{\{s\}\times S^1} = \left( \exp(-t \xi_p) \phi_H^t p, \xi_p \right)\in {\rm Zero} \wt{\mc B}_H.
\end{align*}
\end{proof}

\begin{defn}
Let $\wt{x}_\pm:= (x_\pm, f_\pm) \in {\rm Zero} \wt{\mc B}_H$. We denote 
\begin{align*}
\wt{\mc M}\left( \wt{x}_-, \wt{x}_+; J, H \right):= \left\{ (u, \Phi, \Psi) \in \wt{\mc M}_\Theta^b \ |\ \lim_{s\to \pm\infty} (u, \Phi, \Psi) |_{\{s\}\times S^1} = ( \wt{x}_\pm, 0)\right\}.
\end{align*}
For ${\mf x}_\pm = \left( \wt{x}_\pm, [w_\pm] \right) \in {\rm Crit} \wt{\mc A}_H$ which projects to $\wt{x}_\pm$ via ${\rm Crit} \wt{\mc A}_H  \to {\rm Zero} \wt{\mc B}_H$, we define
\begin{align*}
\wt{\mc M}\left( {\mf x}_-, {\mf x}_+; J, H \right) : = \left\{ (u, \Phi, \Psi) \in \wt{\mc M}(\wt{x}_-, \wt{x}_+) \ |\  \left[ u\# w_-\right] = \left[ w_+\right]  \right\}.
\end{align*}
Sometimes we omit the dependence on $J$ and $H$ in the notations.
\end{defn}

It is easy to deduce the following energy identity for which we omit the proof.
\begin{prop}\label{prop38}Let ${\mf x}_\pm \in {\rm Crit} \wt{\mc A}_H$. Then for any $(u, \Phi, \Psi) \in \wt{\mc M}\left( {\mf x}_-, {\mf x}_+ \right)$, we have
\begin{align*}
E\left( u, \Phi, \Psi \right) = \wt{\mc A}_H\left( {\mf x}_- \right)- \wt{\mc A}_H \left( {\mf x}_+ \right).
\end{align*}
\end{prop}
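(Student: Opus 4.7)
The plan is a direct pointwise computation: use the two components of the vortex equation (\ref{equation26}) to rewrite the energy density as $u^{*}\omega$ plus a total divergence, then apply Stokes' theorem together with the asymptotic decay established in Subsection \ref{subsection31}. The key algebraic inputs are the moment-map identities $\omega(X_{\xi}, Y) = d\mu(Y)\cdot \xi$ and the equivariance-derived formula $\omega(X_{\Phi}, X_{\Psi}) = \mu\cdot [\Phi, \Psi]$, together with the $G$-invariance of $H_{t}$, which gives $\omega(X_{\Phi}, Y_{H_{t}}) = 0$.

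First I would use the $J_{t}$-compatibility of the metric together with the first equation of (\ref{equation26}), which yields $\partial_{t} u + X_{\Psi} - Y_{H_{t}} = J_{t}(\partial_{s} u + X_{\Phi})$, to write
\begin{align*}
|\partial_{s} u + X_{\Phi}|^{2} = \omega\bigl(\partial_{s} u + X_{\Phi},\ \partial_{t} u + X_{\Psi} - Y_{H_{t}}\bigr).
\end{align*}
Expanding the right-hand side via the moment-map identities above and grouping terms, one obtains $\omega(\partial_{s} u, \partial_{t} u) + \partial_{s}\bigl( H_{t}(u) - \mu(u)\cdot \Psi\bigr) + \partial_{t}\bigl( \mu(u)\cdot \Phi\bigr)$, plus the combination $\mu(u)\cdot \bigl( \partial_{s}\Psi - \partial_{t}\Phi + [\Phi, \Psi]\bigr)$. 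The second equation of (\ref{equation26}) converts this last combination into exactly $-|\mu(u)|^{2}$, so that
\begin{align*}
|\partial_{s} u + X_{\Phi}|^{2} + |\mu(u)|^{2} = \omega(\partial_{s} u, \partial_{t} u) + \partial_{s}\bigl( H_{t}(u) - \mu(u)\cdot \Psi\bigr) + \partial_{t}\bigl( \mu(u)\cdot \Phi\bigr).
\end{align*}

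Next I would integrate over $\Theta$. The $\partial_{t}(\cdots)$ term vanishes by $t$-periodicity. For the $\partial_{s}(\cdots)$ term, Proposition \ref{prop36} provides the convergence $\Phi \to 0$ and $(u, \Psi) \to (x_{\pm}, f_{\pm})$ as $s \to \pm\infty$, so the fundamental theorem of calculus produces the boundary term $\int_{S^{1}}[\mu(x_{-})\cdot f_{-} - H_{t}(x_{-})]\, dt - \int_{S^{1}}[\mu(x_{+})\cdot f_{+} - H_{t}(x_{+})]\, dt$. Finally, $\int_{\Theta} \omega(\partial_{s} u, \partial_{t} u)\, ds\, dt = \int_{\Theta} u^{*}\omega = \int_{\mb D} w_{+}^{*}\omega - \int_{\mb D} w_{-}^{*}\omega$, where the second equality uses the capping condition $[u \# w_{-}] = [w_{+}]$ (built into the definition of $\wt{\mc M}({\mf x}_{-}, {\mf x}_{+})$) together with asphericality of $M$, which ensures that any spherical class in $M$ pairs trivially with $\omega$. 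Assembling the three contributions and comparing with the formula for $\wt{\mc A}_{H}$ yields $E(u, \Phi, \Psi) = \wt{\mc A}_{H}({\mf x}_{-}) - \wt{\mc A}_{H}({\mf x}_{+})$. No step is a serious obstacle: the algebra in Step 1 is standard but sign-sensitive given the anti-homomorphism convention $[X_{\xi}, X_{\eta}] = -X_{[\xi, \eta]}$, convergence of the improper integrals is guaranteed by the energy-density decay of Proposition \ref{prop32}, and the identification of $\int_{\Theta} u^{*}\omega$ with a difference of cap integrals crucially uses that $M$ is aspherical rather than just the weaker equivariant condition defining $N_{2}^{G}(M)$.
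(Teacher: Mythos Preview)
Your proof is correct and is essentially the standard computation the paper has in mind; the paper omits the proof of Proposition \ref{prop38}, but the identical manipulation appears in the proof of Proposition \ref{prop75} (the continuation version), carried out there after passing to temporal gauge $\Phi\equiv 0$. Two minor remarks: the asymptotics $\Phi\to 0$, $(u,\Psi)\to (x_\pm,f_\pm)$ are already part of the definition of $\wt{\mc M}({\mf x}_-,{\mf x}_+)$ (Definition below Proposition \ref{prop36}), so you need not invoke Proposition \ref{prop36}; and for the step $\int_\Theta u^*\omega=\int_{\mb D}w_+^*\omega-\int_{\mb D}w_-^*\omega$, the capping condition $[u\# w_-]=[w_+]$ alone already forces the $\omega$-integrals to match (since vanishing in $S_2^G(M)$ implies vanishing pairing with $[\omega^G]$, which for a sphere in $M$ is just $\int\omega$), so asphericality is not strictly needed here.
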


\subsection{Convexity and uniform bound on flow lines}

We will show in this subsection the following	uniform boundedness of bounded solutions. One can compare it with \cite[Theorem 3.9]{Frauenfelder_thesis}. 
\begin{prop}
There exists a compact subset $K_H \subset M$ such that for any $(u, \Phi, \Psi) \in \wt{\mc M}_\Theta^b$, $u(\Theta) \subset K_H$.
\end{prop}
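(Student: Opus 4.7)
The plan is to apply a maximum principle to the proper $G$-invariant function $\mf{f}$ restricted along $u$, using Hypothesis \ref{hyp23} to establish subharmonicity of $\mf{f}(u)$ in the region far from $\mu^{-1}(0)$. Choose $\mf{c}_1 \geq \mf{c}_0$ large enough so that the sublevel set $K_1 := \{\mf{f} \leq \mf{c}_1\}$ contains the compact support of $H$ and all points where $J_t \neq \mf{J}$. On the open set $V := \{(s,t) \in \Theta : \mf{f}(u(s,t)) > \mf{c}_1\}$, equation (\ref{equation26}) restricted to $V$ reduces to the unperturbed symplectic vortex equation for the pair $(\mf{J}, \mu)$, since $Y_{H_t}(u) = 0$ and $J_t(u) = \mf{J}(u)$ there.

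The key analytic step is to show that $\Delta \mf{f}(u) \geq 0$ on $V$. Since $\mf{f}$ is $G$-invariant, $d\mf{f} \cdot X_\eta \equiv 0$ for every $\eta \in {\mf g}$, and hence $\partial_s \mf{f}(u) = d\mf{f}(\partial_s u + X_\Phi)$ and $\partial_t \mf{f}(u) = d\mf{f}(\partial_t u + X_\Psi)$. A direct computation, already carried out in the context of the symplectic vortex equation in \cite{Cieliebak_Gaio_Mundet_Salamon_2002} and \cite{Gaio_Salamon_2005}, expresses $\Delta \mf{f}(u)$ as the sum of a Hessian term
\begin{align*}
\langle \nabla_\xi \nabla \mf{f}, \xi \rangle + \langle \nabla_{\mf{J} \xi} \nabla \mf{f}, \mf{J}\xi \rangle \quad \text{with } \xi = \partial_s u + X_\Phi,
\end{align*}
plus a term proportional to $d\mf{f} \cdot \mf{J}X_{\mu(u)}$ arising from the curvature equation $\partial_s \Psi - \partial_t \Phi + [\Phi, \Psi] + \mu(u) = 0$. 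Both of these are nonnegative on $V$ by Hypothesis \ref{hyp23}, yielding $\Delta \mf{f}(u) \geq 0$ there.

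To conclude, observe that by Proposition \ref{prop36} (together with its analogue at $-\infty$), any bounded solution, after a gauge transformation, has $u(s, \cdot)$ converging uniformly in $t$ to a loop lying in $\mu^{-1}(0)$ as $s \to \pm\infty$. Since $\mu$ is proper by Hypothesis \ref{hyp21}, $\mu^{-1}(0)$ is compact and hence $C_0 := \sup_{\mu^{-1}(0)} \mf{f} < \infty$ is a universal constant. Set $C := \max(\mf{c}_1, C_0)$ and $K_H := \{\mf{f} \leq C\}$, which is compact by properness of $\mf{f}$, and which is gauge-invariant since $\mf{f}$ is $G$-invariant. If $u(s_0, t_0) \notin K_H$ for some $(s_0, t_0)$, then on any large rectangle $[-R,R] \times S^1$ containing $(s_0, t_0)$, apply the maximum principle to the subharmonic function $\mf{f}(u) - C$ on the open set $V \cap ((-R, R) \times S^1)$. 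The boundary of this set consists of portions of $\{\mf{f}(u) = \mf{c}_1\}$, where $\mf{f}(u) - C \leq 0$, and portions of $\{s = \pm R\}$, where $\mf{f}(u) \leq C_0 + o(1) \leq C + o(1)$ for large $R$ by the asymptotic convergence. Letting $R \to \infty$ produces $\mf{f}(u(s_0,t_0)) \leq C$, a contradiction.

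The main obstacle is the subharmonicity computation $\Delta \mf{f}(u) \geq 0$, which requires carefully combining both inequalities in Hypothesis \ref{hyp23} with the full gauge-covariant form of (\ref{equation26}); however this is now standard from the vortex-equation literature and no new idea is needed.
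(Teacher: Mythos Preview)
Your proof is correct and follows essentially the same approach as the paper: establish subharmonicity of $\mf{f}(u)$ on the region where $\mf{f}(u)$ exceeds a threshold (so that $H$ vanishes and $J_t = \mf{J}$), then use the asymptotic decay $\mu(u) \to 0$ at $\pm\infty$ to bound $\mf{f}(u)$ there and apply the maximum principle. The only cosmetic differences are that the paper invokes the strong maximum principle directly (arguing that $\mf{f}(u)$ would have to be constant if it attained an interior maximum above the threshold), whereas you use the weak maximum principle on exhausting rectangles and pass to the limit; and the paper bounds the asymptotic values via $\sup_{|\mu| \leq 1} \mf{f}$ rather than $\sup_{\mu^{-1}(0)} \mf{f}$, which avoids appealing to Proposition~\ref{prop36} and uses only the decay $\mu(u) \to 0$ from Proposition~\ref{prop32}.
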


\begin{proof}
The proof is to use maximum principle as in \cite[Subsection 2.5]{Cieliebak_Gaio_Mundet_Salamon_2002}. We claim that this proposition is true for 
\begin{align*}
K_H={\rm Supp} H \cup {\mf f}^{-1} \left( [0, {\mf c}_1] \right)
\end{align*}
where 
\begin{align*}
{\mf c}_1 = \max \left\{ {\mf c}_0, \sup_{|\mu(x)|\leq 1} {\mf f}(x) \right\}
\end{align*}
where ${\mf c}_0$ is the one in Hypothesis \ref{hyp23}. Suppose the claim is not true. Then there exists a solution $\wt{u} = (u, \Phi, \Psi)\in\wt{\mc M}_\Theta^b$ which violates this condition and $(s_0, t_0) \in \Theta$ such that $u(s_0, t_0) \notin {\rm Supp}H$ and ${\mf f}(u(s_0, t_0)) > {\mf c}_1$. On the other hand, by the previous results, we know that $\displaystyle \lim_{s \to \pm\infty} \mu(u(s, t)) = 0$ so $\displaystyle \lim_{s\to \pm \infty} {\mf f}(u(s, t)) \leq {\mf c}_1$. Hence ${\mf f}(u)$ achieves its maximum at some point of $\Theta$. As in the proof of \cite[Lemma 2.7]{Cieliebak_Gaio_Mundet_Salamon_2002}, we see that ${\mf f}(u)$ is subharmonic on $u^{-1} \left( M \setminus K_H \right)$ and hence ${\mf f}(u)$ must be constant. However, this contradicts with the fact that $\displaystyle \lim_{s\to \pm \infty} {\mf f}(u(s, t)) \leq {\mf c}_1$. 
\end{proof}

\section{Fredholm theory}\label{section4}
	
In this section we investigate the infinitesimal deformation theory of solutions to our equation (modulo gauge). 

Compared to the Fredholm theory studied in \cite[Section 4.1-4.3]{Frauenfelder_thesis}, this section has some minor difference. Since we only consider nondegenerate case, rather than the Morse-Bott case in \cite{Frauenfelder_thesis}, we don't need weighted Sobolev spaces to define the Banach manifolds. Moreover, the Fredholm theory is essentially determined by the limiting self-adjoint operators $R_{\wt{x}_\pm}$ (see Section \ref{section42}); if they have trivial kernel, the Fredholm theory is well-understood. The reader can refer to \cite[Section 2]{Salamon_lecture} for similar treatment in the classical case, and to \cite{Robbin_Salamon_spectral} in the abstract situation. We don't reprove properties about the spectrum of $R_{\wt{x}_\pm}$ as did in \cite[Section 4.1]{Frauenfelder_thesis} because they are standard results.

\subsection{Banach manifolds, bundles, and local slices}

First we fix two loops $\wt{x}_\pm \in {\rm Zero}\wt{\mc B}_H$. For any $k \geq 1$, $p>2$, we consider the space of $W^{k, p}_{loc}$-maps $\wt{u}:= (u, \Phi, \Psi): \Theta \to M \times {\mf g}\times {\mf g}$, such that $\Phi \in W^{k, p}\left( \Theta, {\mf g} \right)$ and $(u, \Psi)$ is asymptotic to $\wt{x}_\pm = (x_\pm, f_\pm)$ at $\pm \infty$ in $W^{k, p}$-sense. This means the following: there exist $T>0$ and $(\xi_-, \eta_-) \in \Gamma( (-\infty, -T] \times S^1, x_-^* TM \oplus {\mf g})$, $(\xi_+, \eta_+) \in \Gamma([T, +\infty) \times S^1, x_+^* TM \oplus {\mf g})$ such that
\begin{align*}
u(\pm s, t) = \exp_{x_\pm (t)} \xi_\pm (\pm s, t),\ \Psi(\pm s, t) = f_\pm(t) + \eta_\pm(\pm s, t),\  \forall s\geq T.
\end{align*}
Here we pull back $x_\pm$ to any cylinder $[a, b]\times S^1$ and still denote them by $x_\pm$.

The space of such objects is a separable Banach manifold, which we denote by
\begin{align*}
\wt{\mc B}^{k, p}( \wt{x}_-, \wt{x}_+).
\end{align*}
The tangent space at any element $\wt{u} \in \wt{\mc B}^{k, p}$ is the Sobolev space
\begin{align*}
T_{\wt{u}} \wt{\mc B}^{k, p}( \wt{x}_-, \wt{x}_+) = W^{k, p} \left( \Theta, u^* TM \oplus {\mf g} \oplus {\mf g} \right).
\end{align*}
We denote by $\wt{\exp}^t$ the exponential map of $M \times {\mf g}\times {\mf g}$, where the Riemannian metric on $M$ is $\omega( \cdot, J_t \cdot )$ which is $t$-dependent. Then the map $\wt\xi \mapsto \wt\exp^t_{\wt{u}} \wt\xi$ is a local diffeomorphism from a neighborhood of $0 \in T_{\wt{u}} \wt{\mc B}^{k, p}( \wt{x}_-, \wt{x}_+) $ and a neighborhood of $\wt{u}$ in $\wt{\mc B}^{k, p}(\wt{x}_-, \wt{x}_+)$.

Then consider a pair ${\mf x}_\pm = (x_\pm, f_\pm, [w_\pm] )\in {\rm Crit} \wt{\mc A}_H$ with $\wt{x}_\pm = (x_\pm, f_\pm) \in {\rm Zero} \wt{\mc B}_H$. We define
\begin{align*}
\wt{\mc B}^{k, p}({\mf x}_-, {\mf x}_+) := \Big\{ \wt{u} = (u, \Phi, \Psi) \in \wt{\mc B}^{k, p} ( \wt{x}_-, \wt{x}_+ )\ |\  [w_- \# u] = [w_+] \Big\}.
\end{align*}
This is the union of some connected components of $\wt{\mc B}^{k, p}(\wt{x}_-, \wt{x}_+)$. 

Let ${\mc G}^{k+1, p}_0$ be the space of $W^{k+1, p}_{loc}$-maps $g: \Theta \to G$ which are asymptotic to the identity of $G$ at $\pm \infty$ in $W^{k+1, p}$-sense. This means the following: there exists $T>0$ and $\eta_- \in W^{k+1, p}((-\infty, -T]\times S^1, {\mf g})$, $\eta_+ \in W^{k+1, p}([T, +\infty) \times S^1, {\mf g})$ such that
\begin{align*}
g(\pm s, t) = \exp \eta_\pm(\pm s, t),\ \forall s\geq T.
\end{align*}
Then ${\mc G}^{k+1, p}_0$ is a Banach Lie group. The gauge transformation extends to a free ${\mc G}^{k+1, p}_0$-action on $\wt{\mc B}^{k, p}(\wt{x}_-, \wt{x}_+)$ (resp. $\wt{\mc B}^{k, p}({\mf x}_-, {\mf x}_+)$), because the symplectic quotient $\ov{M}$ is a free quotient. Then this makes the quotient
\begin{align*}
{\mc B}^{k, p}(\wt{x}_-, \wt{x}_+):= \wt{\mc B}^{k, p} (\wt{x}_-, \wt{x}_+) / {\mc G}_0^{k+1, p}\ \left( {\rm resp.}\ {\mc B}^{k, p} ( {\mf x}_-, {\mf x}_+) := \wt{\mc B}^{k, p}({\mf x}_-, {\mf x}_+) / {\mc G}^{k+1, p}_0 \right)
\end{align*}
a Banach manifold. Indeed, to see this we have to construct local slices of the ${\mc G}^{k+1, p}_0$-action. For any $\wt{u}\in \wt{\mc B}^{k, p}(\wt{x}_-, \wt{x}_+)$ (whose image in ${\mc B}^{k, p}( \wt{x}_-, \wt{x}_+)$ is denoted by $[\wt{u}]$), consider the operator
\begin{align*}
\begin{array}{cccc}
d_0^*: & T_{\wt{u}} \wt{\mc B}^{k, p} (\wt{x}_-, \wt{x}_+) & \to & W^{k-1, p} \left( \Theta, {\mf g} \right)\\
 & ( \xi, \phi, \psi) & \mapsto & - d\mu(J_t \xi) - \partial_s \phi - [ \Phi, \phi] - \partial_t \psi - [\Psi, \psi],
\end{array}
\end{align*}
which is the formal adjoint of the infinitesimal ${\mc G}^{k+1, p}_0$-action. Then as in gauge theory, we have a natural identification
\begin{align}\label{equation41}
T_{[\wt{u}]} {\mc B}^{k, p} \left( \wt{x}_-, \wt{x}_+ \right) \simeq {\rm ker} d_0^*
\end{align}
(where the orthogonal complement is taken with respect to the $L^2$-inner product) and the exponential map $\wt{\exp}^t$ induces a local diffeomorphism
\begin{align*}
 {\rm ker} d_0^* \ni \wt\xi \mapsto \left[ \wt\exp^t_{\wt{u}} \wt\xi \right] \in {\mc B}^{k, p} \left( \wt{x}_-, \wt{x}_+ \right).
\end{align*}

If we have $g_\pm \in L_0 G$, and ${\mf x}_\pm' = g_\pm^* {\mf x}_\pm \in {\rm Crit} \wt{\mc A}_H$, then the pair $(g_-, g_+)$ extends to a smooth gauge transformation on $\Theta$ which identifies $\wt{\mc B}^{k, p}({\mf x}_-, {\mf x}_+)$ with $\wt{\mc B}^{k, p}({\mf x}_-', {\mf x}_+')$. Then, if $[{\mf x}_\pm] \in {\rm Crit} \wt{\mc A}_H / L_0 G$, then we can denote by ${\mc B}^{k, p}([{\mf x}_-], [{\mf x}_+])$ to be the common quotient space. Finally, each $A\in S_2^G(M)$ defines a natural identification
\begin{align*}
{\mc B}^{k, p}\left( [{\mf x}_-], [{\mf x}_+] \right) \to {\mc B}^{k, p} \left( A\# [{\mf x}_-], A\# [{\mf x}_+] \right).
\end{align*}
Then for two pairs $\llbracket{\mf x}_\pm\rrbracket \in {\rm Crit} {\mc A}_H$, consider
\begin{align*}
\bigcup_{ [{\mf y}_\pm] \in {\rm Crit} \wt{\mc A}_H/ L_0 G,\ \llbracket{\mf y}_\pm\rrbracket = \llbracket {\mf x}_\pm\rrbracket } {\mc B}^{k, p}( [{\mf y}_-], [{\mf y}_+]),
\end{align*}
which has a natural action by $N_2^G(M) \subset S_2^G(M)$. We define
\begin{align*}
{\mc B}^{k, p}(\llbracket{\mf x}_-\rrbracket, \llbracket{\mf x}_+\rrbracket) := \left( \bigcup_{ [{\mf y}_\pm] \in {\rm Crit} \wt{\mc A}_H/ L_0 G,\ \llbracket{\mf y}_\pm\rrbracket = \llbracket {\mf x}_\pm\rrbracket } {\mc B}^{k, p}( [{\mf y}_-], [{\mf y}_+]) \right) / N_2^G(M).
\end{align*}

Over $\wt{\mc B}^{k, p}({\mf x}_-, {\mf x}_+)$, we have the smooth Banach space bundle $\wt{\mc E}^{k-1, p}({\mf x}_-, {\mf x}_+)$, whose fibre over $\wt{u}$ is the Sobolev space
\begin{align*}
\wt{\mc E}^{k-1, p}_{\wt{u}}  \left( {\mf x}_-, {\mf x}_+ \right):= W^{k-1, p} \left( u^* TM \oplus {\mf g} \right).
\end{align*}
The ${\mc G}^{k+1, p}_0$-action makes $\wt{\mc E}^{k-1, p}$ an equivariant bundle, hence descends to a Banach space bundle
\begin{align*}
{\mc E}^{k-1, p} ( {\mf x}_-, {\mf x}_+) \to {\mc B}^{k, p} ({\mf x}_-, {\mf x}_+)\ \left( {\rm or}\ {\mc E}^{k-1, p} \left( \llbracket{\mf x}_-\rrbracket, \llbracket{\mf x}_+\rrbracket \right) \to {\mc B}^{k, p} \left( \llbracket{\mf x}_-\rrbracket, \llbracket{\mf x}_+ \rrbracket \right) \right).
\end{align*}
Moreover, the $H$-perturbed vortex equation (\ref{equation26}) gives a section
\begin{align*}
\wt{\mc F}: \wt{\mc B}^{k, p}\left( {\mf x}_-, {\mf x}_+ \right) \to \wt{\mc E}^{k-1, p} ( {\mf x}_-, {\mf x}_+)
\end{align*}
which is ${\mc G}^{k+1, p}_0$-equivariant. So it descends to a section 
\begin{align*}
\begin{array}{c}
{\mc F} : {\mc B}^{k, p}\left( [ {\mf x}_- ] , [ {\mf x}_+ ] \right) \to {\mc E}^{k-1, p} \left( [ {\mf x}_- ], [ {\mf x}_+ ] \right),\\
 {\rm or}\\
{\mc F} : {\mc B}^{k, p}\left( \lbr {\mf x}_- \rbr , \lbr {\mf x}_+ \rbr \right) \to {\mc E}^{k-1, p} \left( \lbr {\mf x}_- \rbr, \lbr {\mf x}_+ \rbr \right).
\end{array}
\end{align*}
Then we see that $\wt{\mc M}\left( {\mf x}_-, {\mf x}_+\right)$ is the intersection of $\wt{\mc F}^{-1}(0) \subset \wt{\mc B}^{k, p} \left( {\mf x}_-, {\mf x}_+ \right)$ with smooth objects. We define
\begin{align*}
{\mc M}\left( \lbr {\mf x}_- \rbr , \lbr {\mf x}_+ \rbr \right) = {\mc F}^{-1}(0),
\end{align*}
whose elements, by the standard regularity theory about symplectic vortex equation (see \cite[Theorem 3.1]{Cieliebak_Gaio_Mundet_Salamon_2002}), all have smooth representatives. Therefore ${\mc M}\left( \lbr {\mf x}_- \rbr , \lbr {\mf x}_+ \rbr\right)$ is independent of $k, p$. 

The linearization of $\wt{\mc F}$ at $\wt{u}$ reads
\begin{multline*}
\wt{D}_{\wt{u}}(\xi, \phi, \psi):= d \wt{\mc F}_{\wt{u}} (\xi, \phi, \psi) \\
:= \left( \begin{array}{c} \nabla_{A, s}\xi + (\nabla_\xi J_t) \left( \partial_t u + X_\Psi - Y_{H_t} \right) + J_t \left( \nabla_{A, t} \xi  - \nabla_\xi Y_{H_t}\right) + X_\phi + J X_\psi \\
        \partial_s \psi+ [\Phi, \psi] - \partial_t \phi - [\Psi, \phi] + d\mu(\xi)              \end{array}\right).
\end{multline*}
Hence the linearization of ${\mc F}$ at $[\wt{u}] \in {\mc B}^{k, p} \left( {\mf x}_-, {\mf x}_+ \right)$, under the isomorphism (\ref{equation41}), is the restriction of $\wt{D}_{\wt{u}}$ to $\left({\rm ker} d_0^* \right)^\bot$. We also define the {\it augmented} linearized operator
\begin{align}\label{equation42}
{\mf D}_{\wt{u}}:= \wt{D}_{\wt{u}}\oplus d_0^*: T_{\wt{u}} \wt{\mc B}^{k, p} \left( {\mf x}_-, {\mf x}_+ \right) \to \wt{\mc E}^{k-1, p}_{\wt{u}} \left( {\mf x}_-, {\mf x}_+ \right) \oplus W^{k-1, p} \left( \Theta, {\mf g} \right).
\end{align}
It is a standard result that the Fredholm property of $d{\mc F}_{[\wt{u}]}$ is equivalent to that of ${\mf D}_{\wt{u}}$ for any representative $\wt{u}$. Hence in the remaining of the section we will study the Fredholm property of the augmented operator.

\subsection{Asymptotic behavior of ${\mf D}_{\wt{u}}$}\label{section42}

Up to an $L_0 G$-action we can choose representatives ${\mf x}_\pm = (x_\pm, f_\pm, [w_\pm])$ such that $f_\pm$ are constants $\theta_\pm \in {\mf g}$. Take any $\wt{u}= (u, \Phi, \Psi) \in \wt{\mc B}^{k, p}( {\mf x}_-, {\mf x}_+)$. For $\wt{\xi}:= (\xi, \psi, \phi) \in T_{\wt{u}} \wt{\mc B}^{k, p}({\mf x}_-, {\mf x}_+)$, define $\wt{J}(\xi, \psi, \phi) = ( J_t \xi, - \phi, \psi)$. Then
\begin{multline*}
{\mf D}_{\wt{u}} \left( \begin{array}{c} \xi\\ \psi \\ \phi \end{array} \right) = \nabla_{A, s} \left( \begin{array}{c} \xi \\ \psi \\ \phi \end{array} \right)+ \widetilde{J} \left( \begin{array}{c} \nabla_{A, t} \xi - \nabla_\xi Y_{H, t}\\ \nabla_{A, t} \psi \\ \nabla_{A, t} \phi  \end{array}\right) \\
+ \left( \begin{array}{ccc} 0 & J L_u & L_u\\ d\mu & 0 & 0 \\
                                                                            L_u^* &  0 & 0 \end{array}\right)\left(\begin{array}{c}  \xi \\ \psi \\ \phi \end{array}\right) + q(s, t) \left( \begin{array}{c} \xi \\ \psi \\ \phi \end{array} \right)\\
 = :  \left[ \left( \begin{array}{c} \nabla_s  \\ \partial_s  \\ \partial_s \end{array} \right)+ \widetilde{J} \left( \begin{array}{c} \nabla_t \\ \partial_t\\ \partial_t  \end{array} \right) + R(s, t) + q(s, t)\right]\left( \begin{array}{c}  \xi \\ \psi \\ \phi \end{array} \right) .
 \end{multline*}
Here $q(s, t)$ is a linear operator such that $\displaystyle \lim_{s\to \pm \infty} q(s, t)=0$; and 
\begin{align}
\label{equation43}
R_{\wt{x}_\pm} (t):= \lim_{s\to \pm \infty} R(s, t)= \left( \begin{array}{ccc} J_t \nabla(X_{\theta_\pm} - Y_{H_t} ) & J_t L_u & L_u \\
                                                          d\mu                                                 & 0     & -{\rm ad} \theta_\pm \\
                                                          L_u^*                                                & {\rm ad} \theta_\pm & 0  \end{array}    \right).
\end{align}
Here $L_u: {\mf g} \to u^*TM$ is the infinitesimal action along the image of $u$ and $L_u^*$ is its dual. 

\begin{prop}\label{nondegenerate} $\wt{x}:= (x, \theta) \in {\rm Zero} \wt{\mc B}_H$, then for all $t$-dependent, $G$-invariant, $\omega$-compatible almost complex structure $J_t$, the self-adjoint operator
\begin{align}\label{equation44}
\begin{array}{ccc}
L^2 \left( S^1, x^* TM \oplus {\mf g} \oplus {\mf g} \right) & \to & L^2 \left( S^1, x^* TM \oplus {\mf g} \oplus {\mf g} \right)\\
\left( \begin{array}{c} \xi \\ \psi \\ \phi \end{array} \right) & \mapsto & \left[\widetilde{J} \left( \begin{array}{c} \nabla_t \\ \partial_t \\ \partial_t \end{array} \right) + R_{\wt{x}}(t) \right] \left(  \begin{array}{c} \xi \\ \psi \\ \phi  \end{array} \right).
\end{array}
\end{align}
has zero kernel.
\end{prop}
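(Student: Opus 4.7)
The strategy is to exploit the Lagrange multiplier structure underlying the setup. By Proposition \ref{proposition27}, the critical loop $\wt{x} = (x, \theta)$ (with $\theta$ a constant in ${\mf g}$) corresponds to a 1-periodic orbit $\ov{x} = \pi_\mu \circ x$ of $Y_{\ov{H}_t}$ on the symplectic quotient $\ov{M}$, and Hypothesis \ref{hyp22} says precisely that the Hessian of ${\mc A}_{\ov{H}}$ at $\ov{x}$—the first-order operator $\ov\xi \mapsto \ov{J}_t(\nabla_t \ov\xi - \nabla_{\ov\xi}Y_{\ov{H}_t})$ on $L^2(S^1, \ov{x}^* T\ov{M})$—has trivial kernel. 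I will show the kernel of (\ref{equation44}) injects into this downstairs kernel, hence vanishes.

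Concretely, suppose $(\xi, \psi, \phi)$ lies in the kernel. Along the loop $x \subset \mu^{-1}(0)$ I decompose orthogonally
\begin{equation*}
T_{x(t)} M = H_t \oplus V_t \oplus J_t V_t,
\end{equation*}
where $V_t = T_{x(t)}(G\cdot x(t))$ is spanned by infinitesimal actions $L_x \eta = X_\eta(x(t))$, the horizontal $H_t$ is its orthogonal complement inside $T_{x(t)}\mu^{-1}(0)$ (isomorphic to $T_{\ov{x}(t)}\ov{M}$ via $d\pi_\mu$), and $J_t V_t$ is normal to $\mu^{-1}(0)$. Writing $\xi = \xi^H + X_\alpha + J_t X_\beta$ with $\alpha, \beta \in C^\infty(S^1, {\mf g})$, I project the first kernel equation onto $H_t$ via $d\pi_\mu$. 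The contributions $J_t L_x \psi$, $L_x \phi$, $L_x\alpha$ and $J_t L_x\beta$ all lie in $V_t \oplus J_t V_t$ and vanish under the horizontal projection; by the standard Riemannian-submersion computation for $\pi_\mu|_{\mu^{-1}(0)}$, the remaining terms produce the downstairs linearized Floer equation $\ov{J}_t(\nabla_t \ov\xi - \nabla_{\ov\xi} Y_{\ov{H}_t}) = 0$ for $\ov\xi := d\pi_\mu(\xi^H)$. By Hypothesis \ref{hyp22}, $\ov\xi \equiv 0$, hence $\xi^H \equiv 0$.

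Having reduced to $\xi = X_\alpha + J_t X_\beta$, I substitute back into the three components of the kernel equation. Using that $L_x: {\mf g} \to V_t$ is an isomorphism (freeness on $\mu^{-1}(0)$), the compatibility identity $L_x^* = -d\mu\circ J_t$, and that the symmetric map $d\mu \circ J_t \circ L_x \in \mathrm{End}({\mf g})$ is positive-definite along $x$, the system becomes a coupled linear first-order ODE for $(\alpha, \beta, \psi, \phi): S^1 \to {\mf g}^4$ with periodic boundary conditions. A direct diagonalization shows that the coefficient operator decouples into two blocks of the form $\partial_t + \mathrm{ad}_\theta + Q(t)$ with $Q(t)$ positive-definite symmetric; pairing with $(\alpha, \beta)$ and $(\psi, \phi)$ in ${\mf g}$ and integrating over $S^1$ gives an energy identity forcing $\alpha = \beta = \psi = \phi = 0$.

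The main obstacle is the last step: the four ${\mf g}$-valued unknowns are coupled through the moment map, the infinitesimal action, and $\mathrm{ad}_\theta$, and one must disentangle the genuine gauge redundancy (already absorbed by the augmentation $d_0^*$ in (\ref{equation42})) from the true kernel. The key input that makes the energy estimate work is the positivity of $d\mu\circ J_t\circ L_x$ on $\mu^{-1}(0)$, which is a direct consequence of the freeness of the $G$-action at the zero level set (Hypothesis \ref{hyp21})—so in the end, nondegeneracy of the quotient orbit together with freeness of the $G$-action are precisely what guarantee that (\ref{equation44}) has trivial kernel.
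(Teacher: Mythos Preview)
Your overall strategy—separate off the horizontal piece using nondegeneracy on $\ov M$, then kill the remaining ${\mf g}$-valued unknowns via the positivity of $L_x^*L_x$—is the right idea, and these are exactly the two ingredients the paper uses. But the execution has two genuine gaps.

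\textbf{The projection step does not decouple.} In your step~3 you project (\ref{equation45}) onto $H_t$ and assert that only the $\xi^H$-contribution survives. For the vertical piece $\xi=X_\alpha$ this is correct: one checks $\nabla_t X_\alpha+\nabla_{X_\alpha}(X_\theta-Y_{H_t})=X_{\alpha'+[\theta,\alpha]}\in V_t$, so its $H_t$-component vanishes. But for the normal piece $\xi=J_tX_\beta$ this fails: the term $\nabla_t(J_tX_\beta)+\nabla_{J_tX_\beta}(X_\theta-Y_{H_t})$ contains $(\nabla_{\dot x}J_t)X_\beta$ and $\nabla_{J_tX_\beta}X_\theta$, both of which couple to $H_t$ through the curvature of the bundle $\mu^{-1}(0)\to\ov M$ (equivalently, the off-diagonal block of the self-adjoint operator with respect to $H_t\oplus J_tV_t$ is governed by $\rho_t(\xi^H,J_tX_\beta)$, which has no reason to vanish). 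So the $H_t$-projection of (\ref{equation45}) is the downstairs operator on $\ov\xi$ \emph{plus a cross term in $\beta$}, and Hypothesis~\ref{hyp22} cannot yet be applied. The ``standard Riemannian-submersion computation'' you invoke is for $\pi_\mu|_{\mu^{-1}(0)}$ and only applies once $\xi$ is already tangent to $\mu^{-1}(0)$.

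\textbf{The final ODE step is not carried out.} Your claim that the residual system in $(\alpha,\beta,\psi,\phi)$ diagonalizes into two blocks $\partial_t+\mathrm{ad}_\theta+Q(t)$ with $Q>0$ is not justified, and in fact the coupling through (\ref{equation45}) involves $\nabla J_t$ and $\nabla Y_{H_t}$ in a way that does not obviously produce such a structure. The ``energy identity'' needs to be written down.

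The paper's proof sidesteps both problems by reversing your order of elimination. It first applies $d\mu\circ J_t$ to (\ref{equation45}) and combines with (\ref{equation46}) to obtain a second-order equation in $\phi$ alone,
\[
\phi''+2[\theta,\phi']+[\theta,[\theta,\phi]]=d\mu(J_tX_\phi),
\]
and a maximum principle on $|\phi|^2$ (using $\langle d\mu(J_tX_\phi),\phi\rangle=|X_\phi|^2\ge 0$ at the maximum) forces $\phi\equiv 0$. Then (\ref{equation46}) gives $d\mu(\xi)=0$, so the normal component $\beta$ is already gone \emph{before} projecting. With $\xi\in T\mu^{-1}(0)$ the $E_t$-projection of (\ref{equation45}) is now genuinely the downstairs linearized equation (the $X_\eta$ part contributes only $X_{\eta'+[\theta,\eta]}\in V_t$), so Hypothesis~\ref{hyp22} gives $\xi^\perp=0$. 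The same maximum-principle trick, applied to the analogous second-order equation for $\eta$, then finishes. The moral: eliminating the normal direction first is what makes the horizontal projection clean.
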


\begin{proof} Suppose $(\xi, \psi, \phi)^T$ is in the kernel, which means
\begin{eqnarray}
J_t \nabla_t \xi + J_t \nabla_\xi \left( X_\theta - Y_{H_t} \right) + X_{\phi} + J_t X_{\psi} &= & 0, \label{equation45}\\
- {d\phi\over dt} + d\mu (\xi) - [\theta, \phi] & = & 0,\label{equation46}\\
{d \psi \over dt} + d\mu(J_t \xi) + [\theta, \psi ] & = & 0.\label{equation47}
\end{eqnarray}
Apply $d\mu\circ J_t$ to (\ref{equation45}), we get
\begin{align*}
d\mu(J_t X_{\phi} ) = d\mu ( \nabla_t \xi + \nabla_\xi ( X_\theta- Y_{H_t} ) ).
\end{align*}
Hence for any $\eta\in {\mf g}$, 
\begin{align*}
\begin{split}
&\ {d\over dt} \langle d\mu(\xi) , \eta \rangle_{\mf g} = {d\over dt} \omega( X_\eta, \xi )\\
= &\ \omega ( [ Y_{H_t}- X_\theta, X_\eta ] , \xi )+ \omega ( X_\eta, \nabla_t \xi - \nabla_\xi ( Y_{H_t}- X_\theta) ) \\
= &\ \omega( X_{[ \theta, \eta] } , \xi)+ \langle d\mu( J_t X_{\phi}), \eta \rangle_{\mf g} \\
= &\ \langle d\mu(\xi), [\theta, \eta] \rangle_{\mf g} + \langle d\mu( J_t X_{\phi}), \eta \rangle_{\mf g} \\
= &\ \langle d\mu(J_t X_{\phi})- [\theta , d\mu(\xi)],  \eta \rangle_{\mf g}.
\end{split}
\end{align*}
Therefore,
\begin{align*}
{d\over dt} d\mu(\xi)= d\mu( J_t X_{\phi}) -[\theta, d\mu(\xi)].
\end{align*}
Then by (\ref{equation46}),
\begin{align}\label{equation48}
\begin{split}
&\ d\mu(J_t X_{\phi}) \\
= &\ {d\over dt} d\mu(\xi) + [\theta, d\mu(\xi)] \\
 = &\ {d\over dt} \left( {d\phi\over dt} + [\theta, \phi] \right) + \left[ \theta, {d\phi \over dt}  +[\theta, \phi] \right]\\
 =  &\ \phi'' + 2\left[ \theta, \phi' \right]+ [\theta, [\theta, \phi ]]. 
\end{split}
\end{align}
Suppose $| \phi |$ takes its maximum at $t= t_0 \in S^1$. Then for $t\in (t_0-\epsilon, t_0 + \epsilon)$, define $\displaystyle \wt{\phi} (t)= {\rm Ad}_{e^{(t-t_0) \theta}} \phi(t)$. Then the right hand side of (\ref{equation48}) is equal to ${\rm Ad}_{e^{(t_0 - t) \theta}} \wt\phi''(t)$. Hence at $t= t_0$, 
\begin{align*}
\begin{split}
0 \geq  &\ {1\over 2} { d^2 \over dt^2} | \phi |^2 = {1\over 2} {d^2 \over dt^2} \left| \wt\phi \right|^2 = \left\langle \wt\phi'', \wt\phi \right\rangle + \left| \wt\phi' \right|_{\mf g}^2 \\
= &\ \left\langle d\mu (J_t X_{\phi(t_0)} ), \phi(t_0) \right\rangle + \left| \wt\phi'(t_0) \right|^2 = \left| X_{\phi(t_0)} \right|^2 + \left| 	\wt\phi'(t_0) \right|^2. 
\end{split}
\end{align*}
Hence $X_{\phi}\equiv 0$, which implies $\phi \equiv 0$ and by (\ref{equation46}), $\xi$ is tangent to $\mu^{-1}(0)$.

Now $x^* T\mu^{-1}(0)= E_t \oplus L_x {\mf g}$, where $E_t = (L_x {\mf g})^\bot \cap x^* T\mu^{-1}(0)$ and the orthogonal complement is taken with respect to the Riemannian metric $g_t = \omega ( \cdot, J_t \cdot)$. Then with respect to this ($G$-invariant) decomposition, write 
\begin{align*}\xi = \xi^\bot(t) + X_{\eta(t)}.
\end{align*}
Then take the $E_t$-component of (\ref{equation45}), and use the nondegeneracy assumption on $\ov{H}$ (Hypothesis \ref{hyp22}), we see that $\xi^\bot \equiv 0$. Then the only equation left is
\begin{align*}
\left\{ \begin{array}{cc} \nabla_t X_{\eta(t)} + \nabla_{X_{\eta(t)}} (X_\theta- Y_{H_t} ) + X_{\psi}& =0,\\[0.2cm]
 \psi' + [\theta, \psi] + d\mu (J_t X_\eta) & =0. \end{array} \right.
\end{align*}
The first equation is equivalent to 
\begin{align*}
\eta'+ [\theta, \eta(t)] + \psi =0.
\end{align*}
Hence 
\begin{align*}
\eta''(t)  + 2 \left[ \theta, \eta'(t) \right] +  \left[ \theta, [\theta, \eta] \right] = d\mu ( J_t X_{\eta}).
\end{align*}
This can be treated similarly as (\ref{equation48}), using the maximum principle, which shows that $\eta \equiv 0$ and hence $\psi \equiv 0$.
\end{proof}

\begin{cor}
For any $\wt{x}_\pm \in {\rm Zero} \wt{\mc B}_H$ and any $\wt{u} \in \wt{\mc B}^{k, p}\left( \wt{x}_-, \wt{x}_+ \right)$, the augmented linearized operator ${\mf D}_{\wt{u}}$ is Fredholm for any $k \geq 1$, $p \geq  2$.
\end{cor}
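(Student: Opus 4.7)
The plan is to reduce the statement to the standard Fredholm theorem for first-order operators of the form $\partial_s + A(s)$ on a cylinder, where $A(s)$ is a one-parameter family of unbounded self-adjoint operators on a fixed Hilbert space converging to invertible self-adjoint limits $A_\pm$ as $s \to \pm\infty$. This abstract framework is developed in \cite{Robbin_Salamon_spectral} and used in the ordinary Hamiltonian Floer setting in \cite[Section 2]{Salamon_lecture}; it applies verbatim here once the two hypotheses (convergence of coefficients and invertibility of the asymptotic operators) are verified.

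First, I would trivialize the bundle $u^*TM \oplus {\mf g} \oplus {\mf g}$ over $\Theta$ by parallel transport along the $s$-lines, using the $t$-dependent Riemannian metric $\omega(\cdot, J_t \cdot)$ on $M$. In this trivialization, sections over each circle $\{s\} \times S^1$ become sections of a fixed Hermitian vector bundle $V \to S^1$, and the augmented linearized operator takes the form
\begin{equation*}
{\mf D}_{\wt{u}} = \partial_s + A(s),
\end{equation*}
where $A(s)$ is a zeroth-order perturbation of $\wt{J}\partial_t$ acting on $W^{1,2}(S^1, V)$.

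Second, I would verify the two hypotheses. The family $A(s)$ converges in the operator norm from $W^{1,2}(S^1,V)$ to $L^2(S^1,V)$ to the limiting operators
\begin{equation*}
A_\pm = \wt{J}\partial_t + R_{\wt{x}_\pm}(t),
\end{equation*}
because $\wt{u}$ is asymptotic to $\wt{x}_\pm$ in $W^{k,p}$ with $k \geq 1$ and $p > 2$ — Sobolev embedding promotes this to uniform convergence of the coefficients of $R(s,t)$ in $(s,t)$, and by construction $q(s,t) \to 0$. For invertibility of $A_\pm$: each is a formally self-adjoint first-order elliptic operator on $S^1$, hence has compact resolvent and purely discrete real spectrum; Proposition \ref{nondegenerate} rules out $0$ as an eigenvalue, so $A_\pm$ is invertible.

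With these inputs, the abstract theorem immediately yields that ${\mf D}_{\wt{u}}$ is Fredholm (and provides the standard spectral-flow formula for the index, to be used in the next subsection). The one genuinely nontrivial input particular to the gauged setting — namely the absence of kernel of the asymptotic operators — has already been handled in Proposition \ref{nondegenerate}; the remaining ingredients are purely formal, so I expect no serious obstacle in this corollary itself. The potential subtlety worth double-checking is that the trivialization respects the asymptotic boundary conditions encoded in $\wt{\mc B}^{k,p}(\wt{x}_-, \wt{x}_+)$, i.e.\ that the exponential map $\wt{\exp}^t$ and the parallel transport identify the nearby element $\wt{u}$ with a genuine element of the trivialized model having $W^{k,p}$ asymptotic decay — this is standard but worth verifying carefully once.
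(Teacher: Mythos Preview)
Your proposal is correct and matches the paper's own approach: the paper does not give an explicit proof of this corollary but refers the reader to the abstract framework of \cite{Robbin_Salamon_spectral} and \cite[Section 2]{Salamon_lecture}, with Proposition \ref{nondegenerate} supplying the only nontrivial input (triviality of the kernel of the asymptotic operators). Your write-up is in fact more detailed than what the paper provides.
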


\begin{cor} There exists $\delta = \delta( \wt{x}_\pm)>0$ such that, for any $\wt{u}= (u, \Phi, \Psi) \in \wt{\mc M}\left( \wt{x}_-, \wt{x}_+; J, H  \right)$ and any $\wt{\xi}\in {\rm ker} {\mf D}_{\wt{u}}$, there exists $c>0$ such that
\begin{align*}
\left| \wt{\xi}(s, t) \right| \leq c e^{-\delta |s|}.
\end{align*}
In particular, if $(u, 0, \Psi) \in \wt{\mc M}_{\Theta}^b$, then $|\partial_s u|$ and $|\partial_s \Psi|$ decay exponentially.
\end{cor}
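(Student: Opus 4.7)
The plan is to run an Agmon-type argument exploiting the nondegeneracy established in Proposition~\ref{nondegenerate}. Near $s = \pm\infty$ the augmented linearization has the form
\[
{\mf D}_{\wt u} = \partial_s + A_\pm(s),\qquad A_\pm(s) = \wt J\,\partial_t + R(s,t) + q(s,t) + (\text{adjoint-gauge terms}),
\]
where $A_\pm(s)\to A_\pm^\infty := \wt J\,\partial_t + R_{\wt x_\pm}(t)$ in operator norm on $L^2(S^1, x_\pm^* TM\oplus {\mf g}\oplus{\mf g})$ as $s\to\pm\infty$. By Proposition~\ref{nondegenerate}, $A_\pm^\infty$ are self-adjoint with trivial kernel; being first-order elliptic on the compact circle $S^1$ they have compact resolvent, hence discrete spectrum bounded away from $0$. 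Let $\delta_0>0$ be the smallest absolute value in ${\rm Spec}\,A_\pm^\infty$, so that $\|A_\pm^\infty v\|_{L^2(S^1)}\ge \delta_0\|v\|_{L^2(S^1)}$. Fix any $\delta\in(0,\delta_0)$; this will be the decay rate.

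For $\wt\xi\in\ker{\mf D}_{\wt u}$, define $f(s)=\|\wt\xi(s,\cdot)\|_{L^2(S^1)}^2$. Using $\partial_s\wt\xi=-A_\pm(s)\wt\xi$ and writing $A_\pm(s) = A_\pm(s)^{\rm sa} + E(s)$ where $A_\pm(s)^{\rm sa}$ is self-adjoint and $\|E(s)\|\to 0$, a direct computation yields
\[
f''(s) = 4\bigl\|A_\pm(s)\wt\xi\bigr\|^2 - 2\bigl\langle \dot A_\pm(s)\wt\xi,\wt\xi\bigr\rangle + (\text{terms involving }E(s)).
\]
Since $A(s)\to A_\pm^\infty$ in norm and $\dot A(s), E(s)\to 0$ (the latter because $R(s,t)\to R_{\wt x_\pm}(t)$ and $q(s,t)\to 0$ by Proposition~\ref{prop32} and the asymptotic convergence from Proposition~\ref{prop36}), one obtains $f''(s)\ge 4\delta^2 f(s)$ for $|s|$ larger than some $S_0$. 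Because $\wt\xi$ lies in a $W^{k,p}$-space with asymptotic profile zero at infinity, $f(s)\to 0$ as $|s|\to\infty$; a standard ODE comparison with the solutions of $g''=4\delta^2 g$ (or equivalently, the maximum principle applied to $f(s)- C e^{-2\delta|s|}$) then forces $f(s)\le C e^{-2\delta|s|}$ for $|s|\ge S_0$. Finally, interior elliptic regularity for the first-order system ${\mf D}_{\wt u}\wt\xi=0$ on unit cylinders $[s-1,s+1]\times S^1$ gives $\|\wt\xi\|_{C^0([s-1/2,s+1/2]\times S^1)}\le C'\|\wt\xi\|_{L^2([s-1,s+1]\times S^1)}$, promoting the $L^2(S^1)$ exponential decay to the pointwise bound $|\wt\xi(s,t)|\le c\,e^{-\delta|s|}$.

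For the ``in particular'' clause, observe that the equation (\ref{equation24}) is translation-invariant in $s$, so for $(u,0,\Psi)\in\wt{\mc M}_\Theta^b$ the variation $(\partial_s u, 0, \partial_s\Psi)$ solves the linearization of (\ref{equation24}). Proposition~\ref{prop36} guarantees that this variation is asymptotic to the critical loops $\wt x_\pm$, so its linearization is governed by the same asymptotic operator $A_\pm^\infty$; the argument of the previous paragraph applies verbatim (either after a small gauge correction placing $(\partial_s u,0,\partial_s\Psi)$ into $\ker{\mf D}_{\wt u}$, or by redoing the Agmon computation for the linearization of (\ref{equation24}) directly, whose limiting operator still has trivial kernel by Proposition~\ref{nondegenerate}). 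The main obstacle I expect is the careful verification of the inequality $f''\ge 4\delta^2 f$ when $A(s)$ is only asymptotically self-adjoint: the error terms arising from $[A(s)-A(s)^*]$, from $\dot A(s)$, and from the bundle being only approximately a product at finite $s$ must all be absorbed into the spectral gap, which requires choosing $S_0$ large enough and $\delta$ strictly below $\delta_0$. Closing this estimate—while simultaneously justifying that $\dot A(s)$ itself decays, which circularly involves the very quantity one is trying to bound—is the technical heart of the argument.
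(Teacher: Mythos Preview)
Your outline for the first part is the standard approach, and it is what the paper cites (it simply points to \cite[Lemma 2.11]{Salamon_lecture}). One correction: the circularity you flag is not real. For a fixed $\wt u\in\wt{\mc M}(\wt x_-,\wt x_+)$ and an arbitrary $\wt\xi\in\ker{\mf D}_{\wt u}$, the coefficient $\dot A(s)$ depends on $\partial_s u,\partial_s\Psi$, not on $\wt\xi$; and Proposition~\ref{prop32} already gives $|\partial_s u|+|\partial_s\Psi|\to 0$ uniformly (no exponential rate needed). That is enough to make $\|\dot A(s)\|$ and $\|A(s)-A^\infty_\pm\|$ small for $|s|\ge S_0$ and to close $f''\ge 4\delta^2 f$. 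So the first part goes through cleanly.

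For the ``in particular'' clause your idea is right but you hedge at exactly the point where the paper makes a clean observation: no gauge correction is needed to place $(\partial_s u,0,\partial_s\Psi)$ into $\ker{\mf D}_{\wt u}$. Translation invariance gives $\wt D_{\wt u}(\partial_s u,0,\partial_s\Psi)=0$, and a two-line computation (equation~(\ref{equation49}) in the paper) shows that the Coulomb component vanishes as well:
\[
-d_0^*(\partial_s u,0,\partial_s\Psi)=\partial_t\partial_s\Psi+L_u^*\partial_s u+[\Psi,\partial_s\Psi]
= -d\mu(\partial_t u)+d\mu(J_t\partial_s u)+[\Psi,\partial_s\Psi]
= d\mu(X_\Psi-Y_{H_t})-[\Psi,\mu]=0,
\]
using $\partial_s\Psi=-\mu(u)$, the first equation of (\ref{equation24}), and equivariance of $\mu$. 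Thus $(\partial_s u,0,\partial_s\Psi)\in\ker{\mf D}_{\wt u}$ outright. The paper then applies a gauge transformation $g$ (from Proposition~\ref{prop36}) so that $g^*\wt u$ has the asymptotics needed for the first part; since ${\mf D}$ transforms equivariantly, $g^*(\partial_s u,0,\partial_s\Psi)\in\ker{\mf D}_{g^*\wt u}$ decays exponentially, hence so does the original. This is tidier than either of your two suggested routes and avoids reopening the Agmon argument.
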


\begin{proof} The first part is standard, see for example \cite[Lemma 2.11]{Salamon_lecture}. For a solution $\wt{u}= (u, 0, \Psi)$ in temporal gauge, by the translation invariance of the equation (\ref{equation24}), we see that $\widetilde{\xi}=(\partial_s u, \beta_s=0, \beta_t= \partial_s \Psi)\in {\rm ker} d\wt{\mc F}_{\wt{u}}$. Moreover,
\begin{align}\label{equation49}
\begin{split}
 -d_0^* \widetilde{\xi}\ = & \ \partial_t \partial_s \Psi+ L_u^* (\partial_s u)+ [\Psi, \partial_s \Psi]\\
= &\ -\partial_t (\mu(u))+ d\mu( J_t \partial_s u)+ [\Psi, \partial_s \Psi] \\
= &\ - d\mu( \partial_t u)+ d\mu( J_t \partial_s u) + [\Psi, \partial_s \Psi] \\
= & \ d\mu\left( X_\Psi- Y_{H_t} \right) - [\Psi ,\mu]=0.
\end{split}
\end{align}
This implies that $\wt{\xi} \in {\rm ker} {\mf D}_{\wt{u}}$. Choose a smooth gauge transformation $g: \Theta \to G$ such that $g^*\wt{u}$ satisfies the asymptotic condition of Proposition \ref{prop36}. Then $g^* \wt{\xi} \in {\rm ker} {\mf D}_{g^*\wt{u}}$, which decays exponentially. So does $\wt\xi$.
\end{proof}

\subsection{The Conley-Zehnder indices}\label{subsection43}

In this subsection we define a grading on the set ${\rm Crit} {\mc A}_H$, which is analogous to the Conley-Zehnder index in usual Hamiltonian Floer theory, and we will call it by the same name. 

For the induced Hamiltonian $(\ov{H}_t)$ on the symplectic quotient $\ov{M}$, we have the usual Conley-Zehnder index
\begin{align*}
\ov{\sf cz}: {\rm Crit} {\mc A}_{\ov{H}} \to {\mb Z}.
\end{align*}
We prove the following theorem
\begin{thm}\label{thm44}
There exists a function
\begin{align*}
{\sf cz}: {\rm Crit} {\mc A}_H \to {\mb Z}
\end{align*}
satisfying the following properties
\begin{enumerate}
\item For the embedding $ \iota: {\rm Crit} {\mc A}_{\ov{H}} \to {\rm Crit} {\mc A}_H$, we have
\begin{align*}
{\sf cz} \circ \iota = \ov{\sf cz};
\end{align*}

\item For any $B \in \Gamma$ and $\llbracket {\mf x} \rrbracket \in {\rm Crit} {\mc A}_H$ we have
\begin{align*}
{\sf cz}\left( B \# \llbracket {\mf x} \rrbracket \right) = {\sf cz} \left( \llbracket {\mf x} \rrbracket \right) - 2 c_1^G (B). 
\end{align*}

\item For $ \llbracket {\mf x}_\pm \rrbracket \in {\rm Crit} {\mc A}_H$ and $[\wt{u}] \in {\mc B}^{k, p} \left( \llbracket{\mf x}_-\rrbracket, \llbracket{\mf x}_+ \rrbracket\right)$, we have
\begin{align*}
{\rm ind} \left( d{\mc F}_{[\wt{u}]}\right) = {\sf cz} \left( \llbracket {\mf x}_- \rrbracket \right) - {\sf cz} \left( \llbracket {\mf x}_+ \rrbracket \right).
\end{align*}
\end{enumerate}
\end{thm}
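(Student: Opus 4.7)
\medskip

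\noindent\textbf{Proof proposal.} My plan is to define $\sf cz$ directly via a disk-trivialization recipe in complete analogy with the classical Conley--Zehnder index, then verify the three properties by a mixture of spectral-flow bookkeeping and a careful inspection of the asymptotic operator $R_{\wt{x}}$ in (\ref{equation43}).

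First I would fix a representative $\mf{x} = (x,f,[w])$ with $f\equiv \theta$ constant (which we can always achieve after an $L_0G$-action, as noted in Section \ref{section42}). The disk $w:{\mb D}\to M$ is contractible, so $w^*TM$ admits a unitary trivialization with respect to $\omega$ and some extension of $J_t$; restricting this to $\partial{\mb D}=S^1$ gives a symplectic trivialization $\Phi_w$ of $x^*TM$. Combined with the constant trivialization of the two copies of $\mf g$, we obtain a symplectic trivialization of $x^*TM \oplus \mf g \oplus \mf g$ with its complex structure $\wt J$ defined in Section \ref{section42}. Under this trivialization, the self-adjoint operator (\ref{equation44}) becomes $\wt J_0 \partial_t + S(t)$ for a path $S(t)$ of symmetric matrices; by Proposition \ref{nondegenerate} it has trivial kernel. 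I then define ${\sf cz}(\lbr \mf x \rbr)$ to be the classical Conley--Zehnder index of the symplectic path generated by this operator (equivalently, the spectral flow from a small positive operator).

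Next I would establish well-definedness. Two symplectic trivializations of a bundle over a contractible disk are homotopic, so the Conley--Zehnder index is independent of the trivialization extending off $w$. For a different choice $w'$ with $[w]=[w']$ in $S_2^G(M)/N_2^G(M)$, the glued sphere $w\#(-w')$ represents a class in $N_2^G(M)$; the standard change-of-trivialization formula says the Conley--Zehnder index shifts by $-2\langle c_1^G, [w\#(-w')]\rangle$, and this vanishes since $N_2^G(M) \subset {\rm ker}\,c_1^G$. The same formula immediately yields property (2): the action of $B\in S_2^G(M)$ described by (\ref{equation22}) extends $w$ by a section over $S^2$ representing $B$, producing the shift $-2c_1^G(B)$. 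Invariance under $L_0G$ follows because a contractible gauge transformation can be deformed to the identity along the disk.

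For property (3), the strategy is the index $=$ spectral flow principle. Pick a smooth representative $\wt u = (u,\Phi,\Psi)$ of $[\wt u]$ in $\wt{\mc B}^{k,p}(\mf x_-, \mf x_+)$; its disk-like asymptotics (Proposition \ref{prop36}) give a homotopy of trivializations of $u^*TM \oplus \mf g\oplus \mf g$ from $\Phi_{w_-}$ at $-\infty$ to $\Phi_{w_+}$ at $+\infty$, precisely because the condition $[w_- \# u]=[w_+]$ modulo $N_2^G(M)$ ensures that any obstruction to such a homotopy lies in ${\rm ker}\,c_1^G$. Under this common trivialization the augmented linearized operator ${\mf D}_{\wt u}$ in (\ref{equation42}) takes the form $\partial_s + A(s)$ with $A(\pm\infty)$ the two boundary operators; by the Robbin--Salamon spectral flow theorem its Fredholm index equals ${\sf cz}(\lbr \mf x_-\rbr)-{\sf cz}(\lbr \mf x_+\rbr)$, and the identification of ${\rm ind}\,d{\mc F}_{[\wt u]}$ with ${\rm ind}\,{\mf D}_{\wt u}$ is standard.

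The main obstacle, and what I would do last, is property (1), namely the compatibility with the classical Conley--Zehnder index on $\ov M$. The idea is to exhibit the asymptotic operator $R_{\wt x}$ in block form with respect to the $G$-invariant orthogonal splitting of $x^*TM$ into the horizontal distribution (isomorphic to $\ov x^*T\ov M$ via $\ov\pi_\mu$), the vertical distribution $L_x\mf g$, and the normal directions to $\mu^{-1}(0)$; together with the two copies of $\mf g$, these assemble into an $\ov x^*T\ov M$-block that reproduces the downstairs asymptotic operator of $\ov{\mc A}_{\ov H}$, plus a complementary block. A direct computation on the complementary block shows that the pairings $L_u, L_u^*, d\mu, {\rm ad}\,\theta$ form a non-degenerate hyperbolic structure: the linear deformation $R_{\wt x}^\tau$ scaling $d\mu,L_u,L_u^*$ by $\tau\in[0,1]$ stays non-degenerate (by the same argument as in Proposition \ref{nondegenerate}), and at $\tau=0$ the block is manifestly conjugate to $\wt J_0\partial_t$ on a trivial Lagrangian pair, which has Conley--Zehnder index $0$. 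Invariance of spectral flow under this deformation then gives $\mathsf{cz}(\iota(\ov{\mf x})) = \ov{\sf cz}(\ov{\mf x})$, completing the verification.
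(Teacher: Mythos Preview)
Your definition of ${\sf cz}$ and the verification of properties (2) and (3) are essentially what the paper does: trivialize $x^*TM$ via the disk $w$, read off a symplectic path from the operator (\ref{equation44}), and invoke the Maslov/loop axiom for (2) and the Robbin--Salamon spectral-flow theorem for (3).

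The gap is in your treatment of property (1), and it is twofold.

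First, for a general pair $(J_t,H_t)$ the asymptotic operator (\ref{equation44}) does \emph{not} respect the splitting you describe. The term $J_t\nabla_\xi Y_{H_t}$ typically mixes horizontal and vertical/normal directions, and $J_t$ need not send $L_x{\mf g}$ into the normal bundle of $\mu^{-1}(0)$. So there is no ``$\ov x^*T\ov M$-block plus complementary block'' to speak of without first deforming $(J_t,H_t)$. The paper handles this by using homotopy invariance of the Conley--Zehnder index together with connectedness of the space of $G$-invariant compatible almost complex structures: it replaces $J$ by a special one built from a connection on $\mu^{-1}(0)\to\ov M$ (so that $JX_\eta$ is the dual direction in ${\mf g}^*$), and replaces $H_t$ by one that is constant in the ${\mf g}^*$-direction near $\mu^{-1}(0)$. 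These choices force $({\mc L}_{Y_{H_t}}J)X_\eta=0$, which is exactly what makes the operator block-diagonal with respect to (\ref{equation411}).

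Second, and more seriously, your proposed deformation $R_{\wt x}^\tau$ scaling $d\mu,L_u,L_u^*$ by $\tau$ does \emph{not} stay nondegenerate down to $\tau=0$, and the argument of Proposition \ref{nondegenerate} does not survive this scaling. At $\tau=0$ the $\phi$- and $\psi$-equations decouple to $\phi'+[\theta,\phi]=0$ and $\psi'+[\theta,\psi]=0$; for abelian $G$ (e.g.\ the toric case, which is the main motivation of the paper) every constant $\phi,\psi\in{\mf g}$ lies in the kernel. Thus your endpoint operator is degenerate and its Conley--Zehnder index is undefined, so the invariance-of-spectral-flow argument collapses. (Relatedly, ``$\wt J_0\partial_t$'' always has the constants in its kernel, so it cannot serve as a nondegenerate reference.)

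The paper's remedy is the opposite deformation: keep the coupling terms $d\mu,L_u,L_u^*$ --- they are precisely what makes the ${\mf g}^{\oplus 4}$-block hyperbolic --- and instead deform $\theta\to 0$. Proposition \ref{nondegenerate} shows the operator remains nondegenerate along this homotopy. At $\theta=0$ (and with the special $J,H$ above) the ${\mf g}^{\oplus 4}$-block becomes $\wt J\,d/dt + S_0$ with $S_0=\begin{pmatrix}0&I\\I&0\end{pmatrix}$, whose fundamental solution $e^{\wt J S_0 t}=\begin{pmatrix}e^{-t}&0\\0&e^{t}\end{pmatrix}$ has no eigenvalue on the unit circle for $t>0$; axiom (3) of the Conley--Zehnder index then gives ${\sf cz}=0$ on this block, and additivity yields ${\sf cz}\circ\iota=\ov{\sf cz}$.
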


To prove Theorem \ref{thm44}, we first review the notion of Conley-Zehnder index in Hamiltonian Floer homology. Let $A: [0, 1]\to {\rm Sp}(2n)$ be a continuous path of symplectic matrices such that 
\begin{align*}
A(0) = {\rm I}_{2n},\  {\rm det} \left( A(1)- {\rm I}_{2n} \right) \neq 0.
\end{align*}
We can associate an integer ${\sf cz}(A)$, the Conley-Zehnder index, to $A$. We list some of its properties below which we will use here (see for example \cite{Robbin_Salamon}).
\begin{enumerate}
\item For any path $B: [0, 1] \to {\rm Sp}(2n)$, we have ${\sf cz}(B A B^{-1}) = {\sf cz}(A)$;

\item ${\sf cz}$ is homotopy invariant;

\item If for $t>0$, $A(t)$ has no eigenvalue on the unit circle, then ${\sf cz}(A) = 0$;

\item If $A_i: [0, 1] \to {\rm Sp}(2n_i)$ for $n=1, 2$, then ${\sf cz} (A_1 \oplus A_2) = {\sf cz}(A_1) + {\sf cz}(A_2)$;

\item If $\Phi: [0, 1] \to {\rm Sp}(2n)$ is a loop with $\Phi(0) = \Phi(1) = {\rm Id}$, then 
\begin{align*}
{\sf cz}(\Phi A) = {\sf cz}(A) + 2 \mu_M (\Phi)
\end{align*}
where $\mu_M(\Phi)$ is the Maslov index of the loop $\Phi$.
\end{enumerate}

With this algebraic notion, in the usual Hamiltonian Floer theory one can define the Conley-Zehnder indices for nondegenerate Hamiltonian periodic orbits. In our case, the induced Hamiltonian $\ov{H}_t: \ov{M}\to {\mb R}$ has the usual Conley-Zehnder index
\begin{align*}
\ov{\sf cz}: {\rm Crit} {\mc A}_{\ov{H}} \to {\mb Z}.
\end{align*}

Then, for each ${\mf x} = ( x, f, [w]) \in {\rm Crit} \wt{\mc A}_H$, the homotopy class of extensions $[w]$ induces a homotopy class of trivializations of $x^* TM$ over $S^1$. With respect to this class of trivialization, the operator (\ref{equation44}) is equivalent to an operator $J_0 \partial_t + A(t)$, which defines a symplectic path. We define the Conley-Zehnder index of ${\mf x}$ to be the Conley-Zehnder index of this symplectic path. By the second and fifth axioms listed above, this index induces a well-defined function
\begin{align*}
{\sf cz} : {\rm Crit } {\mc A}_H\to {\mb Z}
\end{align*}
which satisfies (2) and (3) of Theorem \ref{thm44}.

Now we prove (1). For any contractible periodic orbits $\ov{x}: S^1 \to \ov{M}$ of $Y_{\ov{H}_t}$ and any extension $\ov{w}: {\mb D} \to \ov{M}$ of $\ov{x}$, we can lift the pair $(\ov{x}, \ov{w})$ to a tuple $[{\mf x}] = [x, f, [w\rrbracket \in {\rm Crit} \wt{\mc A}_H/ L_0 G$ as (\ref{equation23}). Since the Conley-Zehnder index is homotopy invariant, and the space of $G$-invariant $\omega$-compatible almost complex structures is connected, we will compute the Conley-Zehnder index using a special type of almost complex structures, and modify the Hamiltonian $H$. 

Starting with any almost complex structure $\ov{J}$ on $\ov{M}$ and a $G$-connection on $\mu^{-1}(0)\to \ov{M}$, $\ov{J}$ lifts to the horizontal distribution defined by the connection. On the other hand, the biinvariant metric on ${\mf g}$ gives an identification ${\mf g}\simeq {\mf g}^*$. We denote by $\eta^* \in {\mf g}^*$ the metric dual of $\eta\in {\mf g}$. Recall that we have a symplectomorphism
\begin{align*}
\mu^{-1} \left( {\mf g}_\epsilon^* \right) \simeq \mu^{-1}(0) \times {\mf g}_\epsilon^*.
\end{align*}
For $\eta \in {\mf g}$, we define $J X_\eta= \eta^* \in {\mf g}^*$, as a vector field on $\mu^{-1}(0) \times {\mf g}^*$. Then this gives a $G$-invariant almost complex structure on $TM|_{\mu^{-1}(0)}$. Then we pullback $J$ by the projection $\mu^{-1}(0) \times {\mf g}^*_\epsilon \to \mu^{-1}(0)$ and denote the pullback by $J$. 

We also modify $H_t$ by requiring that $H_t(x, \eta) = H_t(x)$ for $(x, \eta) \in \mu^{-1}(0) \times {\mf g}_\epsilon^*$. Then the modified $H_t$ can be continuously deformed to the original one, and it doesn't change $\ov{H}$ hence doesn't change ${\rm Crit} {\mc A}_{\ov{H}}$. Moreover, it is easy to check that for the modified pair $(J, H)$, 
\begin{align}\label{equation410}
\left( {\mc L}_{Y_{H_t}} J \right) X_\eta = \left[ {\mc L}_{Y_{H_t}}, J X_\eta \right] = 0.
\end{align}

Now for any $(\ov{x}, \ov{w}) \in {\rm Crit} {\mc A}_{\ov{H}}$, we lift it to $(x, f, [w]) \in {\rm Crit} \wt{\mc A}_H$ with $w:{\mb D}\to \mu^{-1}(0)$ and $f$ being a constant $\theta \in {\mf g}$. Then any symplectic trivialization of $\wt{x}^* T\ov{M} \to S^1$ induces a symplectic trivialization
\begin{align}\label{equation411}
\phi: x^* TM  \simeq S^1 \times \left[ {\mb R}^{2n-2k} \oplus \left( {\mf g}\oplus {\mf g}\right)\right]
\end{align}
such that $\phi( X_\eta, J X_{\zeta} ) = (0, \eta, \zeta)$. Then we see, with respect to $\phi$, the operator (\ref{equation44}) restricted to ${\mf g}^4$ is
\begin{align*}
\left(\begin{array}{c} \eta_1\\ \psi \\ \eta_2 \\ \phi \end{array} \right) \mapsto \wt{J} {d\over dt} \left( \begin{array}{c} \eta_1 \\ \psi \\ \eta_2 \\ \phi \end{array}  \right) + \left[ \begin{array}{c}  \phi -[\theta, \eta_2] \\ \eta_2 - [\theta, \phi]  \\ \psi + [\theta, \eta_1] \\ \eta_1 + [\theta, \psi] \end{array}\right] =: \left( \wt{J} {d\over dt} + S  \right) 
\left(\begin{array}{c} \eta_1\\ \psi \\ \eta_2 \\ \phi \end{array} \right).
\end{align*}
Here we used the property (\ref{equation410}) and 
\begin{align*}
\wt{J}:= \left[\begin{array}{cc} 0 & -{\rm Id}_{{\mf g}\oplus {\mf g}} \\
                                   {\rm Id}_{{\mf g} \oplus {\mf g}} & 0  																	  
\end{array} \right],\ S=\left[ \begin{array}{cc} 0  & {\rm Id}_{{\mf g}\oplus {\mf g} } - {\rm ad}_\theta \\
													      {\rm Id}_{{\mf g}\oplus {\mf g}} + {\rm ad}_\theta & 0\end{array}\right]
	\end{align*}
Moreover, the operator (\ref{equation44}) respect the decomposition in (\ref{equation411}). Hence by the fourth axiom of Conley-Zehnder indices we listed above, we have
\begin{align*}
{\sf cz} \left( x, \theta, [w] \right) = \ov{\sf cz} ( \ov{x}, \ov{w} ) + {\sf cz}\left(  e^{\wt{J} S t} \right).
\end{align*}
As we have shown in the proof of Proposition \ref{nondegenerate} that for any $\theta$ the operator (\ref{equation44}) is an isomorphism, we can deform $\theta$ to zero and compute instead ${\sf cz}(e^{\wt{J}S_0 t})$ for
\begin{align*}
S_0=\left[ \begin{array}{cc} 0 &  {\rm Id}_{{\mf g} \oplus {\mf g} }\\
													 {\rm Id}_{{\mf g}\oplus {\mf g} } & 0   \end{array}\right],
\end{align*}
thanks to the homotopy invariance property. Then we see that
\begin{align*}
e^{\wt{J} S_0 t} = \left[ \begin{array}{cc} e^{-t} & 0 \\ 0 & e^t  \end{array}\right]
\end{align*}
which has no eigenvalue on the unit circle for $t > 0$. By the third axiom of the Conley-Zehnder index, ${\sf cz}( e^{\wt{J}S_0 t}) = 0$.

\section{Compactness of the moduli space}\label{section5}

In this section we consider the compactification of moduli space of gauge equivalence classes of connecting orbits. Because of the aspherical assumption on $(M, \omega)$ we don't have sphere bubbles and the noncompactness only comes from the breaking of trajectories. Therefore the situation is very similar to the case of ordinary Morse homology theory. Compactness results of the vortex equation has been considered in many places in the literature (see \cite{Mundet_2003}, \cite{Frauenfelder_thesis}, \cite{Mundet_Tian_2009}, \cite{Guangbo_compactness}, especially \cite[Section 3]{Cieliebak_Gaio_Mundet_Salamon_2002} for technical details) and the related techniques are standard.

\subsection{Moduli space of stable connecting orbits and its topology}

Let's fix a pair $\llbracket {\mf x}_\pm \rrbracket \in {\rm Crit} {\mc A}_H$. Denote by 
\begin{align*}
\widehat{\mc M}( \llbracket{\mf x}_-\rrbracket, \llbracket{\mf x}_+\rrbracket ) := \widehat{\mc M}( \llbracket{\mf x}_-\rrbracket, \llbracket{\mf x}_+\rrbracket; J, H) = {\mc M}( \lbr {\mf x}_- \rbr, \lbr {\mf x}_+ \rbr; J, H)/ {\mb R}
\end{align*}
the quotient of the moduli space by the translation in the $s$-direction. We denote by $\{\wt{u}\}$ the ${\mb R}$-orbit in $\widehat{\mc M}(\llbracket{\mf x}_-\rrbracket, \llbracket{\mf x}_+\rrbracket)$ of $[\wt{u}] \in {\mc M}( \llbracket{\mf x}_-\rrbracket, \llbracket{\mf x}_+ \rrbracket; J, H)$ and call it a {\it trajectory} from $\llbracket{\mf x}_- \rrbracket$ to $\llbracket {\mf x}_+ \rrbracket$.

\begin{defn}
A broken trajectory from $\llbracket {\mf x}_- \rrbracket $ to $\llbracket {\mf x}_+ \rrbracket$ is a collection
\begin{align*}
\wt{\mf u}:= \left( \left\{ \wt{u}^{(\alpha)} \right\} \right)_{\alpha=1, \ldots, m}:= \left( \left\{ u^{(\alpha)}, \Phi^{(\alpha)}, \Psi^{(\alpha)} \right\}  \right)_{\alpha=1, \ldots, m}
\end{align*}
where for each $\alpha$, $\left\{ \wt{u}^{(\alpha)} \right\} \in \widehat{\mc M} \left( \llbracket {\mf x}_{\alpha-1} \rrbracket, \llbracket {\mf x}_{\alpha} \rrbracket \right)$ and $E(\wt{u}^{(\alpha)}) \neq 0$. Here $\left\{ \lbr {\mf x}_\alpha \rbr  \right\}_{\alpha=0, \ldots, m}$ is a sequence of critical points of ${\mc A}_H$ and
\begin{align*}
\llbracket {\mf x}_0 \rrbracket  = \llbracket {\mf x}_-  \rrbracket,\  \llbracket {\mf x}_m  \rrbracket =  \llbracket {\mf x}_+  \rrbracket.
\end{align*}
We regard the domain of $\wt{\mf u}$ as the disjoint union $\cup_{\alpha=1}^m \Theta$ and let $\Theta^{(\alpha)} \subset \cup_{\alpha=1}^m \Theta$ the $\alpha$-th cylinder.
\end{defn}
We denote by
\begin{align*}
\ov{\mc M} \left( \llbracket {\mf x}_- \rrbracket, \llbracket {\mf x}_+ \rrbracket \right)
\end{align*}
the space of all broken trajectories from $\llbracket {\mf x}_- \rrbracket$ to $\llbracket {\mf x}_+ \rrbracket$.	We have a natural inclusion
\begin{align*}
\widehat{\mc M}\left( \llbracket {\mf x}_- \rrbracket, \llbracket {\mf x}_+ \rrbracket \right) \hookrightarrow \ov{{\mc M}}\left( \llbracket {\mf x}_- \rrbracket, \llbracket {\mf x}_+ \rrbracket \right).
\end{align*}

\begin{defn}\label{defn52} We say that a sequence $\left\{\wt{u}_i\right\} = \left\{ u_i, \Phi_i, \Psi_i\right\} \in \widehat{\mc M}\left( \llbracket {\mf x}_- \rrbracket, \llbracket {\mf x}_+ \rrbracket \right)$ from $\llbracket{\mf x}_-\rrbracket$ to $\llbracket {\mf x}_+\rrbracket$ converges to a broken trajectory
\begin{align*}
\wt{\mf u}:= \left( \left\{ \wt{u}^{(\alpha)}\right\} \right)_{\alpha=1, \ldots, m}
\end{align*}
if: for each $i$, there exists sequences of numbers $s_i^{(1)}< s_i^{(2)}< \cdots < s_i^{(m)}$ and gauge transformations $g_i^{(\alpha)} \in {\mc G}_\Theta $ such that for each $\alpha$, the sequence
\begin{align*}
\left( g_i^{(\alpha)} \right)^* \left( s_i^{(\alpha)} \right)^* (u_i, \Phi_i, \Psi_i)
\end{align*}
converges to $\left(u^{(\alpha)}, \Phi^{(\alpha)}, \Psi^{(\alpha)}\right)$ uniformly on any compact subset of $\Theta$ and such that for any sequence of $(s_i, t_i)$ with 
\begin{align*}
\lim_{i \to \infty} |s_i- s_i^{(\alpha)} |= \infty,\ \forall \alpha
\end{align*}
we have
\begin{align*}
\lim_{i\to \infty} e ( \wt{u}_i ) (s_i, t_i) = 0.
\end{align*}
Here $e(\wt{u}_i)$ is the energy density function.

It is easy to see that this convergence is well-defined and independent of the choices of representatives of the trajectories. We can also extend this notion to sequences of broken trajectories. We omit that for simplicity.
\end{defn}

The main theorem of this section is
\begin{thm}\label{thm53} The space $\ov{\mc M} \left( \llbracket {\mf x}_- \rrbracket, \llbracket {\mf x}_+ \rrbracket \right)$ is compact with respect to the topology defined in Definition \ref{defn52}.
\end{thm}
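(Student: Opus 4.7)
The plan is to adapt the standard Gromov--Floer compactness argument to the gauged setting, following the scheme of \cite{Cieliebak_Gaio_Mundet_Salamon_2002} and \cite{Frauenfelder_thesis}. Let $\wt{u}_i = (u_i, \Phi_i, \Psi_i)$ be a sequence of connecting orbits in ${\mc M}(\lbr {\mf x}_- \rbr, \lbr {\mf x}_+ \rbr)$; after gauge transformations we may place each $\wt{u}_i$ in temporal gauge so that $\Phi_i \equiv 0$. Two a priori bounds are immediate: Proposition \ref{prop38} gives the uniform energy equality $E(\wt{u}_i) = {\mc A}_H(\lbr {\mf x}_- \rbr) - {\mc A}_H(\lbr {\mf x}_+ \rbr)$, and the convexity maximum principle proved at the end of Section \ref{section3} confines every image $u_i(\Theta)$ in a common compact set $K_H \subset M$.

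The first step is to exclude bubbles. Apply the mean value inequality of Lemma \ref{lemma35} to the energy density $e_i := |\partial_s u_i|^2 + |\mu(u_i)|^2$, whose Laplacian is controlled by Lemma \ref{lemma34}. If $\sup_\Theta e_i \to \infty$ along a subsequence, the rescaling $z \mapsto z_0 + \epsilon_i z$ with $\epsilon_i = (\sup e_i)^{-1/2} \to 0$ makes the area form $ds \wedge dt$ rescale by $\epsilon_i^2 \to 0$, so the curvature equation forces $\mu \circ u$ to vanish in the limit and the Hamiltonian perturbation drops out, producing a nonconstant $J$-holomorphic sphere in $M$ of positive energy---which contradicts the aspherical hypothesis on $(M, \omega)$. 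Therefore $e_i$ is uniformly bounded on compact subsets of $\Theta$, and elliptic bootstrap for the vortex equation in Coulomb gauge (cf.\ \cite[Section 3]{Cieliebak_Gaio_Mundet_Salamon_2002}) yields $C^\infty_{\rm loc}$ convergence on any bounded cylindrical region, modulo gauge.

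To produce the broken limit, I first fix a threshold $\varepsilon_0 > 0$ smaller than every nonzero gap between action values of critical points of ${\mc A}_H$ lying in the interval $[{\mc A}_H(\lbr {\mf x}_+ \rbr), {\mc A}_H(\lbr {\mf x}_- \rbr)]$; such a threshold exists because Hypothesis \ref{hyp22} makes ${\rm Crit}\, {\mc A}_{\ov H}$ discrete and, via the embedding $\iota$ of Proposition \ref{proposition27} together with the $\Gamma$-covering structure, only finitely many lifts to ${\rm Crit}\, {\mc A}_H$ fit in a bounded action window. I then choose inductively translation parameters $s_i^{(1)} < \cdots < s_i^{(m)}$ at locations where at least $\varepsilon_0$ of energy concentrates, arranged so that the gaps $s_i^{(\alpha)} - s_i^{(\alpha-1)} \to \infty$. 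After translating by $s_i^{(\alpha)}$ and applying suitable gauge transformations, a subsequence converges in $C^\infty_{\rm loc}$ to a nontrivial finite-energy solution $\wt{u}^{(\alpha)}$ on $\Theta$; Proposition \ref{prop36} then produces asymptotic critical loops $\lbr {\mf x}_{\alpha-1} \rbr$ and $\lbr {\mf x}_\alpha \rbr$ at the two ends of each piece.

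The main technical obstacle is the no-energy-loss/endpoint-matching argument. If energy of at least $\varepsilon_0$ survived in a gap between consecutive $s_i^{(\alpha)}$, one could select another concentration parameter there, contradicting the maximality of the chosen sequence; on the other hand, if less than $\varepsilon_0$ energy survived, then the action drop across the gap is below the minimum nonzero gap between critical values, so after translating to the middle of the gap the limit must sit at a single critical loop, which by continuity agrees with the right end of $\wt{u}^{(\alpha)}$ and the left end of $\wt{u}^{(\alpha+1)}$. This assembles the chain $\lbr {\mf x}_- \rbr = \lbr {\mf x}_0 \rbr, \lbr {\mf x}_1 \rbr, \ldots, \lbr {\mf x}_m \rbr = \lbr {\mf x}_+ \rbr$ and produces a broken trajectory $\wt{\mf u} = (\{ \wt{u}^{(\alpha)} \})_{\alpha = 1, \ldots, m} \in \ov{\mc M}(\lbr {\mf x}_- \rbr, \lbr {\mf x}_+ \rbr)$ to which $\{ \wt{u}_i \}$ converges in the sense of Definition \ref{defn52}.
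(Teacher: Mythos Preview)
Your overall scheme is the right one and matches the paper's outline: uniform energy and $C^0$-bounds, exclusion of sphere bubbles via asphericity, local compactness modulo gauge, and then an induction producing the broken limit. The gap is in how you obtain the threshold $\varepsilon_0$.

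You claim that only finitely many elements of ${\rm Crit}\,{\mc A}_H$ have action in the window $[{\mc A}_H(\lbr{\mf x}_+\rbr),{\mc A}_H(\lbr{\mf x}_-\rbr)]$, invoking discreteness of ${\rm Crit}\,{\mc A}_{\ov H}$ and the $\Gamma$-covering structure. But nothing in the hypotheses forces the periods of $[\omega^G]$ on $\Gamma=S_2^G(M)/N_2^G(M)$ to be discrete: by definition $N_2^G(M)=\ker[\omega^G]\cap\ker c_1^G$, so $([\omega^G],c_1^G)$ is injective on $\Gamma$, yet the image of $[\omega^G]$ alone in ${\mb R}$ can be dense (e.g.\ $\Gamma\simeq{\mb Z}^2$ with $[\omega^G]$ taking values $1$ and $\sqrt2$ on generators while $c_1^G$ separates them). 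In that case there is no positive minimum action gap, infinitely many critical points sit in any action interval, and your choice of $\varepsilon_0$ is unavailable. Proposition~\ref{proposition27} only asserts the map $\iota$ is well-defined; it gives no finiteness.

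The paper resolves this with an independent energy-quantization lemma (Proposition~\ref{prop55}): any connecting orbit of positive energy has energy at least some $\epsilon_0>0$. The argument is not an action-spectrum argument at all. One supposes a sequence $\wt v_i$ with $E(\wt v_i)\to 0^+$; then the energy density goes to zero uniformly, so $u_i(\Theta)$ is eventually contained in $U_\epsilon$ and $\ov\pi\circ u_i$ is $C^0$-close to a single periodic orbit $\ov\gamma$. A carefully chosen gauge transformation then exhibits each $\wt v_i$ as an element of $\wt{\mc M}(\wt x,\wt x)$ in the \emph{trivial} homotopy class (here asphericity of $M$ is used), whence $E(\wt v_i)=0$, a contradiction. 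With this $\epsilon_0$ in hand, your induction terminates after at most $E/\epsilon_0$ steps, and the remainder of your argument goes through.
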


The proof is provided at the end of this section. Indeed the proof is routine and it has been carried out in many places in the literature for general symplectic vortex equations, for example \cite{Mundet_2003}, \cite{Cieliebak_Gaio_Mundet_Salamon_2002}, \cite{Mundet_Tian_2009}, \cite{Guangbo_compactness}. One can also see \cite[Theorem 4.12]{Frauenfelder_thesis} in a much closer case. Since bubbling is ruled out, the proof is almost the same as that for finite dimensional Morse theory, while the gauge symmetry is the only additional ingredient.

\subsection{Local compactness with uniform bounded energy density}

For any compact subset $K \subset \Theta$, consider a sequence of solutions $\wt{u}_i:= \left( u_i, \Phi_i, \Psi_i \right)$ such that the image of $u_i$ is contained in the compact subset $K_H \subset M$ and such that
\begin{align*}
\limsup_{i \to \infty} E(\wt{u}_i) < \infty.
\end{align*}
We have the following local compactness result, which can be compared with \cite[Page 23, Step 1]{Frauenfelder_thesis}.
\begin{prop}
There exists a subsequence (still indexed by $i$), a sequence of smooth gauge transformation $g_i: K \to G$ and a solution $\wt{u}_\infty= (u_\infty, \Phi_\infty, \Psi_\infty ): K \to M \times {\mf g} \times {\mf g}$ to (\ref{equation24}) on $K$, such that the sequence $g_i^* \widetilde{u}_i$ converges to $\wt{u}_\infty$ uniformly with all derivatives on $K$.
\end{prop}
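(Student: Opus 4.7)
The plan is to follow the standard three-step strategy for compactness of solutions to the symplectic vortex equation on a compact domain: (1) establish a uniform $C^0$ bound on the gauge-invariant first derivatives, ruling out energy concentration via the aspherical hypothesis on $(M,\omega)$; (2) use a local gauge fixing to bound the connection components $(\Phi_i, \Psi_i)$; (3) bootstrap using elliptic regularity for the equation (\ref{equation26}) to extract a smoothly convergent subsequence.

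For step (1), enlarge $K$ slightly to an open $K' \supset K$ on which the hypotheses still hold, and consider the gauge-invariant quantity $R_i := \sup_{K'} |d_{A_i} u_i|$, where $d_{A_i} u_i = du_i + X_{\Phi_i}\,ds + X_{\Psi_i}\,dt$. Suppose for contradiction that $R_i \to \infty$ along a subsequence. Applying Hofer's almost-maximum-principle, choose $(s_i, t_i)$ near points where this norm is comparable to $R_i$, and rescale: set $\wt{v}_i(s,t) = g_i^*\wt{u}_i(s_i + R_i^{-1}s,\, t_i + R_i^{-1}t)$ in a local gauge $g_i$ (obtained by radial parallel transport from $(s_i,t_i)$, so the new $\Phi_i$ vanishes along a radial line). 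Under this rescaling the curvature term $\partial_s \Psi - \partial_t \Phi + [\Phi,\Psi]$ and the moment map term $\mu(u)$ are multiplied by $R_i^{-2}$, so $\wt{v}_i$ satisfies a rescaled vortex equation whose nonlinear data tends to zero. Uniform $L^2$ energy bounds and the $C^0$ bound $u_i(K_H) \Subset M$ let us invoke Gromov-type compactness on expanding balls: a subsequence converges to a nonconstant $J$-holomorphic plane with bounded energy, whose removable singularity extension at infinity yields a nonconstant $J$-holomorphic sphere in $M$. This contradicts asphericity of $(M,\omega)$, so $R_i$ is uniformly bounded on $K$.

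For step (2), with $|d_{A_i} u_i|$ bounded and $u_i(K) \subset K_H$ compact, the curvature $F_{A_i} = -\mu(u_i)\,ds\wedge dt$ (from the second equation of (\ref{equation26})) is uniformly bounded in $L^\infty$. On each small coordinate disk in $K$, Uhlenbeck's local compactness theorem furnishes gauge transformations $h_i$ (e.g.\ putting $A_i$ in Coulomb gauge with prescribed boundary value) so that $h_i^* A_i$ is bounded in $W^{1,p}$ for any $p < \infty$. Patching the local gauges across a finite cover of $K$ — which is straightforward since $G$ is compact and the transition gauge transformations take values in a compact group and satisfy elliptic equations — produces global gauge transformations $g_i$ on $K$ (after shrinking $K$ slightly) for which $g_i^* A_i$ is bounded in $W^{1,p}(K)$ and $g_i^* u_i$ is bounded in $W^{1,p}(K, M)$.

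For step (3), write (\ref{equation26}) as $\bar\partial_{J,A} u + \text{lower order} = 0$ coupled to the moment-map equation. With the $W^{1,p}$ bounds from step (2) and $p > 2$, the standard elliptic estimates for the Cauchy-Riemann operator and for $d + d^*$ on $1$-forms allow bootstrapping: each equation gains one derivative on the other variable, and iterating produces uniform $W^{k,p}_{\mathrm{loc}}$ bounds for every $k$. The Rellich-Kondrachov theorem then yields a diagonal subsequence converging in $C^\infty_{\mathrm{loc}}$ to a smooth solution $\wt{u}_\infty$ on $K$. The main obstacle is the bubbling analysis in step (1): one must verify that, after rescaling in an appropriate local gauge, the limit equation is genuinely the pseudoholomorphic sphere equation in $M$ (rather than an affine vortex over $\mathbb{C}$), which relies on the rescaling of the moment-map and curvature terms and on the choice of local trivialization killing $\Phi$ to leading order.
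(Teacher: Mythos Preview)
Your proposal is correct and follows essentially the same approach as the paper: the paper's proof is a two-line sketch that (i) asserts a uniform bound on the energy density $e_{\wt{u}_i}$ over $K$ by invoking the aspherical hypothesis to rule out sphere bubbles, and (ii) cites the standard compactness machinery for the symplectic vortex equation (Uhlenbeck compactness for the connection plus elliptic bootstrapping), exactly as you outline in steps (1)--(3). Your worry at the end about the rescaled limit being an affine vortex rather than a sphere is already handled by your own observation that the moment-map and curvature terms scale by $R_i^{-2}$.
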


\begin{proof} 
By the fact that $u_i$ is contained in the compact subset $K_H$, and the assumption that there exists no nontrivial holomorphic spheres in $M$, we have
\begin{align*}
\sup_{z\in K, i} e_{\wt{u}_i}(z) < \infty.
\end{align*}
Then this proposition can be proved in the standard way, such as in \cite[Section 3]{Cieliebak_Gaio_Mundet_Salamon_2002} or \cite{Mundet_Tian_2009}, using Uhlenbeck's compactness theorem. \end{proof}

%On the other hand, if we have the condition that $\limsup_{z\in K', i} e_{\wt{u}_i}(z) = \infty$ for some compact subset $K' \subset\subset K$, then we can find a subsequence for which a nontrivial holomorphic sphere bubbles off in the limit at some point in $K'$. By the folklore energy quantization property of holomorphic spheres, we know that in the limit there are at most finitely many such bubbles. 

\subsection{Energy quantization}

To prove the compactness of the moduli space, we need the following energy quantization property. It uses a similar argument as in \cite[Theorem 3.3]{Hofer_Salamon}. In Frauenfelder's Lagrangian Floer setting, when $H\equiv 0$, he proved the energy quantization \cite[Lemma 4.11]{Frauenfelder_thesis} by using an {\it a priori} estimate and the isoperimetric inequality.

\begin{prop}\label{prop55}
There exists $\epsilon_0:= \epsilon_0(J, H)>0$, such that for any connecting orbit $\wt{u} \in \wt{\mc M}_\Theta^b$ with positive energy, we have $E(\wt{u}) \geq \epsilon_0$.
\end{prop}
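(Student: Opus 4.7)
I would argue by contradiction. Suppose there is a sequence $\wt u_n = (u_n, \Phi_n, \Psi_n) \in \wt{\mc M}_\Theta^b$ with $0 < E(\wt u_n) \to 0$. By Proposition \ref{prop32}'s companion (Proposition 3.9) all $u_n(\Theta) \subset K_H$, and by Proposition \ref{prop36} I may assume $\wt u_n$ is in temporal gauge with genuine asymptotic limits $\wt x_n^\pm \in {\rm Zero}\, \wt{\mc B}_H$; let $\mf x_n^\pm$ be the corresponding lifts in ${\rm Crit}\, \wt{\mc A}_H$, so that Proposition \ref{prop38} gives $E(\wt u_n) = \wt{\mc A}_H(\mf x_n^-) - \wt{\mc A}_H(\mf x_n^+)$. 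The plan is to show that $E(\wt u_n) = 0$ for $n$ large.

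The first step is to upgrade $E(\wt u_n) \to 0$ to pointwise smallness of the energy density $e(\wt u_n) = |\partial_s u_n|^2 + |\mu(u_n)|^2$. Since the image is confined to $K_H$, Lemma \ref{lemma34} provides a differential inequality of the form $\Delta e(\wt u_n) \ge -c_1 e(\wt u_n)^2 - c_2$ on all of $\Theta$ with constants independent of $n$. Feeding $E(\wt u_n)\to 0$ into the mean-value estimate of Lemma \ref{lemma35} with a carefully chosen radius $r_n \to 0$ yields $\|e(\wt u_n)\|_{C^0(\Theta)} \to 0$. In particular, the restriction $\wt y_s^n := (u_n(s,\cdot), \Psi_n(s,\cdot))$ satisfies $\|\nabla \wt{\mc A}_H(\wt y_s^n)\|_{L^2(S^1)}^2 \le \|e(\wt u_n)\|_\infty \to 0$ uniformly in $s$.

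Next I would invoke a local isoperimetric inequality near the critical set to identify the two asymptotic limits. Because ${\rm Zero}\,\wt{\mc B}_H / L_M G$ is finite (Hypothesis \ref{hyp22}) and the self-adjoint operator $R_{\wt x}(t)$ of Section \ref{section42} has trivial kernel at each critical loop (Proposition \ref{nondegenerate}), a standard Lyapunov--Morse argument on the loop space furnishes $\delta, C > 0$ such that every loop $\wt y$ with $\|\nabla \wt{\mc A}_H(\wt y)\|_{L^2} < \delta$ lies within $W^{1,2}$-distance $C\|\nabla \wt{\mc A}_H(\wt y)\|_{L^2}$ of the $L_M G$-orbit of a unique $\wt x_* \in {\rm Zero}\,\wt{\mc B}_H$. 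Applied to $\wt y_s^n$ for every $s$ (valid once $\|e(\wt u_n)\|_\infty < \delta^2$), continuity in $s$ together with the isolation of critical orbits forces the choice of $\wt x_*$ to be independent of $s$; hence both $\wt x_n^\pm$ project to the orbit of a common $\wt x_*$, and after a further $L_M G$-gauge transformation I may take $\wt x_n^- = \wt x_n^+ = \wt x_*$.

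Consequently the lifts $\mf x_n^\pm$ differ by a deck transformation $A_n \in \Gamma$, so $E(\wt u_n) = \langle [\omega^G], A_n\rangle$. It remains to show $A_n = 0$ for $n$ large, which forces the contradiction $E(\wt u_n) = 0$. The map $u_n$ together with the standard cappings of $\wt x_*$ at both ends produces an equivariant sphere representing $A_n \in S_2^G(M)$; by the uniform $C^0$-proximity of $\wt u_n$ to the constant trajectory at $\wt x_*$ (from $\|e(\wt u_n)\|_\infty \to 0$ and the $W^{1,2}$-bound above) this sphere is contained in an arbitrarily small tubular neighborhood of $\wt x_*(S^1) \subset M$. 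Since $M$ is aspherical, the long exact sequence of the fibration $M \to M_G \to BG$ gives an injection $\pi_2(M_G) \hookrightarrow \pi_1(G)$, so $A_n$ is detected by the homotopy class of the $G$-monodromy of the connection around the cylinder, which in temporal gauge is controlled by $\partial_s \Psi_n = -\mu(u_n) \to 0$; this forces the relevant class in $\pi_1(G)$ to vanish, whence $A_n = 0$ in $S_2^G(M)$ and a fortiori in $\Gamma$. \textbf{The hard part} is precisely this last topological step: translating the uniform $C^0$-smallness of the full gauged map $(u_n,\Phi_n,\Psi_n)$ into triviality of the equivariant class $A_n$, which uses asphericity of $M$ and connectedness of $G$ in an essential way. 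The mean-value reduction and the Lyapunov inequality, while central to the argument, are standard adaptations of the Hofer--Salamon argument to the vortex setting.
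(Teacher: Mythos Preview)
Your overall strategy---contradiction, uniform decay of the energy density, identification of the two asymptotic ends, and then a topological vanishing argument---is exactly the Hofer--Salamon scheme the paper follows, and your first two steps are fine (the direct mean-value argument is a legitimate alternative to the paper's bubbling-or-nontrivial-limit dichotomy). The genuine gap is in the last step.

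In this paper ``aspherical'' means \emph{symplectically} aspherical, i.e.\ $\omega|_{\pi_2(M)}=0$; it does not mean $\pi_2(M)=0$. The long exact sequence of $M\to M_G\to BG$ therefore only yields
\[
\pi_2(M)\longrightarrow \pi_2(M_G)\longrightarrow \pi_1(G),
\]
so your claimed injection $\pi_2(M_G)\hookrightarrow\pi_1(G)$ is unjustified, and even a vanishing ``$G$-monodromy'' class would not force $A_n=0$. The monodromy argument itself is also vague: $\partial_s\Psi_n\to 0$ uniformly and in $L^2$ do not control the total variation of the holonomy over $s\in\mathbb R$. There is a related gauge-theoretic gap one step earlier: your Lyapunov estimate only places $\wt x_n^\pm$ in the same $L_MG$-orbit; to arrange $\wt x_n^-=\wt x_n^+$ by a gauge transformation defined on all of $\Theta$ you would need the two identifying loops in $L_MG$ to lie in the same component of $LG$, which you have not shown.

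The paper avoids both problems by working with the projection $\ov\pi_\mu(u_n)$ to $\ov M$: it trivializes the principal bundle $\mu^{-1}(0)\to\ov M$ over a tube around the single limiting orbit $\ov\gamma$ (this bundle is trivial because $G$ is connected and the tube retracts to $S^1$), and uses this trivialization to build an explicit gauge transformation $\wt g_i$ on $\Theta$ after which the gauged map is $C^0$-close to the constant-in-$s$ path $x_*(t)$. The homotopy class rel endpoints is then visibly trivial, so the energy vanishes. The point you are missing is precisely this concrete construction of the gauge; once one has it, no appeal to $\pi_1(G)$ or to the fibration sequence is needed.
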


\begin{proof} Suppose it is not true. Then there exists a sequence of connecting orbits, represented by solutions in temporal gauge $\wt{v}_i:=(v_i, 0, \Psi_i)\in \wt{\mc M}_\Theta^b$, such that
\begin{align*}
E(\wt{v}_i) > 0,\ \lim_{i \to \infty} E(\wt{v}_i) = 0.
\end{align*} 
We first know that there is a compact subset $K_H \subset M$ such that for every $i$, the image $v_i(\Theta)$ is contained in $K_H$. Then we must have
\begin{align*}
\lim_{i \to \infty} \sup_{\Theta} \left( \left| \partial_s v_i \right| + \left| \mu(v_i) \right| \right)= 0.
\end{align*}
Indeed, if the equality doesn't hold, then we can find a subsequence which either bubbles off a nonconstant holomorphic sphere at some point $z\in \Theta$ (if the above limit is $\infty$), or (after a sequence of proper translation in $s$-direction) converges to a solution (with positive energy) on compact subsets (if the above limit is positive and finite). Either case contradicts the assumption. Therefore we conclude that for any $\epsilon>0$, the image of $v_i$ lies in $U_\epsilon:= \mu^{-1} ({\mf g}_\epsilon^* )$ for $i$ sufficiently large.

Recall that we have projections $\pi_\mu: U_\epsilon \to \mu^{-1}(0)$ and $\ov{\pi}: U_\epsilon \to\ov{M}$. Then for all large $i$ and any $s$, $\ov\pi (v_i (s, \cdot) )$ is $C^0$-close to a periodic orbit of $\ov{H}$ in $\ov{M}$. Since those orbits are discrete, we may fix one such orbit $\ov{\gamma} \in {\rm Zero} {\mc B}_{\ov{H}}$ and choose a subsequence (still indexed by $i$) such that 
\begin{align*}
\limsup_{i \to \infty} \sup_{(s, t) \in \Theta} d \left( \ov\pi (v_i(s, t)), \ov{\gamma}(t) \right) = 0.
\end{align*}
Then, use a fixed Riemannian metric on $\ov{M}$ with exponential map $\ov\exp$, we can write 
\begin{align*}
\ov\pi (v_i(s, t)) = \ov\exp_{\ov{\gamma}(t)} \ov{\xi}_i(s, t)
\end{align*}
where $\ov\xi_i\in \Gamma \left( S^1, \ov\gamma^* T\ov{M} \right)$. Let $B_\epsilon( \ov\gamma^* T\ov{M})$ be the $\epsilon$-disk of $\ov\gamma^* T\ov{M}$. Then $\ov\exp_{\ov\gamma}$ pulls back $\mu^{-1}(0) \to \ov{M}$ to a $G$-bundle $Q \to B_\epsilon(\ov\gamma^* T\ov{M})$, together with a bundle map $\wt\gamma: Q \to \mu^{-1}(0)$. We can trivialize $Q$ by some
\begin{align*}
\phi: Q \to G \times B_\epsilon(\ov\gamma^* T\ov{M}).
\end{align*}

Now we take a lift $\wt{x}:= (x, f) \in {\rm Zero}\wt{\mc B}_H$ of $\ov\gamma$. Then we can write
\begin{align*}
\phi\left(\wt\gamma^{-1}(x(t)) \right) = (g_0(t), \ov\gamma(t)).
\end{align*}
On the other hand, we write
\begin{align*}
\phi\left( \wt\gamma^{-1} \pi_\mu (v_i(s, t)) \right) = (g_i(s, t), \ov\xi_i(s, t)).
\end{align*}
Take the gauge transformation $\wt{g}_i(s, t) = g_i (s, t) g_0(t)^{-1}$. Then write
\begin{align*}
\wt{v}_i':= (v_i', \Phi_i', \Psi_i') := \wt{g}_i^* \wt{v}_i.
\end{align*}
Then by the exponential convergence of $v_i$ as $s \to \pm\infty$, we see that $\partial_s g_i(s, t)$ decays exponentially and hence $\Phi_i'$ converges to zero as $s \to \pm\infty$. On the other hand, we see that 
\begin{align*}
\phi\left( \wt\gamma^{-1} \pi_\mu(v_i'(s, t)) \right) = ( g_0(t), \wt\xi_i(s, t)).
\end{align*}
Therefore
\begin{align*}
\wt{v}_i' \in \wt{\mc M}(\wt{x}, \wt{x}; J, H).
\end{align*}
But it is also easy to see that the homotopy class of $\wt{v}_i'$ is trivial. Because the energy of connecting orbits only depends on its homotopy class, the energy of $\wt{v}_i'$, and hence that of $\wt{v}_i$, is equal to zero. It contradicts with the hypothesis.
\end{proof}

\subsection{Proof of Theorem \ref{thm53}}

It suffices to prove, without essential loss of generality, that for any sequence $[\wt{u}_i] \in {\mc M}\left( \llbracket{\mf x}_- \rrbracket, \llbracket{\mf x}_+ \rrbracket; J, H \right)$ represented by unbroken connecting orbits $(u_i, \Phi_i, \Psi_i ) \in \wt{\mc M} \left( {\mf x}_-,  {\mf x}_+ \right)$, there exists a convergent subsequence. By the assumption that there exists no nontrivial holomorphic sphere in $M$, we have
\begin{align*}
\sup_{i, \Theta} \left|\partial_s u_i + X_{\Phi_i}(u_i) \right| < +\infty.
\end{align*}
Then the limit (broken) connecting orbits can be constructed by induction and the energy quantization property (Proposition \ref{prop55}) guarantees that the induction stops at finite time. The details are standard and left to the reader.

\section{Transversality by perturbing the almost complex structure}\label{section6}

In this section, we treat the transversality problem about our moduli space of connecting orbits. Our method is very much close to the one used by Floer-Hofer-Salamon \cite{Floer_Hofer_Salamon}, which is to use a generic choice of certain class of $t$-dependent almost complex structure. We remark that our method applies to the case when the symplectic quotient $\ov{M}$ is not semi-positive, for which traditionally people can only use the virtual technique to treat the transversality. This is indeed the benefits of using the gauged $\sigma$-model technique proposed in \cite{Cieliebak_Gaio_Salamon_2000}. 

The key point is to prove the existence of $G$-regular points (see Definition \ref{defn611}) for any nontrivial connecting orbit. In \cite[Section 4.4]{Frauenfelder_thesis} Frauenfelder gave an argument in the context of his Lagrangian Floer theory, which is different in some places from the original Floer-Hofer-Salamon approach. In his argument, it suffices to look at the asymptotics of the solution near infinity, which is prescribed by an eigenvector of the Hessian of the action functional at the critical point (by \cite[Theorem B]{Robbin_Salamon_strip}). However, he didn't show that the eigenvector necessarily has nonzero projection onto the symplectic quotient. So {\it a priori}, there could be nontrivial connecting orbits which are contained entirely inside a $G_{\mb C}$-orbit. These objects can neither be excluded, nor be made transverse by his argument. 

In this section we fill this gap in our context of Hamiltonian Floer theory. More precisely, we start with a nondegenerate Hamiltonian $(\ov{H}_t)$ on the symplectic quotient $\ov{M}$. There are plenty of freedom to choose a $G$-invariant lift $(H_t)$, and those ones for which all the nonzero modes of the Hessian of ${\mc A}_H$ have nonzero projections onto $\ov{M}$ are called admissible lifts. We believe that a generic lift is admissible but instead, we give an {\it ad hoc} construction of an admissible lift in Lemma \ref{lemma66}. Then Frauenfelder's argument applies to our situation. 

We first recall the following theorem on the ``normal form'' of symplectic structure near $\mu^{-1}(0)$.

\begin{prop}\label{prop61}(\cite{Gotay}, \cite{Guillemin_Sternberg_1984}, \cite{Marle})
Suppose $0$ is a regular value of $\mu$ and $G$ acts freely on $\mu^{-1}(0)$. Then there exists a neighborhood $U$ of $\mu^{-1}(0)$, and a diffeomorphism
\begin{align}\label{equation61}
i_U: \mu^{-1}(0) \times {\mf g}_\epsilon^* \to  U
\end{align}
where ${\mf g}_\epsilon^*$ is the $\epsilon$-ball of ${\mf g}^*$ centered at the origin, satisfying the following conditions.
\begin{enumerate}
\item $\mu\circ i_U$ is equal to the projection $\theta: \mu^{-1}(0) \times {\mf g}_\epsilon^* \to {\mf g}_\epsilon^*$;

\item There is a connection $\tau \in \Omega^1(U, {\mf g})$ of the $G$-bundle $\mu^{-1}(0) \to \ov{M}$ such that 
\begin{align*}
i_U^* \omega = \pi^* \ov\omega + d( \tau(\theta)).
\end{align*}
Here $\pi: \mu^{-1}(0) \times {\mf g}_\epsilon^* \to \ov{M}$ is the natural projection and $\ov{\omega}$ is the symplectic form of $\ov{M}$.

\item The $G$-action on $U$ is given by
\begin{align*}
g(x, \zeta) = \left( gx, {\rm Ad}_{g^{-1}}^* \zeta \right).
\end{align*}
\end{enumerate}
\end{prop}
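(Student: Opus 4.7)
The plan is to follow the classical Marle--Guillemin--Sternberg normal form argument adapted to our setting. Set $Y := \mu^{-1}(0) \times {\mf g}_\epsilon^*$ with the product $G$-action $g\cdot(x,\zeta) = (gx, {\rm Ad}_{g^{-1}}^*\zeta)$ and let $\theta: Y \to {\mf g}_\epsilon^*$ be the projection. The goal is to construct a $G$-equivariant diffeomorphism $i_U$ from $Y$ (perhaps after shrinking $\epsilon$) onto an open neighborhood of $\mu^{-1}(0) \subset M$, intertwining $\theta$ with $\mu$, and pulling back $\omega$ to the model form $\omega_0 := \pi^*\ov\omega + d\langle \tau, \theta\rangle$, where $\pi: Y \to \ov{M}$ is the quotient projection and $\tau \in \Omega^1(\mu^{-1}(0),{\mf g})^G$ is a fixed principal connection for $\mu^{-1}(0) \to \ov{M}$, pulled back to $Y$.

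First, I would verify that $\omega_0$ is symplectic in a neighborhood of the zero-section. Since the $G$-action is free on $\mu^{-1}(0)$, $T\mu^{-1}(0)$ splits $G$-equivariantly as horizontal (via $\tau$) plus vertical directions. At $\zeta = 0$, a direct calculation shows $\omega_0$ restricts to $\pi^*\ov\omega$ on horizontal vectors and pairs vertical vectors $X_\xi$ with the tautological ${\mf g}^*$ direction by the natural duality, so $\omega_0$ is non-degenerate along $\mu^{-1}(0)\times\{0\}$. Shrinking $\epsilon$ and using compactness of any fundamental domain, $\omega_0$ stays symplectic, and is manifestly closed and $G$-invariant. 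Next, using that $d\mu$ realizes the $G$-equivariant normal bundle of $\mu^{-1}(0) \subset M$ as $\mu^{-1}(0) \times {\mf g}^*$, the equivariant tubular neighborhood theorem produces a $G$-equivariant diffeomorphism $\phi: Y \to U \subset M$ with $\phi|_{\mu^{-1}(0)\times\{0\}} = \mathrm{id}$ and $d\phi$ along the zero-section matching the natural identifications. One can arrange $\mu\circ\phi = \theta$ either by building $\phi$ from the flow of equivariant lifts of the ${\mf g}^*$-directions, or by a separate equivariant isotopy afterwards, because $\mu\circ\phi$ and $\theta$ agree to first order along $\mu^{-1}(0)\times\{0\}$.

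The main step is an equivariant Moser argument to deform $\phi$ into a symplectomorphism for $\omega_0$ while preserving $\mu\circ\phi = \theta$. The closed $G$-invariant 2-form $\phi^*\omega - \omega_0$ vanishes on $TY|_{\mu^{-1}(0)\times\{0\}}$, so the equivariant relative Poincar\'e lemma yields a $G$-invariant primitive $\beta$ vanishing on the zero-section. Along the straight-line path $\omega_s := (1-s)\omega_0 + s\phi^*\omega$ (symplectic for all $s\in[0,1]$ after further shrinking), I solve $\iota_{X_s}\omega_s = -\beta$ to get $G$-invariant time-dependent vector fields $X_s$ vanishing on $\mu^{-1}(0)\times\{0\}$, whose flow $\psi_1$ satisfies $\psi_1^*\phi^*\omega = \omega_0$; setting $i_U := \phi\circ\psi_1$ gives properties (1) and (3), and (4) is automatic since everything is equivariant. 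The delicate point, and the main obstacle, is arranging compatibility with the moment map: one must choose the primitive $\beta$ so that its contraction with the ${\mf g}^*$-fiber directions vanishes on $\mu^{-1}(0)\times\{0\}$, forcing $X_s$ to be tangent to the level sets of $\theta$; this is possible because both $\omega_0$ and $\phi^*\omega$ induce the same moment map on $\mu^{-1}(0)\times\{0\}$ (namely zero), so the obstruction to such a $\beta$ lies in an already-trivial cohomological piece. Once this is secured, $\mu\circ i_U = \mu\circ\phi = \theta$ is preserved and the proposition follows.
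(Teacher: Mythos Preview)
The paper does not give its own proof of this proposition; it is stated with citations to Gotay, Guillemin--Sternberg, and Marle and used as a black box. Your sketch is a reasonable outline of the classical argument in those references (equivariant tubular neighborhood plus an equivariant relative Moser deformation), so in that sense you are reproducing the literature proof rather than diverging from anything in the paper.

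One minor remark on your write-up: you refer to ``properties (1) and (3), and (4)'' but the statement has only items (1)--(3); presumably you mean (2) and (3), with (3) automatic by equivariance. More substantively, the step you flag as ``delicate''---arranging that the Moser isotopy preserves $\theta$---is handled in the references by a slightly different bookkeeping than you suggest. Rather than tweaking the primitive $\beta$ so that $X_s$ is tangent to the $\theta$-levels, one typically first observes that $\theta$ is a moment map for \emph{both} $\omega_0$ and $\phi^*\omega$ (the latter because $\mu\circ\phi=\theta$ and $\phi$ is equivariant), hence for every $\omega_s$; then $G$-invariance of $\beta$ gives $\iota_{X_\xi}\beta$ exact for each $\xi\in{\mf g}$, and the standard computation shows the flow of $X_s$ intertwines the moment maps, i.e.\ $\theta\circ\psi_s=\theta$. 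Your formulation is not wrong, but phrasing it via the moment map identity is cleaner and avoids the somewhat vague ``cohomological piece'' justification.
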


Throughout this appendix, we fix $U$ and identify $U$ with $\mu^{-1}(0) \times {\mf g}_\epsilon^*$ by $i_U$. There are two complementary distributions, $\ov{TM}$ and ${\mf g}_{\mb C}$, where the former is the horizontal distribution of $\tau$ on $\mu^{-1}(0)$ pulled back to $U$, and the latter is the span of $X_\xi$ for all $\xi\in {\mf g}$ and $T{\mf g}_\epsilon^*$. We identify ${\mf g}$ with ${\mf g}^*$ via a pre-chosen biinvariant metric. So $T{\mf g}_\epsilon^*$ is isomorphic to ${\mf g}$ and the distribution ${\mf g}_{\mb C}$ is isomorphic to ${\mf g} \oplus {\bm i} {\mf g}$. We denote by $\pi_1: TM|_U \to \ov{TM}$, $\pi_2: TM|_U \to {\mf g}_{\mb C}$ the projections with respect to the splitting
\begin{align}\label{equation62}
TM|_U \simeq \ov{TM} \oplus {\mf g}_{\mb C}.
\end{align}

\subsection{Admissible almost complex structures and admissible Hamiltonians}

Let $(\ov{H}_t)$ be a Hamiltonian with only nondegenerate 1-periodic orbits, which are $\ov{y}_1, \ldots, \ov{y}_s: S^1 \to \ov{M}$. 

\begin{defn}\label{defn62}
A smooth $S^1$-family of almost complex structures $J = (J_t)$ on $M$ is called {\bf admissible} (with respect to $i_U:\mu^{-1}(0) \times {\mf g}_\epsilon^* \to U$ and $(\ov{H}_t)$) if it satisfies

\begin{enumerate}
\item For each $t \in S^1$, $J_t$ is $G$-invariant and $\omega$-compatible.

\item For each $t \in S^1$, $J_t |_{M \setminus U} = {\mf J}$, where ${\mf J}$ is the one appeared in Hypothesis \ref{hyp23}.

\item With respect to the decomposition (\ref{equation62}), we can write
\begin{align*}
J_t|_U = \left( \begin{array}{cc} J_t^{(11)} & J_t^{(12)} \\ J_t^{(21)} & J_t^{(22)}
\end{array} \right).
\end{align*}
Then we require that for any $i\in \{1, \ldots, s\}$, any $t \in S^1$, and any $x_i(t) \in \mu^{-1}(0)$ with $\pi(x_i(t))= \ov{y}_i(t)$ (where $\pi: \mu^{-1}(0) \to \ov{M}$ is the projection), we have
\begin{align*}
\begin{split}
&\ J_t^{(12)}(x_i(t)) = 0,\ J_t^{(21)}(x_i(t)) = 0,\ J_t^{(22)}(x_i(t)) = {\bm i},\\
&\ \dot{J}_t^{(12)}(x_i(t)) = 0,\ \dot{J}_t^{(21)}(x_i(t)) = 0,\ \dot{J}_t^{(22)}(x_i(t)) = 0.
\end{split}
\end{align*}
\end{enumerate}

For any $G$-invariant lift $H = (H_t)$ of $(\ov{H}_t)$, we denote by $\wt{\mc J}_H $ the space of smooth $S^1$-families of admissible almost complex structures (with respect to $(\ov{H}_t)$), and define $\wt{\mc J}^l_H$ the corresponding objects in the $C^l$-category, for $l \geq 1$.
\end{defn}

\begin{lemma}\label{lemma63}
For $l \geq 1$, the space $\wt{\mc J}^l_H$ is a smooth Banach manifold. For any $J= (J_t)_{t\in S^1} \in \wt{\mc J}^l_H$, the tangent space $T_{J} \wt{\mc J}^l_H$ is naturally identified with the space of $G$-invariant sections $E: S^1 \times M \to {\rm End}_{\mb R}TM$ (of class $C^l$), supported in the closure of $S^1 \times U$, and for each $t \in S^1$ satisfying

\begin{enumerate}
\item $J_t E_t + E_t J_t = 0$;

\item $\omega( \cdot, E_t \cdot)$ is a symmetric tensor;

\item With respect to the decomposition (\ref{equation62}), if we write $E_t$ as
\begin{align*}
E_t= \left( \begin{array}{cc} E^{(11)}_t & E^{(12)}_t \\ E^{(21)}_t & E_t^{(22)} \end{array} \right),
\end{align*}
then for $i \in \{1, \ldots, s\}$ and any $x_i(t) \in \pi^{-1}(\ov{y}_i(t))$, we have
\begin{align*}
E_t^{(j_1 j_2)}(x_i(t)) = 0,\ \dot{E}_t^{(j_1 j_2)}(x_i(t)) = 0,\ (j_1 j_2) \neq (11).
\end{align*}
\end{enumerate}
\end{lemma}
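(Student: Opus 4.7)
The plan is to realize $\wt{\mc J}^l_H$ as a Banach submanifold of the ambient space of all $C^l$ loops of $\omega$-compatible almost complex structures on $M$, via a single exponential chart at a base point.

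First I would construct at least one $J_0 \in \wt{\mc J}^l_H$. Set $J_0 \equiv {\mf J}$ off $U$; inside $U$ use the splitting (\ref{equation62}) to put $J_0$ in block-diagonal form, taking $J_0^{(11)}$ to be the horizontal lift of a fixed $\omega$-compatible almost complex structure on $\ov{M}$ and $J_0^{(22)} = {\bm i}$, so that the jet conditions of Definition \ref{defn62}(3) hold trivially. A $G$-invariant cutoff interpolates the two definitions across $\partial U$, and a small deformation, standard in the theory of compatible triples, restores $\omega$-compatibility globally.

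Next I would use the exponential parametrization: for $J_0$ fixed, the map
\begin{align*}
Y \mapsto J_0 \exp(J_0^{-1} Y)
\end{align*}
is a local diffeomorphism from a neighborhood of $0$ in the Banach space
\begin{align*}
{\mc T}^l := \left\{ Y \in C^l(S^1 \times M, {\rm End}_{\mb R} TM) \, \mid \, Y_t J_0 + J_0 Y_t = 0,\ \omega(\cdot, Y_t \cdot) \text{ symmetric} \right\}
\end{align*}
onto a neighborhood of $J_0$ in the ambient space, with differential at $0$ equal to the identity. Each of the three admissibility conditions transfers to a closed linear constraint on $Y$: $G$-invariance and the support condition $Y_t|_{M \setminus U} = 0$ are closed under $C^l$-convergence (the latter uses $J_0 = {\mf J}$ off $U$), and the jet-vanishing conditions at the $x_i(t)$ form the kernel of a bounded linear evaluation map into a finite-dimensional space. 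Their intersection ${\mc T}^l_H$ is therefore a closed subspace, itself a Banach space, and the chart identifies a neighborhood of $J_0$ in $\wt{\mc J}^l_H$ with an open subset of ${\mc T}^l_H$. The same argument around any other $J \in \wt{\mc J}^l_H$ gives the Banach manifold structure globally and yields the claimed description of the tangent space.

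The only mildly delicate point is the existence of $J_0$, namely arranging the jet conditions (3) simultaneously with $G$-invariance, $\omega$-compatibility, and the matching to ${\mf J}$ outside $U$. However, since $J_t$ is required to be $G$-invariant and $G$ acts freely on $\mu^{-1}(0)$, it suffices to specify $J_0$ at one representative of each orbit $G \cdot x_i(t)$, reducing the problem to a pointwise block-diagonal prescription along $\mu^{-1}(0)$ that is then extended $G$-equivariantly and interpolated to ${\mf J}$; all remaining steps (the exponential chart, closedness of the two support/invariance conditions, and finite codimension of the jet conditions) are standard.
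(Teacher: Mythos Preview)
The paper states this lemma without proof, treating it as routine. Your argument is correct and is the standard way to fill in such a statement: build a base point $J_0\in\wt{\mc J}^l_H$, use an exponential-type chart for $\omega$-compatible almost complex structures, and observe that the admissibility constraints become closed linear conditions on $Y$ in that chart.

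Two small points are worth flagging. First, the jet-vanishing conditions are imposed along the loops $t\mapsto x_i(t)$, $t\in S^1$, not at finitely many points; the evaluation map therefore lands in a Banach space of sections over $S^1$, not a finite-dimensional space. This does not affect the argument, since the kernel of a bounded linear map is closed regardless, but the phrases ``finite-dimensional'' and ``finite codimension'' are inaccurate. Second, the only place where your linearization claim needs a genuine check is condition~(3): one must verify that in the chart $J=J_0\exp(J_0^{-1}Y)$ around a block-diagonal $J_0$, the condition ``$J_t$ is block-diagonal with $J_t^{(22)}={\bm i}$ at $x_i(t)$'' is \emph{exactly} the linear condition ``$Y_t^{(j_1j_2)}(x_i(t))=0$ for $(j_1j_2)\neq(11)$'' (and similarly for the $t$-derivative). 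This holds because matrix $\exp$ and $\log$ preserve block-diagonal form, and the $(22)$-block of $\exp$ at $0$ is the identity; it is worth saying this explicitly rather than leaving it implicit.
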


\begin{rem}
It is natural to think about the space of almost complex structures which respect the splitting (\ref{equation62}). However, this space is not big enough to ``overcome'' all possible obstructions.
\end{rem}

For any lift $(H_t)$ and each $i \in \{1, \ldots, s\}$, we can choose $(x_i, \eta_i)\in {\rm Zero}\wt{\mc B}_H$ so that $\pi\circ x_i = \ov{y}_i$. The Hessian of the action functional $\wt{\mc A}_H$ (with respect to the metric induced from $\omega$ and $(J_t)$) at $\wt{x} = (x, \eta)\in {\rm Zero}\wt{\mc B}_H$ reads
\begin{align}\label{equation63}
\begin{array}{cccc}
{\mc H}_{\wt{x}}: & \Gamma\left( x^* TM \oplus {\mf g} \right) & \to & \Gamma \left( x^* TM \oplus {\mf g} \right)   \\
           & ( \xi, \beta) & \mapsto & \left( J_t \left( \nabla_t \xi + \nabla_\xi X_\eta - \nabla_\xi Y_{H_t} + X_\beta \right) , d\mu \cdot \xi \right).
\end{array}
\end{align}
It is a self-adjoint unbounded operator on $L^2 \left( x^* TM \oplus {\mf g}\right)$, where the connection $\nabla$ and the $L^2$-metric are induced from the family of metrics $g_t := \omega( \cdot, J_t \cdot)$.

\begin{defn}\label{defn65}
$H= (H_t) \in C_c^\infty (S^1 \times M)^G$ is called {\bf admissible} if
\begin{enumerate}
\item All 1-periodic orbits of $\ov{H}_t$ in $\ov{M}$ are nondegenerate;

\item For every $\wt{x}:=(x, \eta) \in {\rm Zero}{\mc B}_H$, every admissible $(J_t) \in \wt{\mc J}^l_{H}$ and every eigenvector $\wt\xi:= (\xi, \beta)$ of ${\mc H}_{\wt{x}}$ corresponding to a nonzero eigenvalue, $\pi_1 \left( \xi(t)\right)$ doesn't vanish identically.
\end{enumerate}
If $(\ov{H}_t)$ is nondegenerate, then $(H_t)$ is called an admissible lift of $(\ov{H}_t)$.
\end{defn}

As we have mentioned at the beginning of this section, we prove the existence of admissible lifts. The way is to modify the 2-jet of $H_t$ along any $(x, \eta) \in {\rm Zero} \wt{\mc B}_H$. 

\begin{lemma}\label{lemma66}
If $(\ov{H}_t)$ is nondegenerate, then it has an admissible lift $(H_t)$.
\end{lemma}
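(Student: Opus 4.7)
The plan is to construct the admissible lift explicitly, modifying the $2$-jet of a natural initial lift along each critical orbit, as suggested in the paragraph preceding the lemma. Using the Marle-Guillemin-Sternberg identification $i_U : \mu^{-1}(0) \times {\mf g}^*_\epsilon \to U$ of Proposition \ref{prop61}, I first fix a base lift $H^0_t := \chi \cdot (\ov H_t \circ \ov\pi)$, with $\chi$ a $G$-invariant cutoff equal to $1$ on a neighborhood of $\mu^{-1}(0)$ and supported in $U$; then $H^0 \in C^\infty_c(S^1 \times M)^G$, and Proposition \ref{proposition27} together with the nondegeneracy of $\ov H_t$ identifies the equivalence classes of ${\rm Zero}\wt{\mc B}_{H^0}$ with the finite set ${\rm Crit}{\mc A}_{\ov H}$. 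I choose representatives $(x_i, \eta_i)$, $i=1,\dots,s$, with $x_i$ a loop in $\mu^{-1}(0)$ lifting $\ov y_i$ and $\eta_i \in {\mf g}$ constant (via the $L_0 G$-gauge), which secures Condition~(1) of Definition~\ref{defn65}.

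Next I would analyse the Hessian ${\mc H}_{\wt x_i}$ at each $(x_i, \eta_i)$. Fix $i$ and any admissible $J$; along $x_i$, admissibility forces $J_t$ and $\dot J_t$ to be block-diagonal with respect to $TM|_{x_i} \simeq \ov{TM} \oplus {\mf g}_{\mb C}$, with $J_t$ acting as the standard complex structure on the ${\mf g}_{\mb C}$-block. For any $(\xi, \beta)$ with $\xi \in \Gamma(x_i^* {\mf g}_{\mb C})$, the $\ov{TM}$-component of the first coordinate of ${\mc H}_{\wt x_i}(\xi, \beta)$ equals $J_t \, B_i(\xi)$, where
\begin{equation*}
B_i(\xi) := \pi_1 \bigl(\nabla_t \xi + \nabla_\xi X_{\eta_i} - \nabla_\xi Y_{H_t}\bigr) \in \Gamma(x_i^* \ov{TM})
\end{equation*}
(the term $X_\beta$ lies entirely in ${\mf g}_{\mb C}$). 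An eigenvector of ${\mc H}_{\wt x_i}$ with nonzero eigenvalue and $\pi_1(\xi) \equiv 0$ therefore forces $B_i(\xi) = 0$, and Proposition~\ref{nondegenerate} excludes the residual case $\xi \equiv 0$, so Condition~(2) reduces to the injectivity of $B_i$.

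Third, the only part of $B_i$ depending on $H_t$ is the zeroth-order term $-\pi_1 \circ \nabla_{(\cdot)} Y_{H_t}$, which is linear in the mixed second derivatives of $H_t$ between $\ov{TM}$- and ${\mf g}_{\mb C}$-directions along $x_i$. I add to $H^0$ a $G$-invariant compactly supported $h^i_t$, supported in a $G$-invariant tubular neighborhood of the compact free orbit $G \cdot x_i$, vanishing to first order along $G \cdot x_i$ (so the critical set is unchanged), with transverse $2$-jet freely prescribed modulo $G$-equivariance. A judicious choice, for instance one making the induced zeroth-order modification of $B_i$ a positive-definite self-adjoint term of sufficiently large norm, renders $B_i$ elliptic with trivial kernel; the orbits $G \cdot x_i$ being disjoint, the perturbations can be summed to give the admissible lift $H_t := H^0_t + \sum_i h^i_t$.

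The main obstacle is the verification of the last step: one must show both that the prescribed $G$-equivariant transverse $2$-jet can in fact be realized by some $G$-invariant smooth perturbation $h^i_t$, and that the resulting choice kills $\ker B_i$ uniformly over every admissible $J$. The first is a standard $G$-equivariant jet-extension problem, made tractable by the freeness of the $G$-action on $G \cdot x_i$. The second is the real core of the argument: the $J$-dependence of $B_i$ enters through the covariant derivative $\nabla$ of the metric $g_t = \omega(\cdot, J_t\cdot)$ in both $\ov{TM}$- and ${\mf g}_{\mb C}$-directions, and one needs to choose a $2$-jet whose contribution dominates this $J$-dependent piece uniformly in $(t, J)$.
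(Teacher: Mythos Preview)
Your overall strategy --- lift $\ov H_t$ trivially and then modify the transverse $2$-jet along each critical orbit --- is exactly what the paper does. The gap is in the reduction step: the sufficient condition ``$B_i$ injective'' can \emph{never} be achieved, so the proposal as written does not close.

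Write $\xi = X_{{\bm a}(t)} + J_t X_{{\bm b}(t)}$ for a section of $x_i^*{\mf g}_{\mb C}$. Using that $\nabla$ is torsion-free, that $\dot x_i = Y_{H_t} - X_{\eta_i}$, and that $Y_{H_t}$ is $G$-invariant (so $[X_{{\bm a}}, Y_{H_t}]=0$), one computes for the ${\mf g}$-part
\[
\nabla_t X_{{\bm a}} + \nabla_{X_{{\bm a}}} X_{\eta_i} - \nabla_{X_{{\bm a}}} Y_{H_t}
= X_{{\bm a}'} + [Y_{H_t}, X_{{\bm a}}] + [X_{{\bm a}}, X_{\eta_i}]
\in {\mf g}|_{x_i(t)},
\]
so $B_i(X_{{\bm a}})\equiv 0$ for every choice of $H$ and every admissible $J$. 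Thus $\Gamma(x_i^*{\mf g})\subset \ker B_i$ always, and your suggestion of adding a ``positive-definite self-adjoint'' zeroth-order term cannot help: $B_i$ goes from rank-$2d$ sections to rank-$2k$ sections, and the $H$-dependent piece $-\pi_1\nabla_{(\cdot)}Y_{H_t}$ vanishes on $X_{{\bm a}}$ by $G$-invariance. (The admissibility of $J$ also forces $B_i$ to be a \emph{zeroth}-order map, not first order, so ``elliptic'' is not the right notion either.)

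What actually works is finer. The $\ov{TM}$-projection of the eigenvector equation only sees ${\bm b}$, through the pointwise map $\theta_t^{(i)}:{\mf g}\to \ov{TM}_{x_i(t)}$, $\xi\mapsto J_t[J_tX_\xi, Y_{H_t}]$; one chooses the $2$-jet of $H$ so that $\theta_t^{(i)}$ has controlled kernel. When $d\le 2k$ one can make $\theta_t^{(i)}$ injective, hence ${\bm b}\equiv 0$; then the ${\mf g}_{\mb C}$-part of the eigenvalue equation (left side in $J_t{\mf g}$, right side in ${\mf g}$) forces ${\bm a}\equiv 0$. When $d>2k$, $\theta_t^{(i)}$ cannot be injective; the paper instead prescribes a \emph{time-dependent} kernel and combines the constraint ${\bm b}(t)\in\ker\theta_t^{(i)}$ with the ODE ${\bm b}'' = (1-\kappa^2){\bm b}$ coming from the ${\mf g}_{\mb C}$-part, using a Fourier-mode argument to conclude ${\bm b}\equiv 0$. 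Your proposal does not touch the ${\mf g}_{\mb C}$-part of the equation at all, which is where the work lies.

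Finally, your worry about uniformity in $J$ dissolves once you use the admissibility conditions: at $x_i(t)$ every admissible $J_t$ is block diagonal with fixed ${\mf g}_{\mb C}$-block, so the kernel of $\theta_t^{(i)}$ is determined by $H$ and the curvature $\Omega_\tau$ alone, independently of $J$.
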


\begin{proof}
We give an explicit construction of an admissible $H_t$. Let $\ov{y}_1, \ldots, \ov{y}_s$ be all 1-periodic orbits of $\ov{H}_t$. Let $\epsilon>0$ be small enough such that it is smaller than the injectivity radius of $\ov{M}$ and such that for any two different 1-periodic orbits $\ov{y}_1, \ov{y}_2: S^1 \to \ov{M}$ of $\left( \ov{H}_t \right)$, $B_\epsilon(\ov{y}_1(t)) \cap B_\epsilon(\ov{y}_2(t)) = \emptyset$ for all $ t\in S^1$. For $i = 1, \ldots, s$, choose a smooth family of cut-off functions $\ov\rho^{(i)}_t: \ov{M} \to [0,1]$ supported in the $\epsilon$-neighborhood of $\ov{y}_i(t)$. They lift to $G$-invariant functions $\ov\rho^{(i)}_t: \mu^{-1}(0) \to [0,1]$.

Let $W_{\ov{H}_t} \in \Gamma( T\mu^{-1}(0))$ be the horizontal lift of $\ov{Y}_{H_t}$ with respect to the $G$-connection $\tau$ (see Proposition \ref{prop61}). For each $i \in \{1, \ldots, s\}$, choose a lift $z_i \in \mu^{-1}(0)$ of $\ov{y}_i(0)$. Let $\gamma_i (t)$ be the integral curve of $W_{\ov{H}_t}$ starting from $z_i$. Then $\gamma_i(1) = g_i z_i = g_i \gamma_i(0)$ for some $g_i \in G$. Then choose $\xi_i\in {\mf g}$ such that 
\begin{align}\label{equation64}
\exp \xi_i = g_i
\end{align}
Define $x_i: S^1 \to \mu^{-1}(0)$ by $x_i (t) = \exp ( - t \xi_i) \gamma_i(t)$, which satisfies
\begin{align*}
x_i'(t) = - X_{\xi_i}(x_i (t)) + W_{ \ov{H}_t}( x_i (t)).
\end{align*}

Suppose ${\rm dim} \ov{M} = 2k$. Choose a trivialization $\phi_i: S^1 \times {\mb R}^{2k} \to \ov{y}_i^* T\ov{M} $, which induces via the horizontal lift a trivialization $\phi_i: S^1 \times {\mb R}^{2k} \to x_i^* \ov{TM} $. Let $B_\epsilon^{2k}\subset {\mb R}^{2k}$ be the $\epsilon$-ball of ${\mb R}^{2k}$ centered at the origin. Then
\begin{align*}
\begin{array}{ccc}
G \times B_\epsilon^{2k} & \to & \mu^{-1}(0)\\
(g, {\bm v}) & \mapsto & g \exp_{x_i (t)} \phi_i(t, {\bm v})
\end{array}
\end{align*}
gives a family of local coordinates of $\mu^{-1}(0)$ around $x_i (t)$. Remember that $\theta$ is the coordinate of the factor ${\mf g}_\epsilon^*$ in (\ref{equation61}). Then $({\bm v}, g, \theta)$ gives a local coordinate chart of $M$ around $x_i(t)$. The points parametrized by $({\bm v}, 1, \theta)$ form a local slice through $x_i(t)$ of the $G$-action. Choose an orthonormal basis ${\bm \epsilon}_1, \ldots, {\bm \epsilon}_d$ where $d = {\rm dim} G$. At this moment our construction diverges into two cases.

$\bullet$ The easy case: when $d \leq 2k$. In this case we define
\begin{align}\label{equation65}
F^{(i)}_t({\bm v}, 1, \theta) = K \ov\rho_t^{(i)}(\exp_{x_i (t)} \phi_i(t ,{\bm v})) \sum_{l=1}^d \langle \theta, {\bm \epsilon}_l\rangle v_l.
\end{align}
Here $K>0$ is a large number to be determined. This defines a function on the local $G$-slice through $x_i(t)$. Then we can extend it to a $G$-invariant function on $M$, still denoted by $F^{(i)}_t$. Then we define
\begin{align}\label{equation66}
H_t({\bm v}, g, \theta) = \pi^* \ov{H}_t +\sum_{i=1}^s F^{(i)}_t.
\end{align}
We see that $F^{(i)}_t$ vanishes along $\mu^{-1}(0)$. Therefore $(H_t)$ is a lift of $(\ov{H}_t)$. We claim that for $K$ big enough, $(H_t)$ is admissible.

Notice that elements of ${\rm Zero} \wt{\mc B}_H$ are lifted from $\ov{y}_1, \ldots, \ov{y}_s$. Moreover, $F^{(i)}_t$ vanishes up to first order along $x_i(t)$. Therefore, 
\begin{align}\label{equation67}
Y_{H_t}(x_i(t)) = W_{\ov{H}_t}(x_i(t))
\end{align}
and $\wt{x}_i = (x_i, \xi_i) \in {\rm Zero} \wt{\mc B}_H$. Then, for each $i\in \{1, \ldots, s\}$, consider the $S^1$-family of linear maps $\theta_t^{(i)}: {\mf g} \to T_{x_i(t)}M$, defined as
\begin{align}\label{equation68}
\theta_t^{(i)} \xi  = J_t [ J_t X_\xi, Y_{H_t} ] (x_i(t)).
\end{align}
For any $Z \in T_{x_i(t)} M$, extend it properly so that $[J_t X_\xi, Z] = 0$ at $x_i(t)$ for all $\xi \in {\mf g}$. Let $\langle \cdot, \cdot \rangle_t$ be the inner product on $TM|_U$ determined by $\omega$ and $J_t$. Then we have
\begin{align}\label{equation69}
\begin{split}
\langle \theta_t^{(i)}(\xi), Z \rangle_t = &\ \omega \left( [ J_t X_\xi, Y_{H_t}], Z \right) \\
= &\  J_t X_\xi\left(  \omega(Y_{H_t}, Z) \right) - \left( {\mc L}_{J_t X_\xi} \omega \right) \left( Y_{H_t},  Z \right) \\
= &\ J_t X_\xi ( Z H_t) - \langle \xi, \Omega_\tau(Y_{H_t}, Z) \rangle.
\end{split}
\end{align}
Here $\Omega_\tau\in \Omega^2(\mu^{-1}(0), {\mf g})$ is the curvature form of $\tau$ and the last equality follows from the (2) of Proposition \ref{prop61} and (3) of Definition \ref{defn62}. If $Z \in {\mf g}_{\mb C}|_{x_i(t)}$, then $J_t X_\xi(Z H_t)$ vanishes by the definition of $H_t$ (\ref{equation66}) and $F^{(i)}_t$ (\ref{equation65}), and $\Omega_\tau(Y_{H_t}, Z)$ vanishes by the property of curvature form. Moreover, ${\mf g}_{\mb C}$ and $\ov{TM}$ are orthogonal at $x_i(t)$ by Definition \ref{defn62}. Therefore, ${\rm Im} \theta_t^{(i)}\subset \ov{TM}_{x_i(t)}$. Then with respect to the basis $({\bm \epsilon}_l)_{l=1}^d$ of ${\mf g}$ and the standard basis $({\bm v}_j)_{j=1}^{2k}$ of $\ov{TM}_{x_i(t)}\simeq {\mb R}^{2k}$, by (\ref{equation65}), the matrix of $\theta_t^{(i)}$ is equal to 
\begin{align*}
K \delta_{jl} - \langle {\bm \epsilon}_l , \Omega_\tau( Y_{H_t}, {\bm v}_j) \rangle,\ 1 \leq l \leq d,\ 1\leq j \leq 2k.
\end{align*}
By (\ref{equation67}), the second term is independent of $K$. So we take $K>0$ big enough so that $\theta_t^{(i)}: {\mf g} \to \ov{TM}_{x_i(t)}$ is injective for every $t\in S^1$. 

Suppose, by contradiction, that $(H_t)$ is not admissible. Then for some $i\in \{1, \ldots, s\}$, there is an eigenvector $\wt\xi= (\xi, \beta)$ of ${\mc H}_{\wt{x}_i}$ corresponding to an eigenvalue $\kappa \neq 0$ and $\xi(t) = X_{{\bm a}(t)} + J_t X_{{\bm b}(t)}$ where ${\bm a}, {\bm b}: S^1 \to {\mf g}$. Then by (\ref{equation63}), 
\begin{align}\label{equation610}
J_t \left( \nabla_t \xi  + \nabla_\xi X_{\xi_i} - \nabla_\xi Y_{H_t} + X_\beta	 \right) = \kappa \xi,\ d \mu \cdot \xi = \kappa \beta;
\end{align}
or equivalently (here we use the third condition of Definition \ref{defn62}),
\begin{align}\label{equation611}
J_t \left( X_{{\bm a}'(t)} + J_t X_{{\bm b}'(t)} + \left[ J_t X_{{\bm b}(t)}, X_{\xi_i} - Y_{H_t} \right] + \kappa^{-1} X_{{\bm b}(t)} \right)  = \kappa X_{{\bm a}(t)} + \kappa J_t X_{{\bm b}(t)}.
\end{align}
By the $G$-invariance of $J_t$ and $Y_{H_t}$, we see that every term above belongs to ${\mf g}_{\mb C}|_{x_i(t)}$ except for $J_t [J_t X_{{\bm b}(t)}, -Y_{H_t}]= - \theta_t^{(i)}({\bm b}(t)) \in \ov{TM}_{x_i(t)}$. Then by the injectivity of $\theta_t^{(i)}$, ${\bm b}(t) \equiv 0$. Therefore
\begin{align*}
J_t \left(  X_{{\bm a}'(t)} +  [ X_{{\bm a}(t)}, X_{\xi_i(t)}] \right) = \kappa X_{{\bm a}(t)}.
\end{align*}
The left-hand-side belong to $J_t {\mf g}_{x_i(t)}$ but the right-hand-side belong to ${\mf g}|_{x_i(t)}$. Therefore ${\bm a}(t)$ vanishes identically. Then by (\ref{equation610}), $\beta$ vanishes identically and this contradicts with $(\xi, \beta) \neq 0$. So $(H_t)$ is admissible.

$\bullet$ The hard case: when $d > 2k$. In this case the construction is more complicated than the above case. Let $(a_{jl}(t))_{1\leq l \leq d}^{1\leq j \leq 2k}$ be a smooth $S^1$-family of matrices which is undetermined. We define
\begin{align*}
F^{(i)}_t({\bm v}, 1, \theta) = \ov\rho_t^{(i)}(\exp_{x_i (t)} \phi_i(t ,{\bm v}))  \left( \langle \theta, \xi_i \rangle + \sum_{1\leq l \leq d,\ 1\leq j \leq 2k} a_{jl}(t) \langle \theta, {\bm \epsilon}_l\rangle v_j \right).
\end{align*}
Here $\xi_i\in {\mf g}$ is determined by (\ref{equation64}). Then we extend $F^{(i)}_t$ to a $G$-invariant function on $M$ and define
\begin{align*}
H_t = \pi^* \ov{H}_t + \sum_{i=1}^s F^{(i)}_t.
\end{align*}
Again, the restriction of $F_t^{(i)}$ to $\mu^{-1}(0)$ is zero and $(H_t)$ is a lift of $(\ov{H}_t)$. We will show that for some properly chosen $(a_{jl}(t))$, $(H_t)$ is admissible. 

First we see that $Y_{H_t}(x_i(t)) = W_{\ov{H}_t}(x_i(t)) - X_{\xi_i}(x_i(t))$ so $\wt{x}_i = (x_i, 0)\in {\rm Zero}\wt{\mc B}_H$, which is a lift of $\ov{y}_i$. We still consider the map $\theta_t^{(i)}$ defined by (\ref{equation68}). For the same reason as above, by (\ref{equation69}), we see that the image of $\theta_t^{(i)}$ is contained in $\ov{TM}_{x_i(t)}$, and the matrix of $\theta_t^{(i)}$ with respect to the basis $({\bm \epsilon}_l)_{l=1}^d$ of ${\mf g}$ and $({\bm v}_j)_{j=1}^{2k}$ of $\ov{TM}_{x_i(t)}$ is 
\begin{align*}
a_{jl}(t) - \langle {\bm \epsilon}_l, \Omega_\tau(Y_{H_t}, {\bm v}_j) \rangle.
\end{align*}
The choice of $(a_{jl}(t))$ is completely free and we can choose them so that ${\rm ker} \theta_t^{(i)}$ is of minimal dimension $d-2k$ and is spanned by 
\begin{align*}
{\bm e}_l(t):= {\bm \epsilon}_l + \cos (2 l \pi t) {\bm \epsilon}_d,\ l = 1, \ldots, d-2k.
\end{align*}

Now we prove that $(H_t)$ constructed above is admissible. Indeed, suppose $\wt\xi = (\xi, \beta)$ is an eigenvector of ${\mc H}_{\wt{x}_i}$ corresponding to a nonzero eigenvalue $\kappa$, and $\xi(t) = X_{{\bm a}(t)} + J_t X_{{\bm b}(t)}$. Then we have (\ref{equation611}) with $\xi_i$ replaced by 0. For the same reason, it implies that ${\bm b}(t) \in {\rm ker} \theta_t^{(i)} = {\rm span}\{ {\bm e}_l(t)\ |\ l = 1, \ldots, d-2k\}$. Suppose ${\bm b}(t) = \sum_{l=1}^{d-2k} b_l(t) {\bm e}_l(t)$. On the other hand, project (\ref{equation611}) to ${\bm i}{\mf g}|_{x_i(t)}$ and ${\mf g}|_{x_i(t)}$ respectively, we obtain
\begin{align*}
{\bm a}'(t) = \left( \kappa - {1\over \kappa} \right) {\bm b}(t),\ {\bm b}'(t) = - \kappa {\bm a}(t).
\end{align*}
Therefore,
\begin{align*}
{\bm b}''(t) = \left( 1- \kappa^2 \right) {\bm b}(t).
\end{align*}
If ${\bm b}(t) \equiv 0$, then ${\bm a}(t) \equiv 0$ and $\beta(t) \equiv 0$, which contradicts $\wt\xi \neq 0$. Therefore, ${\bm b}(t)$ is an eigenfunction of ${d^2 \over dt^2}$ on the circle corresponding to an eigenvalue $1- \kappa^2 = ( 2 m \pi)^2$, for some nonnegative integer $m$. Therefore, there exists ${\bm b}_1, {\bm b}_2\in {\mf g}$ such that
\begin{align}\label{equation612}
{\bm b}(t) = {\bm b}_1 \cos(2m \pi t) + {\bm b}_2 \sin (2m \pi t) = \sum_{l=1}^{d-2k} b_l(t) \left( {\bm \epsilon}_l + \cos(2\pi l t) {\bm \epsilon}_d \right).
\end{align}
Projecting the above equation to the ${\bm \epsilon}_l$-direction for $l = 1, \ldots, d-2k$, we have
\begin{align*}
b_l(t) = \langle {\bm b}_1, {\bm \epsilon}_l\rangle \cos (2 m \pi t) + \langle {\bm b}_2, {\bm \epsilon}_l \rangle \sin (2m \pi t),\ l =1, \ldots, d-2k.
\end{align*}
However, if we project (\ref{equation612}) to the ${\bm \epsilon}_d$-direction, we obtain
\begin{multline*}
\langle {\bm b}_1, {\bm \epsilon}_d \rangle \cos (2 m \pi t) + \langle {\bm b}_2, {\bm \epsilon}_d \rangle \sin (2m \pi t) \\
= \sum_{l=1}^{d-2k} \left( \langle {\bm b}_1, {\bm \epsilon}_l \rangle \cos (2m \pi t) \cos (2l \pi t) + \langle {\bm b}_2, {\bm \epsilon}_l \rangle \sin (2m \pi t) \cos (2l \pi t) \right).
\end{multline*}

If $m>0$, projecting the two sides orthogonally to the subspace of $L^2(S^1) \otimes {\mf g}$ spanned by $\cos(2(m+d-2k)\pi t)$ and $\sin(2(m+d-2k)\pi t)$, we have
\begin{align*}
0 = {1\over 2} \langle {\bm b}_1, {\bm \epsilon}_{d-2k} \rangle \cos(2(m+d-2k)\pi t) + {1\over 2} \langle {\bm b}_2, {\bm \epsilon}_{d-2k} \rangle \sin(2 (m+d-2k)\pi t).
\end{align*}
This means that $\langle {\bm b}_1, {\bm \epsilon}_{d-2k} \rangle = \langle {\bm b}_2, {\bm \epsilon}_{d-2k} \rangle = 0$. By induction we can show that $\langle {\bm b}_1, {\bm \epsilon}_l \rangle = \langle {\bm b}_2, {\bm \epsilon}_l \rangle = 0$ for $l =1, \ldots, d-2k$. (\ref{equation612}) also implies that $\langle {\bm b}_1, {\bm \epsilon}_l \rangle = \langle {\bm b}_2, {\bm \epsilon}_l \rangle = 0$ for other $l$'s. Therefore ${\bm b}(t) \equiv 0$. When $m=0$, we can argu similarly to derive that ${\bm b}(t) \equiv 0$. It then implies that ${\bm a}(t) \equiv 0$ and $\beta(t) \equiv 0$. This contradicts $\wt\xi \neq 0$. Therefore $(H_t)$ is admissible.
\end{proof}

\begin{rem}
The above proof only works if the symplectic quotient has positive dimension, since otherwise the kernel of $\theta_t^{(i)}$ is ${\mf g}$. This is the only reason why we have this condition in Hypothesis \ref{hyp21}.			
\end{rem}

\subsection{The main theorem}

For any admissible $G$-invariant Hamiltonian $(H_t)$, any admissible almost complex structure $J = (J_t) \in \wt{\mc J}^l_H$ for $l \geq 2$ or $l = \infty$, and any constant $\lambda >0$, we can consider the equation for connecting orbits
\begin{align}\label{equation613}
\left\{ \begin{array}{ccc} \displaystyle {\partial u \over \partial s} + X_\Phi (u) + J_t \left( {\partial u \over \partial t} + X_\Psi(u) - Y_{H_t}(u) \right) & = & 0,\\
\displaystyle {\partial \Psi \over \partial s}- {\partial \Phi \over \partial t} + [\Phi, \Psi] + \lambda^2 \mu(u) & = & 0.
\end{array} \right.
\end{align}

Let $k \geq 2$, $p > 2$. Recall that for any pair ${\mf x}_\pm \in {\rm Crit} \wt{\mc A}_H$, we have the Banach manifold ${\mc B}^{k, p}({\mf x}_-, {\mf x}_+)$ defined in Section \ref{section4}. Let ${\mc M}_\lambda({\mf x}_\pm; J, H) \subset {\mc B}^{k, p}({\mf x}_-, {\mf x}_+)$ be the space of gauge equivalence classes of solutions to (\ref{equation613}). Now we can state the main theorem of this section.
\begin{thm}\label{thm68}
Suppose ${\mf x}_\pm \in {\rm Crit} \wt{\mc A}_H$ and $\wt{\mc A}_H({\mf x}_-)- \wt{\mc A}_H({\mf x}_+) > 0$. Then there exists a subset $\wt{\mc J}^{reg}_{H, \lambda} \subset \wt{\mc J}_H$ of second category (i.e., a subset containing a countable intersection of open and dense subsets) such that for any $J \in \wt{\mc J}_{H, \lambda}^{reg}$, the moduli space ${\mc M}_\lambda \left( {\mf x}_\pm; J, H \right)$ is a smooth submanifold of ${\mc B}^{k, p}( {\mf x}_-, {\mf x}_+ )$ for any $k \geq 2$ and $p >2$. Moreover, 
\begin{align*}
{\rm dim} {\mc M}_\lambda \left( {\mf x}_\pm  ; J, H \right) = {\sf cz}( {\mf x}_-  ) - {\sf cz}( {\mf x}_+  ).
\end{align*}
\end{thm}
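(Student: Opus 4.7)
The plan is to apply the standard Sard--Smale machinery to a parametrized moduli space. First I would introduce the universal moduli space
\[
\wt{\mc M}_\lambda^{\rm univ}({\mf x}_-, {\mf x}_+) := \left\{ ([\wt u], J) \in {\mc B}^{k,p}({\mf x}_-, {\mf x}_+) \times \wt{\mc J}_H^l \;:\; \wt{\mc F}_\lambda(\wt u, J) = 0 \right\}
\]
and prove it is a $C^{l-1}$ Banach manifold by showing that the linearization of the parametrized equation,
\[
(\wt\xi, E) \ \longmapsto\  d\wt{\mc F}_{\wt u}(\wt\xi) \;+\; E_t \cdot \bigl( \partial_t u + X_\Psi - Y_{H_t} \bigr),
\]
is surjective at every zero. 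Since $d\wt{\mc F}_{\wt u}$ is already Fredholm by Section \ref{section4}, surjectivity reduces to showing that any $L^q$ cokernel element $\eta$ (with $1/p + 1/q = 1$) must vanish identically.

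For cokernel vanishing I would argue by contradiction. Elliptic regularity for the formal adjoint and Aronszajn's unique continuation principle imply that a nonzero $\eta$ cannot vanish on any open subset of $\Theta$. The key step is then to exhibit a \emph{$G$-regular point} $(s_0, t_0) \in \Theta$ in the sense of Floer--Hofer--Salamon: a point at which $u$ is injective modulo gauge (no other $(s,t)$ sends $u(s,t)$ into the same $G$-orbit), and at which the horizontal component $\pi_1(\partial_s u + X_\Phi)$ is nonzero. At such a point one can prescribe an admissible variation $E \in T_J \wt{\mc J}_H^l$ supported in an arbitrarily small $G$-invariant neighborhood of the $G$-orbit of $u(s_0, t_0)$ whose pairing with $\eta(s_0, t_0)$ is nonzero, contradicting $\eta \in {\rm coker}$. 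The constraints on $E$ from Lemma \ref{lemma63} are easily accommodated because $(s_0, t_0)$ will be chosen away from the asymptotic loops.

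The hardest step, and the one that justifies the admissibility hypotheses, is producing such a $G$-regular point. Here I would exploit the asymptotic analysis of Section \ref{section3}: in temporal gauge $(u, \Psi)(s, \cdot)$ converges exponentially to $\wt{x}_\pm = (x_\pm, f_\pm)$ as $s \to \pm\infty$, and to leading order the difference decays like $e^{-\kappa|s|} \wt\xi(t)$ where $\wt\xi = (\xi, \beta)$ is an eigenvector of the Hessian $\mc H_{\wt{x}_\pm}$ of (\ref{equation63}) with nonzero eigenvalue. By admissibility of $(H_t)$ (Definition \ref{defn65}, whose existence is guaranteed by Lemma \ref{lemma66}), $\pi_1 \xi \not\equiv 0$, so for $|s|$ sufficiently large the horizontal derivative $\pi_1(\partial_s u)$ is nonzero on an open subset of $\Theta$. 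The hypothesis $\wt{\mc A}_H({\mf x}_-) > \wt{\mc A}_H({\mf x}_+)$ excludes the trivial trajectory via Proposition \ref{prop38}, so this open subset is genuinely nonempty. A standard somewhere-injective argument applied to the projected map $\ov u := \ov\pi_\mu \circ u$ near the nondegenerate orbits $\ov{y}_i$ then upgrades this to a point that is also injective modulo $G$.

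Once surjectivity of the universal linearization is in hand, the projection $\pi: \wt{\mc M}_\lambda^{\rm univ} \to \wt{\mc J}_H^l$ is a $C^{l-1}$ Fredholm map whose index equals that of $d{\mc F}_{[\wt u]}$, and the Sard--Smale theorem yields a Baire subset $\wt{\mc J}_{H,\lambda}^{{\rm reg}, l} \subset \wt{\mc J}_H^l$ of regular values. For such $J$, ${\mc M}_\lambda({\mf x}_\pm; J, H)$ is a submanifold of dimension equal to the Fredholm index, which by Theorem \ref{thm44} equals ${\sf cz}({\mf x}_-) - {\sf cz}({\mf x}_+)$. A Taubes-type argument, intersecting the $C^l$ regular sets over all $l \geq 2$ and using that smoothness of any single element $J \in \wt{\mc J}_H$ suffices to make its moduli space smooth by elliptic regularity, upgrades this to a second-category subset $\wt{\mc J}_{H,\lambda}^{\rm reg} \subset \wt{\mc J}_H$, completing the proof.
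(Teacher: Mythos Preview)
Your proposal is correct and follows essentially the same approach as the paper: universal moduli space over $\wt{\mc J}_H^l$, surjectivity of the parametrized linearization via a cokernel/unique-continuation argument localized at a $G$-regular point, existence of $G$-regular points from the asymptotic eigenvector together with admissibility of $H$, then Sard--Smale and Taubes' trick. The paper carries this out with the augmented operator ${\mf D}_{\wt u} = \wt D_{\wt u} \oplus d_0^*$, so the cokernel element has three components $(\eta,\vartheta_1,\vartheta_2)$ and one must also check $\vartheta_1,\vartheta_2$ vanish once $\eta$ does near a $G$-regular point (this follows from freeness of the $G$-action there); and the $G$-regularity condition used is only $u(s_0,t_0)\notin G\,u(\mathbb{R}\setminus\{s_0\},t_0)$ for fixed $t_0$, not full injectivity modulo gauge --- but these are minor refinements of what you wrote, not different ideas.
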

There is no difference if we replace ${\mf x}_\pm$ by their equivalence classes $\lbr {\mf x}_\pm \rbr \in {\rm Crit} {\mc A}_H$. 

To prove this theorem it is necessary to the equation (\ref{equation613}) with $J$ in low regularity. Let $l \geq 1$. We recall the regularity theorem.
\begin{thm}\cite[Theorem 3.1]{Cieliebak_Gaio_Mundet_Salamon_2002}
Let $l \geq 1$ and $p>2$. For any solution $\wt{u} \in W^{1, p}_{loc}(\Theta, M \times {\mf g} \times {\mf g} )$ to (\ref{equation613}) with $J\in \wt{\mc J}^l_H$, there exists a gauge transformation $g\in {\mc G}^{2, p}_{loc}(\Theta, G)$ such that $g^* \wt{u} \in W^{l+1, p}_{loc}(\Theta, M \times {\mf g}\times {\mf g})$.
\end{thm}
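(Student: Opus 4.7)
The plan is to combine a local Coulomb gauge fixing for the connection $A = \Phi ds + \Psi dt$ with elliptic bootstrap applied to the gauge-fixed system, stopping the iteration when the regularity of $J$ is exhausted. The reason the theorem is not immediate is that $\wt{\mc F}$ is not elliptic on its own because of the $LG$-gauge symmetry, so we need to augment it by a gauge-fixing condition before we can extract regularity from the PDE.

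First, I would work locally on small disks $B \subset \Theta$. Given $\wt{u} = (u, \Phi, \Psi) \in W^{1,p}_{loc}$, the Morrey embedding $W^{1,p}(B) \hookrightarrow C^0(B)$ (valid because $p > 2$ and $\dim \Theta = 2$) shows in particular that $A \in L^\infty_{loc} \cap W^{1,p}_{loc}$. I would then invoke the two-dimensional Uhlenbeck-type gauge fixing theorem to produce $g_B \in W^{2, p}(B, G)$ such that $g_B^* A$ satisfies the Coulomb condition $\partial_s \Phi' + \partial_t \Psi' = 0$. The existence of $g_B$ is obtained by solving the quasilinear elliptic equation $d^{\ast} (g^{-1} A g + g^{-1} dg) = 0$ on $B$ via the implicit function theorem, starting from a smooth approximation of $A$ (shrinking $B$ if necessary so the relevant norms are small). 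The point of this step is that the output gauge is $W^{2,p}$, one derivative better than the input $A$, which enables the bootstrap to start.

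Second, in the Coulomb gauge the system (\ref{equation613}) decouples into an elliptic system for $(u, \Phi', \Psi')$:
\begin{align*}
\partial_s u + J_t \partial_t u &= -X_{\Phi'}(u) - J_t X_{\Psi'}(u) + J_t Y_{H_t}(u), \\
\partial_s \Psi' - \partial_t \Phi' + [\Phi', \Psi'] &= -\lambda^2 \mu(u), \\
\partial_s \Phi' + \partial_t \Psi' &= 0.
\end{align*}
The first equation is a perturbed Cauchy-Riemann equation for $u$, and combining the latter two (together with $[\cdot,\cdot]$ as a harmless lower-order nonlinearity) is equivalent, after differentiating once, to a Poisson-type equation $\Delta (\Phi', \Psi') = \text{(nonlinear terms in } u, \Phi', \Psi')$. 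Since $J_t$ is $C^l$ with $l \geq 1$, standard interior elliptic estimates for $\bar\partial_{J_t}$ on $u$ (e.g., of Calderon-Zygmund type) and for $\Delta$ on $(\Phi', \Psi')$ upgrade $(u, \Phi', \Psi')$ from $W^{1,p}_{loc}$ to $W^{2,p}_{loc}$.

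Third, I would iterate. Assuming $(u, \Phi', \Psi') \in W^{j,p}_{loc}$ with $1 \leq j \leq l$, all the nonlinear terms on the right-hand sides—compositions $J_t \circ u$, $Y_{H_t} \circ u$, $\mu \circ u$, $X_{(\cdot)}(u)$, together with products among $\Phi', \Psi', u$—lie in $W^{j, p}_{loc}$. Here I use that $W^{j,p}$ is an algebra for $jp > 2$ and that the composition of a $W^{j,p}$ map into a compact subset with a $C^j$ function is $W^{j,p}$, which is precisely the regularity $J \in \wt{\mc J}^l_H$ supplies as long as $j \leq l$. Applying the elliptic estimate once more bumps the regularity to $W^{j+1, p}_{loc}$. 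The iteration therefore continues up to $j = l$, yielding $(u, \Phi', \Psi') \in W^{l+1, p}_{loc}$, and stops there because the next step would demand $J \in C^{l+1}$. Finally, a patching argument over overlapping disks $B$ produces a single $g \in {\mc G}^{2, p}_{loc}(\Theta, G)$ realizing the improved regularity globally.

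The main obstacle is the first step: for a connection $A$ that is only $W^{1,p}$ (so merely continuous), one must produce a $W^{2, p}$ gauge transformation solving the quasilinear elliptic Coulomb equation. This is the sole place where the two-dimensional nature of $\Theta$ and the hypothesis $p > 2$ are used in an essential way; the subsequent bootstrap, while lengthy, is routine once the gauge has been fixed.
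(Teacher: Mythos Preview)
The paper does not give its own proof of this statement; it is quoted verbatim as \cite[Theorem 3.1]{Cieliebak_Gaio_Mundet_Salamon_2002} and used as a black box. Your outline---local Coulomb gauge via Uhlenbeck's theorem, then elliptic bootstrap on the gauge-fixed system $(\ov\partial_{J_t} u, d^* A, F_A + \lambda^2 \mu(u))$ until the $C^l$ regularity of $J$ is exhausted---is exactly the strategy carried out in that reference, so there is nothing to compare.
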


Then let $l \geq k \geq 2$, $p > 2$; let $J \in \wt{\mc J}^l_H$. For any pair ${\mf x}_\pm \in {\rm Crit} \wt{\mc A}_H$ and any $\lambda>0$, denote by $\wt{\mc M}_\lambda^{k, p} \left( {\mf x}_\pm; J, H \right)\subset \wt{\mc B}^{k, p}({\mf x}_-, {\mf x}_+)$ the space of all solutions to (\ref{equation613}) of class $W^{k, p}_{loc}$ which are asymptotic to ${\mf x}_\pm$, and by ${\mc M}_\lambda^{k, p} \left( {\mf x}_\pm; J, H \right)$ the quotient of $\wt{\mc M}_\lambda^{k, p} \left( {\mf x}_\pm; J, H \right)$ by the action of ${\mc G}_0^{k+1, p}$. This is a subset of ${\mc B}^{k, p}({\mf x}_-, {\mf x}_+)$.

\subsection{$G$-regular points}

The key point in proving the transversality theorem is the existence of the so-called $G$-regular points for any nontrivial connecting orbit. Let $H$ be admissible and  $J \in \wt{\mc J}^l_H$. 

\begin{thm}\label{thm610}If $\wt{u} = (u, 0, \Psi)$ is a nontrivial connecting orbit between $\wt{x}_-$ and $\wt{x}_+$ (i.e., an element in $\wt{\mc M}^{k, p}({\mf x}_\pm; J, H)$), then there exists an eigenvalue $\kappa >0$ and a nonzero eigenvector $(\xi, \beta)$ of the Hessian ${\mc H}_{\wt{x}_+}$ such that
\begin{align*}
(\xi, \beta) = \lim_{s \to +\infty} e^{\kappa s} (\partial_s u , \partial_s \Phi).
\end{align*}
Moreover, there exists $s_0, c>0$ such that for $s \geq s_0$, we have
\begin{align*}
{1\over c} e^{-\kappa s} \leq \left| (\partial_s u, \partial_s \Phi ) \right| \leq c e^{-\kappa s}.
\end{align*} 
\end{thm}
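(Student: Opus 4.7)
The plan is to reduce the statement to a classical asymptotic analysis result for negative gradient trajectories of a Morse function near a hyperbolic critical point, applied in the infinite-dimensional setting of Robbin--Salamon. After gauging into the temporal gauge with the asymptotic model supplied by Proposition \ref{prop36}, the Floer equation (\ref{equation24}) takes the formal shape $\partial_s \wt{u} = -\nabla \wt{\mc A}_H(\wt{u})$ for the loop-valued path $\wt{u}(s,\cdot)=(u(s,\cdot),\Psi(s,\cdot))$, and the endpoint $\wt{x}_+$ is a hyperbolic rest point because, by Proposition \ref{nondegenerate}, the Hessian ${\mc H}_{\wt{x}_+}$ is self-adjoint with trivial kernel, hence has discrete real spectrum bounded away from $0$.

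First, I would fix a smooth trivialization of $x_+^* TM \oplus {\mf g}\oplus {\mf g}$ over $S^1$ and, for $s$ large, write
\begin{align*}
\wt{u}(s,\cdot) = \wt{\exp}^t_{\wt{x}_+} \eta(s),\qquad \eta(s) \in L^2(S^1, x_+^* TM \oplus {\mf g}\oplus {\mf g}).
\end{align*}
The discussion in Section \ref{section42} identifies the linearization of the gradient of $\wt{\mc A}_H$ at $\wt{x}_+$ with the operator ${\mc H}_{\wt{x}_+}$ of (\ref{equation63}), so $\eta$ satisfies an equation of the form
\begin{align*}
\partial_s \eta(s) + {\mc H}_{\wt{x}_+} \eta(s) = N(\eta(s)),
\end{align*}
where the nonlinearity $N$ satisfies $|N(\eta)|\leq C|\eta|^2$ uniformly in $s$ as long as $|\eta|$ is small. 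The corollary after Proposition \ref{nondegenerate} already provides an exponential decay $|\eta(s)|\leq ce^{-\delta s}$ for some $\delta>0$ (via the translation-invariant kernel element $(\partial_s u, 0, \partial_s\Psi)\in {\rm ker}\, {\mf D}_{\wt{u}}$ of (\ref{equation49})), so $\eta(s)$ lies in the range of applicability of the asymptotic theorem.

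Next I would invoke Robbin--Salamon's asymptotic result (\cite{Robbin_Salamon_spectral}; this is the cited ``Theorem B'' in \cite{Robbin_Salamon_strip}) for bounded solutions of semilinear evolution equations whose linearization at the limit has a discrete, hyperbolic spectrum. The theorem yields a positive eigenvalue $\kappa$ of ${\mc H}_{\wt{x}_+}$ and a nonzero eigenvector $e$ such that
\begin{align*}
\eta(s) = e^{-\kappa s}\bigl(e + r(s)\bigr),\qquad |r(s)|\to 0 \text{ as } s\to +\infty,
\end{align*}
provided $\eta \not\equiv 0$. Differentiating in $s$ and using the equation for $\eta$, one obtains $\partial_s \eta(s) = e^{-\kappa s}(-\kappa\, e + o(1))$, which translates back to the desired limit
\begin{align*}
\lim_{s\to +\infty} e^{\kappa s}\,(\partial_s u, \partial_s \Psi) = (\xi,\beta) := -\kappa\, e,
\end{align*}
together with the two-sided bound $c^{-1} e^{-\kappa s} \leq |(\partial_s u, \partial_s \Psi)| \leq c\, e^{-\kappa s}$ for $s$ large.

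Finally, the nontriviality hypothesis ensures $\eta \not\equiv 0$: if the leading asymptotic vanished to all orders, iterating Robbin--Salamon with the next eigenvalue would force $\eta(s)\equiv 0$ near $+\infty$, and unique continuation for the Floer equation (standard for the perturbed Cauchy--Riemann equation coupled to the moment map flow, as in \cite{Cieliebak_Gaio_Mundet_Salamon_2002}) would then imply $\wt{u}$ is gauge-equivalent to the constant trajectory at $\wt{x}_+$, contradicting nontriviality. The main obstacle is verifying that the hypotheses of the Robbin--Salamon asymptotic theorem apply verbatim to our coupled Floer--vortex setting, in particular checking that the nonlinearity $N$ is of quadratic type in a genuine Hilbert-space sense after passing to temporal gauge and that no zero eigenvalue creeps in for $t$-dependent admissible $J$; both reduce, respectively, to uniform bounds on the covariant Hessian of $\wt{\mc A}_H$ and to Proposition \ref{nondegenerate}.
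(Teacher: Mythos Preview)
Your proposal is correct and follows essentially the same strategy as the paper: reduce to the Robbin--Salamon/Agmon--Nirenberg asymptotic theorem for an operator of the form $\partial_s + A(s)$ with $A(s)\to A_+$ self-adjoint and hyperbolic. There is, however, a small but worthwhile difference in framing. The paper applies the asymptotic result \emph{directly} to the section $(\partial_s u, 0, \partial_s \Psi)$, which by (\ref{equation49}) lies in $\ker {\mf D}_{\wt{u}}$ and therefore satisfies a genuinely \emph{linear} equation of Agmon--Nirenberg type; no nonlinear remainder $N$ appears, and one immediately gets $(\partial_s u, 0, \partial_s\Psi)\sim e^{-\kappa s}(\xi,\beta',\beta)$ with $(\xi,\beta',\beta)$ an eigenvector of the three-component limiting operator $\wt{J}\nabla_t + R_{\wt{x}_+}$. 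The vanishing of the $\Phi$-component then forces $\beta'=0$, so $(\xi,\beta)$ is an eigenvector of the two-component Hessian ${\mc H}_{\wt{x}_+}$. You instead parametrize by the position $\eta$ in the two-component loop space, carry the quadratic nonlinearity $N$, and differentiate at the end; this is valid but slightly less direct, and you should be careful that your $\eta$ lives in $x_+^*TM\oplus{\mf g}$ (two components), not $x_+^*TM\oplus{\mf g}\oplus{\mf g}$ as you wrote. One technical point the paper makes explicit and you leave implicit: in this section $J$ is only $C^l$, so the paper specifically invokes \cite{Agmon_Nirenberg} rather than the full \cite{Robbin_Salamon_strip} because only a $C^0$-asymptotic is needed and higher regularity of $J$ is not available.
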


\begin{proof}
By (\ref{equation49}), $(\partial_s u, 0, \partial_s \Psi)$ lies in the kernel of ${\mf D}_{\wt{u}}$. By elliptic regularity we know that $(\partial_s u, 0, \partial_s \Psi)$ lies in $W^{l, p}_{loc}$, which is at least continuously differentiable. Then we can follow the proof of \cite[Theorem B]{Robbin_Salamon_strip} to prove that $(\partial_s u, 0, \partial_s \Psi)$ is asymptotic to $e^{-\kappa s} \wt\xi$ where $\wt\xi = (\xi, \beta', \beta)$ is an eigenvalue of the operator $\wt{J} \nabla_t + R_{\wt{x}_+}$ where $R_{\wt{x}_+}$ is given by (\ref{equation43}). This is because the operator ${\mf D}_{\wt{u}}$ is of the type considered in \cite{Agmon_Nirenberg}, and we only need $C^0$-estimate, which doesn't require that $J$ is smooth. 

Then, notice that $\beta'$ should be zero, therefore $(\xi, \beta)$ is an eigenvector of ${\mc H}_{\wt{x}_+}$ corresponding to $\kappa$.
\end{proof}

This is the reason why we need the second condition in Definition \ref{defn65}: if $(H_t)$ is admissible, we can use the above theorem to show that, any nontrivial connecting orbit cannot stay within a single ``$G_{\mb C}$-orbit''. Otherwise we will have trouble in proving the existence of $G$-regular points.

\begin{defn}\label{defn611}
Let $u: \Theta \to M$ be a $C^1$-map which converges uniformly to $x_\pm: S^1 \to \mu^{-1}(0)$ (as $s \to \pm\infty$). A point $z_0 = (s_0, t_0) \in \Theta$ is called a {\bf $G$-regular point} of $u$ (with respect to $J_t$) if 

\begin{enumerate}

\item $u(z_0) \in U$;

\item $\pi_1^{J_{t_0}} \left( \partial_s u (z_0)\right) \neq 0 $, where $\pi_1^{J_t}: TM|_U \to TM|_U$ is the orthogonal projection onto the orthogonal complement of the distribution spanned by $X_\xi$ and $J_t X_\xi$, where the metric is determined by $\omega$ and $J_t$.

\item $u(s_0, t_0) \notin Gx_\pm (t_0)$.

\item $u(s_0, t_0) \notin G u( {\mb R} -\{s_0\}, t_0)$.
\end{enumerate}
\end{defn}

\begin{prop}
For any $l \geq 2$, any $(J_t) \in \wt{\mc J}^l_H$ and any connecting orbit $\wt{u} = (u, \Phi, \Psi)$ (of class $W^{l+1, p}_{loc}$) between $\wt{x}_-$ and $\wt{x}_+$ with positive energy, the set $R(u)$ of $G$-regular points of $u$ is open and nonempty.
\end{prop}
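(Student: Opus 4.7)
Openness of $R(u)$ follows by verifying that each of the four defining conditions is open. Conditions (1) and (2) follow from continuity of $u$ and of $\pi_1^{J_t}(\partial_s u)$ (the latter well-defined and continuous in a neighborhood of $z_0$ because $u(z_0) \in U$). Condition (3) is open since $Gx_\pm(t)$ depends continuously on $t$ in the Hausdorff topology and is closed in $M$, so its complement is open. For condition (4), condition (2) forces $\partial_s u(z_0) \neq 0$, so by freeness of the $G$-action on $U$ the map $(g, s) \mapsto g\cdot u(s, t_0)$ is an immersion near $(e, s_0)$, which rules out nearby $s$; for $s$ outside a small neighborhood $(s_0-\delta, s_0+\delta)$, the $G$-saturation $Gu((-\infty, s_0-\delta]\cup[s_0+\delta, +\infty), t_0)$ is closed (by compactness of $G$ and of the range of $u(\cdot, t_0)$ in $M/G$) and avoids $u(z_0)$ by hypothesis; continuity of $u$ then propagates this to a neighborhood of $z_0$.

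For nonemptiness, I apply Theorem \ref{thm610} to obtain a positive eigenvalue $\kappa$ and a nonzero eigenvector $\wt\xi = (\xi, \beta)$ of the Hessian ${\mc H}_{\wt{x}_+}$ such that $(\partial_s u(s, t), \partial_s \Psi(s, t)) = e^{-\kappa s}\wt\xi(t) + o(e^{-\kappa s})$ as $s \to +\infty$. The admissibility of $H$ (Definition \ref{defn65}) ensures $\pi_1(\xi(\cdot))$ does not vanish identically on $S^1$, so I fix $t_0$ with $v := \pi_1(\xi(t_0)) \neq 0$. For $s_0$ sufficiently large, I verify (1)--(3) as follows. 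Condition (1) is immediate from $u(s, t_0) \to x_+(t_0) \in \mu^{-1}(0) \subset U$. Condition (2) holds because the splitting (\ref{equation62}) at $u(s_0, t_0)$ converges to that at $x_+(t_0)$, giving $\pi_1^{J_{t_0}}(\partial_s u(s_0, t_0)) = e^{-\kappa s_0}(v+o(1)) \neq 0$. Condition (3) follows because $\ov\pi(u(s_0, t_0))$ lies at distance $\sim e^{-\kappa s_0}|v|$ from $\ov x_+(t_0)$, hence is distinct from both $\ov x_+(t_0)$ and $\ov x_-(t_0)$ (using the identification $U \simeq \mu^{-1}(0)\times{\mf g}_\epsilon^*$ induced by $i_U$, under which $Gx_\pm(t_0)\cap U$ coincides with the fiber over $\ov x_\pm(t_0)$).

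The delicate step is condition (4). The projected curve $\gamma(s) := \ov\pi(u(s, t_0))$ satisfies $\dot\gamma(s) = e^{-\kappa s}(v+o(1))$ as $s \to +\infty$, so it is strictly injective on some tail $[S_1, +\infty)$ and approaches $\ov x_+(t_0)$ tangent to $v$. Any conflicting $s' \neq s_0$ with $u(s_0, t_0)\in Gu(s', t_0)$ must therefore have $s' < S_1$, and moreover $|\mu(u(s_0, t_0))| = |\mu(u(s', t_0))|$ by ${\rm Ad}^*$-equivariance of $\mu$ and bi-invariance of the metric on ${\mf g}$. Since $|\mu(u(s_0, t_0))| = O(e^{-\kappa s_0})$ is arbitrarily small for $s_0$ large, along any sequence $s_0^{(k)} \to +\infty$ an accumulation point $s_\bullet \in (-\infty, S_1]$ of the corresponding $s'_k$ would satisfy $\mu(u(s_\bullet, t_0)) = 0$ and $\ov\pi(u(s_\bullet, t_0)) = \ov x_+(t_0)$, i.e. $u(s_\bullet, t_0) \in Gx_+(t_0)$. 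A direct comparison of the tangent $\dot\gamma(s_\bullet)$ with the asymptotic direction $v$ at the common image $\ov x_+(t_0)$, together with the positive-energy hypothesis (which excludes the trajectory being contained in a single $G_{\mb C}$-orbit), then precludes this coincidence for $s_0$ large. This last comparison is the main obstacle: it requires exploiting simultaneously the exponential decay of $(\partial_s u, \mu(u))$, the nonvanishing of the asymptotic tangent $v$ guaranteed by admissibility of $H$, and the positive-energy assumption.
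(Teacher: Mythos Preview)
Your treatment of openness and of conditions (1)--(3) for nonemptiness is essentially the same as the paper's: both use Theorem \ref{thm610} together with the admissibility of $H$ to produce an open set $S$ of points satisfying (1)--(3). The difference, and the genuine gap in your argument, is condition (4).

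Your attack on (4) proceeds by taking a sequence $s_0^{(k)}\to+\infty$ and arguing about accumulation points of the putative conflicting parameters $s'_k$. You correctly observe that any limit $s_\bullet$ of a bounded subsequence of $(s'_k)$ would satisfy $\mu(u(s_\bullet,t_0))=0$ and $\ov\pi(u(s_\bullet,t_0))=\ov x_+(t_0)$, i.e.\ $u(s_\bullet,t_0)\in Gx_+(t_0)$. But this is \emph{not} a contradiction: nothing prevents the projected curve $\gamma(s)=\ov\pi(u(s,t_0))$ from hitting $\ov x_+(t_0)$ at finitely many earlier times $s_\bullet$, and near each such $s_\bullet$ the curve $\gamma$ could trace out (a reparametrization of) the same arc that $\gamma$ traces for large $s$. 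Your proposed ``direct comparison of the tangent $\dot\gamma(s_\bullet)$ with the asymptotic direction $v$'' has no force, because $\dot\gamma(s_\bullet)$ is in no way constrained to equal or be transverse to $v$; and the parenthetical remark that positive energy excludes confinement to a single $G_{\mb C}$-orbit is both misattributed (it is a consequence of admissibility via Theorem \ref{thm610}, not of positive energy alone) and, more importantly, addresses a global statement rather than the local coincidence you need to rule out. You also do not handle the case $s'_k\to-\infty$, which would force $\ov x_-(t_0)=\ov x_+(t_0)$ but again is not automatically contradictory. In short, the asymptotic estimates from Theorem \ref{thm610} do not by themselves suffice for (4); one needs the PDE structure globally, not just at infinity.

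The paper handles (4) by a reduction rather than a direct argument. Using the $G$-equivariant Hamiltonian isotopy $\phi_H^t$, one sets $v(s,t)=(\phi_H^t)^{-1}u(s,t)$; then $(v,\Psi)$ solves the \emph{unperturbed} vortex equation (with a new $t$-dependent almost complex structure $\wt J_t$), and the condition $u(s_0,t_0)\notin Gu(\mathbb R\setminus\{s_0\},t_0)$ is equivalent to $v(s_0,t_0)\notin Gv(\mathbb R\setminus\{s_0\},t_0)$. For this $H\equiv 0$ situation one can invoke Frauenfelder's argument (his Theorem 4.9), whose core is a unique-continuation statement for the vortex equation: if two solutions agree along a $G$-orbit on an open set, they are gauge equivalent, and repeated self-intersections of the type you are trying to exclude would force the solution to be a nontrivial $s$-shift of itself, contradicting finite energy. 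That unique-continuation input is precisely the missing idea in your approach.
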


\begin{proof}
The proof is due to Frauenfelder (see \cite[Theorem 4.9]{Frauenfelder_thesis}). We assume without loss of generality that $\Phi \equiv 0$. Suppose that $(u, \Phi, \Psi)$ is a connecting orbit with positive energy, asymptotic to $x_\pm(t)$ as $s \to \pm\infty$. The openness of $R(u)$ can be proved in the same way as in the proof of \cite[Theorem 4.9]{Frauenfelder_thesis}. It remains to prove that $R(u)$ is nonempty. By Theorem \ref{thm610} and the admissibility of $H$, there exists a nonempty open subset $S\subset \Theta$ such that all $(s, t) \in S$ satisfy the first three conditions of Definition \ref{defn611}. Therefore it suffices to show that there exists $z_0 = (s_0, t_0) \in S$ such that $u(s_0, t_0) \notin G u({\mb R} -\{s_0\}, t_0)$. To reduce our situation to the case of Frauenfelder (who assumes $H \equiv 0$), we transform $(u, \Psi)$ to a solution $(v, \Psi)$ to the vortex equation on ${\mb R}^2$ with respect to the almost complex structure $\wt{J}_t = ((\phi_H^t)_*)^{-1} \circ J_t \circ (\phi_H^t)_*$ and the vanishing Hamiltonian. This is done by using the Hamiltonian diffeomorphisms $\phi_H^t$ generated by $H_t$. Then one can continue with Frauenfelder's argument to show that there exists $(s_0, t_0) \in S$ such that $v(s_0, t_0) \notin Gv({\mb R}-\{s_0\}, t_0)$. This is also a $G$-regular point of $u$ with respect to $J_t$.
\end{proof}

\subsection{The universal moduli space}

Suppose $l \geq k \geq 2$, $p >2$. We denote
\begin{align*}
{\mc M}^{k, p} \left(  {\mf x}_\pm ; \wt{\mc J}^l_H, H \right) := \left\{ \left( [\wt{u}], J \right) \ |\  J \in \wt{\mc J}^l_H, \ [\wt{u}] = [u, \Phi, \Psi] \in {\mc M}^{k, p}\left(  {\mf x}_\pm ; J, H \right) \right\}.
\end{align*}

\begin{prop}
For $l \geq k \geq 2$, the universal moduli space is a $C^{l-k}$-Banach submanifold of ${\mc B}^{k, p}({\mf x}_-, {\mf x}_+) \times \wt{\mc J}^l_H$.
\end{prop}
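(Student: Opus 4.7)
The plan is to apply the implicit function theorem on Banach manifolds to the universal section
\[
\wt{\mc F}^{univ}: \wt{\mc B}^{k,p}({\mf x}_-, {\mf x}_+) \times \wt{\mc J}_H^l \to \wt{\mc E}^{k-1,p}({\mf x}_-, {\mf x}_+)
\]
defined by the symplectic vortex equation (\ref{equation613}) with $J$ allowed to vary. Standard composition-operator estimates show this section is of class $C^{l-k}$, and its augmented linearization at a zero $(\wt u, J)$ takes the form ${\mf D}_{\wt u}\wt\xi + (E_t(\partial_t u + X_\Psi - Y_{H_t}(u)),\,0)$, where $E = \delta J$. Since ${\mf D}_{\wt u}$ is Fredholm and the $E$-term contributes a bounded operator, the full linearization has closed range, so surjectivity reduces to triviality of the cokernel.

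Suppose $(\eta, \zeta) \in L^q(\Theta, u^*TM \oplus {\mf g} \oplus {\mf g})$ (with $1/p + 1/q = 1$) pairs trivially with the image. Orthogonality to the $(\wt\xi, 0)$-directions places $(\eta, \zeta)$ in the kernel of the formal $L^2$-adjoint of the augmented operator; this is an elliptic first order system, so by elliptic regularity $(\eta, \zeta)$ is smooth and subject to Aronszajn's unique continuation theorem. Orthogonality to the $(0, E)$-directions yields, for every admissible variation $E\in T_J\wt{\mc J}_H^l$,
\[
\int_\Theta \bigl\langle E_t(\partial_t u + X_\Psi - Y_{H_t}(u)),\; \eta \bigr\rangle_t \, ds\, dt = 0.
\]

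The decisive step is to localize this identity at a $G$-regular point $z_0 = (s_0, t_0)$, whose existence is guaranteed by Proposition 6.12. Conditions (1)--(4) of Definition \ref{defn611} ensure that $u(z_0)\in U$, that $u(z_0)$ is $G$-disjoint from both $x_\pm(t_0)$ and from the other points of the slice $u({\mb R}\setminus\{s_0\}, t_0)$, and that $\partial_t u + X_\Psi - Y_{H_t}(u) = -J_t\partial_s u$ has nonzero $\pi_1$-component at $z_0$. Using the description of $T_J\wt{\mc J}_H^l$ in Lemma \ref{lemma63}, together with a cut-off function supported in a small neighborhood of $z_0$ whose $G$-translates in the $t_0$-slice meet the image of $u$ only at $u(z_0)$, I can prescribe $E_{t_0}$ at $u(z_0)$ subject only to the pointwise constraints of Lemma \ref{lemma63}. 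This forces $\pi_1 \eta(z_0) = 0$. By openness of the set of $G$-regular points, $\pi_1\eta$ vanishes on a nonempty open subset of $\Theta$; Aronszajn's unique continuation applied to the adjoint elliptic system then propagates to $\eta\equiv 0$, and the adjoint equation for $\zeta$ yields $\zeta\equiv 0$.

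The main obstacle I anticipate is verifying that the algebraic constraints on $E$ in Lemma \ref{lemma63} --- skew-commutation with $J_t$, symmetry of $\omega(\cdot,E_t\cdot)$, and the vanishing of the $(12), (21), (22)$-blocks and their $t$-derivatives along the asymptotic loops --- still leave enough freedom at a $G$-regular point to hit every element of $\pi_1 T_{u(z_0)}M$. Since $z_0$ is $G$-disjoint from the asymptotic loops by condition (3), the vanishing conditions at these loops are inactive near $z_0$, and the remaining constraints amount to the standard symplectic-antilinear condition on an endomorphism, which leaves a full-dimensional space of admissible $\pi_1$-components. Once the universal transversality is in place, equivariance under ${\mc G}_0^{k+1,p}$ gives the analogous statement on the quotient ${\mc B}^{k,p}({\mf x}_-, {\mf x}_+)\times\wt{\mc J}_H^l$, and the implicit function theorem produces the $C^{l-k}$-Banach submanifold structure.
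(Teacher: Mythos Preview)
Your strategy coincides with the paper's, but the execution has two linked gaps.

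First, you undersell what the $E$-variation achieves. At a $G$-regular point $z_0$ (after a small shift so that $u(z_0)\notin Gx_i(t_0)$ for all $i$, as the paper does), the block-vanishing constraints of Lemma~\ref{lemma63} are indeed inactive, as you observe. The only remaining pointwise constraints on $E_0 \in {\rm End}(T_{u(z_0)}M)$ are $J_{t_0}E_0 + E_0 J_{t_0}=0$ and symmetry of $\omega(\cdot,E_0\cdot)$. For any nonzero $v$ --- here $v=J_{t_0}\partial_s u(z_0)\neq 0$ since $\pi_1^{J_{t_0}}\partial_s u(z_0)\neq 0$ --- such $E_0$ can send $v$ to an \emph{arbitrary} vector of $T_{u(z_0)}M$, not merely one with prescribed $\pi_1$-component. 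Hence the pairing condition forces the full $\eta(z_0)=0$, not just $\pi_1\eta(z_0)=0$.

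Second --- and this is the real gap --- your invocation of unique continuation is premature. Aronszajn's theorem applies to solutions of the full elliptic adjoint system, so you need the entire tuple $(\eta,\zeta)$ (in the paper's notation $(\eta,\vartheta_1,\vartheta_2)$) to vanish on an open set, not just $\pi_1\eta$ or even $\eta$. The paper supplies the missing step: once $\eta\equiv 0$ on a neighborhood $V_0$ of $z_0$, the $u^*TM$-component of ${\mf D}_{\wt u}^*(\eta,\vartheta_1,\vartheta_2)=0$ reduces on $V_0$ to $J_t X_{\vartheta_1}(u)-X_{\vartheta_2}(u)=0$; since $u(V_0)\subset U$ where $G$ acts freely, the infinitesimal action is injective, giving $\vartheta_1=\vartheta_2=0$ on $V_0$. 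Only with the full $(\eta,\vartheta_1,\vartheta_2)$ vanishing on $V_0$ does unique continuation yield $(\eta,\vartheta_1,\vartheta_2)\equiv 0$ on $\Theta$. Your ordering --- unique continuation first, then the adjoint equation for $\zeta$ --- cannot close the argument.
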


\begin{proof}
We use the notations of Section \ref{section4}. The Banach space bundle ${\mc E}^{k-1, p}({\mf x}_-, {\mf x}_+)$ is pulled back to the product ${\mc B}^{k, p}({\mf x}_-, {\mf x}_+) \times \wt{\mc J}_H^l$, which is denoted by the same symbol. Then the left-hand-side of (\ref{equation613}) defines a smooth section
\begin{align*}
\wt{\mc F}_\lambda: {\mc B}^{k, p}({\mf x}_-, {\mf x}_+) \times \wt{\mc J}_H^l \to {\mc E}^{k-1, p}({\mf x}_-, {\mf x}_+)
\end{align*}
whose vanishing locus is ${\mc M}^{k, p} \left( {\mf x}_\pm; \wt{\mc J}^l_H, H \right)$. Then we just need to prove that the linearized operator
\begin{align*}
D: T_{[\wt{u}]} {\mc B}^{k, p}({\mf x}_-, {\mf x}_+) \times T_J \wt{\mc J}^l_H \to {\mc E}^{k-1, p}_{[\wt{u}]}({\mf x}_-, {\mf x}_+)
\end{align*}
is surjective at every $[\wt{u}] \in {\mc M}^{k, p}({\mf x}_\pm; J, H)$. As we did in Section \ref{section4}, we consider instead an augmented operator ${\mf D}$ at any representative $\wt{u}\in \wt{\mc M}^{k, p}\left( {\mf x}_\pm; J, H \right)$, which is a linear map
\begin{align*}
{\mf D}: T_{\wt{u}} \wt{\mc B}^{k, p}({\mf x}_-, {\mf x}_+) \times T_J \wt{\mc J}^l_H \to \wt{\mc E}^{k-1, p}_{\wt{u}} ({\mf x}_-, {\mf x}_+) \oplus W^{k-1, p} \left( \Theta, {\mf g} \right).
\end{align*}
It suffices to prove that ${\mf D}$ is surjective. 

Notice that the restriction of ${\mf D}$ to $T_{\wt{u}} \wt{\mc B}^{k, p}({\mf x}_-, {\mf x}_+)$, which is the augmented linearized operator ${\mf D}_{\wt{u}}$ (\ref{equation42}) is Fredholm, we only need to prove that ${\mf D}$ has dense range. If not the case, then there exists a nonzero vector $\wt{\eta} = (\eta, \vartheta_1, \vartheta_2) \in \wt{\mc E}^{k-1, p}_{\wt{u}}({\mf x}_-, {\mf x}_+) \oplus W^{k-1, p}\left( \Theta, {\mf g}\right)$ which lies in the $L^2$-orthogonal complement of the image of ${\mf D}$; therefore $\wt\eta$ also lies in the orthogonal complement of the image of ${\mf D}_{\wt{u}}$. Hence $\wt\eta \in {\rm ker} {\mf D}_{\wt{u}}^*$. By elliptic regularity associated to ${\mf D}_{\wt{u}}^*$, we see that $\wt\eta$ is continuous. We will show that $\wt\eta$ vanishes on an nonempty open subset of $R(u)$, which, by the unique continuation property of ${\mf D}_{\wt{u}}^*$, contradicts with the fact that $\wt\eta \neq 0$.

Notice that ${\mf D}$ restricted to $T_J\wt{\mc J}^l_H$ reads
\begin{align*}
\displaystyle T_J \wt{\mc J}^l_H \ni (E_t) \mapsto E_t \left( {\partial u \over \partial t} + X_\Psi - Y_{H_t} \right) = E_t J_t \left( {\partial u \over \partial s} \right).
\end{align*}
Suppose $z_0 = (s_0, t_0) \in R(u)$ and $\eta(z_0) \neq 0$. Since $\pi_1 \left( \partial_s u (z_0) \right) \neq 0$, and there are only finitely many 1-periodic orbits of $(\ov{H}_t)$, we may move $z_0$ a little bit inside $R(u)$ to assume that $u(z_0) \notin Gx_i(t_0)$ for all $i \in \{1, \ldots, s\}$. Then we can choose $E_0 \in {\rm End}\left( T_{u(z_0)} M \right)$ such that $J_{t_0} E_0 + E_0 J_{t_0} = 0$ and $\omega( \cdot, E_0 \cdot)$ being symmetric and such that
\begin{align*}
\left\langle E_0 \left( J_{t_0} {\partial u \over \partial s}(z_0) \right), \eta(z_0) \right\rangle > 0 .
\end{align*}
We extend $E_0$ to a $G$-invariant section $(E_t) \in \Gamma^l \left( S^1\times M, {\rm End} TM \right)$ which satisfies the first two conditions in the statement of Lemma \ref{lemma63}, such that $E_{t_0}(u(z_0)) = E_0$. On the other hand, for any $G$-invariant cut-off function $\rho = (\rho_t) \in C^\infty_c( S^1 \times M)$ supported near $(t_0, u(z_0))$ (hence its support is away from $\cup_{i=1}^s G\{ (t, x_i(t))\ |\ t\in S^1\}$, $\rho_t E_t\in T_J \wt{\mc J}^l_H$. We would like to choose such a function such that 
\begin{align}\label{equation614}
\left\langle {\mf D}(0, \rho_t E_t), \wt\eta \right\rangle = \int_\Theta \langle \rho_t(u(s, t)) E_t(u(s, t)), \eta(s, t) \rangle ds dt > 0.
\end{align}
Indeed, choose a small $G$-invariant neighborhood $W(z_0) \subset U$ of $u(z_0)$. Then since $\pi_1(\partial_s u(z_0)) \neq 0$, $u$ induces an embedding $\ov{u}$ from a small disk $B_\epsilon(z_0)\subset \Theta$ into $W(z_0)/G$. Then by \cite[Remark 4.4]{Floer_Hofer_Salamon}, there exists a smooth function $\ov\rho = (\ov\rho_t): S^1 \times U/ G$ supported inside $(t_0 - \epsilon, t_0+ \epsilon) \times W(z_0)/G$ such that for any $(s, t) \in B_\epsilon(z_0)$,
\begin{align*}
\left\langle E_t	\left( J_t {\partial u \over \partial s}(s, t) \right), \eta(s, t) \right\rangle = \ov\rho_t (\ov{u}(s, t)). 
\end{align*}
Then $\ov\rho$ lifts to a $G$-invariant function $\rho$ supported near $(t_0, u(z_0))$. We can shrink the support of $\rho$ such that (\ref{equation614}) holds. Then this contradicts with that $\wt\eta \in {\rm ker} {\mf D}^*$. Therefore $\eta$ vanishes on an open neighborhood $V_0(z_0 )$ of $z_0$.

Then look at the first component of ${\mf D}_{\wt{u}}^* \left( \eta, \vartheta_1, \vartheta_2 \right) =0$. On $V_0(z_0)$ it reads
\begin{align*}
J_t X_{\vartheta_1}(u) - X_{\vartheta_2}(u) = 0.
\end{align*}
Since for $z \in R(u)$, $u(z) \in U$ where $G$ acts freely, $\vartheta_1|_{V_0(z_0)} = \vartheta_2|_{V_0(z_0)} = 0$. So $\wt\eta$ vanishes on $V_0(z_0)$. This finishes our proof.
\end{proof}

Then apply Smale's Sard theorem to the projection ${\mc M}^{k, p} \left( {\mf x}_\pm; \wt{\mc J}^l_H, H \right) \to \wt{\mc J}^l_H$, it is standard to show the following (cf. the proof of \cite[Theorem 3.1.5]{McDuff_Salamon_2004}).
\begin{thm}
For any pair ${\mf x}_\pm  \in {\rm Crit} \wt{\mc A}_H$ with $\wt{\mc A}_H({\mf x}_-) - \wt{\mc A}_H({\mf x}_+) > 0$, there exists $l_0 = l_0 (k, {\mf x}_\pm ) \in {\mb Z}_+$ such that for any $l \geq l_0$, there exists a subset $\wt{\mc J}^{l; reg}_{H, \lambda} \subset \wt{\mc J}^l_H$ of second category, such that for any $J \in \wt{\mc J}^{l; reg}_{H, \lambda} $ and any $[\wt{u}]= [u, \Phi, \Psi] \in {\mc M}^{k, p} \left( {\mf x}_\pm ; J, H \right)$, the linearized operator 
\begin{align*}
{\mc D}_{[\wt{u}]}^{J, H}: T_{[\wt{u}]} {\mc B}^{k, p}({\mf x}_-, {\mf x}_+) \to {\mc E}^{k-1, p}_{[\wt{u}]}({\mf x}_-, {\mf x}_+)
\end{align*}
is surjective.
\end{thm}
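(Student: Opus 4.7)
The plan is to apply the Sard--Smale theorem to the projection $\pi: {\mc M}^{k,p}\left( {\mf x}_\pm; \wt{\mc J}^l_H, H \right) \to \wt{\mc J}^l_H$, $([\wt{u}], J) \mapsto J$, whose domain, by the preceding proposition, is a $C^{l-k}$-Banach submanifold of ${\mc B}^{k,p}({\mf x}_-, {\mf x}_+) \times \wt{\mc J}^l_H$ cut out transversely by $\wt{\mc F}_\lambda$. The first step is to verify that $\pi$ is a $C^{l-k}$ Fredholm map. The tangent space to the universal moduli space at $([\wt{u}], J)$ is the kernel of the full linearization (in gauge-fixed form, the kernel of the augmented operator ${\mf D}$), and ${\mc D}^{J,H}_{[\wt{u}]}$ is its restriction to the fibre direction $T_{[\wt{u}]}{\mc B}^{k,p}({\mf x}_-, {\mf x}_+) \times \{0\}$. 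A snake-lemma diagram chase will then produce an exact sequence
\[ 0 \to {\rm ker}\, {\mc D}^{J,H}_{[\wt{u}]} \to {\rm ker}\, d\wt{\mc F}_\lambda \xrightarrow{d\pi} T_J \wt{\mc J}^l_H \to {\rm coker}\, {\mc D}^{J,H}_{[\wt{u}]} \to {\rm coker}\, d\wt{\mc F}_\lambda \to 0, \]
and since ${\rm coker}\, d\wt{\mc F}_\lambda = 0$ on the universal moduli space (precisely the content of the previous proposition), it follows that $d\pi$ is Fredholm with kernel and cokernel identified with those of ${\mc D}^{J,H}_{[\wt{u}]}$; hence $\pi$ itself is Fredholm of index ${\sf cz}({\mf x}_-) - {\sf cz}({\mf x}_+)$ by Theorem \ref{thm44}.

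Next I would pick $l_0 = l_0(k, {\mf x}_\pm)$ so that $l_0 - k > \max\{0, {\sf cz}({\mf x}_-) - {\sf cz}({\mf x}_+)\}$, ensuring that $\pi$ is of class $C^{l-k}$ with $l-k$ strictly exceeding its Fredholm index for every $l \geq l_0$. The Sard--Smale theorem then delivers a residual (hence second-category) subset $\wt{\mc J}^{l;reg}_{H,\lambda} \subset \wt{\mc J}^l_H$ of regular values of $\pi$. For any $J$ in this set and any $[\wt{u}] \in \pi^{-1}(J) = {\mc M}^{k,p}({\mf x}_\pm; J, H)$, surjectivity of $d\pi_{([\wt{u}], J)}$ translates, via the cokernel identification from the snake-lemma sequence, into the sought surjectivity of ${\mc D}^{J,H}_{[\wt{u}]}$.

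The genuinely substantive part of the whole transversality argument is not in the present theorem but in the preceding proposition, namely surjectivity of ${\mf D}$ on the universal moduli space, which depended on producing $G$-regular points and which in turn used admissibility of $H$ (Definition \ref{defn65}) to rule out eigenvectors of ${\mc H}_{\wt{x}_\pm}$ supported entirely in the complex gauge directions. Granted that ingredient, I expect the remaining work here to be purely formal bookkeeping; the only small wrinkle to check is that although $\wt{\mc J}^{l;reg}_{H,\lambda}$ is defined with $(k,p)$ in mind, the regularity result of Cieliebak--Gaio--Mundet-i-Riera--Salamon cited in the excerpt guarantees that every gauge class in ${\mc M}^{k,p}$ has a smooth representative and that ${\rm ker}\, {\mc D}^{J,H}_{[\wt{u}]}$ and ${\rm coker}\, {\mc D}^{J,H}_{[\wt{u}]}$ are Sobolev-exponent invariant, so the conclusion is unambiguous.
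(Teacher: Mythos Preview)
Your proposal is correct and follows essentially the same approach as the paper: applying the Sard--Smale theorem to the projection from the universal moduli space to $\wt{\mc J}^l_H$, with the Fredholm property of $\pi$ inherited from that of ${\mc D}^{J,H}_{[\wt{u}]}$ via the standard kernel/cokernel identification. In fact the paper simply states that this follows from Smale's Sard theorem and refers to \cite[Theorem 3.1.5]{McDuff_Salamon_2004} for the routine details, which you have spelled out explicitly.
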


Theorem \ref{thm68} follows from the above theorem and Taubes' trick (\cite[Page 52]{McDuff_Salamon_2004}).

\section{Floer homology}\label{section7}

In this section we use the moduli spaces ${\mc M}\left( \llbracket{\mf x}_-\rrbracket, \llbracket{\mf x}_+\rrbracket; J, H \right)$ to define the vortex Floer homology group $VHF_* \left( M, \mu; H \right)$. We also construct the continuation map, which is used to show that the homology group is independent of the choice of the data $(H, J, \lambda)$. 

By Theorem \ref{thm68}, for any admissible $H = (H_t)$ and any $\lambda>0$, we can choose a generic $S^1$-family of ``admissible'' almost complex structures $J \in \wt{\mc J}_{H, \lambda}^{reg}$ (see Definition \ref{defn62}) such that the moduli space is cut off transversely. For simplicity we only discuss the case $\lambda = 1$ but there is no difference for general $\lambda$. We abbreviate $\wt{\mc J}_{H, 1}^{reg} = \wt{\mc J}_H^{reg}$. So for $J \in \wt{\mc J}^{reg}_H$, and a pair $\llbracket{\mf x}_\pm \rrbracket \in {\rm Crit} {\mc A}_H$ with ${\mc A}_H(\llbracket {\mf x}_- \rrbracket ) \neq {\mc A}_H ( \llbracket {\mf x}_+ \rrbracket)$, ${\mc M}\left( \llbracket {\mf x}_-\rrbracket, \llbracket{\mf x}_+\rrbracket; J, H \right)$ is a smooth manifold with 
\begin{align*}
{\rm dim} {\mc M} \left( \llbracket{\mf x}_-\rrbracket, \llbracket{\mf x}_+\rrbracket; J, H \right) = {\sf cz}( \llbracket{\mf x}_-\rrbracket ) - {\sf cz}(\llbracket{\mf x}_+\rrbracket).
\end{align*}
The orbit space of the free ${\mb R}$-action on ${\mc M}\left( \llbracket{\mf x}_-\rrbracket, \llbracket{\mf x}_+\rrbracket; J, H \right)$ is $\widehat{\mc M}(\llbracket{\mf x}_-\rrbracket, \llbracket{\mf x}_+\rrbracket; J, H)$. Combining the compactness theorem (Theorem \ref{thm53}), we have
\begin{prop}
If $J \in \wt{\mc J}^{reg}_H$ and $\lbr{\mf x}_-\rbr \neq \lbr{\mf x}_+\rbr$, then 
\begin{align*}
\begin{split}
{\sf cz}( \llbracket{\mf x}_- \rrbracket ) - {\sf cz}( \llbracket{\mf x}_+\rrbracket) \leq 0 &\ \Longrightarrow {\mc M} \left( \llbracket {\mf x}_- \rrbracket, \llbracket {\mf x}_+ \rrbracket; J, H \right) = \emptyset;\\
{\sf cz}( \llbracket{\mf x}_- \rrbracket) - {\sf cz}( \llbracket{\mf x}_+ \rrbracket) = 1 &\ \Longrightarrow \# \widehat{\mc M}\left( \llbracket {\mf x}_- \rrbracket, \llbracket {\mf x}_+ \rrbracket; J, H \right) < \infty.
\end{split}
\end{align*}
\end{prop}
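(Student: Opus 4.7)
The plan is to derive both parts from three already-established ingredients: the dimension formula of Theorem \ref{thm68}, the freeness of the $\mb R$-action on $\mc M$ by $s$-translation, and the compactness of Theorem \ref{thm53}. The preliminary observation is that whenever $\lbr {\mf x}_-\rbr \neq \lbr {\mf x}_+\rbr$, every element of ${\mc M}(\lbr {\mf x}_-\rbr, \lbr {\mf x}_+\rbr; J, H)$ has strictly positive energy: by Proposition \ref{prop38} its energy equals the action difference, and vanishing of this energy would force $\partial_s u \equiv 0$ and $\mu(u) \equiv 0$, hence the solution would be gauge-equivalent to a stationary one and its two asymptotic limits would coincide. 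Positive energy has two consequences: first, $\wt{\mc A}_H(\lbr{\mf x}_-\rbr) > \wt{\mc A}_H(\lbr{\mf x}_+\rbr)$, which puts us in the hypothesis of Theorem \ref{thm68}; and second, the translational $\mb R$-action has no nontrivial stabilizer, since otherwise the energy density would be periodic in $s$, contradicting the decay at $\pm\infty$ obtained in Proposition \ref{prop32}.

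For the first part, suppose for contradiction that $\mc M$ is nonempty. Then by Theorem \ref{thm68} each of its components is a smooth manifold of dimension ${\sf cz}(\lbr{\mf x}_-\rbr) - {\sf cz}(\lbr{\mf x}_+\rbr)$ on which $\mb R$ acts freely. A nonempty smooth manifold of dimension $\leq 0$ cannot carry a free $\mb R$-action (a zero-dimensional orbit would be a fixed point, and negative dimension is impossible), so we must have ${\sf cz}(\lbr{\mf x}_-\rbr) - {\sf cz}(\lbr{\mf x}_+\rbr) \geq 1$, contradicting the hypothesis.

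For the second part the same reasoning shows $\widehat{\mc M}(\lbr{\mf x}_-\rbr, \lbr{\mf x}_+\rbr; J, H) = \mc M / \mb R$ is a smooth $0$-dimensional manifold, so finiteness reduces to compactness. By Theorem \ref{thm53} any sequence subconverges to a broken trajectory $(\{\wt{u}^{(\alpha)}\})_{\alpha=1, \ldots, m}$ with intermediate critical points $\lbr{\mf x}_0\rbr = \lbr{\mf x}_-\rbr, \lbr{\mf x}_1\rbr, \ldots, \lbr{\mf x}_m\rbr = \lbr{\mf x}_+\rbr$. Each piece is nontrivial (by the condition $E(\wt u^{(\alpha)}) \neq 0$ in Definition \ref{defn52}), so $\lbr{\mf x}_{\alpha-1}\rbr \neq \lbr{\mf x}_\alpha\rbr$, and applying the first part to each piece gives ${\sf cz}(\lbr{\mf x}_{\alpha-1}\rbr) - {\sf cz}(\lbr{\mf x}_\alpha\rbr) \geq 1$. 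Telescoping, $m \leq {\sf cz}(\lbr{\mf x}_-\rbr) - {\sf cz}(\lbr{\mf x}_+\rbr) = 1$, so $m=1$, no breaking can occur, $\widehat{\mc M}$ is compact, and a compact $0$-dimensional manifold is finite.

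The main subtlety is the coordination of genericity in the second part: to apply the first part to an arbitrary intermediate pair requires $J$ to be simultaneously regular for every pair in ${\rm Crit} {\mc A}_H \times {\rm Crit} {\mc A}_H$, not only for the given pair $(\lbr{\mf x}_-\rbr, \lbr{\mf x}_+\rbr)$. Since ${\rm Crit} {\mc A}_H$ is countable (the finite quotient ${\rm Zero} \wt{\mc B}_H/L_M G$ translated by the countable group $\Gamma$), the required condition is a countable intersection of subsets of $\wt{\mc J}_H$ of second category and hence is itself of second category, so the notation $\wt{\mc J}_H^{reg}$ is understood to enforce this joint regularity. Once that is in place, the rest is routine.
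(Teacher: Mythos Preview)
Your proof is correct and follows exactly the approach the paper intends: the proposition is stated there without proof, immediately after invoking Theorem \ref{thm68} and Theorem \ref{thm53}, as their formal consequence. Your write-up supplies the routine details the paper omits, including the observation that nonemptiness forces positive action drop (so Theorem \ref{thm68} applies), the freeness of the $\mb R$-action on positive-energy orbits, and the telescoping index argument that excludes breaking; your remark on taking $\wt{\mc J}_H^{reg}$ as the countable intersection over all pairs is also the implicit convention of Section \ref{section7}.
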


We also denote the following orbit space of ${\mb R}$-actions.
\begin{align*}
\begin{split}
\widehat{\mc M} \left( {\mf x}_-, {\mf x}_+; J, H \right) := &\ {\mc M}\left( {\mf x}_-, {\mf x}_+; J, H \right) / {\mb R},\\
\widehat{\mc M} \left( [{\mf x}_-], [{\mf x}_+]; J, H \right) := &\ {\mc M}\left( [{\mf x}_-], [{\mf x}_+]; J, H \right) / {\mb R}.
\end{split}
\end{align*}
We still use $\{ \cdot \}$ to denote an ${\mb R}$-orbit in these spaces.

\subsection{The gluing map and coherent orientation}

The boundary operator of the Floer chain complex is defined by the (signed) counting of $\widehat{\mc M}( \llbracket{\mf x}_-\rrbracket, \llbracket{\mf x}_+\rrbracket; J, H)$. If we want to define the Floer homology over ${\mb Z}_2$, then we don't need to orient the moduli space; otherwise, the orientation of $\widehat{\mc M}( \llbracket {\mf x}_- \rrbracket, \llbracket {\mf x}_+ \rrbracket; J, H)$ can be treated in the same way as the usual Hamiltonian Floer theory, since the augmented linearized operator ${\mf D}_{\wt{u}}$ (whose determinant is canonically isomorphic to the determinant of the actual linearization $d{\mc F}_{[\wt{u}]}$), is of the same type of Fredholm operators considered in the abstract setting of \cite{Floer_Hofer_Orientation}. We first give the gluing construction and then discuss the coherent orientations of the moduli spaces.

In this subsection we construct the gluing map for broken trajectories with only one breaking. The general case is similar. This construction is, in principle, the same as the standard construction in various types of Morse-Floer theory (see \cite{Floer_Hofer_Orientation} and \cite{Salamon_lecture}), with a gauge-theoretic flavor. The gauge symmetry makes the construction a bit more complex, since we always glue representatives, and we want to show that the gluing map is independent of the choice of the representatives. 

In this subsection, we fix the choice of the admissible family $J = (J_t)  \in \wt{\mc J}^{reg}_H$ and omit the dependence in the notations of the moduli spaces on $J$ and $H$.

For any pair $ {\mf x}_\pm \in {\rm Crit} \wt{\mc A}_H$, we say that a solution $\wt{u} \in \wt{\mc M}\left( {\mf x}_-, {\mf x}_+ \right)$ is in {\bf $r$-temporal gauge}, if its restrictions to $[r, +\infty) \times S^1$ and $(-\infty, r]\times S^1$ are in temporal gauge, for $r>0$. Now we fix a number $r= r_0$ and only consider solutions in $r_0$-temporal gauge. Similar treatment, including this gauge-fixing can be found in \cite[Section 4.7]{Frauenfelder_thesis}.

Now we take three elements ${\mf x}, {\mf y}, {\mf z}\in {\rm Crit} \wt{\mc A}_H$ with
\begin{align*}
{\sf cz}({\mf x})-1 = {\sf cz}({\mf y}) = {\sf cz}({\mf z})+1.
\end{align*}
We would like to construct, for a large $R_0>0$, the {\bf gluing map}
\begin{align*}
\mf{glue}: \widehat{\mc M}\left( [{\mf x}], [{\mf y}] \right) \times (R_0, +\infty) \times \widehat{\mc M}\left( [{\mf y}], [{\mf z}]\right) \to \widehat{\mc M}\left( [{\mf x}], [{\mf z}]\right).
\end{align*}

\subsubsection{The approximate solution}

Consider two gauge equivalence classes of solutions $[\wt{u}_\pm]= [u_\pm, \Phi_\pm, \Psi_\pm]$, $[ \wt{u}_- ] \in {\mc M}\left( [{\mf x}], [{\mf y}]\right)$, $[\wt{u}_+] \in {\mc M}\left( [{\mf y}], [{\mf z}] \right)$ with their representatives both in $r_0$-temporal gauge and $\wt{u}_\pm$ are asymptotic to ${\mf y}$ as $ s \to \mp \infty$. Then there exists $R_1>0$ such that 
\begin{align*}
\pm s \geq R_1 \Longrightarrow u_\mp (s, t) = \exp_{y(t)} \xi_\mp(s, t).
\end{align*}
Here $\Theta_{R_1}^+= [R_1, +\infty) \times S^1$, $\Theta_{R_1}^-= (-\infty, - R_1]\times S^1$ and $\xi_\pm \in W^{k, p}\left( \Theta^{\mp}_{R_1}, y^*TM\right)$. 

Next, we take a cut-off function $\rho$ such that $s\geq 1\Longrightarrow \rho(s) = 1$, $s\leq 0\Longrightarrow \rho(s) = 0$. For each $R>> r_0$, denote $\rho_R(s) = \rho(s- R)$. We construct the ``connected sum''
\begin{align*}
u_R (s, t) = \left\{\begin{array}{cc} u_-(s + R, t),& s \leq -{R\over 2}-1\\
                                            \exp_{y(t)} \left(  \rho_{R\over 2}(-s) \xi_- (s+ R, t) + \rho_{R\over 2} (s) \xi_+(s-R, t) \right), & s \in \left[-{R\over 2} -1, {R\over 2} +1 \right]\\
																						u_+( s- R, t), & s \geq {R\over 2} +1 
\end{array} \right.
\end{align*}
\begin{align*}
\left(\Phi_R, \Psi_R \right)(s, t) = \left\{\begin{array}{cc} \left( \Phi_-(s + R, t), \Psi_-(s + R, t) \right),& s \leq -{R\over 2}-1\\
                                         \left( 0, \rho_{R\over 2} (-s) \Psi_-(s + R, t) +  \rho_{R\over 2}(s) \Psi_+(s-R, t) \right), & s \in \left[-{R\over 2} -1, {R\over 2} +1 \right]\\
																						\left( \Phi_+(s-R, t), \Psi_+(s-R, t) \right). & s \geq {R\over 2} +1 \end{array}\right.
\end{align*}
Now it is easy to see that, if we change the choice of representatives $\wt{u}_\pm$ which are also in $r_0$-temporal gauge, the connected sum $\wt{u}_R:= (u_R, \Phi_R, \Psi_R)$ doesn't change for $s\in [-{R\over 2} -1, {R\over 2} + 1 ]$ and hence we obtain a gauge equivalent connected sum. Moreover, if $h \in L_0 G$ and ${\mf y}'= h^* {\mf y}$, then we choose $g_\pm: \Theta \to G$ such that $g_\pm (s, t) = {\rm Id}$ for $\pm s >>0$ and $g_\pm(s, t) = h(t)$ for $\pm s \leq -r_0$. Then we replace $\wt{u}_\pm$ by $\wt{u}_\pm' := g_\pm^* \wt{u}_\pm$, with 
\begin{align*}
\wt{u}_-' \in \wt{\mc M}\left( {\mf x}, {\mf y}' \right),\ \wt{u}_+' \in \wt{\mc M}\left( {\mf y}', {\mf z} \right)
\end{align*}
which are also in $r_0$-temporal gauge. Then if we do the above connected sum construction, we obtain $\wt{u}_R'$ which is gauge equivalent to $\wt{u}_R$.

Now we consider the augmented linearized operator ${\mf D}_R:= {\mf D}_{\wt{u}_R}$ along $\wt{u}_R \in \wt{\mc B}^{1, p}\left( {\mf x}, {\mf z} \right)$.

\begin{lemma}There exists $c>0$ and $R_2 >0$ such that for every $R\geq R_2$ and $\wt\eta \in \wt{\mc E}^{2, p}_{\wt{u}_R}\left( {\mf x}, {\mf z}\right) \oplus W^{2, p}\left( \Theta, {\mf g}\right)$, we have
\begin{align*}
\left\| {\mf D}_R^* \wt\eta \right\|_{W^{1, p}} \leq c \left\| {\mf D}_R {\mf D}_R^* \wt\eta \right\|_{L^p}.
\end{align*}
\end{lemma}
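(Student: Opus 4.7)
The plan is to deduce this uniform gluing estimate from the corresponding bounds for the two unbroken operators ${\mf D}_\pm := {\mf D}_{\wt{u}_\pm}$ via a cut-and-paste argument, controlling the cross terms by the exponential decay of $\wt{u}_R$ toward the intermediate critical loop $\wt{y}$ in the neck $|s|\le R/2+1$. This is the standard strategy from Floer-theoretic gluing; the twist here is that the underlying operators ${\mf D}_\pm$ are the augmented operators from Section \ref{section4}, so the gauge direction has to be carried along.

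First I would establish the unglued bound. Since $\{\wt{u}_\pm\}$ lie in transversely cut out $1$-dimensional moduli spaces, Theorem \ref{thm68} tells us that $d{\mc F}_{[\wt{u}_\pm]}$ is surjective, hence ${\mf D}_\pm$ is surjective. Its adjoint ${\mf D}_\pm^*$ therefore has trivial kernel, and Fredholm theory together with the standard elliptic estimate for the self-adjoint operator ${\mf D}_\pm{\mf D}_\pm^*$ yields a constant $C_0>0$ with
\[
\|{\mf D}_\pm^*\eta\|_{W^{1,p}}\le C_0\|{\mf D}_\pm{\mf D}_\pm^*\eta\|_{L^p}
\]
for every test section $\eta$ on the cylinders over $\wt{u}_\pm$.

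Next I would choose cutoff functions $\beta_-,\beta_+:{\mb R}\to[0,1]$ with $\beta_-^2+\beta_+^2\equiv 1$, $\beta_-\equiv 1$ on $s\le -1$, $\beta_-\equiv 0$ on $s\ge 1$, and uniformly bounded derivatives. Given $\wt\eta$ on $\Theta$ (over $\wt{u}_R$), split $\wt\eta=\beta_-\wt\eta+\beta_+\wt\eta$ and transplant each piece, via the pregluing identification and parallel transport along $\wt{y}$, to sections $\wt\eta_\pm^\sharp$ on the cylinders over $\wt{u}_\pm$. By construction each $\wt\eta_\pm^\sharp$ is supported in the half of its cylinder where $\wt{u}_R$ and the $R$-translate of $\wt{u}_\pm$ coincide, except in the thin neck $|s|\le 1$ (in $\wt{u}_R$-coordinates). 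There the exponential decay corollary following Proposition \ref{nondegenerate} guarantees that $\wt{u}_\pm$ and $\wt{u}_R$ are both within $O(e^{-\delta R/2})$ of $\wt{y}$, so ${\mf D}_R$ and the transplanted ${\mf D}_\pm$ differ as first-order operators by $O(e^{-\delta R/2})$, and both are $O(e^{-\delta R/2})$-close to the translation-invariant asymptotic operator $\wt{J}\partial_t+R_{\wt{y}}$.

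Applying the unglued bound to each $\wt\eta_\pm^\sharp$ and transplanting back gives a schematic estimate
\[
\|{\mf D}_R^*\wt\eta\|_{W^{1,p}}\le 2C_0\|{\mf D}_R{\mf D}_R^*\wt\eta\|_{L^p}+E_{\mathrm{cut}}+E_{\mathrm{def}},
\]
where $E_{\mathrm{cut}}$ collects the commutator terms between $\beta_\pm$ and ${\mf D}_R{\mf D}_R^*$ (supported in $|s|\le 1$) and $E_{\mathrm{def}}$ collects the difference between $(\beta_\pm{\mf D}_R{\mf D}_R^*\wt\eta)^\sharp$ and ${\mf D}_\pm{\mf D}_\pm^*\wt\eta_\pm^\sharp$. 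The deformation error is bounded by $Ce^{-\delta R/2}\|{\mf D}_R^*\wt\eta\|_{W^{1,p}}$ by the closeness of ${\mf D}_R$ to the shifted ${\mf D}_\pm$ in the neck. For the commutator term, a local interior elliptic estimate for ${\mf D}_R{\mf D}_R^*$ near $\wt{y}$---whose principal symbol is controlled by the invertible asymptotic operator $\wt{J}\partial_t+R_{\wt{y}}$ from Proposition \ref{nondegenerate}---shows that, on $|s|\le 1$, both $\|\wt\eta\|_{L^p}$ and $\|{\mf D}_R^*\wt\eta\|_{L^p}$ are dominated by $Ce^{-\delta R/2}\|{\mf D}_R^*\wt\eta\|_{W^{1,p}}+C\|{\mf D}_R{\mf D}_R^*\wt\eta\|_{L^p}$. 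Collecting, the full error obeys $E_{\mathrm{cut}}+E_{\mathrm{def}}\le\varepsilon(R)\|{\mf D}_R^*\wt\eta\|_{W^{1,p}}+C_1\|{\mf D}_R{\mf D}_R^*\wt\eta\|_{L^p}$ with $\varepsilon(R)\to 0$; for $R\ge R_2$ the first summand is absorbed into the left-hand side, yielding the lemma.

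The main obstacle is precisely the commutator step: a priori the cutoff derivatives produce only an $\|\wt\eta\|_{L^p(|s|\le 1)}$ contribution, and one needs to upgrade it to an absorbable error. This is where the invertibility of the asymptotic operator $\wt{J}\partial_t+R_{\wt{y}}$ from Proposition \ref{nondegenerate}, combined with the exponential decay of $\wt{u}_R$ to $\wt{y}$ in the neck, is decisive: without nondegeneracy of the asymptotic operator there would be slow modes concentrated in the neck that could not be absorbed, and the whole gluing construction would fail.
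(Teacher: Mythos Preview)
Your overall plan—cut and transplant $\wt\eta$, invoke surjectivity of ${\mf D}_\pm$, and control the errors by exponential closeness to $\wt y$ in the neck—is the standard gluing strategy, and is precisely what the paper defers to by citing \cite[Proposition~3.9]{Salamon_lecture}.

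There is, however, a genuine gap at the step you yourself flag as the main obstacle. The asserted bound $\|\wt\eta\|_{L^p(|s|\le 1)}\le Ce^{-\delta R/2}\|{\mf D}_R^*\wt\eta\|_{W^{1,p}}+C\|{\mf D}_R{\mf D}_R^*\wt\eta\|_{L^p}$ does not follow from a local interior elliptic estimate together with the invertibility of $\wt J\partial_t+R_{\wt y}$. Any interior estimate for the second-order operator ${\mf D}_R{\mf D}_R^*$ retains a lower-order term $\|\wt\eta\|_{L^p}$ on the right which cannot be removed locally; and since on the neck ${\mf D}_R^*\wt\eta\approx -\partial_s\wt\eta+(\wt J\partial_t+R_{\wt y})\wt\eta$, inverting the $S^1$-operator still leaves an uncontrolled $\partial_s\wt\eta$. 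What you are effectively asserting is a uniform bound for $({\mf D}_R^*)^{-1}$, which is at least as strong as the lemma itself, so the argument is circular. (The same circularity afflicts your bound on $E_{\mathrm{def}}$: the difference of the second-order operators applied to $\beta_\pm\wt\eta$ is of size $e^{-\delta R/2}\|\wt\eta\|_{W^{2,p}}$, not $e^{-\delta R/2}\|{\mf D}_R^*\wt\eta\|_{W^{1,p}}$.) With cutoff derivatives supported on $|s|\le 1$, the commutator $[\beta_\pm,{\mf D}_R{\mf D}_R^*]$ contributes an error $C\|\wt\eta\|_{W^{1,p}(|s|\le 1)}$ with $C$ independent of $R$, and nothing in your scheme absorbs it. The argument in \cite{Salamon_lecture} avoids this by working with $\zeta:={\mf D}_R^*\wt\eta$ rather than $\wt\eta$: one argues by contradiction, normalizing $\|\zeta_\nu\|_{W^{1,p}}=1$ with $\|{\mf D}_{R_\nu}\zeta_\nu\|_{L^p}\to 0$, translating by $\pm R_\nu$ and using Rellich compactness to extract limits $\zeta^\pm\in\ker{\mf D}_\pm$, showing $\zeta^\pm=0$, and then applying the invertibility of $\partial_s+(\wt J\partial_t+R_{\wt y})$ on $W^{1,p}({\mb R}\times S^1)$ to a neck cutoff of $\zeta_\nu$ to force $\|\zeta_\nu\|_{W^{1,p}}\to 0$. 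Because the cutoffs there act on $\zeta$, the commutator error is $C\|\zeta\|_{L^p(|s|\le 1)}\le C\|\zeta\|_{W^{1,p}}=C$, and it is the compactness step—not an a priori inequality—that drives this term to zero.
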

\begin{proof}
Same as the proof of \cite[Proposition 3.9]{Salamon_lecture}
\end{proof}

Hence we can construct a right inverse
\begin{align*}
{\mf Q}_R:= {\mf D}_R^* \left( {\mf D}_R {\mf D}_R^* \right)^{-1}: \wt{\mc E}^{0, p}_{\wt{u}_R} \left( {\mf x}, {\mf z} \right) \oplus L^p\left( \Theta, {\mf g}\right) \to T_{\wt{u}_R} \wt{\mc B}^{1, p} \left( {\mf x}, {\mf z} \right)
\end{align*}
with 
\begin{align*}
\left\|{\mf Q}_R \right\|\leq c.
\end{align*}

Now we write ${\mf Q}_R:= ({\mc Q}_R, {\mc A}_R)$ with ${\mc Q}_R: \wt{\mc E}_{\wt{u}_R}^{0, p} \left( {\mf x}, {\mf z} \right) \to T_{\wt{u}_R} \wt{\mc B}^{1, p}\left( {\mf x}, {\mf z} \right)$ and ${\mc A}_R: L^p\left( \Theta, {\mf g} \right) \to T_{\wt{u}_R} \wt{\mc B}^{1, p} \left( {\mf x}, {\mf z} \right)$. Then actually the image of ${\mc Q}_R$ lies in the kernel of $d_0^*$ and therefore ${\mc Q}_R$ is a right inverse to $d\wt{\mc F}_{\wt{u}_R}|_{{\rm ker} d_0^*}$. Because our construction is natural with respect to gauge transformations, we see that ${\mc Q}_R$ induces an injection
\begin{align*}
\ov{\mc Q}_R: {\mc E}_{[\wt{u}_R]}^{0, p} \left( {\mf x}, {\mf z} \right) \to T_{[\wt{u}_R]} {\mc B}^{1, p}  \left( {\mf x}, {\mf z} \right)
\end{align*}
which is a right inverse to the linearized operator $d{\mc F}_{[\wt{u}_R]}$ and which is bounded by $c$. By the implicit function theorem (\cite[Theorem A.3.4]{McDuff_Salamon_2004}), we have
\begin{prop}
There exists $R_0 >0$, $\delta_0 >0$ such that for each $R \geq R_0$, there exists a unique $\wt\xi \in {\rm Im} {\mc Q}_R \subset T_{\wt{u}_R} \wt{\mc B}^{1, p}\left( {\mf x}, {\mf z} \right)$, $\|\wt\xi \|_{W^{1, p}} \leq \delta_0$ such that
\begin{align*}
\wt{\mc F} \left( \exp_{\wt{u}_R} \wt\xi \right) = 0,\ \| \wt\xi \|_{W^{1, p}} \leq 2c \| \wt{\mc F} (\wt{u}_R ) \|_{L^p}.
\end{align*}
\end{prop}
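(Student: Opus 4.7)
The plan is to deduce this from the quantitative implicit function theorem (\cite[Theorem A.3.4]{McDuff_Salamon_2004}) applied to the section $\wt{\mc F}$ restricted to the affine subspace $\wt{u}_R + {\rm Im} \, {\mc Q}_R \subset \wt{\mc B}^{1,p}({\mf x}, {\mf z})$. The three ingredients needed are: (i) smallness of the ``defect'' $\wt{\mc F}(\wt{u}_R)$ in $L^p$; (ii) the already-established uniform bound $\|{\mc Q}_R\| \leq c$ on the right inverse; and (iii) a uniform quadratic estimate for the nonlinear remainder in the Taylor expansion of $\wt{\mc F}$ around $\wt{u}_R$.

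First I would estimate $\|\wt{\mc F}(\wt{u}_R)\|_{L^p}$. Outside the gluing region $\left[-\tfrac{R}{2}-1, \tfrac{R}{2}+1\right] \times S^1$, the pre-glued $\wt{u}_R$ coincides with either $\wt{u}_-(\cdot + R, \cdot)$ or $\wt{u}_+(\cdot - R, \cdot)$, which solve the vortex equation, so $\wt{\mc F}(\wt{u}_R) = 0$ there. In the gluing annulus, $\wt{u}_R$ is a cut-off interpolation through $y(t)$ between the exponentially decaying tails $\xi_\pm$ and $\Psi_\pm - \theta$ (where $\theta$ is the constant ${\mf g}$-component of ${\mf y}$). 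The exponential decay provided by the nondegenerate asymptotics (Corollary after Proposition \ref{nondegenerate}), combined with the boundedness of $\rho$ and $\rho'$, gives
\begin{align*}
\|\wt{\mc F}(\wt{u}_R)\|_{L^p} \leq C e^{-\delta R / 2}
\end{align*}
for some $C, \delta > 0$ independent of $R$. In particular, this tends to $0$ as $R \to \infty$.

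Next I would establish the quadratic estimate. Writing $\wt{\mc F}(\exp_{\wt{u}_R} \wt\xi) = \wt{\mc F}(\wt{u}_R) + d\wt{\mc F}_{\wt{u}_R}(\wt\xi) + N_R(\wt\xi)$, I need a constant $C_0$, uniform in $R \geq R_0$, such that for $\|\wt\xi_i\|_{W^{1,p}} \leq \delta_0$,
\begin{align*}
\|N_R(\wt\xi_1) - N_R(\wt\xi_2)\|_{L^p} \leq C_0 \bigl(\|\wt\xi_1\|_{W^{1,p}} + \|\wt\xi_2\|_{W^{1,p}}\bigr) \|\wt\xi_1 - \wt\xi_2\|_{W^{1,p}}.
\end{align*}
This is standard: $N_R$ comes from the curvature of the exponential map and the pointwise nonlinearities in $J_t$, $X_\Phi$, $X_\Psi$, $\mu$, $Y_{H_t}$. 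Because $u_R(\Theta)$ stays in a fixed compact subset $K_H \subset M$ (by the maximum principle from Section \ref{section3}), all geometric quantities and their derivatives are uniformly bounded, and the Sobolev embedding $W^{1,p}(\Theta) \hookrightarrow C^0(\Theta)$ (valid since $p > 2$) converts $W^{1,p}$-smallness of $\wt\xi$ into $C^0$-smallness, giving the required Lipschitz estimate on $N_R$ uniformly in $R$.

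With these three ingredients, \cite[Theorem A.3.4]{McDuff_Salamon_2004} applied to the map $\wt\xi \mapsto \wt{\mc F}(\exp_{\wt{u}_R} \wt\xi)$ restricted to the closed subspace ${\rm Im} \, {\mc Q}_R$ (on which $d\wt{\mc F}_{\wt{u}_R} \circ {\mc Q}_R = {\rm Id}$ up to the augmentation, so the restriction of $d\wt{\mc F}_{\wt{u}_R}$ to ${\rm Im}\,{\mc Q}_R$ is invertible with inverse of norm at most $c$) yields a unique $\wt\xi \in {\rm Im}\, {\mc Q}_R$ with $\|\wt\xi\|_{W^{1,p}} \leq \delta_0$ solving $\wt{\mc F}(\exp_{\wt{u}_R} \wt\xi) = 0$, together with the estimate $\|\wt\xi\|_{W^{1,p}} \leq 2c \|\wt{\mc F}(\wt{u}_R)\|_{L^p}$, provided $\delta_0$ is chosen so that $4 c C_0 \delta_0 \leq 1$ and $R_0$ is chosen so that $2 c \|\wt{\mc F}(\wt{u}_R)\|_{L^p} \leq \delta_0 / 2$ for all $R \geq R_0$. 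The main obstacle is the exponential-decay estimate in step (i): one must verify that the asymptotic analysis of Section \ref{section3}, refined by the spectral gap of the Hessian at ${\mf y}$ (Proposition \ref{nondegenerate}), really gives exponential decay of both $\xi_\pm$ and $\Psi_\pm - \theta$ in $W^{k,p}$ on the cylindrical ends, because the cut-off procedure then converts this into an $L^p$-bound on the defect in a completely standard way. Nothing in these steps is sensitive to gauge choices beyond the $r_0$-temporal gauge fixing, so the naturality of the construction under gauge transformations asserted earlier carries over to the glued solution.
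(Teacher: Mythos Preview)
Your proposal is correct and follows exactly the approach the paper takes: the paper simply writes ``By the implicit function theorem (\cite[Theorem A.3.4]{McDuff_Salamon_2004}), we have'' and states the proposition without further elaboration. You have supplied the standard verification of the three hypotheses (exponential smallness of the defect, uniform right inverse, uniform quadratic remainder) that the paper leaves implicit.
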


Then we can define
\begin{align*}
\begin{array}{cccc} \wt{\mf{glue}} : & \widehat{\mc M}\left( {\mf x}, {\mf y} \right) \times (R_0, +\infty) \times \widehat{\mc M} \left( {\mf y}, {\mf z} \right) & \to & \widehat{\mc M} \left( {\mf x}, {\mf z} \right)\\
& \left( \{ \wt{u}_- \}, R, \{ \wt{u}_+ \} \right) &\mapsto & \{ \exp_{\wt{u}_R} \wt\xi \}.
\end{array}
\end{align*}
By the gauge equivariance of the construction, it induces the gluing map
\begin{align*}
\mf{glue}: \widehat{\mc M} \left( [{\mf x}], [{\mf y}] \right) \times (R_0, +\infty) \times \widehat{\mc M} \left( [{\mf y}], [{\mf z}] \right) \to \widehat{\mc M} \left( [{\mf x}], [{\mf z}] \right).
\end{align*}
We can also replace $[{\mf x}], [{\mf y}], [{\mf z}]$ by $\lbr {\mf x}\rbr, \lbr {\mf y}\rbr, \lbr{\mf z}\rbr$ respectively.

\subsubsection{Coherent orientation and the boundary operator}

On the other hand, it is easy to see that the augmented linearized operators ${\mf D}_{\wt{u}}$ for all connecting orbits $\wt{u}$ is of ``class $\Sigma$'' considered in \cite{Floer_Hofer_Orientation}. Therefore, by the main theorem of \cite{Floer_Hofer_Orientation}, there exists a ``coherent orientation'' with respect to the gluing construction. Choosing such a coherent orientation, then to each zero-dimensional moduli space $\widehat{\mc M}( \lbr {\mf x} \rbr, \lbr {\mf y} \rbr; J, H)$, we can associate the counting $\chi_J ( \lbr {\mf x} \rbr, \lbr {\mf y} \rbr)\in {\mb Z}$, where each trajectory $\{ \wt{u}\}$ contributes 1 (resp. -1) if the orientation of $ \{ \wt{u} \}$ coincides (resp. differ from) the ``flow orientation'' of the solution. For all the other cases, define $\chi_J (\lbr {\mf x}\rbr , \lbr {\mf y} \rbr ) = 0$.

Recall that in Subsection \ref{subsection24} we defined the $\Lambda_{\mb Z}$-modules $VCF(M, \mu; H; \Lambda_{\mb Z})$. It is generated by ${\rm Crit} {\mc A}_H$, and is graded by the Conley-Zehnder index. We denote by $VCF_k \left( M, \mu; H; \Lambda_{\mb Z} \right)$ the subgroup consisting of degree $k$ elements. Then we define
\begin{align}\label{equation71}
\begin{array}{cccc}
\delta_J : & VCF_k\left( M,\mu; H; \Lambda_{{\mb Z}} \right) & \to & VCF_{k-1} \left( M, \mu; H; \Lambda_{\mb Z} \right)\\[0.3cm]
          & \lbr {\mf x} \rbr  & \mapsto & \displaystyle \sum_{\lbr {\mf y}\rbr \in {\rm Crit}{\mc A}_H } \chi_J ( \lbr \mf x \rbr, \lbr \mf y \rbr) \lbr {\mf y} \rbr.
\end{array}
\end{align}

As in the ordinary Hamiltonian Floer theory, we have
\begin{thm}
For any choice of the coherent orientations on all ${\mc M}( \lbr {\mf x} \rbr, \lbr {\mf y} \rbr)$, the operator $\delta_J$ in (\ref{equation71}) defines a morphism of $\Lambda_{\mb Z}$-modules satisfying $\delta_J \circ \delta_J = 0$. 
\end{thm}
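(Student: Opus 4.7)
The proof has two independent parts: showing that formula (\ref{equation71}) defines a $\Lambda_{\mb Z}$-module map, and showing that $\delta_J\circ\delta_J=0$. Throughout, I would use the three ingredients already in place: the dimension formula for ${\mc M}(\lbr{\mf x}\rbr,\lbr{\mf y}\rbr;J,H)$ via the Conley-Zehnder index (Theorem \ref{thm44} combined with Theorem \ref{thm68}), compactness up to breaking of $\ov{\mc M}(\lbr{\mf x}\rbr,\lbr{\mf y}\rbr)$ (Theorem \ref{thm53}), the energy identity $E=\wt{\mc A}_H({\mf x}_-)-\wt{\mc A}_H({\mf x}_+)$ (Proposition \ref{prop38}), and the gluing/coherent orientation discussion of this section.

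\textbf{Well-definedness of $\delta_J$.} First I would verify that each counting integer $\chi_J(\lbr{\mf x}\rbr,\lbr{\mf y}\rbr)$ is a finite sum. When ${\sf cz}(\lbr{\mf x}\rbr)-{\sf cz}(\lbr{\mf y}\rbr)=1$, the moduli space ${\mc M}(\lbr{\mf x}\rbr,\lbr{\mf y}\rbr;J,H)$ is a transversely cut $1$-manifold on which ${\mb R}$ acts freely (translation), so $\widehat{\mc M}$ is a $0$-manifold. Any broken trajectory in $\ov{\mc M}\setminus\widehat{\mc M}$ would consist of $m\geq 2$ nontrivial pieces whose Conley-Zehnder differences are each $\geq 1$ (since each unbroken piece has positive-dimensional moduli after translation quotient), contradicting the total drop of $1$. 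Hence $\widehat{\mc M}=\ov{\mc M}/{\mb R}$ is compact and $0$-dimensional, i.e.\ finite, so the signed count is a well-defined integer. Next I would check the Novikov condition. Fixing a basis $\lbr{\mf y}_1\rbr,\dots,\lbr{\mf y}_N\rbr$ of $VCF_{k-1}$ (one lift for each $1$-periodic orbit of $\ov{H}$ of the appropriate index; finitely many by Hypothesis \ref{hyp22}), the coefficient of $\lbr{\mf y}_i\rbr$ in $\delta_J(\lbr{\mf x}\rbr)$ is a formal sum $\sum_B n_B q^B$ with $n_B=\chi_J(\lbr{\mf x}\rbr,B\#\lbr{\mf y}_i\rbr)$. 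For $n_B\neq 0$ the moduli space is non-empty, so by Proposition \ref{prop38} and the identity $\wt{\mc A}_H(B\#\lbr{\mf y}_i\rbr)=\wt{\mc A}_H(\lbr{\mf y}_i\rbr)-\langle[\omega^G],B\rangle$, we get $\langle[\omega^G],B\rangle\leq\wt{\mc A}_H(\lbr{\mf x}\rbr)-\wt{\mc A}_H(\lbr{\mf y}_i\rbr)$. This is exactly the downward Novikov condition. Grading compatibility follows from Theorem \ref{thm44}(2)--(3).

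\textbf{The identity $\delta_J\circ\delta_J=0$.} The standard argument is to examine the coefficient of each generator $\lbr{\mf z}\rbr$ in $\delta_J^2(\lbr{\mf x}\rbr)$ when ${\sf cz}(\lbr{\mf x}\rbr)-{\sf cz}(\lbr{\mf z}\rbr)=2$; it equals $\sum_{\lbr{\mf y}\rbr}\chi_J(\lbr{\mf x}\rbr,\lbr{\mf y}\rbr)\chi_J(\lbr{\mf y}\rbr,\lbr{\mf z}\rbr)$, the sum running over intermediate $\lbr{\mf y}\rbr$ of index ${\sf cz}(\lbr{\mf x}\rbr)-1$. I would identify this number with the signed count of the boundary of the compactified $1$-manifold $\ov{\widehat{\mc M}}(\lbr{\mf x}\rbr,\lbr{\mf z}\rbr)$. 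Three ingredients combine: by Theorem \ref{thm68} the open $1$-manifold $\widehat{\mc M}(\lbr{\mf x}\rbr,\lbr{\mf z}\rbr)$ has a coherent orientation; by Theorem \ref{thm53} and a dimension count (only single breakings are possible in index difference $2$, as any further breaking would push the total index difference above $2$), the compactification adds precisely the once-broken configurations $\bigsqcup_{\lbr{\mf y}\rbr}\widehat{\mc M}(\lbr{\mf x}\rbr,\lbr{\mf y}\rbr)\times\widehat{\mc M}(\lbr{\mf y}\rbr,\lbr{\mf z}\rbr)$; and the gluing map $\mf{glue}$ constructed above, combined with the uniqueness clause of the implicit function theorem, identifies a neighborhood of each such broken configuration with a half-open end $(R_0,\infty)$ of $\widehat{\mc M}(\lbr{\mf x}\rbr,\lbr{\mf z}\rbr)$. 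Since a compact oriented $1$-manifold with boundary has signed boundary count zero, and the boundary signs match the product of trajectory signs by the defining property of coherent orientation (as in \cite{Floer_Hofer_Orientation}), the inner sum vanishes.

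\textbf{The main obstacle.} The delicate step is not any single ingredient but assembling the \emph{manifold-with-boundary} structure on $\ov{\widehat{\mc M}}(\lbr{\mf x}\rbr,\lbr{\mf z}\rbr)$, i.e.\ showing that (i) every end of $\widehat{\mc M}(\lbr{\mf x}\rbr,\lbr{\mf z}\rbr)$ arises from exactly one once-broken trajectory, and (ii) the gluing map is a local homeomorphism from $[R_0,\infty)\times\{\text{broken}\}$ onto a neighborhood of that broken configuration, with no extra solutions. Part (ii) follows from the uniqueness in the implicit function theorem applied in the gluing argument (the unique small correction $\wt\xi$ produced there), so the real task is the surjectivity in (i): given a sequence $[\wt{u}_n]\in\widehat{\mc M}(\lbr{\mf x}\rbr,\lbr{\mf z}\rbr)$ escaping every compact set, its Gromov-type limit is a broken trajectory through some intermediate $\lbr{\mf y}\rbr$, and I would show by another application of the implicit function theorem that for all large $n$ the orbit $\{\wt{u}_n\}$ lies in the image of $\mf{glue}$ applied to this broken limit. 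This is gauge-theoretic bookkeeping on top of the standard finite-dimensional Morse-theory compactness/gluing dichotomy, and the freeness of the ${\mc G}_0^{k+1,p}$-action on the space of connecting orbits (established in Section \ref{section4}) is what makes everything work equivariantly. Once this is in place, $\delta_J\circ\delta_J=0$ is immediate, and the theorem follows.
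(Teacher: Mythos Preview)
Your proposal is correct and is precisely the standard Hamiltonian Floer theory argument that the paper invokes. In fact the paper does not give its own proof of this theorem at all; it simply states ``As in the ordinary Hamiltonian Floer theory, we have\ldots'' and relies on the ingredients already assembled (Theorem~\ref{thm53} for compactness, Theorem~\ref{thm68} for transversality, the gluing map of this subsection, and the Floer--Hofer coherent orientation package). Your sketch supplies exactly those details, including the Novikov finiteness check via the energy identity of Proposition~\ref{prop38}, and the identification of $\partial\,\ov{\mc M}(\lbr{\mf x}\rbr,\lbr{\mf z}\rbr)$ with once-broken trajectories when the index drop is~$2$.

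One minor bookkeeping point: with the paper's convention $B\#\lbr{\mf y}_i\rbr \sim q^{-B}\lbr{\mf y}_i\rbr$, the coefficient of $\lbr{\mf y}_i\rbr$ in $\delta_J(\lbr{\mf x}\rbr)$ is $\sum_B \chi_J(\lbr{\mf x}\rbr,B\#\lbr{\mf y}_i\rbr)\,q^{-B}$ rather than $q^{B}$; tracing the sign through, the energy bound becomes a \emph{lower} bound on $\langle[\omega^G],B\rangle$, which after the substitution $B\mapsto -B$ is exactly the downward Novikov finiteness you want. This does not affect the argument, only the indexing.
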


This makes $\left( VCF_*(M, \mu; H; \Lambda_{\mb Z} ), \delta_J \right)$ a chain complex of $\Lambda_{\mb Z}$-modules, to which will be generally referred as the {\bf vortex Floer chain complex}. Therefore the vortex Floer homology group is defined as the graded $\Lambda_{\mb Z}$-module
\begin{align*}
VHF_k \left( M, \mu; J, H; \Lambda_{\mb Z} \right):= {{\rm ker} \left(\delta_J: VCF_k \to VCF_{k-1} \right) \over {\rm im} \left( \delta_J: VCF_{k+1}\to VCF_{k} \right)   }.
\end{align*}

\subsection{The continuation map}

Now we prove that the vortex Floer homology group defined above is independent of the choice of admissible family of almost complex structures and the time-dependent Hamiltonians, and, if we use the moduli space of (\ref{equation17}) instead of (\ref{equation14}) to define the Floer homology, independent of the parameter $\lambda>0$. This following type of argument is standard, i.e., the continuation method.

Let $\left( \left( J^-_t \right), \left( H^-_t \right), \lambda^- \right)$ and $\left( \left( J^+_t \right) , \left( H^+_t \right), \lambda^+ \right)$ be two triples where $\lambda^\pm > 0$, $(H_t^\pm)$ are admissible Hamiltonians and $\left( J_t^\pm \right) \in \wt{\mc J}^{reg}_{H^\pm, \lambda^\pm }$. We choose a cut-off function $\rho: {\mb R}\to [0, 1]$ such that $\rho(s) = 1$ for all $s \leq -1$ and $\rho(s) = 0$ for all $s \geq 1$. Then we define
\begin{align*}
H_{s, t} = \rho(s) H^-_t + (1-\rho(s)) H^+_t,\ \lambda_s = \rho(s) \lambda^- + (1-\rho(s)) \lambda^+.
\end{align*}
We denote this family of Hamiltonians by ${\bm H}= (H_{s, t})$ and ${\bm \lambda} : = (\lambda_s)$. 

We have to choose a family of almost complex structures ${\bm J} = (J_{s, t})$ to define the equation. Let $\wt{\mc J} \left( J^-, J^+ \right)$ be the space of families of $G$-invariant, $\omega$-compatible almost complex structures $\wt{\mc J} \left( J^-, J^+ \right) $ consisting of smooth families of almost complex structures ${\bm J} = (J_{s, t})_{(s, t) \in \Theta}$, such that for all $k \geq 1$,
\begin{align}\label{equation72}
\left| e^{|s|}  \left( J_{s, t} - J^-_t \right) \right|_{C^k (\Theta_- \times M)} < \infty,\ \left| e^{|s|}\left( J_{s, t} - J^+_t \right) \right|_{C^k(\Theta_+ \times M)} < \infty.
\end{align}

For each ${\bm J} \in \wt{\mc J}\left( J^-, J^+ \right)$, consider the following equation on $\wt{u}= (u, \Phi,\Psi): \Theta \to M \times {\mf g} \times {\mf g}$
\begin{align}\label{equation73}
\left\{ \begin{array}{ccc}
\displaystyle {\partial u \over \partial s}  + X_{\Phi}(u) + J_{s, t} \left( {\partial u \over \partial t}  + X_{\Psi}(u) - Y_{H_{s, t}}(u) \right) & = & 0;\\[0.3 cm]
\displaystyle {\partial \Psi \over \partial s} - {\partial \Phi \over \partial t}  + [\Phi, \Psi] + \lambda_s^2 \mu(u) & = & 0.
\end{array}\right.
\end{align}
For the same reason as in Section \ref{section3}, any finite energy solution whose image in $M$ has compact closure is gauge equivalent to a solution which is asymptotic to some $\wt{x}_\pm \in {\rm Zero}\wt{\mc B}_H$ as $ s\to \pm\infty$. Hence for any pair $ \wt{\mf x}^\pm  \in {\rm Crit} \wt{\mc A}_{H^\pm}$, we can consider the space of solutions to (\ref{equation73})
\begin{align*}
\wt{\mc N} \left( {\mf x}^-, {\mf x}^+; {\bm J}, {\bm H}, {\bm \lambda} \right) \subset \wt{\mc B}^{k, p}( {\mf x}^-, {\mf x}^+)
\end{align*}
which are asymptotic to ${\mf x}^\pm$ as $s \to \pm\infty$.

One thing to check in defining the continuation map is the energy bound of solutions, which gurantees the compactness of the moduli space. We define the energy of a solution $(u, \Phi, \Psi)$ to (\ref{equation73}) by
\begin{align*}
 E(u, \Phi, \Psi) = \left\| \partial_s u + X_\Phi(u) \right\|_{L^2(\Theta)}^2 + \left\|\lambda_s \mu(u) \right\|_{L^2(\Theta)}^2.
\end{align*}
Here the $L^2$-norm is taken with respect to the family of metrics induced by $\omega$ and $J_{s, t}$ parametrized by $(s, t)\in \Theta$. Then we have
\begin{prop}\label{prop75}
For any $\wt{u} = (u, \Phi, \Psi) \in \wt{\mc N}\left( {\mf x}^-, {\mf x}^+; {\bm J}, {\bm H}, {\bm \lambda} \right)$, we have
\begin{align}\label{equation74}
E \left( u, \Phi, \Psi \right) = \wt{\mc A}_{H^-} ({\mf x}^- ) - \wt{\mc A}_{H^+} (  {\mf x}^+ ) - \int_\Theta {\partial H_{s, t}\over \partial s} (u) ds dt.
\end{align}
\end{prop}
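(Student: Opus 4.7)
The plan is to compute $\frac{d}{ds}\wt{\mc A}_{H_{s,t}}(u(s,\cdot),\Psi(s,\cdot),[w(s,\cdot)])$ along a solution and then integrate in $s$, mimicking the classical Floer-theoretic argument. First, using gauge invariance, I would reduce to the case where $\wt{u}$ is in temporal gauge $\Phi\equiv 0$; the energy $E$, the boundary values ${\mf x}^\pm$, and the integral $\int_\Theta (\partial_s H_{s,t})(u)\,ds\,dt$ are all manifestly gauge-invariant (the latter because $H_{s,t}$ is $G$-invariant), so there is no loss. In temporal gauge, (\ref{equation73}) becomes
\begin{align*}
\partial_s u + J_{s,t}(\partial_t u + X_\Psi - Y_{H_{s,t}}) = 0,\qquad \partial_s\Psi + \lambda_s^2\mu(u) = 0.
\end{align*}
For each $s$, choose the extension $w(s,\cdot):{\mb D}\to M$ of $u(s,\cdot)$ so that for $s'<s$ the extensions differ by the strip $u|_{[s',s]\times S^1}$ (this is the standard continuous family of extensions and is compatible with the homotopy class condition in the definition of $\wt{\mf L}$).

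Next I would differentiate the three summands of $\wt{\mc A}_{H_{s,t}}$. The disk term contributes $-\int_{S^1}\omega(\partial_s u,\partial_t u)\,dt$; the moment-map term contributes $\int_{S^1}\langle d\mu\cdot\partial_s u,\Psi\rangle dt+\int_{S^1}\langle\mu(u),\partial_s\Psi\rangle dt$, of which the first piece equals $\int_{S^1}\omega(X_\Psi,\partial_s u)\,dt$ by the moment map identity $d\langle\mu,\xi\rangle=\iota_{X_\xi}\omega$; and the Hamiltonian term contributes $-\int_{S^1}(\partial_s H_{s,t})(u)\,dt-\int_{S^1}\omega(Y_{H_{s,t}},\partial_s u)\,dt$. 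Regrouping, the sum of the first three $s$-derivatives reads
\begin{align*}
\int_{S^1}\omega\bigl(\partial_t u+X_\Psi-Y_{H_{s,t}},\partial_s u\bigr)\,dt+\int_{S^1}\langle\mu(u),\partial_s\Psi\rangle dt-\int_{S^1}(\partial_s H_{s,t})(u)\,dt.
\end{align*}
Substituting $\partial_t u+X_\Psi-Y_{H_{s,t}}=J_{s,t}\partial_s u$ from the first equation and $\partial_s\Psi=-\lambda_s^2\mu(u)$ from the second, the first two integrals collapse to $-\int_{S^1}\bigl(|\partial_s u|^2+\lambda_s^2|\mu(u)|^2\bigr)\,dt$, where the norms are with respect to $g_{s,t}=\omega(\cdot,J_{s,t}\cdot)$ and the fixed inner product on ${\mf g}$.

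Finally, I would integrate in $s$ from $-\infty$ to $+\infty$. The boundary values of $\wt{\mc A}_{H_{s,t}}$ at $s\to\pm\infty$ are $\wt{\mc A}_{H^\pm}({\mf x}^\pm)$ by the asymptotic behavior (Proposition \ref{prop36}) together with the decay condition (\ref{equation72}) on ${\bm J}$ and the corresponding decay of $(H_{s,t}-H^\pm_t)$ built into the definition of the homotopy, yielding
\begin{align*}
\wt{\mc A}_{H^+}({\mf x}^+)-\wt{\mc A}_{H^-}({\mf x}^-) = -E(u,\Phi,\Psi)-\int_\Theta (\partial_s H_{s,t})(u)\,ds\,dt,
\end{align*}
which rearranges to (\ref{equation74}). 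The only mild subtlety I anticipate is justifying the termwise differentiation and the boundary evaluation: this needs the exponential decay of $|\partial_s u|$ and $|\partial_s\Psi|$ (from Section \ref{section42}) to see that the $s$-derivative of $\wt{\mc A}_{H_{s,t}}$ is integrable, and it needs the extensions $w(s,\cdot)$ to form a smooth family representing a fixed element of $\wt{\mf L}/L_0 G$ so that $\wt{\mc A}$ is well-defined along the path; both are standard once one is in temporal gauge.
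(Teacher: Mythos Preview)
Your proposal is correct and follows essentially the same approach as the paper. Both arguments reduce to temporal gauge and establish the same pointwise identity between the energy density and the $s$-derivative of the action plus the $\partial_s H_{s,t}$ term; the paper simply organizes the computation by starting from the energy density and rewriting it as a total $s$-derivative, whereas you start from $\tfrac{d}{ds}\wt{\mc A}_{H_{s,t}}$ and identify it with minus the energy density --- the two are the same calculation read in opposite directions.
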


\begin{proof} Transforming $\wt{u}$ into temporal gauge, the energy density reads
\begin{align*}
\begin{split}
\left| {\partial u \over \partial s} \right|^2 + \left| \lambda_s \mu(u) \right|^2 = & \omega \left( {\partial u\over \partial s} , {\partial u \over \partial t} + X_{\Psi} - Y_{H_{s, t}}(u) \right) - \mu(u) \cdot {\partial \Psi \over \partial s}\\
= & \omega \left( {\partial u \over \partial s}, {\partial u \over \partial t} \right) - {\partial \over \partial s} \left( \mu(u) \cdot \Psi \right) + {\partial \over \partial s} (H_{s, t}(u)) - {\partial H_{s, t}\over \partial s}(u).
\end{split}
\end{align*}
Then integrating over $\Theta$, we obtain (\ref{equation74}).
\end{proof}

Then we prove 
\begin{prop} 
There exists a subset $\wt{\mc J}^{reg}_{{\bm H}, {\bm \lambda}} \left( J^-, J^+\right) \subset \wt{\mc J}\left( J^-, J^+ \right)$ of second category such that for every ${\bm J} = (J_{s, t}) \in \wt{\mc J}^{reg}_{{\bm H}, {\bm \lambda}}\left( J^-, J^+ \right)$ and every pair $ {\mf x}^\pm \in {\rm Crit} \wt{\mc A}_{H^\pm}$, the moduli space ${\mc N} \left( {\mf x}^\pm; {\bm J}, {\bm H}, {\bm \lambda} \right)$ is a smooth manifold of dimension ${\sf cz}( {\mf x}^- ) - {\sf cz}( {\mf x}^+ )$.
\end{prop}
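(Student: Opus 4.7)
The plan is to mimic the Sard--Smale argument used for Theorem \ref{thm68}, adapted to the $s$-dependent setting of (\ref{equation73}). Fix $l \geq k \geq 2$, $p > 2$, and introduce a Banach manifold $\wt{\mc J}^l(J^-, J^+)$ of $C^l$-families of admissible almost complex structures satisfying the weighted decay (\ref{equation72}); its tangent space at ${\bm J}$ consists of $G$-invariant endomorphisms $E = (E_{s,t})$ of $TM$ anticommuting with $J_{s,t}$, with $\omega(\cdot, E_{s,t}\cdot)$ symmetric, decaying exponentially as $s \to \pm\infty$, and vanishing to first order along the 1-periodic orbits of $H^\pm_t$ as in Lemma \ref{lemma63}. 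Form the universal moduli space ${\mc N}^{k,p}({\mf x}^\pm; \wt{\mc J}^l(J^-, J^+), {\bm H}, {\bm \lambda})$ as the zero set, inside ${\mc B}^{k,p}({\mf x}^-, {\mf x}^+) \times \wt{\mc J}^l(J^-, J^+)$, of the smooth Banach bundle section defined by the left-hand side of (\ref{equation73}). Proposition \ref{prop75} plus the asymptotic analysis of Section \ref{section3} (applied on each end using the admissible data $(J^\pm_t, H^\pm_t)$) guarantees that elements of this universal moduli space do decay to ${\mf x}^\pm$ at the appropriate exponential rate.

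Next I would show that at every $([\wt{u}], {\bm J})$ in the universal zero set, the total linearization
\[
{\mc D}: T_{[\wt{u}]} {\mc B}^{k,p}({\mf x}^-, {\mf x}^+) \oplus T_{\bm J} \wt{\mc J}^l(J^-, J^+) \to {\mc E}^{k-1, p}_{[\wt{u}]}({\mf x}^-, {\mf x}^+)
\]
is surjective. Its restriction to the first factor is the actual linearization $d{\mc F}_{[\wt{u}]}$ of (\ref{equation73}), and this is Fredholm for the same reason as in Section \ref{section4}: the asymptotic operators at $\pm\infty$ are the self-adjoint operators $R_{\wt{x}^\pm}$ of (\ref{equation43}) associated to the admissible pairs $(J^\pm_t, H^\pm_t)$, which have trivial kernel by Proposition \ref{nondegenerate}. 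Hence it suffices to prove that ${\mc D}$ has dense range. If not, a nonzero $\wt{\eta} \in \ker {\mf D}_{\wt{u}}^*$ is $L^2$-orthogonal to the image of the $(E_{s,t})$-variation as well.

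The main obstacle, as in Section \ref{section6}, is to produce a $G$-regular point for $u$ on which $\wt\eta$ is nonzero. Unlike the translation-invariant case, $\partial_s \wt{u}$ is not in $\ker {\mf D}_{\wt{u}}$ here because the $s$-derivatives of $J_{s,t}$, $H_{s,t}$, $\lambda_s$ contribute inhomogeneous terms; however the asymptotic behavior of $\partial_s \wt{u}$ as $s \to \pm\infty$ is still controlled by an eigenmode of the Hessian ${\mc H}_{\wt{x}^\pm}$, exactly as in Theorem \ref{thm610}, because on each end $(J, H, \lambda)$ is constant and the equation reduces to (\ref{equation613}). By the admissibility of $H^\pm$ (Definition \ref{defn65}), this eigenvector has nonzero $\pi_1$-component, which combined with the unique-continuation argument of the proposition following Definition \ref{defn611} (Frauenfelder's argument, after conjugating by the Hamiltonian flow to remove $H$) yields a non-empty open set $R(u) \subset \Theta$ of $G$-regular points in the sense of Definition \ref{defn611}.

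Fix $z_0 \in R(u)$ with $\eta(z_0) \neq 0$; since only finitely many 1-periodic orbits are involved, a small perturbation of $z_0$ keeps $u(z_0)$ away from the orbits through the asymptotic loops at both ends. Choose $E_0 \in \mathrm{End}(T_{u(z_0)} M)$ anticommuting with $J_{s_0, t_0}$ with $\langle E_0 (J_{s_0, t_0} \partial_s u(z_0)), \eta(z_0)\rangle > 0$; following the argument immediately preceding (\ref{equation614}), cut off $E_0$ by a $G$-invariant bump in a small neighborhood of $(z_0, u(z_0))$ (disjoint from both asymptotic orbits, so the vanishing-at-infinity condition is automatic) to obtain an admissible variation $(E_{s,t}) \in T_{\bm J} \wt{\mc J}^l(J^-, J^+)$ whose pairing with $\wt{\eta}$ is strictly positive, a contradiction. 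Hence $\eta$ vanishes on $V_0(z_0)$; the first component of ${\mf D}_{\wt{u}}^* \wt{\eta} = 0$, together with the free $G$-action on $U$, then forces the gauge components of $\wt{\eta}$ to vanish there as well, and unique continuation gives $\wt{\eta} \equiv 0$. Smale's Sard theorem applied to the projection onto $\wt{\mc J}^l(J^-, J^+)$, followed by Taubes' trick to pass from $C^l$ to $C^\infty$, produces the second category subset $\wt{\mc J}^{reg}_{{\bm H}, {\bm \lambda}}(J^-, J^+)$. The dimension count is the Fredholm index of $d{\mc F}_{[\wt{u}]}$, which equals that of the augmented operator ${\mf D}_{\wt{u}}$; by the spectral flow calculation of Subsection \ref{subsection43} this index is ${\sf cz}({\mf x}^-) - {\sf cz}({\mf x}^+)$.
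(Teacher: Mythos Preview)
Your argument is essentially correct, but it is considerably more elaborate than what is actually needed, and the paper's own proof takes a much shorter route.

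The key observation you are missing is that once the perturbation $E_{s,t}$ is allowed to depend on \emph{both} $s$ and $t$, the hard part of Section~\ref{section6} evaporates. In the translation-invariant setting the perturbation $E_t$ is $s$-independent, so localizing near $u(z_0)$ inevitably hits every other point $(s',t_0)$ with $u(s',t_0)\in Gu(z_0)$; this is precisely why one needs condition (4) of Definition~\ref{defn611} and hence the whole $G$-regular-point machinery, the admissibility of $H^\pm$, and Theorem~\ref{thm610}. For the continuation equation none of this is necessary: one can cut off in $(s,t)\in\Theta$ directly, so it suffices that \emph{some} point of the solution lies in the free locus of the $G$-action. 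Since every solution converges to $\mu^{-1}(0)\subset U$ as $|s|\to\infty$, this is automatic --- every solution is ``irreducible'' in the sense of Cieliebak--Gaio--Mundet--Salamon, and their Theorem~4.10 gives transversality of the universal moduli space immediately. Sard--Smale and Taubes' trick then finish the argument exactly as you describe.

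Two minor remarks on your write-up. First, the tangent space of $\wt{\mc J}^l(J^-,J^+)$ does \emph{not} carry the vanishing conditions of Lemma~\ref{lemma63} along the periodic orbits; the only constraint is the weighted $C^l$-decay to zero coming from (\ref{equation72}). Imposing the extra constraint does no harm (a smaller perturbation space that still achieves surjectivity is fine), but it is not part of the definition. Second, your appeal to Theorem~\ref{thm610} for the asymptotics of $\partial_s\wt u$ is valid on each end because $(J,H,\lambda)$ is constant there, but as explained above you never actually need it here.
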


\begin{proof}
Let $l \geq 2$ and let $\wt{\mc J}^l \left(J^-, J^+\right)$ be the space of families of $G$-invariant, $\omega$-compatible almost complex structures parametrized by $(s, t)\in \Theta$ of class $C^l$, which satisfy (\ref{equation72}) up to $k = l$. These are Banach manifolds whose intersection is $\wt{\mc J}\left( J^-, J^+ \right)$. 

As we did in Section \ref{section6}, for $l \geq k \geq 2$ and $p>2$, for each ${\bm J} \in \wt{\mc J}^l \left( J^-, J^+ \right)$, we define the space of solutions to (\ref{equation73})
\begin{align*}
\wt{\mc N}^{k, p} \left( {\mf x}^-, {\mf x}^+; {\bm J}, {\bm H}, {\bm \lambda} \right) \subset \wt{\mc B}^{k, p}\left( {\mf x}^-, {\mf x}^+ \right).
\end{align*}
We denote
\begin{align*}
{\mc N}^{k, p} \left( {\mf x}^-, {\mf x}^+; {\bm J}, {\bm H}, {\bm \lambda} \right):= \wt{\mc N}^{k, p} \left( {\mf x}^-, {\mf x}^+; {\bm J}, {\bm H}, {\bm \lambda} \right)/ {\mc G}_0^{k+1, p}.
\end{align*}

Then any solution $\wt{u} \in \wt{\mc N}^{k, p} \left(  {\mf x}^-, {\mf x}^+; {\bm J}, {\bm H}, {\bm \lambda} \right)$ will go into $U$ as $|s|\to \infty$. This implies that every such $\wt{u}$ is ``irreducible'' in the sense of \cite[Definition 4.1]{Cieliebak_Gaio_Mundet_Salamon_2002}. Hence we can prove the transversality of the universal moduli space over $\wt{\mc J}^l\left( J^-, J^+ \right)$ in the same way as \cite[Theorem 4.10]{Cieliebak_Gaio_Mundet_Salamon_2002}, because now the perturbation is allowed to depend on both $s$ and $t$. By Sard-Smale theorem, this implies that for a generic ${\bm J} \in \wt{\mc J}^l\left( J^-, J^+ \right)$, the moduli space ${\mc N}^{k, p}\left( {\mf x}^-, {\mf x}^+; {\bm J}, {\bm H}, {\bm \lambda} \right)$ is cut off transversely. The Proposition follows by applying Taubes' trick. 
\end{proof}

Choose an element ${\bm J} \in \wt{\mc J}^{reg}_{{\bm H}, {\bm \lambda}}\left( J^-, J^+ \right)$. As in Section \ref{section4}, for each pair $\lbr {\mf x}^\pm \rbr \in {\rm Crit} {\mc A}_{H^\pm}$, we can define the moduli space
\begin{align}\label{equation75}
{\mc N} \left( \lbr {\mf x}^-\rbr, \lbr {\mf x}^+ \rbr; {\bm J}, {\bm H}, {\bm \lambda} \right).
\end{align}
A coherent system of orientations can be put on all these moduli spaces, for the same reason as in the case of connecting orbits. Then when ${\sf cz}(\lbr {\mf x}^- \rbr) = {\sf cz} ( \lbr {\mf x}^+ \rbr)$, the moduli space (\ref{equation75}) is a zero dimensional manifold, and its algebraic count gives an integer $\chi \left( \lbr {\mf x}^- \rbr, \lbr {\mf x}^+ \rbr \right)$. Then we define the continuation map 
\begin{align*}
\begin{array}{cccc}
\mf{cont}_{{\bm J}, {\bm H}, {\bm \lambda}}: & VCF_* \left( J^-, H^-, \lambda^- \right) & \to &  VCF_* \left( J^+, H^+, \lambda^+ \right)\\[0.2 cm]
           & \lbr {\mf x}^- \rbr & \mapsto & \displaystyle \sum_{ \lbr {\mf x}^+ \rbr  \in {\rm Crit} {\mc A}_{H^+}} \chi  \left( \lbr {\mf x}^- \rbr, \lbr {\mf x}^+ \rbr \right)\lbr {\mf x}^+ \rbr.
					\end{array}
\end{align*}
The two sides are the vortex Floer chain complex for the two triples $(J^-, H^-, \lambda^-)$ and $(J^+, H^+, \lambda^+)$ respectively. Then we have the similar results as in ordinary Hamiltonian Floer theory.
\begin{thm}
\begin{enumerate}
\item The map $\mf{cont}_{{\bm J}, {\bm H}, {\bm \lambda}}$ is a chain map. The induced map on the vortex Floer homology groups is independent of the choice of the homotopies ${\bm H}$, ${\bm \lambda}$, and the choice of ${\bm J} \in \wt{\mc J}^{reg}_{{\bm H}, {\bm \lambda}}$. In particular, $\mf{cont}_{{\bm J}, {\bm H}, {\bm \lambda}}$ is a chain homotopy equivalence.

\item For $i = 1, 2, 3$, suppose $H^i= (H^i_t)$ are admissible Hamiltonians, $\lambda^i >0$ and $(J^i_t) \in \wt{\mc J}^{reg}_{H^i, \lambda^i}$. We denote by 
\begin{align*}
\mf{cont}_{ij}: VHF \left( M, \mu; J^i, H^i, \lambda^i \right) \to VHF \left( M, \mu; J^j, H^j, \lambda^j \right),\ 1\leq i < j \leq 3
\end{align*}
the isomorphism induced by any chain-level continuation map. Then 
\begin{align*}
\mf{cont}_{23} \circ \mf{cont}_{12} = \mf{cont}_{13}.
\end{align*}
\end{enumerate}
\end{thm}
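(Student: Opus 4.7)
The plan is to adapt the standard continuation-method argument in Hamiltonian Floer theory to the present gauged setting, using the Banach manifold and gluing machinery already set up in Sections \ref{section4}--\ref{section6} together with the energy identity of Proposition \ref{prop75}. First, for the chain map property, I would fix $\lbr{\mf x}^-\rbr, \lbr{\mf x}^+\rbr$ with ${\sf cz}(\lbr{\mf x}^-\rbr) - {\sf cz}(\lbr{\mf x}^+\rbr) = 1$ and analyze the one-dimensional moduli space ${\mc N}(\lbr{\mf x}^-\rbr, \lbr{\mf x}^+\rbr; {\bm J}, {\bm H}, {\bm \lambda})$. A verbatim repetition of the compactness argument of Section \ref{section5} (replacing ${\mb R}$-invariance by the absence of it) shows that the only noncompactness comes from breaking at $\pm\infty$, giving rise to either a $\delta_{J^-}$-trajectory followed by a continuation solution, or a continuation solution followed by a $\delta_{J^+}$-trajectory. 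The gluing construction carried out for connecting orbits generalizes with no essential change (the equation (\ref{equation73}) is asymptotically translation-invariant on each end, which is all that the gluing analysis uses), so the boundary of the compactified moduli space is
\begin{equation*}
\bigsqcup_{\lbr{\mf y}\rbr}\widehat{\mc M}(\lbr{\mf x}^-\rbr,\lbr{\mf y}\rbr;J^-,H^-) \times {\mc N}(\lbr{\mf y}\rbr,\lbr{\mf x}^+\rbr) \ \sqcup\ \bigsqcup_{\lbr{\mf y}\rbr} {\mc N}(\lbr{\mf x}^-\rbr,\lbr{\mf y}\rbr) \times \widehat{\mc M}(\lbr{\mf y}\rbr,\lbr{\mf x}^+\rbr;J^+,H^+).
\end{equation*}
With coherent orientations extended in the standard way (these operators are still of class $\Sigma$ in the sense of \cite{Floer_Hofer_Orientation}), counting yields $\delta_{J^+}\circ\mf{cont} = \mf{cont}\circ\delta_{J^-}$.

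For independence of the homotopy, given two admissible homotopies $({\bm J}_\nu, {\bm H}_\nu, {\bm\lambda}_\nu)$, $\nu=0,1$, between the same endpoints, I would construct a two-parameter family $({\bm J}_\tau, {\bm H}_\tau, {\bm\lambda}_\tau)$, $\tau\in[0,1]$, and consider the parametrized universal moduli space over $[0,1]$. A generic such family achieves transversality, and the count of isolated solutions defines an operator $K$ of degree $+1$; analyzing the boundary of the 1-dimensional parametrized moduli space (breaking at $s=\pm\infty$ plus the boundary contributions at $\tau=0,1$) yields $\mf{cont}_1 - \mf{cont}_0 = \delta_{J^+}\circ K + K\circ\delta_{J^-}$, i.e.\ a chain homotopy. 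To conclude that $\mf{cont}$ is a chain homotopy equivalence, one takes the reverse homotopy $(J^+,H^+,\lambda^+)\rightsquigarrow(J^-,H^-,\lambda^-)$ to get a map $\mf{cont}'$; composing and then concatenating the homotopy with its reverse produces a homotopy that is itself homotopic to the constant homotopy at $(J^\pm, H^\pm,\lambda^\pm)$, whose continuation map is easily seen (by a cobordism argument — the zero-dimensional moduli space of the constant homotopy consists only of constant trajectories at each critical point) to be the identity. Therefore $\mf{cont}'\circ\mf{cont}$ and $\mf{cont}\circ\mf{cont}'$ are chain homotopic to the identity, which gives the invariance of the homology and the canonical isomorphism.

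For part (2), the triangle identity $\mf{cont}_{23}\circ\mf{cont}_{12} = \mf{cont}_{13}$, I would use a gluing argument for continuation trajectories. Given two homotopies $({\bm H}_{12},{\bm\lambda}_{12},{\bm J}_{12})$ and $({\bm H}_{23},{\bm\lambda}_{23},{\bm J}_{23})$, form for each large $R>0$ the spliced homotopy $({\bm H}_R,{\bm\lambda}_R,{\bm J}_R)$ obtained by shifting the first to $(-\infty,0]$ and the second to $[2R,\infty)$, with a constant-in-$s$ interpolation through $(J^2,H^2,\lambda^2)$ on $[0,2R]$. A parametrized gluing analysis identical to the one for broken trajectories shows that for $R\gg 0$ the chain-level map associated to $({\bm H}_R,{\bm J}_R,{\bm\lambda}_R)$ equals $\mf{cont}_{23}^{\mathrm{chain}}\circ\mf{cont}_{12}^{\mathrm{chain}}$. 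On the other hand, $({\bm H}_R,{\bm\lambda}_R,{\bm J}_R)$ is an admissible homotopy from $(J^1,H^1,\lambda^1)$ to $(J^3,H^3,\lambda^3)$, so by part (1) its induced homology map equals $\mf{cont}_{13}$.

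The chief analytical obstacle is the energy bound for continuation solutions, which is the linchpin of the compactness needed in each step. The identity (\ref{equation74}) of Proposition \ref{prop75} provides the bound only because ${\partial H_{s,t}/\partial s}$ is compactly supported in $s$ and $H_t^\pm\in C_c^\infty$; the same identity must be re-derived in the parametrized setting (with an additional $\tau$-variation term) and in the spliced-homotopy setting (where the length $2R$ of the neck gives a contribution that is uniformly bounded because $\partial_s H_{s,t}$ vanishes on the neck). Provided this is handled, together with the uniform convexity bound from Hypothesis \ref{hyp23} (which keeps $u$ in a fixed compact $K_H\subset M$ independent of the homotopy, again by the maximum principle), all the nonlinear Fredholm, gluing, and orientation arguments proceed in parallel with the classical Hamiltonian Floer case.
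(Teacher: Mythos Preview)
Your proposal is correct and follows exactly the route the paper takes: the paper's own proof merely states that the argument ``is essentially based on the construction of various gluing maps and the compactness results'' for the one-dimensional moduli spaces ${\mc N}(\lbr{\mf x}^-\rbr,\lbr{\mf x}^+\rbr;{\bm J},{\bm H},{\bm\lambda})$, notes that one should work in $r$-temporal gauge to glue approximate solutions, and then omits all details. Your outline (boundary analysis for the chain-map property, homotopy of homotopies for independence, reverse homotopy for the equivalence, and spliced-homotopy gluing for the triangle identity) is precisely the standard package the paper is invoking, and your identification of the energy bound (\ref{equation74}) and the maximum-principle containment as the analytical inputs is on target.
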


\begin{proof} The proof is essentially based on the construction of various gluing maps and the compactness results about ${\mc N}\left( \lbr {\mf x}^-\rbr, \lbr {\mf x}^+ \rbr ; {\bm J}, {\bm H}, {\bm \lambda} \right)$ when ${\sf cz}({\mf x}^-) - {\sf cz}( {\mf x}^+) = 1$. As in the gluing map constructed in proving the property $\delta_J^2 = 0$, we need to specify a gauge to construct the approximate solutions. We can still use solutions in $r$-temporal gauge, which is a notion independent of the equation. We omit the details. 
\end{proof}

\bibliography{symplectic_ref}
	
\bibliographystyle{amsplain}

\end{document}